\newtheorem{theorem}{Theorem}[section]
\newtheorem{prop}[theorem]{Proposition}
\newtheorem{lemma}[theorem]{Lemma}
\newtheorem{coro}[theorem]{Corollary}
\newtheorem{definition}[theorem]{Definition}
\newtheorem{rmk}[theorem]{Remark}
\newtheorem{subtheorem}{Theorem}[subsection]
\newtheorem{subprop}[subtheorem]{Proposition}
\newtheorem{sublemma}[subtheorem]{Lemma}
\newtheorem{subcoro}[subtheorem]{Corollary}
\newtheorem{subdefinition}[subtheorem]{Definition}
\newtheorem{subrmk}[subtheorem]{Remark}
\newcommand\CC{{\mathcal C}}
\newcommand\DD{{\mathcal D}}
\newcommand\FF{{\mathcal F}}
\newcommand\GG{{\mathcal G}}
\newcommand\HH{{\mathcal H}}
\newcommand\LL{{\mathcal L}}
\newcommand\MM{{\mathcal M}}
\newcommand\PP{{\mathcal P}}
\newcommand\PMF{{\PP\kern-2pt\MM\FF}}
\newcommand\PML{{\PP\kern-2pt\MM\LL}}
\newcommand\R{{\mathbb R}}
\newcommand{\fsubd}{\mathrel{{\scriptstyle\searrow}\kern-1ex^d\kern0.5ex}}
\newcommand{\bsubd}{\mathrel{{\scriptstyle\swarrow}\kern-1.6ex^d\kern0.8ex}}
\newcommand{\fsubeq}{\mathrel{\raise-.7ex\hbox{$\overset{\searrow}{=}$}}}
\newcommand{\bsubeq}{\mathrel{\raise-.7ex\hbox{$\overset{\swarrow}{=}$}}}
\newcommand{\tsh}[1]{\left\{\kern-.9ex\left\{#1\right\}\kern-.9ex\right\}}
\newcommand{\Dis}{\hbox{Dis}}
\newcommand{\argmx}{\hbox{argmax}}
\newcommand{\argmn}{\hbox{argmin}}
\newcommand{\cone}{\hbox{Cone}}
\newcommand{\shad}{\hbox{Shad}}
\title[GH-convergence of maximal Gromov hyperbolic spaces and their boundaries]{Gromov-Hausdorff convergence of maximal Gromov hyperbolic spaces and their boundaries}
\author{Kingshook Biswas}
\address{Stat-Math Unit, Indian Statistical Institute, 203 B. T. Rd., Kolkata 700108, India}
\email{kingshook@isical.ac.in}
\author{Arkajit Pal Choudhury}
\address{Stat-Math Unit, Indian Statistical Institute, 203 B. T. Rd., Kolkata 700108, India}
\email{rkjtpalchoudhury\_r@isical.ac.in}
\subjclass[2020]{51F99, 30L05, 54E25}
\keywords{Gromov-Hausdorff convergence, Gromov hyperbolic spaces, Gromov product spaces,  Semi-metric, Antipodal spaces, Equicontinuity}
\begin{document}
	\begin{abstract} 
		The relation between negatively curved spaces and their boundaries is important for various rigidity problems. In \cite{biswas2024quasi}, the class of Gromov hyperbolic spaces called maximal Gromov hyperbolic spaces was introduced, and the boundary functor $X \mapsto \partial X$ was shown to give 
		an equivalence of categories between maximal Gromov hyperbolic spaces (with morphisms being isometries) and a class of compact quasi-metric spaces 
		called quasi-metric antipodal spaces (with morphisms being Moebius homeomorphisms). The proof of this equivalence involved the construction of 
		a filling functor $Z \mapsto {\mathcal M}(Z)$, associating to any quasi-metric antipodal space $Z$ a maximal Gromov hyperbolic space ${\mathcal M}(Z)$. 
		
		\medskip
		
		We study the ``continuity" properties of the boundary and filling functors. We show that convergence of a sequence of quasi-metric antipodal spaces 
		(in a certain sense called ``almost-isometric convergence") implies convergence (in the Gromov-Hausdorff sense) of the associated maximal Gromov hyperbolic 
		spaces. Conversely, we show that convergence of maximal Gromov hyperbolic spaces together with a natural hypothesis of ``equicontinuity" on the boundaries 
		implies convergence of boundaries. We use this to show that Gromov-Hausdorff convergence of a sequence of proper, geodesically complete CAT(-1) spaces implies 
		Gromov-Hausdorff convergence of their boundaries equipped with visual metrics. We also show that convergence of maximal Gromov hyperbolic spaces to a 
		maximal Gromov hyperbolic space with finite boundary implies convergence of boundaries. 
	\end{abstract}
	
	\bigskip
	
	\maketitle
	
	\tableofcontents
	
	\section{Introduction}
	
	\medskip
	
	The relation between the geometry of negatively curved spaces and that of their boundaries is a well-studied topic, which is an important part of 
	various rigidity theorems and problems for negatively curved spaces, such as the Mostow rigidity theorem and the Marked length spectrum rigidity problem. 
	For a Gromov hyperbolic space $X$, the geometry of the boundary $\partial X$ is encoded in a function (defined up to a bounded multiplicative error), called the {\it cross-ratio} of the boundary, which is a 
	positive function of quadruples of distinct points in $\partial X$, generalizing the classical Euclidean cross-ratio on the boundary $S^{n-1}$ of the real hyperbolic $n$-space 
	$\mathbb{H}^n$. A central problem is to understand to what extent the cross-ratio on $\partial X$ determines the geometry of $X$. 
	
	\medskip
	
	A map $f : \partial X \to \partial Y$ between boundaries of Gromov hyperbolic spaces $X, Y$ is said to be {\it power quasi-Moebius} if it distorts cross-ratios at most by 
	raising to a power and multiplying by a constant (see for eg. \cite{jordi} for the precise definition). Under mild assumptions on $X$ and $Y$, it is known that any such map extends to a quasi-isometry $F : X \to Y$ (\cite{bonk-schramm} for power quasi-symmetric boundary maps, \cite{jordi} for general power quasi-Moebius maps). For a general Gromov hyperbolic space 
	the cross-ratio is defined only up to a bounded multiplicative error. If the Gromov hyperbolic space is however {\it boundary continuous} (the Gromov inner product extends continuously 
	to the boundary), then the cross-ratio is well-defined, without any error term. Examples of such spaces include CAT(-1) spaces, and as shown in \cite{biswas2024quasi}, CAT(0) Gromov 
	hyperbolic spaces.  For such spaces, one can talk of {\it Moebius maps} between their boundaries, which are maps between boundaries which preserve cross-ratios. Isometries between boundary continuous Gromov hyperbolic spaces extend to Moebius maps between their boundaries. The converse, or rigidity of Moebius maps, is an important open problem: does any Moebius map 
	$f : \partial X \to \partial Y$ between boundaries of boundary continuous Gromov hyperbolic spaces $X, Y$ extend to an isometry $F : X \to Y$? As explained 
	in \cite{biswas2015moebius}, an affirmative answer to this question would lead to a solution of the marked length rigidity problem of Burns and Katok \cite{burns-katok} for 
	closed negatively curved manifolds. Rigidity of Moebius maps is known to hold only in certain special cases, either when $X$ and $Y$ are both rank one symmetric spaces of non-compact type normalized to have upper curvature bound $-1$ (\cite{bourdon2}), or when $X, Y$ are 
	both geodesically complete trees (\cite{Hughes}, \cite{beyrer-schroeder}). There are also local and infinitesimal rigidity results for Moebius maps between boundaries of negatively curved 
	Hadamard manifolds, proved in \cite{biswas4}. For general boundary continuous spaces $X, Y$, the hyperbolic filling techniques of \cite{bonk-schramm} and 
	\cite{jordi} are too rough to give anything more than quasi-isometric extensions of boundary maps, even with the stronger hypothesis that the boundary map is Moebius (under this hypothesis, their results do however give almost-isometric extensions, i.e. $(1, C)$-quasi-isometric for some $C > 0$). 
	
	\medskip
	
	In \cite{biswas2024quasi}, a large class of spaces was introduced for which rigidity of Moebius maps does indeed hold, namely the class of 
	{\it maximal Gromov hyperbolic spaces}. These spaces are proper, geodesic, geodesically complete and boundary continuous; for convenience, we will call Gromov hyperbolic 
	spaces satisfying these four properties {\it good Gromov hyperbolic spaces}. The boundaries of good Gromov hyperbolic spaces are certain compact quasi-metric spaces called {\it quasi-metric antipodal spaces} (we refer to \Cref{preliminaries} for the definition of antipodal spaces). Any quasi-metric antipodal space is equipped with the cross-ratio of its quasi-metric. 
	A {\it hyperbolic filling} of a quasi-metric antipodal space $Z$ is a pair $(X, f)$ where $X$ is a good Gromov hyperbolic space and $f : \partial X \to Z$ is a Moebius 
	homeomorphism. A {\it maximal Gromov hyperbolic space} is a good Gromov hyperbolic space $X$ which is maximal amongst all hyperbolic fillings of its boundary 
	$\partial X$, in the sense that for any hyperbolic filling $( Y, g : \partial Y \to \partial X)$ of $\partial X$, the Moebius homeomorphism $g : \partial Y \to \partial X$ extends 
	to an isometric embedding $G : Y \to X$. 
	
	\medskip
	
	In \cite{biswas2024quasi}, it was shown that any quasi-metric antipodal space $Z$ admits a hyperbolic filling by a unique maximal Gromov hyperbolic space ${\mathcal M}(Z)$. 
	The space ${\mathcal M}(Z)$, called the {\it Moebius space} associated to $Z$, depends only on the cross-ratio on $Z$, and is given by the space of all Moebius antipodal functions on $Z$ equipped with a certain metric 
	(we refer to \Cref{preliminaries} for the definitions). The association $Z \mapsto {\mathcal M}(Z)$ is functorial, and any Moebius homeomorphism $f : Z_1 \to Z_2$ between antipodal 
	spaces extends canonically to an isometry $F : {\mathcal M}(Z_1) \to {\mathcal M}(Z_2)$ between the associated Moebius spaces. It was also shown in \cite{biswas2024quasi} 
	that any maximal Gromov hyperbolic space $X$ is canonically isometric to the Moebius space ${\mathcal M}(\partial X)$, thus the class of maximal Gromov hyperbolic spaces 
	coincides with the class of Moebius spaces. It follows that any Moebius map $f : \partial X \to \partial Y$ between boundaries of maximal Gromov hyperbolic spaces $X, Y$ extends to 
	an isometry $F : X \to Y$. From this one obtains an equivalence of categories between the categories of maximal Gromov hyperbolic spaces (with isometries as morphisms) and quasi-metric antipodal spaces (with Moebius homeomorphisms as morphisms). 
	
	\medskip
	
     There is a close connection between maximal Gromov hyperbolic spaces and injective metric spaces (we refer to \cite{lang2013injective} for basic 
	definitions and properties of injective metric spaces). In \cite{biswas2024quasi} it is shown that a good Gromov hyperbolic space $X$ is maximal if and only if it is injective. 
     Thus maximal Gromov hyperbolic spaces are the same as injective, good Gromov hyperbolic spaces. Moreover, the injective hull of any good Gromov hyperbolic space 
     is a maximal Gromov hyperbolic spaces. The Moebius space ${\mathcal M}(Z)$ associated to a quasi-metric antipodal space $Z$ is the injective hull of any hyperbolic filling of $Z$. 
     The results obtained in this article about maximal Gromov hyperbolic spaces may also be viewed as results about injective good Gromov hyperbolic spaces, and injective 
     hulls of good Gromov hyperbolic spaces.  
	
	\medskip
	
	Our aim in this article is to study the ``continuity" properties of the two functors giving the equivalence of categories between maximal Gromov hyperbolic spaces and 
	quasi-metric antipodal spaces, namely the boundary 
	functor $X \mapsto \partial X$, and the filling functor $Z \mapsto {\mathcal M}(Z)$. For the maximal Gromov hyperbolic spaces, which are proper metric spaces, 
	the natural notion of convergence is that of Gromov-Hausdorff convergence, where Gromov-Hausdorff convergence of non-compact proper metric spaces means Gromov-Hausdorff 
	convergence of closed balls of radius $R > 0$ for every $R > 0$. For quasi-metric antipodal spaces, which are compact quasi-metric spaces but not necessarily metric spaces, 
	we introduce a notion of {\it almost-isometric convergence} (AI-convergence) of quasi-metric spaces, which reduces to Gromov-Hausdorff convergence in the case of metric spaces.  
	Our first result below may then be interpreted as saying that the filling functor $Z \mapsto {\mathcal M}(Z)$ is continuous. We prove it for the larger class of antipodal spaces 
	(see \Cref{preliminaries} for the definition of antipodal spaces), for which one may also define the Moebius spaces ${\mathcal M}(Z)$. As shown in \cite{biswas2024quasi}, there is an equivalence 
	of categories between a category of certain metric spaces called maximal Gromov product spaces (which contain the maximal Gromov hyperbolic spaces as a proper subcategory) 
	and the category of antipodal spaces. Gromov product spaces are a class of metric spaces introduced in \cite{biswas2024quasi} which contains the class of good 
	Gromov hyperbolic spaces, and whose boundaries are antipodal spaces. One can define as before the notion of a maximal Gromov product space, and it is shown in 
	\cite{biswas2024quasi} that any maximal Gromov product space is a Moebius space ${\mathcal M}(Z)$ associated to some antipodal space $Z$. Antipodal spaces are semi-metric spaces and for these one also has a notion of almost-isometric convergence. We then have:  
	
	\medskip
	

	
	\begin{theorem}\label{forward}
		Let $\{(Z_n,\rho_n)\}_{n\ge 1}$ be a sequence of antipodal spaces which converge to an antipodal space $(Z,\rho_0)$ in the sense of almost-isometric convergence. 
		Then the Moebius spaces $(\MM(Z_n),\rho_n)$ converge to the Moebius space $(\MM(Z),\rho_0)$ in Gromov-Hausdorff sense.
		
		\medskip
		
		In particular, if $\{ (X_n, x_n) \}_{n \ge 1}, (X, x)$ are maximal Gromov product spaces such that $(\partial X_n, \rho_{x_n}) \to (\partial X, \rho_x)$ in the sense 
		of almost-isometric convergence, then $(X_n, x_n) \to (X, x)$ in the sense of Gromov-Hausdorff convergence.

         \medskip

     \end{theorem}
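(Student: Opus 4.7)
The plan is to transport the AI-convergence hypothesis at the level of $Z_n$ into an approximate isometry of balls at the level of Moebius antipodal functions, using the explicit description of $\MM$ provided in \cite{biswas2024quasi}. Recall that points of $\MM(Z)$ are Moebius antipodal functions $f : Z \to [0,\infty)$, with a metric $d_\MM$ expressible pointwise in terms of values $f(\xi)$ and $\rho_0(\xi,\eta)$. The semi-metric $\rho_0$ itself serves as the basepoint of $\MM(Z)$, and an $R$-ball about this basepoint consists of functions whose values are pinched between explicit quantities depending only on $R$ and $\rho_0$. In particular, such balls form a uniformly equicontinuous family with respect to the semi-metric.

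Unpacking AI-convergence, for every $\delta > 0$ and all large $n$ there exist maps $\phi_n : Z \to Z_n$ and $\psi_n : Z_n \to Z$ with $\delta$-dense images satisfying $|\rho_n(\phi_n(\xi), \phi_n(\eta)) - \rho_0(\xi,\eta)| < \delta$, and symmetrically for $\psi_n$. Given $g \in \MM(Z_n)$ in the $R$-ball about $\rho_n$, the composition $g \circ \phi_n$ satisfies the defining Moebius-antipodal identity on $(Z, \rho_0)$ up to an additive error controlled by $\delta$ and $R$. I would then project this near-Moebius function to an exact element $\Psi_n(g) \in \MM(Z)$, either through an explicit infimum formula built from cross-ratios of $\rho_0$ or by invoking the injectivity of $\MM(Z)$ established in \cite{biswas2024quasi}, and control the $L^\infty$-distance between $g \circ \phi_n$ and $\Psi_n(g)$ by the same error. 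The symmetric construction yields $\Phi_n : \MM(Z) \to \MM(Z_n)$. Verifying that $\Psi_n, \Phi_n$ are $o(1)$-approximate isometries on $R$-balls around the basepoints and that their compositions are $o(1)$-close to the identity then delivers the Gromov-Hausdorff convergence $\MM(Z_n) \to \MM(Z)$. The second assertion of the theorem follows because any maximal Gromov product space $(X, x)$ is canonically isometric to $\MM(\partial X)$ via an identification sending the basepoint $x$ to the element $\rho_x$, by \cite{biswas2024quasi}.

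The principal obstacle is the projection step in defining $\Psi_n$: converting a near-Moebius antipodal function on $(Z, \rho_0)$ into an exact one while controlling its sup-distance from the approximate function uniformly on $R$-balls. This requires that the infima and cross-ratio conditions characterizing $\MM(Z)$ depend continuously on the input in a quantitative way. Compactness of $Z$, the pinching and equicontinuity of Moebius antipodal functions in an $R$-ball, and the explicit realization of $\MM$ from \cite{biswas2024quasi} together suggest that a small uniform perturbation of the semi-metric data produces only a small uniform perturbation in the realized Moebius antipodal function. The remaining verifications reduce to routine estimates combining this stability with the $\delta$-distortion bounds on $\phi_n, \psi_n$, together with standard Arzel\`a--Ascoli arguments to pass to limits on $R$-balls.
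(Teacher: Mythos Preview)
Your overall strategy matches the paper's: pull back along the rough isometry at the boundary level to get a near-Moebius function, project onto the genuine Moebius space, and verify the resulting maps are $\epsilon$-isometries on $R$-balls. The second assertion does indeed follow immediately from the identification $X \cong \MM(\partial_P X)$ for maximal Gromov product spaces.

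The gap is exactly where you flag it: the projection step. Neither mechanism you suggest will do the job as stated. Invoking injectivity of $\MM(Z)$ yields a $1$-Lipschitz retraction $r\colon C(Z)\to \MM(Z)$, but that only bounds $d(r(f),r(g))$ by $d(f,g)$; it gives no control on $d(r(f),f)$ unless you already know $f$ is close to $\MM(Z)$ in sup-norm, which is precisely the estimate you need and have not established. An unspecified ``infimum formula built from cross-ratios'' is not a substitute either; nothing like this appears in \cite{biswas2024quasi} with the required quantitative bound. The paper's projection is the \emph{antipodalization map} $\PP_\infty\colon C(Z)\times\MM(Z)\to\MM(Z)$, obtained as the $t\to\infty$ limit of the antipodal flow (an ODE in $C(Z)$). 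Its crucial property is the explicit bound
\[
\big\|\tau_{\PP_\infty(\tau,\rho)}-\tau\big\|_\infty \le 4\,\big\|D_\rho(\tau)\big\|_\infty,
\]
where $D_\rho(\tau)$ is the discrepancy. The substantive work (the paper's Lemma~B.1) is then showing $\sup_{\rho\in B_{\MM}(\rho_0,R)}\|D_{\rho_n}(\tau_\rho\circ f_n)\|_\infty<\epsilon$ whenever $f_n$ is a $\delta$-isometry with $\delta$ small enough; this is a direct but non-trivial computation using antipodes and the equicontinuity inherited from AI-convergence.

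You also omit two smaller points the paper must handle. First, $\tau_\rho\circ f_n$ need not be continuous (the $f_n$ are not assumed continuous), so the paper first applies a smoothing operator built from a partition of unity associated to a fine net; one must check this perturbs both the function and its discrepancy only by $\epsilon$. Second, the output of $\PP_\infty$ may land slightly outside the target $R$-ball, so the paper composes with an explicit retraction $\pi^R$ onto the ball and verifies this moves points by at most $O(\epsilon)$.
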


 \medskip

     The results obtained in \cite{biswas2024quasi} imply that the injective hull $E(X)$ of a good Gromov hyperbolic space $X$  (or more generally a Gromov product space), 
     is determined by the Moebius structure on its boundary $\partial X$. As an immediate Corollary of the above Theorem, we have the following result which says that 
     if the Moebius structures on the boundaries of a sequence of good Gromov hyperbolic spaces (or more generally Gromov product spaces) converge, then the injective 
     hulls of the spaces converge:

\medskip

\begin{coro} \label{hullsconverge}

If  $\{ (X_n, x_n) \}_{n \ge 1}, (X, x)$ are good Gromov hyperbolic spaces (or more generally Gromov product spaces) 
such that $(\partial X_n, \rho_{x_n}) \to (\partial X, \rho_x)$ in the sense 
of almost-isometric convergence, then the injective hulls $(E(X_n), x_n) \to (E(X), x)$ in the sense of Gromov-Hausdorff convergence.

\end{coro}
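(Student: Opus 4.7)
The plan is to reduce the corollary to the ``in particular'' part of \Cref{forward} applied to the injective hulls $E(X_n)$ and $E(X)$. The only substantive checks are (i) that the injective hulls are themselves maximal Gromov product spaces and (ii) that the boundary data appearing in the hypothesis of the corollary agree with the boundary data of the injective hulls, including the identification of basepoints.

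First I would invoke the results from \cite{biswas2024quasi} recalled in the excerpt: for any good Gromov hyperbolic space (or more generally any Gromov product space) $X$, the injective hull $E(X)$ is a maximal Gromov hyperbolic (resp.\ maximal Gromov product) space, and is canonically isometric to the Moebius space $\MM(\partial X)$ of the boundary. In particular, the natural isometric embedding $X \hookrightarrow E(X)$ induces a canonical Moebius identification $\partial X \cong \partial E(X)$. Under this identification, the visual semi-metric based at a point $x \in X$ extends to the visual semi-metric based at the image of $x$ in $E(X)$, since the Gromov product from a point is preserved under isometric embedding; concretely, the basepoint $x_n \in X_n$ corresponds, via the inclusion $X_n \hookrightarrow E(X_n) = \MM(\partial X_n)$, to the Moebius function $\rho_{x_n} \in \MM(\partial X_n)$, and similarly for $x$.

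With these identifications, the hypothesis $(\partial X_n, \rho_{x_n}) \to (\partial X, \rho_x)$ in the almost-isometric sense is exactly the hypothesis $(\partial E(X_n), \rho_{x_n}) \to (\partial E(X), \rho_x)$. Since $E(X_n)$ and $E(X)$ are maximal Gromov product spaces, the second (``in particular'') assertion of \Cref{forward} applies and yields pointed Gromov-Hausdorff convergence $(E(X_n), x_n) \to (E(X), x)$, which is the desired conclusion. There is no real obstacle here beyond carefully verifying the basepoint identification between $x_n \in X_n$ and $\rho_{x_n} \in \MM(\partial X_n)$; this is immediate from the construction of $\MM(\partial X_n)$ and the canonical isometric embedding of a Gromov product space into its associated Moebius space.
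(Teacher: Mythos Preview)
Your proposal is correct and matches the paper's approach: the paper presents this corollary as immediate from \Cref{forward} together with the identification $E(X) \cong \MM(\partial X)$ from \cite{biswas2024quasi}, and does not give a separate proof. Your write-up simply makes explicit the basepoint identification $x_n \leftrightarrow \rho_{x_n}$ via the visual embedding, which is exactly the detail one would supply if asked to expand the ``immediate'' into a proof.
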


	\medskip
	
	The continuity of the boundary functor $X \mapsto \partial X$ is more delicate. Gromov-Hausdorff convergence of non-compact Gromov hyperbolic spaces only requires convergence 
	of balls of finite radius, while the boundary at infinity of a Gromov hyperbolic space depends on the asymptotic geometry of the space near infinity, so it is not immediately 
	clear whether convergence of the spaces should imply convergence of their boundaries, without any further hypotheses on the geometry at infinity. We show that under 
	a natural hypothesis on the sequence of boundaries, called {\it equicontinuity} of antipodal spaces (see \Cref{preliminaries} for the definition), convergence of Gromov product 
	spaces does indeed imply convergence of their boundaries. 
	
	\medskip
	
	\begin{theorem}\label{backward}
		Let $\{(X_n,x_n)\}_{n\ge 1}, (X, x)$ be Gromov product spaces, and suppose that $(X_n,x_n) \to (X,x)$ in the Gromov-Hausdorff sense.  
		If the boundaries $\{(\partial_P X_n,\rho_{x_n})\}_{n\ge 1}$ form an equicontinuous family of antipodal spaces, then 
		$(\partial_P X_n,\rho_{x_n}) \to (\partial_P X,\rho_x)$ in the sense of almost-isometric convergence.
	\end{theorem}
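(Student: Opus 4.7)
The plan is to transport boundary points along the Gromov--Hausdorff approximations and then use equicontinuity to upgrade the finite-scale control into a control on the boundary semi-metric. Concretely, I would first unpack the hypothesis $(X_n,x_n)\to (X,x)$ into basepoint-preserving $(\epsilon_n,R_n)$-approximations $\phi_n : B_{R_n}(x_n)\to X$ and $\psi_n : B_{R_n}(x)\to X_n$ with $\epsilon_n\to 0$, $R_n\to\infty$, and record that such maps almost preserve the Gromov products based at $x_n$ and $x$ on their domains of definition.

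Next I would construct approximate boundary maps $f_n : \partial_P X_n \to \partial_P X$ as follows. For $\xi\in\partial_P X_n$, choose a sequence $y_k\in X_n$ with $(y_k\mid y_l)_{x_n}\to\infty$ representing $\xi$, truncate at a radius of order $R_n$, and push the truncated sequence into $X$ via $\phi_n$. Because Gromov products are almost preserved on the truncated ball, the images still have Gromov products that diverge (up to $O(\epsilon_n)$) and so define a boundary point $f_n(\xi)\in\partial_P X$; a standard diagonal argument makes the assignment well-defined modulo $o(1)$ error. The analogous construction using $\psi_n$ gives reverse maps $g_n : \partial_P X \to \partial_P X_n$.

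The heart of the argument is the estimate
\[
\bigl|(\xi\mid\eta)_{x_n} - (f_n\xi\mid f_n\eta)_x\bigr| = o(1), \qquad \text{uniformly in } \xi,\eta\in\partial_P X_n,
\]
which, via the identity $\rho = e^{-(\cdot\mid\cdot)}$, yields $|\rho_{x_n}(\xi,\eta)-\rho_x(f_n\xi,f_n\eta)|=o(1)$. For any fixed $\xi,\eta$ and any truncation radius $R$, the approximation $\phi_n$ delivers the estimate at scale $R$; the real question is whether the scale-$R$ data already reflects the boundary semi-metric. Without further hypothesis one can have sequences of pairs $\xi_n,\eta_n$ whose representing Gromov products only saturate at scales far beyond $R_n$, so $\phi_n$ cannot detect them. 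Equicontinuity of $\{(\partial_P X_n,\rho_{x_n})\}$ is precisely the hypothesis that rules this out, by providing a uniform-in-$n$ modulus relating the boundary semi-metric to the scale at which representative Gromov products become close to their boundary value. Establishing this uniform reduction to a bounded scale $R=R(\varepsilon)$, after which the $(\epsilon_n,R_n)$-approximation can be applied, is what I expect to be the main technical obstacle.

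Finally, to obtain almost-isometric convergence one must check that $f_n$ is approximately surjective and close to being an inverse of $g_n$. The composition $\psi_n\circ\phi_n$ is $o(1)$-close to the identity on $B_{R_n/2}(x_n)$ by the Gromov--Hausdorff hypothesis, and the same equicontinuity reduction transports this into the statement that $g_n\circ f_n$ is $o(1)$-close to the identity in the semi-metric on $\partial_P X_n$, and symmetrically for $f_n\circ g_n$. Combined with the semi-metric estimate above, this yields the required almost-isometric convergence $(\partial_P X_n,\rho_{x_n})\to(\partial_P X,\rho_x)$.
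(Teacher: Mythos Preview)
Your overall strategy---push points through the GH approximations and compare visual semi-metrics---is the same as the paper's, but the proposal has two genuine gaps.

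First, the construction of $f_n$ is not well-defined as written. If you truncate a representing sequence for $\xi\in\partial_P X_n$ at scale $R_n$ and push it through $\phi_n$, you get finitely many points in $X$, not a boundary point; and there is no ``standard diagonal argument'' that produces a canonical boundary point from them. The paper avoids this by working with geodesic rays: for each boundary point $\zeta$ in the source it takes the point $z(\zeta)$ on the sphere of radius $R$ along the ray $[x,\zeta)$, pushes $z(\zeta)$ through the GH map, and then defines the target boundary point as the endpoint of a geodesic ray in the target through the image. Equivalently, it chooses a point in $\operatorname{argmax} B(x_n,\FF_n(z(\zeta)),\cdot)$. You need some concrete choice like this.

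Second, and more seriously, you have misidentified what equicontinuity does. The real obstacle is the two-sided comparison between the scale-$R$ quantity $e^{-(u\mid v)_x}$ (for $u,v$ on the $R$-sphere) and the boundary quantity $\rho_x(\zeta,\zeta')$. The upper bound $\rho_x(\zeta,\zeta')\le e^{-(u\mid v)_x}$ comes from the $\operatorname{argmax}$ inequality (the paper's Lemma~\ref{gromov ip upper bound}), not from equicontinuity. For the reverse inequality the paper uses geodesic completeness: extend the segment $[u,v]$ to a bi-infinite geodesic with endpoints $\tilde\zeta,\tilde\zeta'$, so that $\rho_x(\tilde\zeta,\tilde\zeta')\ge e^{-(u\mid v)_x}$, and then show $\rho_x(\zeta,\tilde\zeta),\rho_x(\zeta',\tilde\zeta')$ are small (of order $e^{-R}$). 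Equicontinuity enters only at this last step, and only on the $X_n$ side: having shown $\rho_{x_n}(\zeta_n,\tilde\zeta_n)$ small, one needs $|\rho_{x_n}(\zeta_n,\zeta_n')-\rho_{x_n}(\tilde\zeta_n,\tilde\zeta_n')|$ small \emph{uniformly in $n$}. Your description---``a uniform modulus relating the boundary semi-metric to the scale at which representative Gromov products saturate''---is not what equicontinuity says and does not obviously yield this step. Note also that the paper builds $g_n:\partial_P X\to\partial_P X_n$ (into the equicontinuous family) and then invokes Lemma~\ref{conditional inverse}; your direction $f_n:\partial_P X_n\to\partial_P X$ would need equicontinuity in a different place, and you have not located it.
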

	
	\medskip
	
	We remark that equicontinuity of boundaries is also a necessary condition for their AI-convergence, and is hence a natural hypothesis to make. 
     We also note that the Gromov product spaces in the above Theorem are 
	not required to be maximal. Equicontinuity of boundaries holds for example when all the boundaries are metric spaces, which is the case for visual metrics on the boundary 
	of a CAT(-1) space. As a corollary of the previous Theorem, we do get a result on continuity of the boundary functor $X \mapsto \partial X$ when restricted to the class 
	of proper, geodesically complete CAT(-1) spaces:
	
	\medskip
	
	\begin{theorem}\label{CAT(-1) main}
		Let $\{ (X_n,x_n) \}_{n \ge 1}, (X, x)$ be proper geodesically complete CAT$(-1)$ spaces. If $(X_n,x_n) \to (X,x)$ in Gromov-Hausdorff sense, then the associated visual metric spaces $(\partial X_n, \rho_{x_n}) \to (\partial X, \rho_x)$ in the usual Gromov-Hausdorff sense.

	\end{theorem}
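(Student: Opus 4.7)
The plan is to deduce this directly from \Cref{backward}, so the work consists of verifying the hypotheses of that theorem and unwinding the notions of convergence on the two sides.

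First I would check that proper, geodesically complete CAT$(-1)$ spaces belong to the class of Gromov product spaces to which \Cref{backward} applies. CAT$(-1)$ spaces are geodesic, and the Gromov inner product extends continuously to the boundary (this is the classical boundary continuity for CAT$(-1)$ spaces, which also gives that the boundary is a genuine antipodal space). Together with properness and geodesic completeness, this places each $(X_n, x_n)$ and $(X, x)$ in the setting of good Gromov hyperbolic (hence Gromov product) spaces, with boundaries equipped with the Bourdon visual quasi-metric $\rho_{x_n}, \rho_x$.

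Next I would verify the equicontinuity hypothesis. Bourdon's theorem says that on a CAT$(-1)$ space the visual function $\rho_x$ is in fact a genuine metric, i.e. the triangle inequality holds with constant $1$. In the framework of \Cref{preliminaries}, this means every $(\partial X_n, \rho_{x_n})$ is a metric antipodal space, not merely a quasi-metric one. Equicontinuity of antipodal spaces is the condition that the defect in the metric inequality tends uniformly to zero as the points become close; when all members of the family satisfy the exact triangle inequality, equicontinuity is immediate. Thus the family $\{(\partial X_n, \rho_{x_n})\}_{n \ge 1}$ trivially forms an equicontinuous family of antipodal spaces.

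Having checked both hypotheses, \Cref{backward} gives that $(\partial X_n, \rho_{x_n}) \to (\partial X, \rho_x)$ in the sense of almost-isometric convergence. The final step is to observe that AI-convergence coincides with ordinary Gromov-Hausdorff convergence for metric spaces, which was flagged in the introduction as one of the design features of AI-convergence. Since all the boundaries in question are genuine metric spaces (by Bourdon again), this identification applies, and we conclude that $(\partial X_n, \rho_{x_n}) \to (\partial X, \rho_x)$ in the usual Gromov-Hausdorff sense.

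The only real content beyond invoking \Cref{backward} is the appeal to Bourdon's metric property of visual distances on CAT$(-1)$ spaces, which is what makes equicontinuity free and lets AI-convergence collapse to Gromov-Hausdorff convergence. I do not expect any serious obstacle; the proof should be a short two-step corollary once the dictionary between the CAT$(-1)$ setting and the abstract antipodal-space setting is written down.
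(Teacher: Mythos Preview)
Your proposal is correct and matches the paper's own proof essentially line for line: the paper also observes that proper geodesically complete CAT$(-1)$ spaces are Gromov product spaces, invokes Bourdon to get that the visual antipodal functions are genuine metrics (hence the family is equicontinuous), applies \Cref{backward}, and then uses \Cref{AIconv} to identify AI-convergence with ordinary Gromov--Hausdorff convergence.
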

	
	\medskip
	
	As another corollary of \Cref{backward} we have:
	
	\medskip
	
	\begin{coro}
		Let $\{(Z_n,\rho_n)\}_{n\ge 1}$ be an equicontinuous family of antipodal spaces, and let $(Z,\rho_0)$ be an antipodal space. 
		If $(\MM(Z_n,\rho_n),\rho_n) \to (\MM(Z,\rho_0),\rho_0)$ in the Gromov-Hausdorff sense, then $(Z_n,\rho_n) \to (Z,\rho_0)$ in the sense 
		of almost-isometric convergence.
	\end{coro}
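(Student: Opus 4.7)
The plan is to apply \Cref{backward} directly to the sequence of Moebius spaces $X_n := \MM(Z_n, \rho_n)$ with limit $X := \MM(Z, \rho_0)$. I would proceed in three steps.

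\smallskip

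\emph{Step 1: set up the canonical boundary identifications.} First I would recall from \cite{biswas2024quasi} that each Moebius space $\MM(Z_n, \rho_n)$ is a maximal Gromov product space (in particular a Gromov product space), and comes with a canonical Moebius homeomorphism $\partial_P \MM(Z_n) \to Z_n$. The datum ``$\rho_n$'' attached to $\MM(Z_n,\rho_n)$ in the statement encodes a basepoint $x_n \in \MM(Z_n)$ for which the Gromov product semi-metric $\rho_{x_n}$ on $\partial_P \MM(Z_n)$ corresponds, under this Moebius homeomorphism, to $\rho_n$ on $Z_n$. The analogous statement holds for $(X, x) := (\MM(Z), \MM(Z,\rho_0))$ with $\rho_x = \rho_0$. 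Under these identifications, AI-convergence $(\partial_P X_n, \rho_{x_n}) \to (\partial_P X, \rho_x)$ is literally AI-convergence $(Z_n, \rho_n) \to (Z, \rho_0)$.

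\smallskip

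\emph{Step 2: transfer the hypotheses of \Cref{backward}.} The hypothesis of the Corollary is precisely pointed Gromov--Hausdorff convergence $(X_n, x_n) \to (X, x)$, so the geometric hypothesis of \Cref{backward} holds. The equicontinuity hypothesis on $\{(Z_n, \rho_n)\}_{n \ge 1}$ transports, via the Moebius homeomorphisms of Step 1, to equicontinuity of the boundary family $\{(\partial_P X_n, \rho_{x_n})\}_{n \ge 1}$.

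\smallskip

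\emph{Step 3: invoke \Cref{backward}.} With both hypotheses in place, \Cref{backward} gives $(\partial_P X_n, \rho_{x_n}) \to (\partial_P X, \rho_x)$ in the AI sense, which by Step 1 is the desired AI-convergence $(Z_n, \rho_n) \to (Z, \rho_0)$.

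\smallskip

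The only genuinely non-trivial bookkeeping is Step 1, namely verifying that the basepoint encoded by $\rho_n$ in $\MM(Z_n, \rho_n)$ is exactly the one whose Gromov product semi-metric pulls back to $\rho_n$ on $Z_n$; this compatibility is built into the construction of $\MM(Z)$ as the space of Moebius antipodal functions in \cite{biswas2024quasi}, and amounts to unwinding definitions rather than new geometric input.
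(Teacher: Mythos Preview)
Your proposal is correct and matches the paper's approach: the corollary is presented as an immediate consequence of \Cref{backward}, using precisely the identification you describe in Step~1 (recorded in the paper after \Cref{structure theorem}, namely that $\partial_P \MM(Z)$ with visual antipodal function based at $\rho_0$ is identified with $(Z,\rho_0)$). The paper's proof of the closely related \Cref{CAT(-1) fillable} spells out the same reasoning in one sentence.
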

	
	\medskip
	
	The class of maximal Gromov product spaces with finite boundary was studied in \cite{biswas2024polyhedral} (in this case the spaces are Gromov hyperbolic), where it 
	was shown that such a space $X$ is isometric to an $l_{\infty}$-polyhedral complex embedded in $({\mathbb R}^n, || . ||_{\infty})$, where $n = \# \partial X$. 
	We show that the boundary functor $X \mapsto \partial X$ is continuous at such points (i.e. maximal Gromov product spaces with finite boundary): 
	
	\medskip
	
	\begin{theorem}\label{backward finite}
		Let $\{(X_n,x_n)\}_{n\ge 1}$ be a sequence of maximal Gromov product spaces, and let $X$ be a maximal Gromov product space with finite boundary. 
		If $(X_n,x_n) \to (X,x)$ in the Gromov-Hausdorff sense, then $(\partial_P X_n,\rho_{x_n}) \to (\partial_P X,\rho_x)$ in the sense of almost-isometric 
		convergence.
	\end{theorem}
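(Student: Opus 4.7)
The plan is to exploit the polyhedral structure of the limit $X$. Since $\partial_P X$ is finite with $k := \#\partial_P X$ points $\xi_1, \ldots, \xi_k$, by the result of \cite{biswas2024polyhedral}, $X$ embeds isometrically into $(\R^k, \|\cdot\|_\infty)$ as an $l_\infty$-polyhedral complex, and from the basepoint $x$ one finds exactly $k$ asymptotic directions realized by geodesic rays $\gamma_i \colon [0,\infty) \to X$ with $\gamma_i(0) = x$ and $\gamma_i(t) \to \xi_i$. The crucial geometric input from the finite-boundary structure is a \emph{trapping property}: for every $\delta > 0$ there exists $R(\delta) > 0$ such that any point $p \in X$ with $d(x,p) \ge R(\delta)$ lies within $\delta$ of some $\gamma_i$, and consequently any geodesic segment $[x,p]$ of length $\ge R(\delta) + T$ fellow-travels $\gamma_i$ up to time $T$ within error controlled by $\delta$. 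This is forced by the fact that $X$ has only $k$ unbounded asymptotic cells in its polyhedral decomposition.

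The next step is to pull back the rays $\gamma_i$ to $X_n$. Using GH $\epsilon_n$-approximations between $B_{T_n}(x_n) \subset X_n$ and $B_{T_n}(x) \subset X$ for $T_n \to \infty$ and $\epsilon_n \to 0$, transfer each segment $\gamma_i|_{[0, T_n]}$ to an almost-geodesic segment in $X_n$, which by properness and a diagonal argument extends along a subsequence to a geodesic ray $\tilde\gamma_i^n$ in $X_n$ defining a boundary point $\eta_i^n \in \partial_P X_n$. Since the Gromov products $(\tilde\gamma_i^n(t)\mid \tilde\gamma_j^n(t))_{x_n}$ are uniformly close (up to $\epsilon_n$) to $(\gamma_i(t)\mid\gamma_j(t))_x$, which stabilize as $t \to \infty$, one obtains $\rho_{x_n}(\eta_i^n, \eta_j^n) \to \rho_x(\xi_i, \xi_j)$ for every pair $i, j$.

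The main obstacle is to show that every $\zeta \in \partial_P X_n$ clusters near one of the $\eta_i^n$, uniformly: namely, $\sup_{\zeta \in \partial_P X_n} \min_i \rho_{x_n}(\zeta, \eta_i^n) \to 0$. Fix $\delta > 0$, pick $T$ with $e^{-T} < \delta$, set $R = R(\delta)$, and take $n$ large enough that the GH-approximation between $B_{R+T}(x_n)$ and $B_{R+T}(x)$ has error $\epsilon \ll \delta$. Given $\zeta \in \partial_P X_n$, pick $q \in X_n$ on a ray representing $\zeta$ with $d(x_n, q) = R + T$. The GH-approximation sends $q$ to a point $q' \in X$ at distance $\approx R+T$ from $x$. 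By the trapping property, $q'$ is within $\delta$ of some $\gamma_i$ throughout the time interval beyond $R$, so the segment $[x, q']$ fellow-travels $\gamma_i$ over the last $T$ units. Transferring this back and using that $\tilde\gamma_i^n$ itself approximates $\gamma_i$, we obtain $(\zeta \mid \eta_i^n)_{x_n} \ge T - o(1)$, giving $\rho_{x_n}(\zeta, \eta_i^n) < 2\delta$ for $n$ sufficiently large.

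Combining both ingredients, the map $\phi_n \colon \partial_P X_n \to \partial_P X$ sending $\zeta$ to the $\xi_i$ minimizing $\rho_{x_n}(\zeta, \eta_i^n)$, together with the section $\psi_n(\xi_i) = \eta_i^n$, furnishes the required almost-isometric correspondence between $(\partial_P X_n, \rho_{x_n})$ and $(\partial_P X, \rho_x)$. The crux of the argument — and what genuinely uses the finiteness of $\partial_P X$, bypassing the equicontinuity hypothesis of \Cref{backward} — is the trapping lemma for geodesics in the $l_\infty$-polyhedral complex $X$; establishing this lemma rigorously from the polyhedral embedding is where the care is needed, since $l_\infty$-geodesics are famously non-unique and one must argue via the coordinate projections that any long geodesic from $x$ eventually enters and stays in an unbounded asymptotic cell.
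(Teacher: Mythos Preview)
Your outline establishes two correct ingredients: the anchor points $\eta_i^n \in \partial_P X_n$ with $\rho_{x_n}(\eta_i^n,\eta_j^n) \to \rho_x(\xi_i,\xi_j)$, and the clustering $\min_i \rho_{x_n}(\zeta,\eta_i^n) \to 0$ for every $\zeta \in \partial_P X_n$. The gap is in the last step, where you assert that these two facts yield a small-distortion map $\phi_n$. They do not: the antipodal functions $\rho_{x_n}$ are only semi-metrics, so from $\rho_{x_n}(\zeta,\eta_i^n) < 2\delta$ and $\rho_{x_n}(\zeta',\eta_j^n) < 2\delta$ one \emph{cannot} conclude $|\rho_{x_n}(\zeta,\zeta') - \rho_{x_n}(\eta_i^n,\eta_j^n)|$ is small. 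That implication is exactly the equicontinuity hypothesis of \Cref{backward}, which this theorem is meant to avoid. Concretely, for $i \ne j$ your argument gives only one inequality: via monotonicity of the Gromov product along rays and the GH transfer you get $(\zeta|\zeta')_{x_n} \ge (q|q')_{x_n} \approx (\xi_i|\xi_j)_x$, hence $\rho_{x_n}(\zeta,\zeta') \le \rho_x(\xi_i,\xi_j) + o(1)$. But you have no mechanism for the reverse inequality: nothing prevents $(\zeta|\zeta')_{x_n}$ from being arbitrarily large even when $\zeta,\zeta'$ cluster to distinct anchors, because GH convergence controls only balls of finite radius while $(\zeta|\zeta')_{x_n}$ is a limit along rays going to infinity in $X_n$.

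The paper closes this gap by a genuinely different route that exploits the maximality of the $X_n$, not just of $X$. Since each $X_n$ is injective, it carries a convex geodesic bicombing; this is used to prove that for disjoint connected components $U,V$ of the sphere $S_{X_n}(x_n,R)$ one has $(\xi|\eta)_{x_n} \le R$ whenever $\xi \in \shad_{x_n}(U)$, $\eta \in \shad_{x_n}(V)$, and more precisely $|(\xi|\eta)_{x_n} - (u|v)_{x_n}| \le \operatorname{diam}(U) + \operatorname{diam}(V)$ for $u \in U$, $v \in V$ (\Cref{disjoint cone and shadow}, \Cref{gromov product close}). The GH convergence to the finite-boundary space $X$ forces $S_{X_n}(x_n,R)$ to consist of $k$ clusters of diameter $O(\epsilon_n)$, and the shadow decomposition then gives both inequalities at once. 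Your proposal never invokes the maximality of $X_n$; some such ingredient on the $X_n$ side is indispensable.
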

	
	\medskip
	
     As an immediate Corollary of the above Theorem and the previous results, we have the following:

\medskip

\begin{coro} \label{finitebdrys}

Let $\{(X_n, x_n)\}_{n\ge 1}$, $(X, x)$ be maximal Gromov hyperbolic spaces with finite boundaries. Then $(X_n,x_n) \to (X,x)$ in the Gromov-Hausdorff sense if 
and only if  $(\partial X_n,\rho_{x_n}) \to (\partial X,\rho_x)$ in the sense of almost-isometric convergence.
\end{coro}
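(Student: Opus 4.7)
The plan is to deduce the corollary by combining the two one-directional results already proved, namely \Cref{forward} for the implication from AI-convergence of boundaries to Gromov-Hausdorff convergence of the filling spaces, and \Cref{backward finite} for the reverse implication in the finite-boundary setting. Both directions should reduce to checking that the hypotheses of the relevant theorem are satisfied, once we use the identification of maximal Gromov hyperbolic spaces with their Moebius fillings as recalled in the introduction.

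For the forward implication, I would start from AI-convergence $(\partial X_n, \rho_{x_n}) \to (\partial X, \rho_x)$. Each $X_n$ and $X$ is a maximal Gromov hyperbolic space, hence in particular a maximal Gromov product space, and by the results of \cite{biswas2024quasi} recalled above each is canonically isometric to the Moebius space of its boundary, i.e.\ $X_n \cong \MM(\partial X_n)$ and $X \cong \MM(\partial X)$. The quasi-metric antipodal spaces $(\partial X_n, \rho_{x_n})$ are in particular antipodal (semi-metric) spaces, so \Cref{forward} applies and gives $(\MM(\partial X_n), \rho_{x_n}) \to (\MM(\partial X), \rho_x)$ in the Gromov-Hausdorff sense. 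Transporting the basepoints $x_n, x$ through the canonical isometries yields $(X_n, x_n) \to (X, x)$ in the Gromov-Hausdorff sense.

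For the backward implication, suppose $(X_n, x_n) \to (X, x)$ in the Gromov-Hausdorff sense. Since $X$ is a maximal Gromov hyperbolic space with finite boundary, in particular it is a maximal Gromov product space with finite boundary, and since the $X_n$ are maximal Gromov hyperbolic they are also maximal Gromov product spaces. This places us exactly in the setting of \Cref{backward finite}, which then yields the desired AI-convergence $(\partial X_n, \rho_{x_n}) \to (\partial X, \rho_x)$.

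I do not expect any real obstacle here, as the corollary is a direct juxtaposition of two results already established in the paper. The only small bookkeeping points are to confirm that the basepoints $x_n, x$ on the geometric side correspond to the basepoints implicit in the quasi-metrics $\rho_{x_n}, \rho_x$ on the boundary side, and that the canonical identifications $X_n \cong \MM(\partial X_n)$, $X \cong \MM(\partial X)$ intertwine these basepoints; both follow from the functoriality of the correspondence established in \cite{biswas2024quasi} and used throughout the paper.
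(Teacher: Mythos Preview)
Your proposal is correct and matches the paper's own treatment: the paper states this corollary as an immediate consequence of \Cref{backward finite} together with the earlier results (namely \Cref{forward}), without giving a separate proof. Your bookkeeping about the basepoint identifications via the visual embedding $i_X\colon X \to \MM(\partial_P X)$ is the right way to make the ``in particular'' clause of \Cref{forward} precise, and is exactly what is implicit in the paper's formulation.
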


\medskip
 
	Thus if one restricts oneself to the subcategories of maximal Gromov hyperbolic spaces with finite boundary and of finite antipodal spaces respectively, 
	then the boundary and filling functors are in a certain sense ``homeomorphisms" between these categories. Moreover, using \Cref{forward}, one can show that these 
	subcategories are ``dense" in the categories of maximal Gromov product spaces and of antipodal spaces respectively. In particular, any maximal Gromov product space 
	is a Gromov-Hausdorff limit of $l_{\infty}$-polyhedral complexes with finite boundary (see \Cref{PolyComplex subsection}):
	
\medskip

	\begin{theorem}\label{PolyComplex}
		Any maximal Gromov product space $X$ is a Gromov-Hausdorff limit of finite dimensional $l_{\infty}$-polyhedral complexes $X_n$, which are maximal Gromov product (hyperbolic) spaces with finite boundary. 

\medskip

     In particular, any injective, good Gromov hyperbolic space is a Gromov-Hausdorff limit of finite dimensional $l_{\infty}$-polyhedral complexes. 
	\end{theorem}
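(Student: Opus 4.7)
The plan is to approximate $X$ by approximating its boundary. Since $X$ is a maximal Gromov product space, by \cite{biswas2024quasi} it is canonically isometric to the Moebius space $\MM(Z)$ of its antipodal boundary $Z := \partial X$. I would construct a sequence of finite antipodal subspaces $Z_n \subset Z$ with $Z_n \to Z$ in almost-isometric convergence, and then invoke \Cref{forward} to deduce Gromov-Hausdorff convergence $\MM(Z_n) \to \MM(Z) = X$.

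\medskip

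To construct $Z_n$, I would first fix a finite $\tfrac{1}{n}$-net $W_n \subset Z$ with respect to $\rho$ (using compactness of $Z$). For each $z \in W_n$ the antipodal condition on $Z$ supplies a partner $z^\ast \in Z$ with $\rho(z, z^\ast) = \mathrm{diam}(Z)$, and I would set $Z_n := W_n \cup \{z^\ast : z \in W_n\}$. Since $\rho$ is symmetric, each added antipode $z^\ast$ already has a partner $z \in Z_n$, so $(Z_n, \rho|_{Z_n})$ is itself an antipodal space. The inclusion $Z_n \hookrightarrow Z$ is tautologically isometric and $Z_n$ contains a $\tfrac{1}{n}$-net for $Z$, which should give $Z_n \to Z$ in the almost-isometric sense.

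\medskip

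Once the boundary approximation is in place, \Cref{forward} gives $(\MM(Z_n), \rho|_{Z_n}) \to (X, \rho_x)$ in the Gromov-Hausdorff sense. Each $\MM(Z_n)$ has the finite boundary $Z_n$, hence is a maximal Gromov hyperbolic space with finite boundary, and by the structure theorem of \cite{biswas2024polyhedral} it is isometric to a finite-dimensional $l_\infty$-polyhedral complex embedded in $(\bbR^{\#Z_n}, \|\cdot\|_\infty)$. The parenthetical claim about injective good Gromov hyperbolic spaces then follows immediately from the equivalence between injectivity and maximality established in \cite{biswas2024quasi}.

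\medskip

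The principal obstacle is the first step: one must verify that $Z_n$ is genuinely antipodal in the precise sense of \Cref{preliminaries} (antipodal subspaces of antipodal spaces do not automatically inherit the property) and that the inclusion $Z_n \hookrightarrow Z$ realizes AI-convergence under the definition used there. Both amount to unwinding definitions -- the symmetry of $\rho$ handles the antipodal-closure step, while compactness of $Z$ yields the finite nets -- but these are the points where technical care is needed, since $\rho$ need not satisfy a triangle inequality and the notion of ``net'' for a semi-metric space must be compared with the chosen formulation of AI-convergence.
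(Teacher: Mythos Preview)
Your proposal is correct and essentially identical to the paper's argument: the paper also identifies $X$ with $\MM(\partial X)$, chooses finite $(1/n)$-nets $Z_n\subset Z$ closed under taking $\rho_0$-antipodes, observes that the inclusion is a $(1/n)$-isometry giving AI-convergence, and then applies \Cref{forward} together with the polyhedral structure theorem from \cite{biswas2024polyhedral}. Your explicit two-step construction of $Z_n$ (net $W_n$, then adjoin antipodes) is exactly how one realizes the paper's phrase ``a finite $(1/n)$-net such that every point in $Z_n$ has a $\rho_0$-antipode in it,'' and the obstacles you flag are indeed just definition-unwinding.
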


\medskip
	
	Finally, we remark that Shibahara \cite{shibahara2021gromov} has proven results on the relation 
	between the Gromov-Hausdorff convergence of proper Gromov hyperbolic spaces and that of their boundaries when equipped with certain metrics called uniform metrics. 
     However, the uniform metrics on the boundaries are in general different from the visual quasi-metrics (although they are bi-Lipschitz equivalent).
	
	\medskip
	
	\section{Preliminaries and definitions}\label{preliminaries}
	
	\medskip
	
	In this section, we shall recall some preliminaries and develop some tools and notions which will be useful in the paper.
	
	\subsection{Antipodal spaces and their associated Moebius spaces}\label{antipodal}\hfill\\
	
	\noindent The content presented in this subsection and the next one is based on definitions and results developed by Biswas. We direct the reader to consult \cite{biswas2024quasi} for a more comprehensive discussion.
	
	\begin{subdefinition}[{\bf Separating function} or {\bf Semi-metric}]\label{semi-metric}
		Let $Z$ be a compact metrizable space with at least four points.
		A function $\rho_0\colon  Z\times Z\to [0,\infty)$ is called separating (or a `semi-metric') if $\colon$ 
		\medskip
		
		(1) it is continuous
		\medskip
		
		(2) it is symmetric, i.e. $\rho(\xi,\eta)=\rho(\eta,\xi)$ for all $\xi,\eta\in Z$
		\medskip
		
		(3) it satisfies positivity, i.e. $\rho(\xi,\eta)=0$ if and only if $\xi=\eta$.
		\medskip
		
		\noindent For this discussion we shall call the pair $(Z,\rho_0)$ a compact `semi-metric' space, where $Z$ is compact metrizable space equipped with separating function $\rho_0$ (cf. \cite{wilson1931semi}).
	\end{subdefinition}
	\medskip
	
	\noindent The `cross-ratio' with respect to a separating function $\rho_0$  for a quadruple of distinct points
	$\xi,\xi',\eta,\eta' \in Z$ is defined to be
	\begin{equation*}
		[\xi,\xi',\eta,\eta']_{\rho_0} \coloneqq 
		\frac{\rho_0(\xi,\eta)\rho_0(\xi',\eta')}{\rho_0(\xi,\eta')\rho_0(\xi',\eta)}
	\end{equation*}
	
	\noindent Also two separating function $\rho_0,\rho_1$ on $Z$ is said to be {\it Moebius equivalent} if they have same cross-ratio
	\begin{equation*}
		[\xi, \xi', \eta, \eta']_{\rho_0} = [\xi, \xi', \eta, \eta']_{\rho_1}
	\end{equation*}
	for all distinct $\xi, \xi', \eta, \eta' \in Z$. A continuous map $f\colon(Z_1,\rho_1)\to(Z_2,\rho_2)$ between compact semi-metric spaces is called a {\it Moebius embedding} if it preserves cross-ratios for quadruples of distinct points. Along with this, if $f$ is surjective, then it is called {\it Moebius homeomorphism} and the two compact semi-metric spaces are {\it Moebius equivalent}.
	
	\begin{sublemma}[{\bf G}eometric {\bf M}ean-{\bf V}alue {\bf T}heorem, \cite{biswas2015moebius},\cite{biswas2024quasi} Lemma 2.2]\label{GMVT}
		Two separating functions $\rho_0, \rho_1$ are Moebius equivalent if and only if there exists a positive continuous function $\phi \colon  Z \to (0, \infty)$ such that
		\begin{equation*}
			\rho_1(\xi, \eta)^2 = \phi(\xi) \phi(\eta) \rho_0(\xi, \eta)^2
		\end{equation*}
		for all $\xi, \eta \in Z$.
		Such a function $\phi$ is unique. It is called derivative and is denoted by $\frac{d\rho_1}{d\rho_0}$. 
	\end{sublemma}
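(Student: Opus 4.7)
The \emph{if} direction is a direct substitution: if $\rho_1(\xi,\eta)^2 = \phi(\xi)\phi(\eta)\rho_0(\xi,\eta)^2$, then inserting this into the cross-ratio makes the factor $\sqrt{\phi(\xi)\phi(\xi')\phi(\eta)\phi(\eta')}$ appear identically in the numerator and the denominator of $[\xi,\xi',\eta,\eta']_{\rho_1}$, so it equals $[\xi,\xi',\eta,\eta']_{\rho_0}$.

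For the converse, my plan is to set $\psi(\xi,\eta) := \rho_1(\xi,\eta)/\rho_0(\xi,\eta)$ for $\xi \ne \eta$ and to recast equality of cross-ratios as the multiplicative identity
$$\psi(\xi,\eta)\,\psi(\xi',\eta') \;=\; \psi(\xi,\eta')\,\psi(\xi',\eta) \qquad \text{for all distinct quadruples.}$$
This says that $\psi$ is ``multiplicatively separable'' off the diagonal, which motivates the following construction. Fix three distinct auxiliary points $p,q,r \in Z$ and set
$$\phi(\xi) \;:=\; \frac{\psi(\xi,p)\,\psi(\xi,q)}{\psi(p,q)} \qquad \text{for } \xi \notin \{p,q\},$$
together with the analogous formulas using the pair $(q,r)$ (respectively $(p,r)$) to define $\phi$ at $\xi=p$ (respectively $\xi=q$).

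I would then verify, in order: (a) $\phi$ is well-defined, i.e.\ different choices of auxiliary pair give the same value; (b) $\phi(\xi)\phi(\eta) = \psi(\xi,\eta)^2$ for all distinct $\xi,\eta$, giving the desired factorization; (c) $\phi$ is continuous and positive on all of $Z$; and (d) $\phi$ is unique. All four reduce to systematic use of the $\psi$-identity above. For instance, (b) with $\xi,\eta \notin \{p,q\}$ follows from the instance $\psi(\xi,\eta)\psi(p,q) = \psi(\xi,q)\psi(p,\eta)$, after squaring and matching with the defining expressions for $\phi(\xi)$ and $\phi(\eta)$; collisions with $p$ or $q$ are handled by switching to an alternative auxiliary pair drawn from $\{p,q,r\}$. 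Continuity at any $\xi_0$ follows by choosing the auxiliary pair to avoid $\xi_0$, making $\phi$ locally a continuous ratio of $\rho_0$ and $\rho_1$; consistency across choices is already supplied by (a). Positivity is immediate from the positivity of $\rho_0,\rho_1$ off the diagonal. For (d), if $\phi,\phi'$ both satisfy the conclusion, fixing $\eta$ in $\phi(\xi)\phi(\eta) = \phi'(\xi)\phi'(\eta)$ shows that $\phi/\phi'$ is a positive constant on $Z\setminus\{\eta\}$, hence on $Z$ by continuity, and substituting back forces this constant to equal $1$. The main (rather modest) obstacle is the bookkeeping in step (b): tracking whether $\xi,\eta$ collide with the chosen auxiliary points and switching pairs accordingly. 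No tool beyond the $\psi$-identity itself is needed.
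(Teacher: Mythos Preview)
The paper does not supply its own proof of this lemma; it is stated without proof and attributed to \cite{biswas2015moebius} and \cite{biswas2024quasi}. Your proposal is correct and is essentially the standard argument: equality of cross-ratios is exactly the multiplicative separability of $\psi=\rho_1/\rho_0$ off the diagonal, and your formula $\phi(\xi)=\psi(\xi,p)\psi(\xi,q)/\psi(p,q)$ recovers the factor. The verification of (b) you sketch is right; note that for distinct $\xi,\eta,p,q$ one applies the $\psi$-identity twice (once to the pair $\psi(\xi,p)\psi(\eta,q)$ and once to $\psi(\xi,q)\psi(\eta,p)$), each time producing a factor $\psi(\xi,\eta)\psi(p,q)$, which after division by $\psi(p,q)^2$ gives $\psi(\xi,\eta)^2$.

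One minor point on (d): your appeal to continuity to pass from $Z\setminus\{\eta\}$ to $Z$ is not quite the correct justification, since $\eta$ could be an isolated point of $Z$. Instead, observe that the constant $c_\eta$ obtained on $Z\setminus\{\eta\}$ must agree with $c_{\eta'}$ on the (nonempty, since $|Z|\ge 4$) overlap $Z\setminus\{\eta,\eta'\}$, and the two sets together cover $Z$; this gives $\phi/\phi'\equiv c$ on all of $Z$ without any continuity argument, and then $c^2=1$ with $c>0$ forces $c=1$.
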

	
	\medskip
	
	\noindent As a consequence of the GMVT, one can define the following metric space as in \cite{biswas2024quasi}.
	
	\begin{subdefinition}[{\bf Unrestricted Moebius space of a separating function, \cite{biswas2024quasi}}]
		Let $\rho_0$ be a separating function on a compact metrizable space $Z$ with at least four points. The Unrestricted Moebius space $\mathcal{UM}(Z)$ of the separating function is the metric space
		
		\begin{equation*}
			\mathcal{UM}(Z,\rho_0) \coloneqq  \{ \rho \ | \ \rho \ \text{is a separating function Moebius equivalent to} \ \rho_0 \},
		\end{equation*}
		equipped with the metric
		\begin{equation*}
			d_{\MM(Z)}( \rho_1, \rho_2 ) \coloneqq  \left|\left| \log \frac{d\rho_2}{d\rho_1} \right|\right|_{\infty},
		\end{equation*}
		for $\rho_1, \rho_2 \in \mathcal{UM}(Z,\rho_0)$.
		
		\medskip	
	\end{subdefinition}
	
	We also have the map
	\label{embedding}
	\begin{align*}
		i_{\rho_0} \colon  (\mathcal{UM}(Z,\rho_0), d_{\MM})& \to ( C(Z),\|\cdot\|_{\infty} ) \\
		\rho & \mapsto \log \frac{d\rho}{d\rho_0}
	\end{align*}
	which is a  surjective isometry with the inverse given by the map \label{isometry}
	\begin{align*}
		E_{\rho_0}\colon ( C(Z), \|\cdot\|_{\infty} ) & \to  (\mathcal{UM}(Z,\rho_0), d_{\MM(Z)})\\
		\tau & \mapsto E_{\rho_0}(\tau)
	\end{align*}
	where for $\tau \in C(Z)$, the separating function $\rho = E_{\rho_0}(\tau) \in \mathcal{UM}(Z)$ is defined by
	\begin{equation*}
		\rho(\xi, \eta)^2 \coloneqq  e^{\tau(\xi)} e^{\tau(\eta)} \rho_0(\xi, \eta)^2
	\end{equation*}
	for all $\xi, \eta  \in Z$. Then $\log \frac{d\rho}{d\rho_0} = \tau$.
	\medskip
	
	\noindent Now we define $\colon$ 
	
	\begin{subdefinition}[{\bf Antipodal function}, \cite{biswas2024quasi}]\label{antipodal function}
		Let $Z$ be a compact metrizable space. An antipodal function on $Z$
		is a separating function $\rho_0$ which satisfies the following two properties$\colon$ \medskip
		
		(1) $\rho_0$ has diameter one, i.e. $\sup_{\xi, \eta \in Z} \rho_0(\xi, \eta) = 1$.\medskip
		
		(2) $\rho_0$ is antipodal, i.e. for all $\xi \in Z$ there exists $\eta \in Z$ such that $\rho_0(\xi, \eta) = 1$.\medskip
		
		\noindent An antipodal space is a compact semi-metric space $(Z,\rho_0)$ such that $\rho_0$ is an antipodal function. 
		
	\end{subdefinition}

	\noindent A {\it quasi-metric antipodal space} is an antipodal space where the antipodal function is a quasi-metric.
	\medskip
	
	\noindent Now we define the space of our interest$\colon$ 
	
	\begin{subdefinition}[{\bf Moebius space of antipodal function, \cite{biswas2024quasi}}]
		Let $(Z,\rho_0)$ be an antipodal space. The Moebius space $\MM(Z,\rho_0)$ is defined to be the metric space
		\begin{equation*}
			\MM(Z,\rho_0) \coloneqq  \{ \rho \in \mathcal{UM}(Z,\rho_0) \ | \ \rho \ \text{is an antipodal function} \ \}
		\end{equation*}
		(equipped with the metric $d_{\MM}$ as defined above).
	\end{subdefinition}
	
	\noindent Observe that $\mathcal{UM}(Z,\rho_0)$ is not necessarily proper. But we have
	$\MM(Z,\rho_0)$ is always proper (i.e. closed bounded balls are compact) and a closed subset of $\mathcal{UM}(Z,\rho_0)$(see  \cite[Lemma 2.7]{biswas2024quasi}).
	
	We recall the following properties of $\MM(Z,\rho_0)$ from  \cite{biswas2024quasi}.
	
	\begin{subprop}[Biswas, \cite{biswas2024quasi}]
		The space $\MM(Z,\rho_0)$ is 
		\medskip
		
		\begin{enumerate}
			\item unbounded
			\medskip
			\item contractible 
			\medskip
			\item geodesic, i.e. any two points can be joined by a geodesic
			\medskip
			\item geodesically complete, i.e. any geodesic segment can be extended to a bi-infinite geodesic.
		\end{enumerate}
	\end{subprop}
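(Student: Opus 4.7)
The plan is to route all four properties through the isometry $i_{\rho_0} \colon (\mathcal{UM}(Z,\rho_0), d_\MM) \to (C(Z), \|\cdot\|_\infty)$ defined by $\rho \mapsto \log(d\rho/d\rho_0)$. Under this identification, $\MM(Z,\rho_0)$ corresponds to the subset $A \subset C(Z)$ of continuous functions $\tau$ for which $E_{\rho_0}(\tau)$ is antipodal, i.e., satisfying the pointwise tightness condition
\[
\max_{\eta \in Z}\Bigl(\tfrac{1}{2}\bigl(\tau(\xi)+\tau(\eta)\bigr) + \log \rho_0(\xi,\eta)\Bigr) = 0 \qquad \text{for every } \xi \in Z.
\]
Everything then reduces to studying $A$ as a subset of $C(Z)$.

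For \emph{unboundedness}, I would fix $\xi_0 \in Z$ and construct a family $\{\rho_t\}_{t>0}$ of antipodal functions by multiplicatively rescaling $\rho_0$ using the profile $\phi_t(\xi) := 1/\max(\rho_0(\xi_0,\xi), e^{-t})$, and then normalizing by a constant to restore diameter one; this imitates the visual metric viewed from a point at depth $t$ along a geodesic ray heading toward $\xi_0$. Direct computation with the tightness condition above should verify antipodality and yield $\|\log(d\rho_t/d\rho_0)\|_\infty \to \infty$ as $t\to\infty$. For the \emph{geodesic} property, given $\rho_1, \rho_2 \in \MM$ with log-derivatives $\tau_i$, the straight-line path $\tau_t := (1-t)\tau_1 + t\tau_2$ in $C(Z)$ generally fails to land in $A$; the plan is to correct it via a multiplicative interpolation of the form
\[
\rho_t(\xi, \eta)^2 = b_t(\xi)\, b_t(\eta)\, \rho_1(\xi,\eta)^{2(1-t)} \rho_2(\xi,\eta)^{2t},
\]
where $b_t \in C(Z)$ is chosen by a fixed-point argument to reinstate tightness, with $b_0 = b_1 = 1$. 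Verifying $d_\MM(\rho_s,\rho_t) = |s-t|\,d_\MM(\rho_1,\rho_2)$ then amounts to the upper bound $\|\log(d\rho_t/d\rho_s)\|_\infty \le |t-s|\,\|\tau_1-\tau_2\|_\infty$, with the reverse inequality supplied by the triangle inequality.

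With geodesics in hand, \emph{contractibility} follows by fixing a basepoint $\rho_\ast \in \MM$ and retracting along the canonical geodesics to $\rho_\ast$ constructed above; continuity of the retraction in the other endpoint is inherited from the stability of the fixed-point procedure used to define $b_t$. \emph{Geodesic completeness} follows from geodesicity combined with unboundedness: given a geodesic segment terminating at $\rho$, the ``direction of travel'' corresponds to a function $\dot\tau \in C(Z)$, and a short extension beyond $\rho$ can be produced by a local version of the unboundedness fixed-point construction run in that direction, then concatenated with the original segment. The hardest step throughout will be the construction in the geodesic paragraph: the global antipodal condition is not preserved under naive convex combinations of log-derivatives, and correcting a naive interpolation while still realizing the correct $L^\infty$ distance between endpoints requires genuinely balancing the tightness condition at every point $\xi \in Z$ simultaneously.
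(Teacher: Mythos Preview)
The paper does not prove this proposition here---it is recalled from \cite{biswas2024quasi}---but the key mechanism is described in \S2.5 of the present paper: the \emph{antipodalization map} $\PP_\infty$, obtained by flowing along the ODE $\dot\tau_t = -D_\rho(\tau_t)$ until the discrepancy vanishes. That flow is what replaces your unspecified ``fixed-point argument'' for the correction $b_t$, and it is doing real work you have not accounted for.

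Concretely, two gaps. First, in your unboundedness construction, normalizing $\phi_t$-rescaled $\rho_0$ by a constant can restore diameter one but does \emph{not} restore antipodality: the condition that every $\xi$ attains $\sup_\eta \rho(\xi,\eta)=1$ is a pointwise tightness constraint, not a global scale, and your profile $\phi_t(\xi)=1/\max(\rho_0(\xi_0,\xi),e^{-t})$ has no reason to satisfy it after a single multiplicative fix. Second, and more seriously, for the geodesic property you assert the upper bound $\|\log(d\rho_t/d\rho_s)\|_\infty \le |t-s|\,\|\tau_1-\tau_2\|_\infty$, but once you introduce the correction $b_t$ this bound requires $\|\log b_t - \log b_s\|_\infty$ to be controlled in a way your fixed-point sketch does not supply. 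In the actual argument (see Lemma~\ref{convexcombi} and the surrounding discussion of $\PP_\infty$), the antipodal flow comes with the exponential estimate \eqref{expconv} and a monotonicity property of discrepancy along the flow that together force $d_{\MM}(\PP_\infty(t\tau_\rho,\rho_0),\rho_0)=t\,d_\MM(\rho,\rho_0)$ exactly; this is what makes the interpolation a genuine geodesic rather than merely a continuous path in $\MM(Z)$. Your outline for contractibility and geodesic completeness then inherits these gaps, since both lean on the geodesic construction and on ``stability of the fixed-point procedure'' that has not been established.
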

	
	\noindent {\bf Notation$\colon$ } 
	\begin{enumerate}[{\it (I)}]
		\item For brevity of notation sometimes we shall write $\mathcal M(Z)$ instead of $\MM(Z,\rho_0)$. Whenever we write $\mathcal M(Z)$, it should be understood that there exists a fixed antipodal function $\rho_0$ for which $\mathcal M(Z)\coloneqq \mathcal M(Z,\rho_0)$. Also, because of the identification above, we shall often treat elements of $\MM(Z)$, that is, antipodal functions $\rho$, as a continuous function $\tau_{\rho}=\log\frac{d\rho}{d\rho_0}$ on $Z$.
		
		\item For a metric space $X$, unless otherwise mentioned, the set
		\begin{equation*}
			B_{X}(o,R)\coloneqq \{\ x\in X \ | \ d_{X}(o,x)\le R \ \}\subset X
		\end{equation*} denotes closed ball of radius $R>0$ centered at $o$ in $X$. For example,
		$B_{\MM(Z)}(\rho,R)\subset \MM(Z)$ denotes a closed ball of radius $R>0$ around $\rho$ in $\MM(Z)$. 
	\end{enumerate}
	
	\medskip
	
	\subsection{Gromov product and Moebius spaces}\label{Gromov ip}\hfill\\
	
	\noindent Consider a metric space $(X,d)$, for any point $x\in X$ we define the Gromov product with respect to $x$,
	\begin{equation*}
		(y|z)_x\coloneqq \frac{1}{2}\big(d(y,x)+d(z,x)-d(y,z)\big).
	\end{equation*}
	The following \Cref{gromov ip upper bound} relates the Gromov product in the Moebius space $\MM(Z,\rho_0)$, with respect to  $\rho_0$, with the antipodal function $\rho_0$, which will be helpful later. Also see \cite[Theorem 6.3]{biswas2024quasi}.
	
	\begin{sublemma}[Biswas, \cite{biswas2024quasi} Lemma 6.2]\label{gromov ip upper bound}
		Let $\alpha,\beta\in \MM(Z,\rho_0)$, suppose $\xi\in \argmx \frac{d\alpha}{d\rho_0}$ and $\eta\in \argmx\frac{d\beta}{d\rho_0}$ then we have 
		\begin{equation*}
			(\alpha|\beta)_{\rho_0}\le \log\frac{1}{\rho_0(\xi,\eta)}
		\end{equation*}
		(with the convention that the right-hand side in the inequality is $+\infty$ if $\xi=\eta$).
	\end{sublemma}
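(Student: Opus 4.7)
The plan is to work inside $C(Z)$ via the isometric identification $\rho \leftrightarrow \tau_\rho := \log(d\rho/d\rho_0)$ established on page~\pageref{embedding}. Under this identification $\rho_0$ corresponds to the zero function, and for any $\alpha,\beta \in \MM(Z,\rho_0)$ we have $d_{\MM}(\alpha,\beta) = \|\tau_\alpha - \tau_\beta\|_\infty$ and $d_{\MM}(\alpha,\rho_0) = \|\tau_\alpha\|_\infty$. The first key observation I would establish is that the antipodal hypothesis forces $\|\tau_\alpha\|_\infty = \max \tau_\alpha$. Indeed, for each $p$ the antipodal property provides $q$ with $\alpha(p,q) = 1$; combining $\alpha(p,q)^2 = e^{\tau_\alpha(p)+\tau_\alpha(q)}\rho_0(p,q)^2$ with $\rho_0(p,q) \le 1$ yields $\tau_\alpha(p) + \tau_\alpha(q) \ge 0$, hence $\tau_\alpha(p) \ge -\max \tau_\alpha$. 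Thus $\min \tau_\alpha \ge -\max \tau_\alpha$, giving $\|\tau_\alpha\|_\infty = \max \tau_\alpha = \tau_\alpha(\xi) =: M_\alpha$, and similarly $\|\tau_\beta\|_\infty = \tau_\beta(\eta) =: M_\beta$.

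Next, I would lower-bound $d_{\MM}(\alpha,\beta)$ by evaluating the sup norm at the two specific points $\xi$ and $\eta$:
\[
d_{\MM}(\alpha,\beta) \;\ge\; \max\bigl(\tau_\alpha(\xi)-\tau_\beta(\xi),\, \tau_\beta(\eta)-\tau_\alpha(\eta)\bigr) \;\ge\; \tfrac{1}{2}\bigl(M_\alpha + M_\beta - \tau_\beta(\xi) - \tau_\alpha(\eta)\bigr).
\]
To control the remaining terms $\tau_\beta(\xi) + \tau_\alpha(\eta)$, I would invoke the diameter-1 condition at the single pair $(\xi,\eta)$: from $\alpha(\xi,\eta) \le 1$ one gets $M_\alpha + \tau_\alpha(\eta) \le 2\log(1/\rho_0(\xi,\eta))$, and from $\beta(\xi,\eta)\le 1$ one gets $M_\beta + \tau_\beta(\xi) \le 2\log(1/\rho_0(\xi,\eta))$. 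Adding these yields $\tau_\alpha(\eta) + \tau_\beta(\xi) \le 4\log(1/\rho_0(\xi,\eta)) - M_\alpha - M_\beta$.

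Plugging this back into the lower bound on $d_{\MM}(\alpha,\beta)$ gives $d_{\MM}(\alpha,\beta) \ge M_\alpha + M_\beta - 2\log(1/\rho_0(\xi,\eta))$, and then
\[
(\alpha|\beta)_{\rho_0} = \tfrac{1}{2}\bigl(M_\alpha + M_\beta - d_{\MM}(\alpha,\beta)\bigr) \;\le\; \log\tfrac{1}{\rho_0(\xi,\eta)},
\]
as desired; the case $\xi = \eta$ is handled by the stated convention that the right-hand side is $+\infty$. I expect the main conceptual obstacle to be the first step: recognizing that the two antipodal axioms (diameter one plus attainment at every point) together pin down $\|\tau_\alpha\|_\infty$ to the one-sided maximum $\max \tau_\alpha$. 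Everything afterwards is a clean algebraic combination of the GMVT formula $\alpha^2 = e^{\tau_\alpha(p)+\tau_\alpha(q)}\rho_0^2$ and the sup-norm definition of $d_{\MM}$.
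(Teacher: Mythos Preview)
Your argument is correct. Note, however, that the paper does not actually supply a proof of this sublemma: it is quoted as Lemma~6.2 of \cite{biswas2024quasi} and stated without proof, so there is no ``paper's own proof'' to compare against here.

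That said, your approach is the natural one and almost certainly close to the original. The only place worth a small remark is step~3: when you write
\[
d_{\MM}(\alpha,\beta)\;\ge\;\max\bigl(\tau_\alpha(\xi)-\tau_\beta(\xi),\ \tau_\beta(\eta)-\tau_\alpha(\eta)\bigr),
\]
you are using $\|\tau_\alpha-\tau_\beta\|_\infty \ge \tau_\alpha(p)-\tau_\beta(p)$ without absolute value, which is of course fine (and is all you need for the averaging $\max(a,b)\ge\tfrac12(a+b)$ to go through). Everything else---the identification $\|\tau_\alpha\|_\infty=\max\tau_\alpha$ from antipodality, the two diameter-one inequalities at the pair $(\xi,\eta)$, and the final algebra---is clean and correct.
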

	\medskip
	
	Now we recall the notion of {\it Gromov product space} introduced in \cite[Section 5]{biswas2024quasi}.
	
	\begin{subdefinition}[{\bf Gromov product space, \cite{biswas2024quasi}}]
		Let $X$ be a proper, geodesic, and geodesically complete metric space. We say that $X$ is a Gromov product space if there exists a second countable Hausdorff compactification of $\hat{X}$ of $X$ such that for any $x\in X$, the Gromov product map $(\cdot|\cdot)_x\colon  X\times X\to [0,+\infty)$ extends to a continuous function $(\cdot|\cdot)_x\colon \hat{X}\times \hat{X}\to [0,+\infty]$ with the property that for $\xi,\eta\in \hat{X}\setminus X$, we have $(\xi|\eta)_x=+\infty$ if and only if $\xi=\eta$
		
		We call $\hat{X}$ the Gromov product compactification of $X$,
		and $\partial_P X\coloneqq \hat{X}\setminus X$ is defined to be the Gromov product boundary of $X$.
	\end{subdefinition} 
	For a Gromov product space, the Gromov compactification is unique up to equivalence (see \cite{biswas2024quasi} Proposition 5.2).
	
	Examples of Gromov product spaces include {\it``good"} (i.e. proper, geodesic, geodesically complete, boundary continuous) Gromov hyperbolic spaces say $X$. The visual compactification of $X$ is the Gromov product compactification, then the Gromov boundary $\partial X$ is the Gromov product boundary. In particular, proper geodesically complete CAT$(-1)$ spaces are Gromov product spaces. Another class of examples for Gromov product spaces, which are not necessarily Gromov hyperbolic spaces, arise in Hilbert geometry. Consider strictly convex bounded domains $\Omega\subseteq \R^n$, having $C^1$-smooth boundary, equipped with the Hilbert metric $d_{\Omega}$ (see \cite{papadopoulos2014handbook} for exposition). Then $(\Omega,d_{\Omega})$ are Gromov product spaces (see \cite{benoist2006convex}), and the Gromov product boundary is  $\partial \Omega$, the boundary of the domain in $\R^n$. 
	
	Given a Gromov product space $X$, the Gromov product boundary $\partial_P X$, equipped with the visual antipodal function defined by $\rho_x\coloneqq \exp(-(\cdot|\cdot)_x)\colon \partial_P X\times \partial_P X\to [0,1]$ for any $x\in X$, is an antipodal space. Any two visual antipodal functions are Moebius equivalent. Biswas showed that, if we consider the Moebius space associated with $(\partial_P X,\rho_x)$, that is $\MM(\partial_P X)$, as above, then we have a natural isometric embedding $i_X\colon X\to\MM(\partial_P X),\ a\mapsto \rho_a$ called {\it visual embedding} (see \cite{biswas2024quasi}, cf. \cite{biswas2015moebius}). Moreover, the image of $X$ under the visual embedding is $7\delta$-dense in $\MM(\partial X)$, when $X$ is a good Gromov ($\delta$)-hyperbolic space (see \cite[Theorem 1.4]{biswas2024quasi}).   
	
	\begin{sublemma}[Biswas, \cite{biswas2024quasi}, Lemma 5.7] \label{Moebius homeomorphism}
		Let $X$ and $Y$ be Gromov product spaces. Fix some $x\in X$ and $F\colon X\to Y$ be an isometric embedding with $F(x)=y$. Then $F$ extends to a Moebius embedding of antipodal spaces, $f\colon (\partial_P X,\rho_x)\to (\partial_P Y,\rho_y)$. If $F$ is an isometry, then the Moebius map $f$ is a homeomorphism. 
	\end{sublemma}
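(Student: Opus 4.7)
The plan is to exploit the compatibility of Gromov products with isometric embeddings. Since $F(x) = y$ and $F$ is distance-preserving, direct computation gives the key identity
\[ (F(a)|F(b))_y \;=\; (a|b)_x \quad \text{for all } a, b \in X. \]
For $\xi \in \partial_P X$, I would pick any $a_n \in X$ with $a_n \to \xi$ in $\hat{X}$. By continuity of the extended Gromov product, $(a_n|a_m)_x \to +\infty$, so the identity yields $(F(a_n)|F(a_m))_y \to +\infty$. Moreover $d(F(a_n), y) = d(a_n, x) \to +\infty$ and $Y$ is proper, so $(F(a_n))$ leaves every compact subset of $Y$; by compactness of $\hat{Y}$, every subsequence of $(F(a_n))$ has a further subsequence converging to a point of $\partial_P Y$. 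If $\eta, \eta' \in \partial_P Y$ are two such subsequential limits (possibly coming from two different sequences $a_n, b_n \to \xi$), then continuity of the Gromov product together with the identity forces $(\eta|\eta')_y = +\infty$, which by the defining property of the Gromov product compactification of $Y$ implies $\eta = \eta'$. Hence $f(\xi) := \lim F(a_n) \in \partial_P Y$ is well-defined, and we set $\hat{F} : \hat{X} \to \hat{Y}$ with $\hat{F}|_X = F$ and $\hat{F}|_{\partial_P X} = f$.

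Next I would establish continuity and the Moebius property. Passing to limits in the basic identity, first in one argument and then the other, extends it to
\[ (\hat{F}(\xi)|\hat{F}(\eta))_y \;=\; (\xi|\eta)_x \quad \text{for all } \xi, \eta \in \hat{X}. \]
Continuity of $\hat{F}$ at a boundary point $\xi$ then follows: if $\xi_n \to \xi$ in $\hat{X}$, any subsequential limit $\zeta$ of $(\hat{F}(\xi_n))$ in $\hat{Y}$ satisfies $(\zeta|\hat{F}(\xi))_y = (\xi|\xi)_x = +\infty$, which, since $\hat{F}(\xi) \in \partial_P Y$, forces $\zeta = \hat{F}(\xi)$. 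Restricting the identity to $\partial_P X \times \partial_P X$ gives $\rho_y(f(\xi), f(\eta)) = e^{-(\xi|\eta)_x} = \rho_x(\xi,\eta)$, so $f$ in fact isometrically identifies the two visual antipodal functions; in particular it preserves cross-ratios and is a Moebius embedding.

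Finally, when $F$ is a surjective isometry, applying the same construction to $F^{-1}$ (based at $y$) produces a Moebius embedding $g : \partial_P Y \to \partial_P X$, and the uniqueness built into the limit definitions gives $g \circ f = \mathrm{id}_{\partial_P X}$ and $f \circ g = \mathrm{id}_{\partial_P Y}$; since both are continuous, $f$ is a homeomorphism. I expect the main obstacle to be the well-definedness and boundary continuity of $\hat{F}$: both rely on careful use of the defining property that boundary points of a Gromov product compactification are precisely those pairs whose Gromov product equals $+\infty$, together with the continuity of the extended Gromov product, which is what forces distinct subsequential limits to collapse and produces a bona fide continuous extension.
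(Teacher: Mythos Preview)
Your argument is correct and is the standard one: preserve the Gromov product under the isometric embedding, pass to limits using compactness of $\hat{Y}$ and the defining property of the Gromov product compactification to get a well-defined boundary extension, then read off $\rho_y(f(\xi),f(\eta))=\rho_x(\xi,\eta)$, which is strictly stronger than ``Moebius'' and certainly implies it. Note that the paper does not actually give a proof of this sublemma; it is quoted from \cite{biswas2024quasi}, Lemma~5.7, so there is no in-paper proof to compare against, but your write-up matches the expected argument.
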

	
	Consider an antipodal space $(Z,\rho)$. A {\it filling} of $(Z,\rho)$ is a pair $(X,f)$, where $X$ is a Gromov product space equipped with a Moebius homeomorphism $f\colon \partial_P X \to Z$ \footnote{Here $f$ preserves the canonical cross-ratios given by the visual metrics.}. If $X$ is a CAT$(-1)$ space then we say $(Z,\rho)$ is `CAT$(-1)$ {\it  fillable}'. We say two fillings $(X,f)$ and $(Y,g)$ are equivalent if the Moebius homeomorphism $g^{-1}\circ f\colon \partial_P X\to \partial_P Y$ extends to an isometry $H\colon X\to Y$ and we write $(X,f)\simeq(Y,g)$. We say $(X,f)\leq (Y,g)$ if the Moebius homeomorphism $g^{-1}\circ f$ extends to an isometric embedding. This relation defines a partial ordering on the equivalence classes of fillings of $(Z,\rho)$ (see \cite[Proposition 5.13]{biswas2024quasi}). 
	\begin{subrmk}\label{induced isometry}
		Let $(Z_1,\rho_1)$ and $(Z_2,\rho_2)$ be two antipodal spaces and let $f\colon (Z_2,\rho_2)\to (Z_1,\rho_1)$ be a Moebius homeomorphism between them. Then this induces an isometry by push-forward $f_*\colon \MM(Z_2)\to \MM(Z_1)$ defined by
		\begin{equation*}
			f_*(\rho)(\xi,\eta)\coloneqq \rho\ (f^{-1}(\xi),f^{-1}(\eta)) \quad \hbox{ for }\ \xi,\eta\in Z_1
		\end{equation*}
		for $\rho\in \MM(Z_2)$ (see \cite[Section 6]{biswas2024quasi}). 
		
		
		In particular, if $(X,\varphi)$ is a filling of the antipodal space $(Z_1,\rho_1)$, then, the Moebius homeomorphism $\varphi\colon \partial X\to Z_1$ induces an isometry $\varphi_*\colon \MM(\partial X)\to \MM(Z_1)$, with inverse given by the pull-back $\varphi_*\colon \MM(Z_1)\to \MM(\partial X)$, 
		where 
		$\varphi^*=(\varphi^{-1})_*$.
	\end{subrmk}
	
	\begin{subdefinition}[{\bf Maximal Gromov product space, \cite{biswas2024quasi}}]
		Let $X$ be a Gromov product space, and let $Z=\partial_P X$ be its Gromov product boundary equipped with a visual antipodal function $\rho_x$ for some $x\in X$. We say $X$ is a maximal Gromov product space if for any filling $(Y,g)$ of $(Z,\rho_x)$ we have $(Y,g)\leq(X,id_{Z})$.
	\end{subdefinition} 
	
	For an antipodal space $(Z,\rho_0)$, the Moebius space $\MM(Z)$ is a maximal Gromov product space and the Gromov product boundary $\partial_P \MM(Z)$ equipped with the visual antipodal function based at $\rho_0$ is identified with $(Z,\rho_0)$ (see \cite[ Theorem 6.3]{biswas2024quasi}). The Moebius space $\MM(Z)$ is the unique (up to isometry) maximal element among the fillings of $(Z,\rho_0)$. \begin{subprop}[Biswas, \cite{biswas2024quasi}, Theorem 1.7]\label{structure theorem}
		Let $X$ be a Gromov product space. $X$ is maximal if and only if $X$ is isometric to $\MM(\partial_P X)$, via the visual embedding $i_X\colon  X\to \MM(\partial_P X),(x\mapsto \rho_x)$.
	\end{subprop}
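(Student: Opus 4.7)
The statement decomposes into two implications, and both reduce to facts about the visual embedding $i_X\colon X\to \MM(\partial_P X)$, which is already known to be an isometric embedding. The real content of the theorem is therefore surjectivity of $i_X$. For the direction $(\Leftarrow)$, suppose $i_X$ is an isometry. Let $\psi\colon \partial_P \MM(\partial_P X)\to \partial_P X$ denote the canonical identification of the Moebius space's boundary with $(\partial_P X,\rho_x)$. Given any filling $(Y,g)$ of $(\partial_P X,\rho_x)$, the pair $(Y,\psi^{-1}\circ g)$ is a filling of the boundary of $\MM(\partial_P X)$, so by the maximality of $\MM(\partial_P X)$ the map $\psi^{-1}\circ g$ extends to an isometric embedding $Y\to \MM(\partial_P X)$. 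Composing with $i_X^{-1}$ yields an isometric embedding $Y\to X$ inducing $g$ on the boundary, which shows $(Y,g)\le (X,\mathrm{id})$; hence $X$ is maximal.

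For the harder direction $(\Rightarrow)$, assume $X$ is maximal. The pair $(\MM(\partial_P X),\psi)$ is itself a filling of $(\partial_P X,\rho_x)$, so by maximality of $X$ there exists an isometric embedding $G\colon \MM(\partial_P X)\to X$ extending $\psi$. I would then show that $G$ is a two-sided inverse to $i_X$. The composition $G\circ i_X\colon X\to X$ is an isometric self-embedding whose induced boundary map is $\psi\circ \psi^{-1}=\mathrm{id}_{\partial_P X}$. For any $x\in X$, set $y:=G(i_X(x))$; since an isometric embedding preserves Gromov products and extends to the identity on the boundary,
\begin{equation*}
(\xi|\eta)_y = (\xi|\eta)_x \quad \text{for all } \xi,\eta\in \partial_P X.
\end{equation*}
This says $\rho_y=\rho_x$, i.e.\ $i_X(y)=i_X(x)$. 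Injectivity of $i_X$ forces $y=x$, so $G\circ i_X=\mathrm{id}_X$. An analogous computation with the roles of $G$ and $i_X$ reversed gives $i_X\circ G=\mathrm{id}_{\MM(\partial_P X)}$; hence $i_X$ is a surjective isometry.

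The step requiring the most care is the bookkeeping of boundary maps: one must verify that $i_X$ induces precisely $\psi^{-1}$ on boundaries and, conversely, that the extension $G$ supplied by maximality induces $\psi$, so that the compositions $G\circ i_X$ and $i_X\circ G$ restrict to the identity on the relevant boundary. This follows by unraveling the defining formula $i_X(x)=\rho_x$, together with \Cref{Moebius homeomorphism}, which guarantees that an isometric embedding between Gromov product spaces induces a well-defined Moebius homeomorphism between their boundaries. Once these identifications are pinned down, the remainder of the argument is the short Gromov-product calculation displayed above.
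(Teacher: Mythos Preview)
The paper does not supply its own proof of this proposition: it is quoted verbatim as Theorem~1.7 of \cite{biswas2024quasi} and used as a black box. So there is no ``paper's proof'' to compare against here.

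That said, your argument is sound and is essentially the natural one. Both directions correctly reduce to the surjectivity question for the visual embedding, and your use of maximality in each direction is the right move: for $(\Leftarrow)$ you transport the known maximality of $\MM(\partial_P X)$ through the isometry, and for $(\Rightarrow)$ you exploit maximality of $X$ to produce the candidate inverse $G$. The Gromov-product computation showing $G\circ i_X=\mathrm{id}_X$ is clean and correct: an isometric self-embedding inducing the identity on the boundary must fix visual antipodal functions, and injectivity of $i_X$ finishes it.

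Two small remarks. First, the ``analogous computation'' for $i_X\circ G=\mathrm{id}_{\MM(\partial_P X)}$ genuinely is analogous, but it relies on knowing that the visual embedding of $\MM(\partial_P X)$ into the Moebius space of \emph{its} boundary is injective; under the canonical identification $\psi$ this visual embedding is the identity, so the step goes through, but it is worth saying so explicitly rather than waving at symmetry. Second, you are right that the boundary bookkeeping---verifying that $i_X$ induces $\psi^{-1}$---is where the care lies; this is exactly the content of the identification of $\partial_P\MM(Z)$ with $Z$ stated just before the proposition in the paper, together with \Cref{Moebius homeomorphism}, so you have located the dependency correctly.
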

	
	Given an antipodal space $(Z, \rho_0)$, the Moebius space $\MM(Z, \rho_0)$ is a Gromov hyperbolic space if and only if $(Z, \rho_0)$ is a quasi-metric space (see \cite[Theorem 1.2]{biswas2024quasi}).  
	
	Consequently, observe that for a quasi-metric antipodal space $(Z, \rho_0)$, every good filling $(X, f)$ is a hyperbolic filling (i.e., $X$ is a good Gromov hyperbolic space) because $X$ is isometrically embedded in the Gromov hyperbolic space $\MM(Z, \rho_0)$. Also $\MM(Z,\rho_0)$ is a maximal Gromov product space. Thus, $\MM(Z, \rho_0)$ is the maximal hyperbolic filling of the quasi-metric antipodal space $(Z, \rho_0)$, and is therefore called a {\it maximal Gromov hyperbolic space} (see \cite[Section 8]{biswas2024quasi}). If $(Z,\rho_0)$ is an antipodal space of cardinality $n<\infty$, then it is a quasi-metric space, and in that case, the maximal Gromov product space $\MM(Z,\rho_0)$ is a maximal Gromov hyperbolic space with $n$ points on the boundary. For discussion on the polyhedral structure of maximal Gromov hyperbolic spaces with finite boundary, see \cite{biswas2024polyhedral}.

	There is an equivalence of categories between antipodal spaces and maximal Gromov product spaces, where the morphisms in the first category is given by Moebius homeomorphisms and in the latter by isometries (\cite[Section 6]{biswas2024quasi}).
	
	\medskip
	
	\subsection{Injective metric spaces, maximal Gromov product spaces and geodesic bi-combing}\label{geodesic bicombing subsection}\hfill\\
	
	\noindent We recall the definition of {\it injective metric spaces} 
	(also known as {\it hyperconvex spaces}), first introduced by Aronszajn and Panitchpakdi \cite{aronszajn1956extension}. A metric space $X$ is called injective if for any metric space $A$ and any subspace $B\subset A$ any $1-$Lipschitz map $f \colon B\to X$ has a $1-$Lipschitz extension $F \colon A\to X$ such that $F|_B=f$ (see \cite[Chapter 3]{petrunin2023pure}, \cite[Section 2]{lang2013injective}). Every injective metric space $X$ is complete and geodesic. Examples of injective spaces are:  the real line, closed intervals in the real line, geodesically complete metric trees and $(\R^n,\|\cdot\|_\infty)$ (more generally $l^\infty(\mathcal K)$ for any arbitrary set $\mathcal K$). There is a well-known characterization of injective metric spaces, which we shall state here without proof.
	
	\medskip
	
	\begin{subprop}[see \cite{aronszajn1956extension}, \cite{lang2013injective}]\label{hyperconvex thm}
		Let $X$ be a metric space. Then the following are equivalent:
		\begin{enumerate}[{\it (1)}]
			\item X is an injective metric space.\medskip
			
			\item If $i \colon X\to Y$ is an isometric embedding into any metric space $Y$ there exists a 1-Lipschitz map $\pi \colon Y\to X$ such that $\pi\circ i= id|_X$, i.e. $X$ is an `absolute 1-Lipschitz retract'.\medskip
			
			\item If $\{B_X(x_i,R_i)\}_{i\in I}$ is a family of closed balls in $X$ such that 
			\begin{equation*}\label{intersection}
				R_i+R_j\ge d_X(x_i,x_j)
			\end{equation*} for any $i,j\in I$ then 
			$\cap_{i\in I}B(x_i,R_i)\neq \emptyset$, i.e. $X$ is `hyperconvex'.\label{hyp}
		\end{enumerate}
	\end{subprop}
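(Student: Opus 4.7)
The plan is to establish the cycle $(1) \Rightarrow (2) \Rightarrow (3) \Rightarrow (1)$ following the classical approach.

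For $(1) \Rightarrow (2)$, given an isometric embedding $i \colon X \to Y$, the inverse $i^{-1} \colon i(X) \to X$ is an isometry, hence $1$-Lipschitz on the subspace $i(X) \subset Y$. Injectivity of $X$ applied to this subspace yields a $1$-Lipschitz extension $\pi \colon Y \to X$, and by construction $\pi \circ i = \mathrm{id}_X$.

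For $(2) \Rightarrow (3)$, the key auxiliary input is that $\ell^\infty(\mathcal{K})$ is hyperconvex for any set $\mathcal{K}$: given pairwise intersecting closed balls $\{B(g_i, r_i)\}$ in $\ell^\infty(\mathcal{K})$, at each coordinate $z \in \mathcal{K}$ the real intervals $[g_i(z)-r_i, g_i(z)+r_i]$ pairwise intersect, and the assignment $g(z) := \sup_i\, (g_i(z) - r_i)$ is a bounded function lying in $\bigcap_i B(g_i, r_i)$ (this is the one-dimensional Helly statement for closed intervals, coordinate by coordinate). Now fix a basepoint $x_0 \in X$ and consider the Kuratowski-type isometric embedding $\iota \colon X \to \ell^\infty(X)$, $\iota(x)(z) := d_X(x, z) - d_X(x_0, z)$. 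Since $\iota$ is isometric, the family $\{B(\iota(x_i), R_i)\}$ in $\ell^\infty(X)$ still satisfies the pairwise intersection hypothesis, so hyperconvexity of $\ell^\infty(X)$ produces $y \in \ell^\infty(X)$ with $\|y - \iota(x_i)\|_\infty \leq R_i$ for all $i$. By $(2)$, there is a $1$-Lipschitz retraction $\pi \colon \ell^\infty(X) \to X$ with $\pi \circ \iota = \mathrm{id}_X$, and then
\[
d_X(\pi(y), x_i) \;=\; d_X\bigl(\pi(y), \pi(\iota(x_i))\bigr) \;\leq\; \|y - \iota(x_i)\|_\infty \;\leq\; R_i,
\]
so $\pi(y) \in \bigcap_{i \in I} B_X(x_i, R_i)$, as required.

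For $(3) \Rightarrow (1)$, let $f \colon B \to X$ be a $1$-Lipschitz map with $B \subset A$, and consider the poset of pairs $(B', f')$ where $B \subset B' \subset A$ and $f'$ is a $1$-Lipschitz extension of $f$, ordered by extension. Chains have upper bounds given by union, so Zorn's Lemma supplies a maximal element $(B^*, f^*)$. I claim $B^* = A$. If not, pick $a \in A \setminus B^*$ and consider the family $\{B_X(f^*(b), d_A(b, a))\}_{b \in B^*}$. For any $b, b' \in B^*$,
\[
d_A(b, a) + d_A(b', a) \;\geq\; d_A(b, b') \;\geq\; d_X(f^*(b), f^*(b')),
\]
so the hypothesis of $(3)$ is satisfied. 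The resulting common point $x^* \in X$ satisfies $d_X(x^*, f^*(b)) \leq d_A(b, a)$ for every $b \in B^*$, so extending by $f^*(a) := x^*$ produces a strictly larger $1$-Lipschitz extension, contradicting maximality. Hence $B^* = A$ and $X$ is injective.

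The most delicate step is $(2) \Rightarrow (3)$: one must recognise that the correct ambient space to exploit property $(2)$ is $\ell^\infty(X)$, verify its own hyperconvexity by a coordinate-wise Helly argument, and transfer the balls via the Kuratowski embedding. The remaining two implications are essentially formal once the setup is correct; in particular, $(3) \Rightarrow (1)$ uses Zorn's Lemma, but the entire geometric content is concentrated in the one-point extension step, which is exactly what hyperconvexity delivers.
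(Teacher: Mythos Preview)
Your proof is correct and follows the classical route due to Aronszajn--Panitchpakdi. Note, however, that the paper does not actually prove this proposition: it is stated explicitly ``without proof'' with citations to \cite{aronszajn1956extension} and \cite{lang2013injective}, so there is no in-paper argument to compare against.
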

	\noindent There is a close connection between the injective metric spaces and maximal Gromov product spaces.
	
	\begin{subtheorem}[Biswas,\cite{biswas2024quasi}, Theorem 1.10]\label{injectivity theorem}
		Let $X$ be a Gromov product space.
		Then $X$ is maximal if and only if $X$ is injective.
		In particular, for any antipodal space $(Z,\rho)$, the Moebius space $\MM(Z,\rho)$ is injective.
	\end{subtheorem}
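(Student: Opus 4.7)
The plan is to prove the two implications separately, using the structure theorem to reduce one direction and the hyperconvexity characterization of injective metric spaces to handle the other.

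For the forward direction (maximal implies injective), by the structure theorem (\Cref{structure theorem}) a maximal Gromov product space $X$ is isometric to $\MM(\partial_P X)$ via the visual embedding, so it suffices to show $\MM(Z, \rho_0)$ is injective for every antipodal space $(Z, \rho_0)$. I would verify the hyperconvexity characterization \Cref{hyperconvex thm}(3): given any family $\{B_{\MM(Z)}(\rho_i, R_i)\}_{i \in I}$ of closed balls in $\MM(Z)$ with $R_i + R_j \geq d_{\MM}(\rho_i, \rho_j)$ for all $i, j$, the intersection is non-empty. Since $\MM(Z)$ is proper, closed balls are compact, so by the finite intersection property it suffices to handle the finite case.

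For a finite subfamily $\{B(\rho_i, R_i)\}_{i=1}^n$, I would translate via the isometry $i_{\rho_0} \colon \MM(Z) \hookrightarrow C(Z)$ to $\tau_i := \log(d\rho_i / d\rho_0)$, so the hypothesis becomes $\|\tau_i - \tau_j\|_\infty \leq R_i + R_j$. The goal is to produce a $\tau^* \in C(Z)$ satisfying (a) $\|\tau^* - \tau_i\|_\infty \leq R_i$ for all $i$, and (b) $\rho^* := E_{\rho_0}(\tau^*)$ is antipodal (so that $\rho^* \in \MM(Z)$). A natural first candidate is $\tau^*_0(\xi) := \max_i(\tau_i(\xi) - R_i)$, which immediately satisfies (a) by straightforward arithmetic using the pairwise compatibility. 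For (b), observe that antipodality of $\rho^*$ is equivalent to $\tau^*$ being a fixed point of the order-reversing operator $T(\tau)(\xi) := -\max_\eta[\tau(\eta) + 2\log\rho_0(\xi,\eta)]$, since for each $\rho_i \in \MM(Z)$ we have $T(\tau_i) = \tau_i$. One checks that $\tau^*_0 \leq T(\tau^*_0)$ precisely because of the pairwise compatibility, and one then passes to a genuine fixed point $\tau^*$ either by iterating the order-preserving $T^2$ starting from $\tau^*_0$, or by invoking Zorn's lemma on the set of $\tau \geq \tau^*_0$ with $\tau(\xi)+\tau(\eta)+2\log\rho_0(\xi,\eta) \leq 0$; the norm bound (a) should be preserved by these operations because the antipodality of each $\rho_i$ (itself a fixed point of $T$) controls the sup norm.

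For the reverse direction (injective implies maximal), the visual embedding $i_X \colon X \to \MM(\partial_P X)$ is an isometric embedding, and by the forward direction $\MM(\partial_P X)$ is itself injective. Injectivity of $X$ then gives, via \Cref{hyperconvex thm}(2), a 1-Lipschitz retraction $r \colon \MM(\partial_P X) \to X$ with $r \circ i_X = \mathrm{id}_X$. To conclude that $i_X$ is surjective, I would appeal to the fact (used throughout the framework of \cite{biswas2024quasi}) that $\MM(\partial_P X)$ is the injective hull of $i_X(X)$ in the sense of Isbell and Lang: since $X$ is itself an injective space isometrically containing $i_X(X)$, the minimality of the injective hull forces $i_X(X) = \MM(\partial_P X)$, whence $X$ is isometric to $\MM(\partial_P X)$. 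The structure theorem then gives that $X$ is maximal.

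The principal obstacle is verifying the antipodality (b) in the forward direction: the norm bound and continuity of $\tau^*$ are routine, but guaranteeing for every $\xi$ a witness $\eta$ with $\tfrac12(\tau^*(\xi)+\tau^*(\eta)) + \log\rho_0(\xi, \eta) = 0$ requires careful tracking of which indices realize the extrema in the definition of $\tau^*$, together with the individual antipodality of each $\rho_i$. A secondary technical point in the reverse direction is justifying that $\MM(\partial_P X)$ is the injective hull (and not merely some injective overspace) of $i_X(X)$; this ultimately rests on an essentiality argument for the Moebius-induced extension, which may need to be handled separately.
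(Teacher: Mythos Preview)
The paper does not prove this theorem at all; it is quoted verbatim from \cite{biswas2024quasi} (as Theorem~1.10 there) and used as a black box. So there is no ``paper's own proof'' to compare against, and your proposal is really a proposal for a proof of a result external to this article.

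That said, your outline is reasonable and essentially in the right spirit. A couple of remarks on the two directions.

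For the forward direction, your computation is correct that $\tau^*_0 := \max_i(\tau_i - R_i)$ satisfies the ball constraints and that $T(\tau^*_0) = \min_i(\tau_i + R_i)$, so $\tau^*_0 \le T(\tau^*_0)$ is exactly the pairwise compatibility hypothesis. One can push a bit further: since each $\tau_j$ is a fixed point of $T$, one checks $T^2(\tau^*_0) \ge \tau^*_0$, so $T$ maps the order interval $[\tau^*_0, T(\tau^*_0)]$ into itself, and every function in that interval satisfies the ball constraints. The remaining issue, as you correctly flag, is extracting a genuine fixed point of $T$ inside this interval while retaining continuity; the monotone iteration $T^{2k}(\tau^*_0)$ is increasing and bounded, but one must argue the limit is continuous (Dini is not immediately available). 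In \cite{biswas2024quasi} this extraction is handled by the antipodalization machinery (the $\PP_\infty$ map recalled in \Cref{discrepancy}), which is precisely designed to turn a sub-antipodal $\tau$ into an antipodal one in a controlled way; that is the missing ingredient in your sketch.

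For the reverse direction, your reduction to ``$\MM(\partial_P X)$ is the injective hull of $i_X(X)$'' is legitimate, and the present paper does assert exactly this in the introduction (attributing it to \cite{biswas2024quasi}). But as you note, this essentiality statement is itself a substantial result and not a triviality; without it, the retraction $r$ alone does not force $i_X$ to be surjective. So your reverse direction is correct modulo invoking that external fact, which is consistent with how the present paper treats the whole theorem.
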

	
	\begin{subdefinition}[{\bf Geodesic bi-combing}]\label{geodesic bicombing}
		A geodesic bi-combing $\Gamma$ on a metric space $(X,d)$ is a map $\Gamma\colon X \times X \times [0,1]\to X$ such that for every ordered pair $(x,y)\in X\times X$ the curve $\Gamma_{xy}\coloneq\Gamma(x,y,\cdot)$ is a constant speed parametrization of a geodesic from $x$ to $y$,i.e. $\Gamma_{xy}(0)=x$, $\Gamma_{xy}(1)=y$ and \begin{equation*}
			d(\Gamma_{xy}(s),\Gamma_{xy}(t))=(t-s)\cdot d(x,y)
		\end{equation*} for all $0\le s\le t\le 1$.
	\end{subdefinition}
	We say a geodesic bi-combing $\Gamma$ on a metric space $(X,d)$ is {\it convex} if for all $x,y,p,q\in X$ the function \begin{equation*}
		t\mapsto d(\Gamma_{xy}(t),\Gamma_{pq}(t))
	\end{equation*}
	is convex on $[0,1]$. And, we say $\Gamma$ is {\it conical} if for all $t\in [0,1]$
	\begin{equation*}
		d(\Gamma_{xy}(t),\Gamma_{pq}(t))\le(1-t)\cdot d(x,p)+t\cdot d(y,q).
	\end{equation*}Each of these two conditions implies $\Gamma$ is continuous.
	\begin{subprop}[\cite{lang2013injective}, Proposition 3.8]
		Every injective metric space $X$ admits a conical geodesic bi-combing $\Gamma$ such that, for all $x,y \in X$ and $t \in [0,1]$,\\
		\begin{enumerate}[(1)]
			\item $\Gamma_{xy}(t) = \Gamma_{yx}(1-t)$ (we say $\Gamma$ is `reversible').\\
			\item $I \circ \Gamma_{xy} = \Gamma_{I(x)I(y)}$ for every isometry $I\colon X\to X$.
		\end{enumerate}
	\end{subprop}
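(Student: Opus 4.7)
The plan is to construct $\Gamma$ by iterated midpoint subdivision, with hyperconvexity supplying the midpoints throughout. First I would note that for any $x,y \in X$, the two balls $B_X(x, d(x,y)/2)$ and $B_X(y, d(x,y)/2)$ have radii summing to exactly $d(x,y)$, so by \Cref{hyperconvex thm}(3) their intersection is non-empty and every point in it is a midpoint of $x$ and $y$. Iterating this observation, for each ordered pair $(x,y)$ one obtains a map on the dyadic rationals in $[0,1]$ satisfying the constant-speed relation $d(\gamma_{xy}(s),\gamma_{xy}(t)) = |t-s|\cdot d(x,y)$, which extends by completeness of $X$ (a consequence of injectivity) to a constant-speed geodesic $\Gamma_{xy}\colon[0,1]\to X$.

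The content of the proposition lies in arranging these geodesics to interact conically. I would reduce the conical inequality $d(\Gamma_{xy}(t), \Gamma_{pq}(t)) \le (1-t)d(x,p) + t\, d(y,q)$ to a single condition on a midpoint map $m\colon X\times X \to X$, namely that $m$ be $1$-Lipschitz when $X\times X$ carries the averaged metric $d'((x,y),(x',y')) \coloneqq \tfrac12\bigl(d(x,x') + d(y,y')\bigr)$. Granted such an $m$, conicality at dyadic times follows by induction on the subdivision level (the case $t=1/2$ being precisely the $1$-Lipschitz condition on $m$), and conicality at general $t\in[0,1]$ then follows by continuity. The existence of $m$ itself is extracted from injectivity: the diagonal $\{(x,x) : x\in X\}\subset (X\times X, d')$ is isometric to $X$ via projection, and the identity on $X$ is $1$-Lipschitz there, so \Cref{hyperconvex thm}(1) extends it to a $1$-Lipschitz map $m\colon (X\times X, d')\to X$; the triangle inequality then forces each $m(x,y)$ to be an actual midpoint of $x$ and $y$.

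Finally, reversibility (property $(1)$) is enforced by a symmetrization step: replace $\Gamma$ by $\tilde\Gamma_{xy}(t) \coloneqq m(\Gamma_{xy}(t), \Gamma_{yx}(1-t))$, which remains a constant-speed geodesic from $x$ to $y$ and now satisfies $\tilde\Gamma_{xy}(t) = \tilde\Gamma_{yx}(1-t)$. Isometry-equivariance (property $(2)$) requires the underlying midpoint map $m$ to commute with every isometry of $X$, which I would arrange by a canonical selection of $m$, for instance via an ultrafilter limit of midpoint assignments that respects the $\mathrm{Isom}(X)$-action, after which equivariance of $\Gamma$ follows directly from its inductive construction. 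The step I expect to be the main obstacle is exactly this simultaneous control: a naive pointwise choice of midpoints breaks the averaged $1$-Lipschitz condition, while the extension supplied by raw injectivity is not canonical and hence need not be equivariant. Forcing both properties together is where the full strength of injectivity must be combined with a careful global selection argument.
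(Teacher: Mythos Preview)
The paper itself does not prove this proposition; it is quoted verbatim from \cite{lang2013injective} and used as a black box. So there is no ``paper's own proof'' to compare against---only Lang's original argument.

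Your midpoint-subdivision approach is a legitimate route to a conical bicombing, and the reduction of conicality to a single $1$-Lipschitz midpoint map $m\colon (X\times X,d')\to X$ is correct and standard. There are, however, two genuine gaps. First, your symmetrization $\tilde\Gamma_{xy}(t)=m(\Gamma_{xy}(t),\Gamma_{yx}(1-t))$ only yields reversibility if $m$ itself is symmetric, i.e.\ $m(a,b)=m(b,a)$; nothing in the extension-from-the-diagonal argument forces this, and you do not address it. Second---and this is the real issue, which you yourself flag---the equivariance in (2) is left as a gesture toward ``an ultrafilter limit of midpoint assignments.'' That is not a proof: an ultrafilter limit of non-equivariant maps has no reason to be equivariant, and there is no evident $\mathrm{Isom}(X)$-invariant family to take a limit over in the first place.

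Lang's actual argument avoids both difficulties by abandoning the abstract midpoint approach in favour of the explicit model of an injective space as its own injective hull $E(X)\subset\Delta(X)\subset \mathbb{R}^X$ (here $\Delta(X)$ is the set of functions $f$ with $f(x)+f(y)\ge d(x,y)$). One sets $\Gamma_{fg}(t)\coloneqq p\bigl((1-t)f+tg\bigr)$, where $p\colon\Delta(X)\to E(X)$ is the \emph{canonical} $1$-Lipschitz retraction. Conicality follows because affine interpolation in $\ell^\infty$ is conical and $p$ is $1$-Lipschitz; reversibility is immediate since $(1-t)f+tg=(1-(1-t))g+(1-t)f$; and equivariance holds because both affine interpolation and $p$ are defined intrinsically and hence commute with the natural $\mathrm{Isom}(X)$-action on $\Delta(X)$. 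The moral is that the ``careful global selection argument'' you anticipate needing is replaced entirely by working in a model where the construction is canonical from the outset.
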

	For a proper geodesic metric space $X$, existence of a geodesic bi-combing guarantees $X$ admits a {\it convex} geodesic bi-combing (see \cite[Theorem 3.4]{descombes2015bicombings}). Thus:
	\begin{subrmk}
		Any maximal Gromov product space $X$ admits a convex geodesic bi-combing $\Gamma$. (from \Cref{injectivity theorem})
	\end{subrmk}
	
	\medskip
	
	\subsection{Semi-metric spaces and antipodal spaces}\hfill\\
	
	\noindent In this subsection, we collect several simple facts which will be useful later on. We recall that a (compact) semi-metric space is a compact metrizable space equipped with a separating function (\Cref{semi-metric}). 
	\begin{sublemma}
		\label{lemma 1.1}
		Let $(Z,\rho)$ be a compact semi-metric space and let $X$ be a compact subset of $C(Z)$. For every $\epsilon>0$ there exists $\delta>0$ such that if $\xi,\eta\in Z$ and $\rho(\xi,\eta)<\delta$ then 
		\begin{equation*}
			\big|f(\xi)-f(\eta)\big|<\epsilon
		\end{equation*}  
		for all $f\in X$.	
	\end{sublemma}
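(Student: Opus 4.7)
The plan is to argue by contradiction, exploiting the compactness of both $Z$ and $X$ together with the positivity property of the separating function $\rho$. Suppose the conclusion fails. Then there exists $\epsilon_0 > 0$ and sequences $\{\xi_n\}, \{\eta_n\} \subset Z$ together with $\{f_n\} \subset X$ such that $\rho(\xi_n,\eta_n) \to 0$ while $|f_n(\xi_n) - f_n(\eta_n)| \ge \epsilon_0$ for every $n$.

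Since $Z$ is compact metrizable and $X$ is compact in $(C(Z),\|\cdot\|_\infty)$, after passing to a common subsequence I may assume $\xi_n \to \xi$, $\eta_n \to \eta$ in $Z$, and $f_n \to f$ uniformly on $Z$ for some $f \in X$. By continuity of $\rho$ (property (1) in \Cref{semi-metric}),
\begin{equation*}
    \rho(\xi,\eta) \;=\; \lim_{n\to\infty} \rho(\xi_n,\eta_n) \;=\; 0,
\end{equation*}
so the positivity property of the separating function forces $\xi = \eta$.

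Finally, combining the uniform convergence $f_n \to f$ with the continuity of $f$, one has $f_n(\xi_n) \to f(\xi)$ and $f_n(\eta_n) \to f(\eta) = f(\xi)$, hence $|f_n(\xi_n) - f_n(\eta_n)| \to 0$, contradicting $|f_n(\xi_n) - f_n(\eta_n)| \ge \epsilon_0$. I do not anticipate any real obstacle here; the only subtle point is that $\rho$ is merely a separating function rather than a genuine metric, but continuity plus positivity is all that the compactness argument requires.
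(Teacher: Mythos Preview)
Your proof is correct and follows essentially the same argument as the paper: both proceed by contradiction, extract convergent subsequences using the compactness of $Z$ and of $X\subset C(Z)$, deduce $\xi=\eta$ from continuity and positivity of $\rho$, and obtain the contradiction from uniform convergence of $f_n$.
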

	\begin{proof}
		Suppose not, then there exists $\epsilon_0>0$ such that we have sequences $\{\xi_n\},\{\eta_n\}$ in $Z$ and $\{f_n\}$ in $X$ with $\rho(\xi_n,\eta_n)\to 0$ as $n\to \infty$ and $f_n(\xi_n)-f_n(\eta_n)>\epsilon_0$ for all $n$. As $Z$ a compact metric space and $X$ compact subset we pass to sub-sequence $\xi_n\to \xi\in Z$, $\eta_n\to \eta\in Z$ and $f_n\to f\in X$, i.e. $f_n$ converges to $f $ uniformly, as $n\to \infty$. By continuity of $\rho$ we have $\rho(\xi_n,\eta_n)\to \rho(\xi,\eta)=0$, as $n\to \infty$, hence $\xi=\eta$. And by uniform convergence of $f_n$ we have $f_n(\xi_n)\to f(\xi)$ and $f_n(\eta_n)\to f(\eta)$, as $n\to \infty$, hence $(f_n(\xi_n)-f_n(\eta_n))\to f(\xi)-f(\eta)=0$, as $n\to \infty$. This contradicts the initial hypothesis $f_n(\xi_n)-f_n(\eta_n)>\epsilon_0$. 
	\end{proof}
	
	\medskip
	
	\noindent This Lemma has the following consequences, which will be used several times later.
	\begin{subcoro}
		\label{Coro 1}
		Let $(Z,\rho_0)$ be an antipodal space, and let ${B}_{\MM(Z)}(\rho_0,R)$ denote the closed ball of radius $R>0$ around $\rho_0$ in the Moebius space $\MM(Z)$.  For every $\epsilon>0$ there exists $\delta>0$ such that if $\xi,\eta\in Z$ and $\rho_0(\xi,\eta)<\delta$ then 
		\begin{equation*}
			\big|\tau_\rho(\xi)-\tau_\rho(\eta)\big|<\epsilon
		\end{equation*}
		for all $ \rho\in {B}_{\MM(Z)}(\rho_0,R)$
	\end{subcoro}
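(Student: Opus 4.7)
The plan is to reduce this corollary directly to the preceding lemma (Lemma 1.1 in the excerpt) by producing an appropriate compact subset of $C(Z)$.

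First, I recall from the preliminaries that the map $i_{\rho_0}\colon (\mathcal{UM}(Z,\rho_0), d_{\MM}) \to (C(Z), \|\cdot\|_\infty)$ sending $\rho \mapsto \tau_\rho = \log \frac{d\rho}{d\rho_0}$ is a surjective isometry. Next, I use the fact (stated in the excerpt, citing \cite[Lemma 2.7]{biswas2024quasi}) that the Moebius space $\MM(Z,\rho_0)$ is proper, so the closed ball $B_{\MM(Z)}(\rho_0, R)$ is compact. Since $i_{\rho_0}$ is an isometry, its restriction to $B_{\MM(Z)}(\rho_0, R)$ sends this ball onto a compact subset
\begin{equation*}
X \coloneqq i_{\rho_0}\bigl(B_{\MM(Z)}(\rho_0,R)\bigr) = \{\tau_\rho : \rho \in B_{\MM(Z)}(\rho_0,R)\} \subset C(Z).
\end{equation*}

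Now I apply Lemma 1.1 to the compact set $X \subset C(Z)$: given $\epsilon > 0$, it produces $\delta > 0$ such that $\rho_0(\xi,\eta) < \delta$ implies $|f(\xi) - f(\eta)| < \epsilon$ for every $f \in X$. Unpacking, this is exactly the statement that $|\tau_\rho(\xi) - \tau_\rho(\eta)| < \epsilon$ for every $\rho \in B_{\MM(Z)}(\rho_0, R)$, which is the desired conclusion.

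There is essentially no obstacle here; the corollary is a direct consequence of the equicontinuity principle in Lemma 1.1 once one identifies the correct compact family in $C(Z)$, namely the image of $B_{\MM(Z)}(\rho_0,R)$ under the canonical isometric identification $i_{\rho_0}$.
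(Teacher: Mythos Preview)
Your proof is correct and matches the paper's intended approach: the corollary is stated without proof precisely because it is an immediate application of Lemma~\ref{lemma 1.1} to the compact set $i_{\rho_0}(B_{\MM(Z)}(\rho_0,R))\subset C(Z)$, using properness of $\MM(Z)$. You have identified exactly this.
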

	
	\medskip
	\noindent In the absence of the triangle inequality (for semi-metric spaces), the following two equivalent facts will be crucially utilized multiple times later.
	
	\begin{subcoro}\label{Coro 2}
		Let $(Z,\rho)$ be a compact semi-metric space. For every $\epsilon>0$ there exists $\delta>0$ such that if $\xi,\xi',\eta,\eta'\in Z$ with $\max\{\rho(\xi,\xi'),\rho(\eta,\eta')\}<\delta$ then \begin{equation*}
			\big|\rho(\xi,\eta)-\rho(\xi',\eta')\big|<\epsilon.
		\end{equation*}
	\end{subcoro}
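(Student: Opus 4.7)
The plan is to argue by contradiction, following exactly the compactness template of \Cref{lemma 1.1}. The key point is that although $\rho$ is only a separating function (no triangle inequality), it is continuous on $Z \times Z$ and separates points, so small values of $\rho$ correspond to pairs of points that are close in the underlying compact metric topology on $Z$.

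Concretely, I would suppose the conclusion fails. Then there exist $\epsilon_0 > 0$ and sequences $\{\xi_n\}, \{\xi_n'\}, \{\eta_n\}, \{\eta_n'\}$ in $Z$ with
\begin{equation*}
\max\{\rho(\xi_n, \xi_n'), \rho(\eta_n, \eta_n')\} \to 0 \quad \text{as } n \to \infty,
\end{equation*}
but $|\rho(\xi_n, \eta_n) - \rho(\xi_n', \eta_n')| \geq \epsilon_0$ for every $n$. Using that $Z$ is compact metrizable, I would pass to a common subsequence so that $\xi_n \to \xi$, $\xi_n' \to \xi^*$, $\eta_n \to \eta$, $\eta_n' \to \eta^*$ in $Z$.

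Now the continuity of $\rho$ (property (1) of a separating function) gives $\rho(\xi_n, \xi_n') \to \rho(\xi, \xi^*)$, and by hypothesis this limit equals $0$. Positivity (property (3)) then forces $\xi = \xi^*$, and identically $\eta = \eta^*$. Applying continuity of $\rho$ once more, $\rho(\xi_n, \eta_n) \to \rho(\xi, \eta)$ and $\rho(\xi_n', \eta_n') \to \rho(\xi^*, \eta^*) = \rho(\xi, \eta)$, so the difference $|\rho(\xi_n, \eta_n) - \rho(\xi_n', \eta_n')|$ tends to $0$, contradicting the assumption that it stays bounded below by $\epsilon_0$.

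There is no real obstacle here; the argument is essentially a uniform-continuity statement for $\rho$ in which the notion of ``closeness'' on the domain is measured by $\rho$ itself rather than by a background metric, and compactness combined with the fact that $\rho$ separates points bridges between the two notions. The only thing to be slightly careful about is that one must extract a single subsequence along which all four sequences converge, and then invoke continuity of $\rho$ as a function of \emph{two} variables; both are immediate from compactness of $Z \times Z$.
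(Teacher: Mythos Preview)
Your proof is correct and is essentially the same approach as the paper's. The paper states this result as a corollary of \Cref{lemma 1.1} without giving a separate proof; your argument simply unwinds that lemma's compactness-and-contradiction template with the family $\{\rho(\cdot,\zeta):\zeta\in Z\}$ playing the role of the compact set $X\subset C(Z)$, which is exactly the intended deduction.
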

	\medskip
	
	\begin{subcoro}
		\label{Coro 3}
		Let $(Z,\rho)$ be an antipodal space. For every $\epsilon>0$ there exists $\delta>0$ such that if $\xi,\eta\in Z$ and $\rho(\xi,\eta)<\delta$ then
		\begin{equation*}
			\sup_{\zeta\in Z}\ \big|\rho(\xi,\zeta)-\rho(\eta,\zeta)\big|<\epsilon
		\end{equation*}
	\end{subcoro}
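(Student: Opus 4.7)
The plan is to derive this corollary directly from the preceding \Cref{Coro 2}, by applying it to a degenerate quadruple in which the second pair of points collapses to a single point. The antipodal hypothesis will not actually play a role; the statement holds for any compact semi-metric space $(Z,\rho)$, which is exactly the setting in which \Cref{Coro 2} was formulated.

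Concretely, given $\epsilon > 0$, I would first invoke \Cref{Coro 2} with $\epsilon/2$ in place of $\epsilon$ to produce a $\delta > 0$ with the property that whenever $\xi,\xi',\eta,\eta'\in Z$ satisfy $\max\{\rho(\xi,\xi'),\rho(\eta,\eta')\} < \delta$, we have $|\rho(\xi,\eta) - \rho(\xi',\eta')| < \epsilon/2$. Now suppose $\xi,\eta \in Z$ with $\rho(\xi,\eta) < \delta$, and let $\zeta \in Z$ be arbitrary. Applying the conclusion above to the quadruple $(\xi,\eta,\zeta,\zeta)$, and noting that $\rho(\zeta,\zeta)=0 < \delta$, yields
\begin{equation*}
\bigl|\rho(\xi,\zeta) - \rho(\eta,\zeta)\bigr| < \epsilon/2.
\end{equation*}
Since this bound is uniform in $\zeta$, taking the supremum gives $\sup_{\zeta \in Z} |\rho(\xi,\zeta) - \rho(\eta,\zeta)| \le \epsilon/2 < \epsilon$, which is exactly the claim.

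There is essentially no obstacle; the only subtlety worth flagging is the passage from the pointwise bound to a \emph{strict} inequality on the supremum, which is why one applies \Cref{Coro 2} with $\epsilon/2$ rather than $\epsilon$ (alternatively, one could appeal to the fact that $\zeta \mapsto |\rho(\xi,\zeta)-\rho(\eta,\zeta)|$ is continuous on the compact space $Z$ and hence attains its supremum). If one did not want to invoke \Cref{Coro 2}, the statement could also be proved directly by the same compactness-and-contradiction argument used in \Cref{lemma 1.1}: assuming the conclusion fails, extract sequences $\xi_n,\eta_n,\zeta_n$ with $\rho(\xi_n,\eta_n)\to 0$ but $|\rho(\xi_n,\zeta_n)-\rho(\eta_n,\zeta_n)| \ge \epsilon_0$, pass to convergent subsequences using compactness of $Z$, and contradict continuity of $\rho$. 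Either route works, but routing through \Cref{Coro 2} is the cleanest.
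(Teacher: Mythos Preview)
Your proposal is correct and matches the paper's intent: the paper states \Cref{Coro 2} and \Cref{Coro 3} as ``two equivalent facts'' following \Cref{lemma 1.1} without giving separate proofs, and your derivation of \Cref{Coro 3} from \Cref{Coro 2} via the degenerate quadruple $(\xi,\eta,\zeta,\zeta)$ is precisely the implication that justifies this equivalence. Your observation that the antipodal hypothesis is not needed is also accurate---\Cref{Coro 2} is already stated for general compact semi-metric spaces.
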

	\medskip
	
	\begin{sublemma}
		\label{rho distance}
		Let $(Z,\rho)$ be a compact semi-metric space, and let $A$ be a closed subset of $Z$. Then the function $f\colon Z\to \mathbb{R}$ defined by $f(\xi)=\rho(\xi,A)\coloneqq \inf\{\ \rho(\xi,\eta)\ |\ \eta\in A\ \}$ is continuous.
	\end{sublemma}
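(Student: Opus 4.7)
The plan is to reduce continuity of $f$ to \Cref{Coro 2}, which controls the variation of $\rho$ in one entry uniformly in the other. The relevant inequality is the standard fact that for any two bounded functions $a,b\colon A \to [0,\infty)$ one has
\begin{equation*}
\bigl|\inf_{\eta \in A} a(\eta) - \inf_{\eta \in A} b(\eta)\bigr| \le \sup_{\eta \in A}\bigl|a(\eta)-b(\eta)\bigr|,
\end{equation*}
so it suffices to control $\sup_{\eta \in A}|\rho(\xi',\eta)-\rho(\xi,\eta)|$ when $\xi'$ is close to $\xi$ in $Z$.

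I would fix $\xi \in Z$ and a sequence $\xi_n \to \xi$ in the topology of $Z$. Since $\rho$ is continuous on $Z \times Z$ and positivity gives $\rho(\xi,\xi)=0$, we get $\rho(\xi_n,\xi) \to 0$. Now I would apply \Cref{Coro 2} with the pair $(\xi_n,\xi)$ and $(\eta,\eta)$: given $\epsilon>0$, there exists $\delta>0$ such that $\rho(\xi_n,\xi)<\delta$ (which holds for all large $n$) forces
\begin{equation*}
\sup_{\eta \in Z}\bigl|\rho(\xi_n,\eta)-\rho(\xi,\eta)\bigr| < \epsilon.
\end{equation*}
Taking the sup over $\eta \in A \subseteq Z$ and invoking the inequality above then yields $|f(\xi_n)-f(\xi)|<\epsilon$ for all large $n$, which proves continuity of $f$ at $\xi$.

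There isn't really a hard step here — the proof is essentially a two-line consequence of \Cref{Coro 2}. The only point worth flagging is that, because $\rho$ need not satisfy the triangle inequality, one cannot use the usual Lipschitz argument $|f(\xi)-f(\xi')| \le \rho(\xi,\xi')$ that works in the metric setting; the uniform continuity statement provided by \Cref{Coro 2} is precisely the substitute needed. Closedness of $A$ is used only implicitly (to ensure $f$ is a well-defined distance-to-a-nonempty-closed-set function), and compactness of $Z$ enters through the corollary itself, not through this argument.
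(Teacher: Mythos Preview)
Your proof is correct and genuinely different from the paper's. The paper argues by contradiction: it uses compactness of $A$ to realize each $f(\xi_n)$ as $\rho(\xi_n,\eta_n)$ for some $\eta_n\in A$, extracts a convergent subsequence $\eta_{n_k}\to\eta\in A$, and combines the two one-sided estimates $\liminf f(\xi_{n_k})\ge f(\xi)$ and $\limsup f(\xi_{n_k})\le f(\xi)$ coming from continuity of $\rho$ at the specific points $(\xi,\eta)$ and $(\xi,\tilde\eta)$. Your route instead invokes the uniform estimate of \Cref{Coro 2} once and passes it through the elementary inequality $|\inf a-\inf b|\le\sup|a-b|$, which is both shorter and, as you note, does not actually require $A$ to be closed. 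The paper's argument is slightly more self-contained in that it appeals only to pointwise continuity of $\rho$ rather than to the derived corollary, but since \Cref{Coro 2} is already available at this point in the text there is no circularity in your use of it.
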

	\begin{proof} 
		Observe for every $\xi\in Z$ there exists $\eta\in A$ such that $\rho(\xi,\eta)=\rho(\xi,A)$. 
		If possible let us suppose $f$ is not continuous. Then there exists a point $\xi \in Z $ and a sequence $\{\xi_n\}$ in $Z$ such that $\xi_n\to \xi$ as $n\to \infty$ and $|f(\xi_n)-f(\xi)|>c>0$ for some $c>0$. Now there exists $\eta_n\in Z$ such that $f(\xi_n)=\rho(\xi_n,A)=\rho(\xi_n,\eta_n)$ for all $n\in \mathbb{N}$. There is a sub-sequence of $\{\eta_n\}$ such that we have $\eta_{n_k}\to \eta\in A$ then from continuity of $\rho$ we get,
		$$\lim_{k\to\infty}f(\xi_{n_k})=\lim_{k\to\infty}\rho(\xi_{n_k},\eta_{n_k})=\rho(\xi,\eta)\ge\rho(\xi,A)=f(\xi)$$
		by the definition of $\rho(\xi,A)$. Again there exists $\tilde{\eta}\in A$ such that $f(\xi)=\rho(\xi,\tilde{\eta})$, then $f(\xi_{n_k})\le \rho(\xi_{n_k},\tilde{\eta})$ for all $k\in \mathbb{N}$. Therefore, 
		$$\lim_{k\to\infty}f(\xi_{n_k})\le \lim_{k\to\infty}\rho(\xi_{n_k},\tilde{\eta})=\rho(\xi,\tilde{\eta})=f(\xi)$$
		So we get 
		$$\lim_{k\to\infty}f(\xi_{n_k})=f(\xi)$$ 
		which implies $|f(\xi_n)-f(\xi)|<c$ for infinitely many $n$, this contradicts the initial hypothesis.
	\end{proof}
	
	\medskip
	
	\subsection{Discrepancy, Antipodal flow and the Antipodalization map $\PP_\infty$ }\label{discrepancy}\hfill\\
	
	\noindent Let $(Z,\rho_0)$ be an antipodal space. Given a bounded function $\tau$ on $Z$ and $\rho\in \MM(Z)$, recall that the {\it discrepancy} of $\tau$ with respect to $\rho$ is the function $D_{\rho}(\tau)$ (see Definition 3.1 \cite{biswas2024quasi}), for $\xi \in Z$ defined by,
	\begin{equation}\label{discrepancy definition}
		D_{\rho}(\tau)(\xi)\coloneqq \sup_{\eta\in Z\setminus\{\xi\}}\tau(\xi)+\tau(\eta)+\log\rho(\xi,\eta)^2.
	\end{equation}
	Observe that $D_{\rho}(\tau)$ might not be a continuous function on $Z$, but it is bounded with
	\begin{equation}
		\big\|D_{\rho}(\tau)\big\|_\infty\le 2\big\|\tau\big\|_\infty.
	\end{equation}
	(see \eqref{discrepancy 2 lipschitz}) below and note $\tau\equiv0$ then $D_{\rho}(\tau)\equiv 0$. 
	
	For $\tau\in C(Z)$, a continuous function, we have that $D_\rho(\tau)$ is continuous, and moreover if $\bar{\rho}\coloneqq E_\rho(\tau)$ then $\colon$ \\ 
	\begin{center}
		$\bar{\rho}$ is an antipodal function, i.e. $\bar{\rho}\in \MM(Z)$
		if and only if $D_\rho(\tau)\equiv 0$.
	\end{center}
	\hfill\\
	\noindent Now, note the following fact about the discrepancy.
	
	\begin{sublemma}[cf. \cite{biswas2024quasi} Lemma 3.2]\label{discrepancy 2 lipschitz}
		Let $\tau_1, \tau_2$ be two bounded functions on antipodal space $(Z,\rho)$, then 
		\begin{equation}\label{discrepancy 2 lipschitz ineq}
			\big\|D_{\rho}(\tau_1)-D_{\rho}(\tau_2)\big\|_\infty\le2\cdot\big\|\tau_1-\tau_2\big\|_\infty.
		\end{equation}
	\end{sublemma}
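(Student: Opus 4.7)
The plan is to exploit the standard fact that the supremum of a family of functions is $1$-Lipschitz in the family (with respect to the uniform norm), together with the triangle inequality applied pointwise in the defining expression for $D_\rho(\tau)$.

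Fix $\xi \in Z$. I would write out
$$ D_\rho(\tau_i)(\xi) = \sup_{\eta \in Z \setminus \{\xi\}} F_i(\eta), \qquad F_i(\eta) := \tau_i(\xi) + \tau_i(\eta) + \log \rho(\xi,\eta)^2, \quad i = 1,2,$$
so that for every $\eta \in Z \setminus \{\xi\}$,
$$ F_1(\eta) - F_2(\eta) = (\tau_1(\xi) - \tau_2(\xi)) + (\tau_1(\eta) - \tau_2(\eta)),$$
and hence $|F_1(\eta) - F_2(\eta)| \le 2 \|\tau_1 - \tau_2\|_\infty$ uniformly in $\eta$.

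From $F_1(\eta) \le F_2(\eta) + 2\|\tau_1 - \tau_2\|_\infty$, taking the supremum over $\eta \in Z \setminus \{\xi\}$ on both sides yields $D_\rho(\tau_1)(\xi) \le D_\rho(\tau_2)(\xi) + 2\|\tau_1 - \tau_2\|_\infty$. Swapping the roles of $\tau_1, \tau_2$ gives the reverse bound, so $|D_\rho(\tau_1)(\xi) - D_\rho(\tau_2)(\xi)| \le 2\|\tau_1 - \tau_2\|_\infty$. Since $\xi$ was arbitrary, taking the supremum over $\xi \in Z$ produces the claimed inequality.

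There is no real obstacle here: the argument is one line of the triangle inequality followed by the routine observation that $\sup$ is $1$-Lipschitz, and the $\log \rho(\xi,\eta)^2$ term (which could be $-\infty$, but never $+\infty$ since $\rho \le 1$) cancels exactly in $F_1 - F_2$, so no continuity or finiteness issue arises as long as we take the convention $\sup \varnothing = -\infty$ and note that at least one pair $(\xi,\eta)$ with $\rho(\xi,\eta) > 0$ exists (as $|Z| \ge 4$), making $D_\rho(\tau_i)(\xi)$ finite whenever $\tau_i$ is bounded.
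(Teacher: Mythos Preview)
Your proof is correct and is essentially identical to the paper's own argument: both bound $\tau_1(\xi)+\tau_1(\eta)$ by $\tau_2(\xi)+\tau_2(\eta)+2\|\tau_1-\tau_2\|_\infty$, take the supremum over $\eta$, and then invoke symmetry. The only cosmetic difference is that you package the expression as $F_i(\eta)$ before estimating, whereas the paper writes the inequality out directly.
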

	
	\begin{proof}
		We know,  $\tau_1\le \tau_2 + \|\tau_1-\tau_2\|_\infty$ hence,
		\begin{equation*}
			\tau_1(\xi)+\tau_1(\eta)+\log\rho(\xi,\eta)^2\le\tau_2(\xi)+\|\tau_1-\tau_2\|_\infty+\tau_2(\eta)+\|\tau_1-\tau_2\|_\infty+\log\rho(\xi,\eta)^2
		\end{equation*}
		for all $\xi,\eta\in Z$ and $\xi\not=\eta$.
		Then 
		\begin{align*}
			\sup_{\eta\in Z\setminus \{\xi\}}\tau_1(\xi)+\tau_1(\eta)+\log\rho(\xi,\eta)^2&\le \sup_{\eta\in Z\setminus \{\xi\}}\tau_2(\xi)+\tau_2(\eta)+\log\rho(\xi,\eta)^2+2\cdot\|\tau_1-\tau_2\|_\infty\\ \implies D_\rho(\tau_1)(\xi)& \le D_\rho(\tau_2)(\xi)+2\cdot\|\tau_1-\tau_2\|_\infty
		\end{align*}
		for all $\xi\in Z$. Similarly, $ D_\rho(\tau_2)(\xi) \le D_\rho(\tau_1)(\xi)+2\cdot\|\tau_1-\tau_2\|_\infty$
		for all $\xi\in Z$. Hence the result. 
	\end{proof}
	
	We now recall the definition of the antipodalization map. Given an antipodal space $(Z,\rho_0)$, fix $\rho\in \MM(Z)$.  Consider the ODE on Banach space $(C(Z),\|\cdot\|_\infty)$
	\begin{equation}\label{ODE}
		\frac{d}{dt}(\tau_t)=-D_{\rho}(\tau_t), 
	\end{equation} which is defined by the vector field $-D_{\rho}\colon (C(Z),\|\cdot\|_\infty) \to (C(Z),\|\cdot\|_\infty)$ on $(C(Z),\|\cdot\|_\infty)$, where $D_{\rho}$ is the discrepancy. It is called the {\it $\rho$-antipodal flow}. For every $\tau\in C(Z)$, the unique solution to the $\rho$-antipodal flow exists, that is a $C^1$ curve in the Banach space $(C(Z),\|\cdot\|_\infty)$, $t\in I\subset \R \mapsto \tau_t\in C(Z)$, such that $\tau_0=\tau$ and $\tau_t$ is defined for all time $t\ge0$ (i.e. $I=(-a,\infty)$ for some $a>0$).   From \cite{biswas2024quasi}, Theorem 3.15, we have the limit 
	\begin{equation*}
		\tau_\infty=\tau_\infty(\tau,\rho)\coloneqq \lim_{t\to \infty} \tau_t
	\end{equation*}
	exists in $C(Z)$. Moreover, we have an exponential convergence estimate,
	\begin{equation}\label{expconv}
		\big\|\tau_\infty-\tau_t\big\|_\infty\le 4 \cdot \big\|D_{\rho_1}(\tau)\big\|_\infty\cdot e^{-t/2}
	\end{equation}
	for $t\ge 0$, also $D_\rho(\tau_\infty)\equiv 0$ hence $\rho_\infty\coloneqq E_{\rho}(\tau_\infty)\in \MM(Z)$.
	
	\begin{subdefinition}{\bf (The antipodalization map, \cite{biswas2024quasi})}\label{antipodalization}
		Let $(Z,\rho_0)$ be an antipodal space. The antipodalization map is the map
		\begin{align*}
			\PP_\infty \colon  C(Z) \times \MM(Z) &\to \MM(Z)\\
			(\quad\tau\quad,\quad\rho\quad) &\mapsto\ \rho_\infty\ \coloneqq\ E_{\rho}(\tau_\infty),
		\end{align*}
		where, $\tau_\infty\coloneqq \tau_\infty(\tau, \rho)$ as defined above.
	\end{subdefinition}
	\noindent For detailed discussion on antipodal flow, long-term existence of solution to ODE \eqref{ODE} and properties of the antipodalization map, see \cite[Section 3]{biswas2024quasi}.
	
	Here, we recall a fact about the antipodalization map without proof, which will be used later. This is a direct consequence of Lemma 4.1 in \cite{biswas2024quasi}.
	
	\begin{sublemma}(Biswas, \cite{biswas2024quasi})\label{convexcombi}
		Let $(Z,\rho_0)$ be an antipodal space. For $\rho\in \MM(Z)$ and for $t\in[0,1]$ define \begin{equation*}
			\rho_t\coloneqq \PP_\infty(t\cdot\tau_\rho,\rho_0)
		\end{equation*}
		then $d_{\MM(Z)}(\rho_t,\rho_0)=t\cdot d_{\MM(Z)}(\rho,\rho_0)$. 
	\end{sublemma}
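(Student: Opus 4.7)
The plan is to exploit the $1$-Lipschitz property of the antipodalization map. Specifically, Lemma 4.1 of \cite{biswas2024quasi} says that, for any fixed base point $\rho \in \MM(Z)$, the map $\tau \mapsto \PP_\infty(\tau,\rho)$ is a $1$-Lipschitz map from $(C(Z),\|\cdot\|_\infty)$ into $(\MM(Z), d_{\MM(Z)})$. Before applying this I would identify the two endpoints of the path $t \mapsto \rho_t$. The endpoint $\rho_0 = \PP_\infty(0,\rho_0)$ is immediate, since the zero function is a stationary point of the $\rho_0$-antipodal flow ($D_{\rho_0}(0) \equiv 0$). The endpoint at $t=1$ is slightly less obvious: because $\rho \in \MM(Z)$, the characterization of antipodal functions via the discrepancy forces $D_{\rho_0}(\tau_\rho) \equiv 0$, so $\tau_\rho$ is itself a stationary point of the flow in \eqref{ODE}, whence $(\tau_\rho)_\infty = \tau_\rho$ and $\PP_\infty(\tau_\rho,\rho_0) = E_{\rho_0}(\tau_\rho) = \rho$.

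With the endpoints in hand, the $1$-Lipschitz estimate applied to the pair $(0, t\tau_\rho)$ yields
\begin{equation*}
d_{\MM(Z)}(\rho_t,\rho_0) \le \|t\tau_\rho - 0\|_\infty = t \cdot d_{\MM(Z)}(\rho,\rho_0),
\end{equation*}
and applied to the pair $(t\tau_\rho,\tau_\rho)$ yields
\begin{equation*}
d_{\MM(Z)}(\rho_t,\rho) \le \|(1-t)\tau_\rho\|_\infty = (1-t)\cdot d_{\MM(Z)}(\rho,\rho_0).
\end{equation*}
The triangle inequality $d_{\MM(Z)}(\rho,\rho_0) \le d_{\MM(Z)}(\rho_0,\rho_t) + d_{\MM(Z)}(\rho_t,\rho)$ then sandwiches everything, forcing both of the above bounds to be equalities and in particular delivering the asserted identity $d_{\MM(Z)}(\rho_t,\rho_0) = t \cdot d_{\MM(Z)}(\rho,\rho_0)$.

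There is no genuine obstacle here once Lemma 4.1 of \cite{biswas2024quasi} is available; the argument is a clean sandwich between a $1$-Lipschitz upper bound and a triangle-inequality lower bound. The only subtle point worth flagging at the outset is the identification $\PP_\infty(\tau_\rho,\rho_0) = \rho$, since it is the step that depends on $\rho$ being already antipodal with respect to $\rho_0$. As a byproduct of the equality case one also sees that $t \mapsto \rho_t$ parametrizes a constant-speed geodesic segment in $\MM(Z)$ from $\rho_0$ to $\rho$, which is the natural context in which \Cref{convexcombi} will later be used.
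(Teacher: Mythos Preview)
Your argument is correct and is precisely the natural way to unpack the paper's one-line justification ``direct consequence of Lemma~4.1 in \cite{biswas2024quasi}'': the paper gives no further details, and the $1$-Lipschitz sandwich you wrote out is exactly what that consequence amounts to. The endpoint identifications $\PP_\infty(0,\rho_0)=\rho_0$ and $\PP_\infty(\tau_\rho,\rho_0)=\rho$ via vanishing discrepancy are the right observations, and the geodesic interpretation you note at the end is indeed how the lemma is used later (in \Cref{retraction lemma}).
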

	
	
	\subsection{Partitions of unity on compact semi-metric spaces and smoothing operators}\label{PoU}\hfill\\
	
	\noindent For a given finite open cover $\mathcal{U}\coloneqq \{U_i\}_{i=1}^k$ of a compact semi-metric space $(Z,\rho)$ consider the collection of functions $\{f_i\colon Z\to \mathbb{R}\}_{i=1}^k$ defined by $f_i(\xi)=\rho(\xi,U_i^c)$ where $U_i^c=Z\setminus U_i$. Then by \Cref{rho distance} $f_i$'s are continuous and $f_i\ge 0$. Observe that $\sum_if_i\colon Z\to R$ is a strictly positive function on $Z$. Then, we can define 
	\begin{equation*}
		P_i\coloneqq \frac{f_i}{\sum_if_i}\colon Z\to \mathbb{R}
	\end{equation*} 
	such that $0\le P_i\le1$ for every $P_i$ and $\sum_i P_i\equiv 1$ on $Z$. We call this collection $\{P_i\}_{i=1}^k$ a \textbf{partition of unity} on $(Z,\rho)$ with respect to the finite cover $\mathcal{U}$. 
	
	For our purposes, in a compact semi-metric space $(Z, \rho)$, we define open balls as $B_{\rho}(\xi, \delta) \coloneqq  \{\eta \in Z \ | \ \rho(\xi, \eta) < \delta\}$ for $\xi \in Z$. Then, for a finite $\delta$-net $F \subset Z$ with respect to $\rho$, the corresponding collection $\mathcal{U} \coloneqq  \{B_{\rho}(\zeta, \delta)\}_{\zeta \in F}$ will form a finite open cover of $Z$. The partition of unity on $(Z,\rho)$ with respect to the cover $\mathcal U$, will be called a ``{\it partition of unity on $(Z,\rho)$ associated to the $\delta$-net $F$}", denoted by $\{P_\zeta\}_{\zeta\in F}$. Corresponding to this partition of unity we define the {\bf smoothing operator} on $(B(Z),\|\cdot\|_\infty)$, the space of bounded real valued functions on $Z$, as the linear map
	\begin{equation*}
		\begin{split}
			C\colon (B(Z),\|\cdot\|_{\infty})\to (C(Z),\|\cdot\|_\infty),
		\end{split}
	\end{equation*}
	defined by,
	\begin{equation}
		C(\tau)(\xi)=\sum_{\zeta\in F}\tau(\zeta)P_\zeta(\xi), \quad \hbox{for} \quad \xi\in Z,
	\end{equation} 
	for $\tau\in (B(Z),\|\cdot\|_\infty)$. It is clear from the definition that $C(\tau)$ is continuous for any $\tau \in (B(Z),\|\cdot\|_\infty)$.
	\begin{sublemma}
		\label{smoothing of tau lemma}
		Given $\epsilon>0$, let $\tau$ be a bounded function on $(Z,\rho)$ with property that there exists $\delta>0$ such that, for any $\xi,\eta\in Z$ if $\rho(\xi,\eta)<\delta$ then
		\begin{equation*}
			\big|\tau(\xi)-\tau(\eta)\big|<\epsilon.
		\end{equation*}
		Consider a finite $\delta$-net $F\subset Z$ with respect to $\rho$. Let $\{P_\zeta\}_{\zeta\in F}$ denote the partition of unity on $(Z,\rho)$ associated to the finite $\delta$-net $F$. Then $C(\tau)$ satisfies, 
		\begin{equation*}
			\big\|C(\tau)-\tau\big\|_\infty<\epsilon \quad \hbox{and} \quad \big\|C(\tau)\big\|_\infty\le\big\|\tau\big\|_\infty.
		\end{equation*} 
	\end{sublemma}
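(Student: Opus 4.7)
The plan is to exploit the support structure of the partition of unity $\{P_\zeta\}_{\zeta\in F}$. The crucial observation is that $P_\zeta(\xi) > 0$ forces $\rho(\xi,\zeta) < \delta$. Indeed, by construction, $P_\zeta(\xi) > 0$ iff $f_\zeta(\xi) = \rho(\xi, B_\rho(\zeta,\delta)^c) > 0$. Since $\rho(\cdot,\zeta)$ is continuous on $Z$, the complement $B_\rho(\zeta,\delta)^c = \{\eta : \rho(\eta,\zeta) \ge \delta\}$ is a closed (hence compact) subset of $Z$, so if $\xi$ lay in this set we would have $f_\zeta(\xi) \le \rho(\xi,\xi) = 0$. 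Thus $f_\zeta(\xi) > 0$ implies $\xi \in B_\rho(\zeta,\delta)$, i.e.\ $\rho(\xi,\zeta) < \delta$.

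For the first bound, I would use the partition-of-unity identity $\sum_{\zeta\in F} P_\zeta(\xi) = 1$ to write
\begin{equation*}
C(\tau)(\xi) - \tau(\xi) \;=\; \sum_{\zeta \in F}\bigl(\tau(\zeta) - \tau(\xi)\bigr)\,P_\zeta(\xi).
\end{equation*}
By the observation above, only indices $\zeta$ with $\rho(\xi,\zeta) < \delta$ contribute, and for each such $\zeta$ the hypothesis on $\tau$ gives $|\tau(\zeta)-\tau(\xi)| < \epsilon$. Taking absolute values, bounding term-by-term, and using $\sum_\zeta P_\zeta(\xi) = 1$ yields $|C(\tau)(\xi) - \tau(\xi)| < \epsilon$ for every $\xi \in Z$, which gives $\|C(\tau)-\tau\|_\infty \le \epsilon$, and hence the stated estimate.

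The second inequality $\|C(\tau)\|_\infty \le \|\tau\|_\infty$ is immediate: at any $\xi \in Z$, $C(\tau)(\xi) = \sum_\zeta \tau(\zeta)P_\zeta(\xi)$ is a convex combination of the finitely many values $\{\tau(\zeta)\}_{\zeta \in F}$ (since $P_\zeta(\xi) \ge 0$ and $\sum_\zeta P_\zeta(\xi) = 1$), hence $|C(\tau)(\xi)| \le \|\tau\|_\infty$. There is essentially no technical obstacle; the only step that deserves care is the support description for $P_\zeta$, which relies on the continuity of $\rho(\cdot,\zeta)$ together with the positivity axiom $\rho(\xi,\xi)=0$ of a semi-metric.
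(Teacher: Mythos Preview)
Your argument is correct and is essentially identical to the paper's own proof: both observe that $P_\zeta(\xi)>0$ forces $\rho(\xi,\zeta)<\delta$, rewrite $C(\tau)(\xi)-\tau(\xi)$ as $\sum_\zeta(\tau(\zeta)-\tau(\xi))P_\zeta(\xi)$, and bound term-by-term using the hypothesis on $\tau$ together with $\sum_\zeta P_\zeta(\xi)=1$. Your justification of the support claim via $\rho(\xi,\xi)=0$ and the closedness of $B_\rho(\zeta,\delta)^c$ is in fact slightly more explicit than the paper's, which simply asserts the equivalence.
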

	\begin{proof}
		Given $\xi\in Z$ observe, $\zeta\in F$ such that $P_\zeta(\xi)>0$  if and only if $\rho(\xi,\zeta)<\delta$. Define $S(\xi)\coloneqq \{\eta\in F\ |\ \rho(\xi,\eta)<\delta\}$. Therefore, we can write
		\begin{equation*}
			\sum_{\zeta\in S(\xi)}P_\zeta(\xi)=1\ \text{ and }\ C(\tau)(\xi)=\sum_{\zeta\in S(\xi)}\tau(\zeta)\cdot P_\zeta(\xi)
		\end{equation*}
		for $\xi\in Z$. Then we have, 
		\begin{align*}
			\Big|C(\tau)(\xi)-\tau(\xi)\Big|&=\Big|\sum_{\zeta\in S(\xi)}\tau(\eta_i)\cdot P_\zeta(\xi)-\tau(\xi)\bigg(\sum_{\eta_i\in S(\xi)}P_\zeta(\xi)\bigg)\Big|\\
			&=\Big|\sum_{\zeta\in S(\xi)}(\tau(\eta_i)-\tau(\xi))\cdot P_\zeta(\xi)\Big|\\
			&\le \sum_{\zeta\in S(\xi)}\Big|\tau(\zeta)-\tau(\xi)\Big|\cdot P_\zeta(\xi)\\
			&<\epsilon.
		\end{align*}
		Therefore, we have $\|C(\tau)-\tau\|_\infty<\epsilon$
		and also from definition $\|C(\tau)\|_\infty\le\|\tau\|_\infty$.
	\end{proof}

	\medskip
	
	\subsection{Equicontinuous families of semi-metric spaces}\label{equicontinuity definition}\hfill\\
	
	\noindent This notion will be useful, and we shall see this come up in our discussion often.
	\begin{subdefinition}
		Given a family of compact semi-metric spaces $\{(Z_\lambda,\rho_\lambda)\}_{\lambda\in \Lambda}$, it is said to be equicontinuous if one of the following equivalent conditions hold$\colon$ 
		\begin{enumerate}
			\item for every $\epsilon>0$ there exists $\delta=\delta(\epsilon)>0$ such that for all $\lambda\in \Lambda$, if $\xi,\eta\in Z_\lambda$ with $\rho_\lambda(\xi,\eta)<\delta$ then 
			\begin{equation*}
				\sup_{\zeta\in Z_\lambda}\ \big|\rho_\lambda(\xi,\zeta)-\rho_\lambda(\eta,\zeta)\big|<\epsilon.
			\end{equation*}
			\medskip
			
			\item for every $\epsilon>0$ there exists $\delta=\delta(\epsilon)>0$ such that for all $\lambda\in \Lambda$, if $\xi,\xi',\eta,\eta'\in Z_\lambda$ with $\max\{\rho_\lambda(\xi,\xi'),\rho_\lambda(\eta,\eta')\}<\delta$ then 
			\begin{equation*} 
				\quad\ \big|\rho_\lambda(\xi,\eta)-\rho_\lambda(\xi',\eta')\big|<\epsilon.
			\end{equation*}
		\end{enumerate}
	\end{subdefinition}
	
	Any family of compact metric spaces is always equicontinuous by the triangle inequality. We shall later on see that a sequence of CAT$(-1)$ fillable antipodal spaces is equicontinuous (see \Cref{equicontinuity lemma} and \Cref{equicontinuity remark}).
	\medskip
	
	\subsection{Gromov-Hausdorff convergence}\hfill\\
	
	\noindent  In this subsection, we provide a brief overview of Gromov-Hausdorff (in short ``GH") distance and convergence concerning compact metric spaces. This was first introduced by Edwards \cite{edwards1975} later rediscovered by Gromov \cite{gromov1981structures},\cite{gromov1981group} (see \cite{tuzhilin2016invented} for an interesting discussion on the history of GH distance). Subsequently, we delve briefly into the case of pointed metric spaces that are not necessarily compact, which is of greater interest to us. For details, one can refer to \cite[Chapters 7, 8]{burago2022course}, also see \cite{bridson1999metric}, \cite{petrunin2023pure}, \cite{alexander2024alexandrov}.
	
	\medskip
	Given two metric spaces $X$ and $Y$, the {\it Gromov-Hausdorff distance} between them, denoted by $d_{GH}(X,Y)$ is defined to be the infimum of all numbers $r>0$ with the property that there exists a metric space $Z$ and isometric embeddings $f_1\colon X\to Z$ and $f_2\colon Y\to Z$ such that the Hausdorff distance $d_H$ between $f_1(X)$ and $f_2(Y)$ in $Z$, $d_H(f_1(X),f_2(Y))\le r$.
	\medskip
	
	\begin{subrmk}[\cite{burago2022course} Theorem 7.3.30]\label{dGH}
		The Gromov–Hausdorff distance $d_{GH}$ is non-negative, symmetric, and satisfies the triangle inequality; moreover
		$d_{GH}(X,Y)=0$ if and only if X and Y are isometric. Therefore, $d_{GH}$ defines a finite metric on the space of isometry classes of compact metric spaces.
	\end{subrmk}
	
	Given two metric spaces $X$ and $Y$, for $\epsilon>0$ a function $f\colon X\to Y$ (not necessarily continuous) is said to be an {\it $\epsilon$-isometry} if the {\it distortion} of $f$ is less than $\epsilon$, and $f(X)$ is an $\epsilon$-net of $Y$, i.e.
	\begin{equation}\label{epsiloniso}
		\Dis(f)\coloneqq \sup_{x_1,x_2\in X} \big|d_Y(f(x_1),f(x_2))-d_X(x_1,x_2)\big|<\epsilon \quad \text{and}\quad Y=\bigcup_{x\in X}B(f(x),\epsilon).
	\end{equation}
	
	\begin{subrmk}\label{e-iso}
		Given two metric spaces $X$ and $Y$, the following implications are valid (will use the above terminology) : 
		\begin{enumerate}[(i)]
			\item if $f\colon X\to Y$ is an $\epsilon$-isometry then for any $x_0\in X$ there exists $g\colon Y\to X$ a $3\epsilon$-isometry such that $g\circ f(x_0)=x_0$.
			\item if $X$, $Y$ are both compact and $d_{GH}(X,Y)<\epsilon$ then there exists a $2\epsilon$-isometry $f\colon X\to Y$
			\item if $X$, $Y$ are both compact and there exists an $\epsilon$-isometry $f\colon X\to Y$ then $d_{GH}(X,Y)<2\epsilon$.
		\end{enumerate}
		(see \cite[Chapter 7]{burago2022course}, cf. \cite{bridson1999metric}).
	\end{subrmk}
	\medskip
	
	\noindent We say, a sequence $\{X_n\}_{n\ge 1}$ of compact metric spaces converges to a compact metric space $X$ if $d_{GH} (X_n,X) \to0$
	as $n\to \infty$. We denote it by, $X_n\xrightarrow{d_{GH}}X$ and $X$ is called the Gromov–Hausdorff limit of $\{X_n\}$. The GH-limit, if it exists, is unique up to an isometry. Furthermore, from \Cref{e-iso}, we can infer that $X_n\xrightarrow{d_{GH}}X$ if and only if there exists $\epsilon_n$-isometries $f_n\colon  X_n\to X$ such that $\epsilon_n \to 0+$.
	
	Next, we recall the definition of the GH-Convergence for `pointed' metric spaces, which are proper and geodesic (not necessarily compact).   
	
	\begin{subdefinition}[{\bf Pointed Gromov-Hausdorff convergence}]\label{GHconv}
		A pointed metric space is a pair $(X, p)$ where $X$ is a metric space with $p$ a point in $X$.
		A sequence $\{(X_n,p_n )\}_{n\ge 1}$ of pointed proper, geodesic metric spaces converges in the
		Gromov–Hausdorff sense to another pointed proper geodesic metric space $(X, p)$ if the following
		holds. For every $R > 0$ and $\epsilon > 0$ there exists $n _0\ge1$ such that for
		every $n \ge n_0$ there is a  map $f \colon  B_{X_n}(p_n,R) \to B_X(p,R)\subset X$
		with$\colon$ \\
		
		(1) $f (p_n ) = p$;\\
		
		(2) $f$ is an $\epsilon$-isometry.\\
		
		\noindent Or equivalently, (from \Cref{e-iso}(i)), for every $R>0$ and $\epsilon>0$ there exists $m_0\ge 1$ such that for every $n\ge m_0$ there is an $\epsilon$-isometry $g\colon B(p,R) \to B(p_n,R)$ with $g(p)=p_n$.\\
		
		\noindent And we write $(X,p_n)\xrightarrow{GH\ conv.}(X,p)$, and $(X,p)$ is called a (pointed) GH-limit of the sequence. 
	\end{subdefinition}  
	
	Although a similar definition exists for general non-compact pointed GH-convergence (see \cite{burago2022course},\cite{bridson1999metric}), that is equivalent to the given definition for the proper geodesic case.
	
	\begin{rmk}\label{unique GH limit}
		Given $\{(X_n,p_n)\}_{n\ge 1}$ a sequence as in \Cref{GHconv}, if a pointed GH-limit exists, then it is unique up to a `pointed isometry'. For two GH-limits $(X,p)$ and $(X',p')$ there exists an isometry $f\colon (X,p)\to(X',p')$,  with $f(p)=p'$ (cf \cite[ Chapter 8]{burago2022course}). 
	\end{rmk}
	
	Next, we recall the Gromov pre-compactness criterion for pointed metric spaces (see \cite{burago2022course}, \cite{bridson1999metric}).
	
	\begin{subtheorem}{\bf (Gromov's Pre-compactness Theorem)}\label{GCT}
		Let $(X_n,x_n)$ be a sequence of pointed metric spaces. If for every $R>0$ and $\epsilon>0$, there exists an integer $N=N(R,\epsilon)$ such that the closed ball $B_{X_n}(x_n, R)\subset X_n$ admits an $\epsilon$-net with at most $N$ points, then a sub-sequence of $(X_n,x_n)$ converges in the pointed Gromov-Hausdorff sense to a complete metric space. 
	\end{subtheorem}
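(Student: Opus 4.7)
The plan is a standard diagonal extraction reducing the pointed statement to the compact Gromov--Hausdorff pre-compactness theorem. First I would establish the compact analogue: for a sequence $\{Y_n\}$ of compact metric spaces of uniformly bounded diameter admitting, for every $\epsilon > 0$, finite $\epsilon$-nets of cardinality at most $N(\epsilon)$, some subsequence converges in Gromov--Hausdorff. For each $k \ge 1$ fix a $(1/k)$-net $S_n^k \subset Y_n$ of size at most $N(1/k)$; by taking nested unions one may assume $S_n^k \subset S_n^{k+1}$, and after relabeling and passing to a subsequence, $|S_n^k| = M_k$ becomes independent of $n$. The distances $d_{Y_n}(s_n^{k,i}, s_n^{k,j})$ lie in a uniformly bounded interval, so a Cantor-style diagonal argument over the countable index set $\{(k,i,j)\}$ yields a further subsequence along which every such distance converges to a limit $d^{\infty}(s^{k,i}, s^{k,j})$. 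The completion of the metric quotient of the countable set $\{s^{k,i}\}$, after dividing by the zero-distance relation, gives the compact GH-limit $Y_{\infty}$.

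For the pointed statement, the hypothesis says each closed ball $B_n^R \coloneqq B_{X_n}(x_n, R)$ is totally bounded uniformly in $n$. Apply the compact case to $(B_n^R, x_n)$ for each $R \in \mathbb{N}$: extract a subsequence so that $(B_n^1, x_n) \to (Y_1, y_1)$, then a further subsequence for $R = 2$, and so on, finally passing to the diagonal. Along this diagonal $(B_n^R, x_n) \to (Y_R, y_R)$ for every $R \in \mathbb{N}$. The isometric inclusions $B_n^R \hookrightarrow B_n^{R+1}$ pass to pointed isometric embeddings $Y_R \hookrightarrow Y_{R+1}$ at the limit, provided the diagonal scheme is arranged so that the chosen $(1/k)$-net of $B_n^{R+1}$ extends the $(1/k)$-net of $B_n^R$. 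Define $(X, x)$ as the metric completion of the directed union $\bigcup_R Y_R$, based at $x \coloneqq y_1$ (all basepoints being identified). Since each closed ball in $X$ of finite radius sits inside some compact $Y_{R'}$, the space $X$ is complete (in fact proper).

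It remains to verify $(X_n, x_n) \to (X, x)$ in the sense of \Cref{GHconv}. Given $R > 0$ and $\epsilon > 0$, choose $R' \in \mathbb{N}$ with $R' > R + \epsilon$, and invoke compact GH-convergence $(B_n^{R'}, x_n) \to (Y_{R'}, y_{R'})$ to obtain, for $n$ large, an $(\epsilon/3)$-isometry $f_n \colon Y_{R'} \to B_n^{R'}$ with $f_n(y_{R'}) = x_n$. Since $B_X(x, R)$ sits inside $Y_{R'}$, the restriction $f_n|_{B_X(x, R)}$ lands in an $(\epsilon/3)$-neighborhood of $B_n^R$; a minor adjustment, replacing each image point by a nearest point of $B_n^R$, produces the required $\epsilon$-isometry $B_X(x, R) \to B_n^R$ sending $x$ to $x_n$.

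The main obstacle is the subtle mismatch between closed balls of a fixed radius and the behavior of Gromov--Hausdorff convergence, which is insensitive to points exactly on the boundary sphere: points at distance exactly $R$ from the basepoint in the limit may arise from sequences whose distances straddle $R$, and vice versa. This is why one must work with a slightly inflated radius $R' > R + \epsilon$ and then project back, rather than trying to match balls of the same radius directly. The uniform total-boundedness hypothesis is precisely what provides the slack to carry out these adjustments with controlled error, and also what forces the gluing of the $Y_R$ to produce a proper (not merely complete) limit space.
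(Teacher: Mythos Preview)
The paper does not prove this statement: it is recalled in the preliminaries as a classical result, with references to \cite{burago2022course} and \cite{bridson1999metric}, and no proof is supplied. Your sketch is the standard textbook argument (essentially the one found in those references): reduce to the compact case via a diagonal over radii, build the limit as a directed union of the compact GH-limits $Y_R$, and handle the ball-boundary mismatch by inflating the radius. This is correct in outline and there is nothing to compare it against in the paper itself.

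One small remark: in your compact step you write ``for a sequence $\{Y_n\}$ of compact metric spaces,'' but under the hypotheses of the theorem the closed balls $B_{X_n}(x_n,R)$ are only known to be totally bounded, not compact (the $X_n$ are not assumed complete or proper). This does not affect your argument, since the net-extraction and distance-limit construction only use total boundedness and the resulting $Y_R$ is compact regardless; but it is worth saying explicitly.
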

	
	In the following two paragraphs we shall briefly discuss the pointed GH-convergence of proper geodesically complete CAT$(-1)$ spaces.
	
	Note that for a sequence of proper geodesic metric spaces $(X_n,x_n)$, the sub-sequential limit $(X,x)$ obtained by Gromov's Pre-compactness Theorem ensures that it will also be proper and geodesic (see Exercises 8.1.8 and 8.1.9 in \cite{burago2022course}). In fact, if $\{(X_n,x_n)\}_{n\ge 1}$ is a sequence of proper geodesic CAT$(-1)$ spaces, then the limit is a proper geodesic CAT$(-1)$ space as well (see \cite{bridson1999metric}).
	
	In the terminology of \cite{lytchak2019geodesically}, a metric space $X$ is called {\it locally geodesically complete} if every local geodesic $\gamma\colon [a,b]\to X$ for any $a<b$, extends as a local geodesic to a larger interval $[a-\epsilon,b+\epsilon]$. For CAT$(-1)$ spaces, local geodesics coincide with geodesics. If $X$ is a complete, locally geodesically complete CAT$(-1)$ space, then it is {\it geodesically complete}, meaning any geodesic segment can be extended to a bi-infinite geodesic. In Example 4.2 of \cite{lytchak2019geodesically}, it is observed that if $\{(X_n,x_n)\}_{n\ge 1}$ is a sequence of locally geodesically complete pointed CAT$(-1)$ spaces, then so is their GH-limit. Thus, for proper, geodesically complete CAT$(-1)$ spaces, their GH-limit is also proper and geodesically complete.
	
	\bigskip
	
	\section{Gromov-Hausdorff-like distance between compact semi-metric spaces}
	
	\medskip
	We can similarly define the notion of $\epsilon$-isometry for maps between semi-metric spaces. We will sometimes refer to them as {\it rough isometries}.
	\begin{definition}
		Let $(Z_1,\rho_1)$ and $(Z_2,\rho_2)$ be two compact semi-metric spaces. For $\epsilon>0$ a function $f\colon (Z_1,\rho_1)\to (Z_2,\rho_2)$ is said to be an $\epsilon$-isometry, if distortion of $f$ is less than $\epsilon$ and $f(Z_1)$ is an $\epsilon$-net in $Z_2$ with respect to $\rho_2$, i.e.
		\begin{equation}
			\Dis(f)\coloneqq \sup_{\xi,\eta\in Z_1}\big|\rho_2(f(\xi),f(\eta))-\rho_1(\xi,\eta)\big|<\epsilon \quad \text{and} \quad Z_2=\bigcup_{\zeta\in Z_1}B_{\rho_2}(f(\zeta),\epsilon)
		\end{equation}
	\end{definition}
	
	Now we define the Gromov-Hausdorff like distance between compact semi-metric spaces and we call it the {\it Almost-Isometry} (in short {\it ``AI"}) distance.
	\begin{definition}
		Let $(Z_1,\rho_1)$ and $(Z_2,\rho_2)$ be two compact semi-metric spaces. We define the AI-distance between $(Z_1,\rho_1)$ and $(Z_2,\rho_2)$ to be $\colon$ 
		\begin{equation*}
			\begin{split}
				&d_{AI}((Z_1,\rho_1),(Z_2,\rho_2))
				\coloneqq \inf \{\ \epsilon>0\ |\ \exists\  \epsilon\text{-isometries }f\colon (Z_1,\rho_1)\to(Z_2,\rho_2)\text{, }g\colon (Z_2,\rho_2)\to(Z_1,\rho_1)\ \}
			\end{split}
		\end{equation*} 
	\end{definition}
	
	\noindent Note, this is similar to Edward's definition for the distance between compact metric spaces, \cite{edwards1975}, see discussion in \cite{tuzhilin2016invented}.  We have the following from definition$\colon$ 
	\begin{lemma}\label{AI=0}
		$d_{AI}((Z_1,\rho_1),(Z_2,\rho_2))=0 $ if and only if there exists an isometry $f\colon (Z_1,\rho_1)\to(Z_2,\rho_2)$.
	\end{lemma}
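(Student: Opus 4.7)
The ``only if'' direction is immediate: if $f\colon(Z_1,\rho_1)\to(Z_2,\rho_2)$ is a bijective $\rho$-preserving map, then $\Dis(f)=0$ and $f(Z_1)=Z_2$, so $f$ is an $\epsilon$-isometry for every $\epsilon>0$; the same holds for $f^{-1}$, forcing $d_{AI}=0$.

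For the converse, suppose $d_{AI}((Z_1,\rho_1),(Z_2,\rho_2))=0$, and choose a sequence $\epsilon_n\downarrow 0$ together with $\epsilon_n$-isometries $f_n\colon Z_1\to Z_2$. Since $Z_1$ is compact metrizable it is separable; fix a countable dense subset $D=\{x_k\}_{k\ge 1}\subset Z_1$. Using compactness of $Z_2$ and a diagonal extraction, pass to a subsequence (still denoted $f_n$) such that $f_n(x_k)$ converges in $Z_2$ for every $k$; call the pointwise limit $f\colon D\to Z_2$. Passing $n\to\infty$ in $|\rho_2(f_n(x),f_n(y))-\rho_1(x,y)|<\epsilon_n$ and using continuity of $\rho_2$, one obtains $\rho_2(f(x),f(y))=\rho_1(x,y)$ for all $x,y\in D$.

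The plan is then to extend $f$ continuously to all of $Z_1$. The key observation is that, in any compact semi-metric space, positivity and continuity of the semi-metric together with compactness imply that $\rho$-convergence is equivalent to topological convergence: any subsequential topological limit $y'$ of a $\rho$-convergent sequence $y_n\to y$ must satisfy $\rho_2(y',y)=0$, hence $y'=y$. For $x\in Z_1$ and a sequence $x_n\in D$ with $x_n\to x$, continuity of $\rho_1$ yields $\rho_1(x_n,x_m)\to 0$, hence $\rho_2(f(x_n),f(x_m))\to 0$; compactness of $Z_2$ together with the previous observation then shows that $\{f(x_n)\}$ converges, and one defines $f(x)$ to be this limit. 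Independence of the approximating sequence follows by the same argument, and continuity of $\rho_2$ gives $\rho_2(f(x),f(x'))=\rho_1(x,x')$ on all of $Z_1$. In particular $f$ is continuous and injective.

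It remains to prove surjectivity, which I expect to be the most delicate step given the absence of a triangle inequality. Given $y\in Z_2$, the $\epsilon_n$-density of $f_n(Z_1)$ yields $z_n\in Z_1$ with $\rho_2(f_n(z_n),y)<\epsilon_n$; extract a subsequence with $z_n\to z\in Z_1$ by compactness. The goal is to show $f(z)=y$. For any $x\in D$, passing to the limit in $|\rho_2(f_n(x),f_n(z_n))-\rho_1(x,z_n)|<\epsilon_n$ and using $f_n(x)\to f(x)$, $f_n(z_n)\to y$ (topologically, again by the equivalence above), $z_n\to z$, and continuity of the two semi-metrics, gives $\rho_2(f(x),y)=\rho_1(x,z)=\rho_2(f(x),f(z))$. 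Thus the continuous function $w\mapsto\rho_2(w,y)-\rho_2(w,f(z))$ vanishes on $f(D)$; since $f$ is continuous and $D$ is dense in $Z_1$, $f(D)$ is dense in $f(Z_1)$, and so this function vanishes on all of $f(Z_1)$. Evaluating at $w=f(z)$ yields $\rho_2(f(z),y)=0$, hence $f(z)=y$, so $f$ is the desired isometry.
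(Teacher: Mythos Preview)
Your proof is correct, and the construction of the isometric embedding $f$ via a diagonal argument on a countable dense set followed by a continuous extension is essentially identical to the paper's. The difference lies in the surjectivity step.

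The paper uses \emph{both} sequences of rough isometries: from the $g_n\colon Z_2\to Z_1$ it builds a second isometric embedding $g\colon Z_2\to Z_1$, and then argues that the self-embedding $h=f\circ g\colon Z_2\to Z_2$ must be onto, via the classical fact that an isometric self-embedding of a compact (semi-)metric space cannot omit a point (otherwise the iterates $\{h^n(y)\}$ would form an infinite $c$-separated set). Surjectivity of $f$ then follows immediately.

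Your argument instead exploits only the $f_n$'s, using their $\epsilon_n$-density to locate, for each $y\in Z_2$, a limit point $z\in Z_1$ with $\rho_2(f(x),y)=\rho_2(f(x),f(z))$ for all $x$, and then evaluating at $x=z$. This is a genuinely different route: it is more self-contained and uses strictly less input (you never invoke the $g_n$'s), at the cost of a slightly more delicate limit computation. The paper's approach is more modular and recycles a standard compactness lemma; yours shows that the $g_n$'s are in fact redundant for this particular statement, which is consonant with the paper's later Lemma~\ref{inverse}.
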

	\begin{proof}
		Suppose there exists an isometry $f\colon (Z_1,\rho_1)\to(Z_2,\rho_2)$ then easy to see that the AI-distance is $0$.
		
		Now suppose $d_{AI}((Z_1,\rho_1),(Z_2,\rho_2))=0$ then without loss of generality there exists $\epsilon_n$-isometries $f_n\colon (Z_1,\rho_1)\to(Z_2,\rho_2)$ and $g_n\colon (Z_2,\rho_2)\to (Z_1,\rho_1)$ with $\epsilon_n \to 0+$. $Z_1$ is a compact metric space, so we have a countable dense subset $D$. Passing to a sub-sequence of ${f_n}$, we have ${f_n(x)}$ converges for each $x\in D$. Hence define $f\colon D\to Z_2$ by $f(x)=\lim_{n\to\infty}f_n(x)$. Also $f$ is an isometric embedding from $D$ in to $(Z_2,\rho_2)$, i.e. for $x,x'\in D$ we have $\rho_2(f(x),f(x'))=\rho_1(x,x').$ By using continuity (\Cref{Coro 2} and \ref{Coro 3}) and positivity of separating functions we can uniquely extend this $f$ to an isometric embedding of $(Z_1,\rho_1)$ into $(Z_2,\rho_2)$, i.e. $\rho_2(f(x),f(x'))=\rho_1(x,x')$ for all $x,x'\in Z_1$. Using $g_n$'s we can similarly define an isometric embedding from $g\colon (Z_2,\rho_2)\to(Z_1,\rho_1)$. 
		
		Consider $h\coloneqq f\circ g\colon (Z_2,\rho_2)\to (Z_2,\rho_2)$ which is an isometric embedding. We show that $h$ is surjective and hence $f\colon (Z_1,\rho_1)\to (Z_2,\rho_2)$ is an isometry. If $h$ is not surjective, then there exists $y\in Z_2\setminus h(Z_2)$ exists. Now $h$ being continuous, $h(Z_2)$ is compact therefore $\rho_2(y,h(Z_2))=c>0$. Observe that the set $\{h^n(y)\ |\ n\in \mathbb N\}$ is an infinite set and is $c$-separated with respect to $\rho_2$, i.e. any for any $m,n\in \mathbb N$ (say $m>n$) then $\rho_2(h^n(y),h^m(y))=\rho_2(y,h^{m-n}(y))\ge c$. However, this is impossible since $Z_2$ is a compact metrizable space. Hence, $h$ must be surjective. Therefore, $f$ is a surjection and an isometry.	
	\end{proof}
	
	\medskip
	
	We say a sequence of compact semi-metric spaces $\{(Z_n,\rho_n)\}_{n\ge 1}$ {\it AI-converges} to another compact semi-metric space $(Z,\rho_0)$ if 
	\begin{equation*}
		d_{AI}((Z_n,\rho_n),(Z,\rho_0))\to 0 \quad \hbox{as} \quad n\to\infty
	\end{equation*}
	and we denote it by $(Z_n,\rho_n)\xrightarrow{AI\ conv.}(Z,\rho_0)$, and $(Z,\rho_0)$ is called an {\it AI-limit} of the sequence. In order to have AI-convergence, one require $\epsilon_n$-isometries $f_n \colon (Z_n,\rho_n) \to (Z,\rho_0)$ and $g_n \colon (Z,\rho_0) \to (Z_n,\rho_n)$ with $\epsilon_n\to 0+$. However, the following \Cref{inverse} shows that, it is enough to have $\epsilon_n$-isometries $f_n \colon (Z_n,\rho_n) \to (Z,\rho_0)$, with $\epsilon_n\to 0+$, for AI-convergence. 
	
	\begin{lemma}\label{inverse}
		Let $\{(Z_n,\rho_n)\}_{n\ge 1}$ be a sequence of compact semi-metric spaces and let $(Z,\rho_0)$ be another compact semi-metric space such that we have $\epsilon_n$-isometries $f_n\colon (Z_n,\rho_n)\to(Z,\rho_0)$ with $\epsilon_n\to 0+$. Then $(Z_n,\rho_n)\xrightarrow{AI\ conv.}(Z,\rho_0)$.
	\end{lemma}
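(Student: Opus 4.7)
The plan is to construct, for each $n$, a map $g_n\colon (Z,\rho_0)\to (Z_n,\rho_n)$ which is a $\delta_n$-isometry with $\delta_n\to 0+$; combined with the hypothesized $\epsilon_n$-isometries $f_n$ this will give AI-convergence directly from the definition. The natural construction is the "near-inverse": for each $\eta\in Z$, since $f_n(Z_n)$ is an $\epsilon_n$-net in $Z$ with respect to $\rho_0$, pick $g_n(\eta)\in Z_n$ satisfying $\rho_0(f_n(g_n(\eta)),\eta)<\epsilon_n$. This imitates the construction in \Cref{e-iso}(i) for metric spaces.

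The $\epsilon$-net condition for $g_n$ is the easier half. Given any $x\in Z_n$, set $\eta=f_n(x)\in Z$; then the definition of $g_n$ gives $\rho_0(f_n(g_n(f_n(x))),f_n(x))<\epsilon_n$, and applying the distortion bound for $f_n$ to the pair $(g_n(f_n(x)),x)$ upgrades this to $\rho_n(g_n(f_n(x)),x)<2\epsilon_n$. So $g_n(Z)$ is a $2\epsilon_n$-net in $(Z_n,\rho_n)$.

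The distortion bound for $g_n$ is where the absence of a triangle inequality becomes the main obstacle. For $\eta,\eta'\in Z$ I would write
\begin{align*}
\big|\rho_n(g_n(\eta),g_n(\eta'))-\rho_0(\eta,\eta')\big|
&\le \big|\rho_n(g_n(\eta),g_n(\eta'))-\rho_0(f_n(g_n(\eta)),f_n(g_n(\eta')))\big|\\
&\quad+\big|\rho_0(f_n(g_n(\eta)),f_n(g_n(\eta')))-\rho_0(\eta,\eta')\big|.
\end{align*}
The first summand is controlled by $\Dis(f_n)<\epsilon_n$. For the second, the triangle inequality is unavailable; instead I would invoke \Cref{Coro 2} applied to the single compact semi-metric space $(Z,\rho_0)$, which gives, for any $\epsilon>0$, a $\delta=\delta(\epsilon)>0$ such that whenever $\rho_0(\alpha,\alpha')<\delta$ and $\rho_0(\beta,\beta')<\delta$ one has $|\rho_0(\alpha,\beta)-\rho_0(\alpha',\beta')|<\epsilon$. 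Taking $\alpha=f_n(g_n(\eta))$, $\alpha'=\eta$, $\beta=f_n(g_n(\eta'))$, $\beta'=\eta'$, the bounds $\rho_0(\alpha,\alpha')<\epsilon_n$, $\rho_0(\beta,\beta')<\epsilon_n$ hold by construction, so once $\epsilon_n<\delta$ the second summand is at most $\epsilon$.

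Combining the two bounds, $\Dis(g_n)<\epsilon_n+\epsilon$ for all $n$ large. Given that $\epsilon>0$ was arbitrary, we obtain a sequence $\delta_n\to 0+$ with $g_n$ being a $\delta_n$-isometry (concretely one can set $\delta_n=\max\{2\epsilon_n,\epsilon_n+\epsilon(\epsilon_n)\}$ with a suitable choice of $\epsilon(\epsilon_n)\to 0$). Together with the $f_n$ this verifies the AI-convergence criterion. The only subtle point is the use of \Cref{Coro 2}, which is the correct substitute for the triangle inequality in the semi-metric setting and is exactly what makes the argument work without assuming equicontinuity of the sequence $\{(Z_n,\rho_n)\}$.
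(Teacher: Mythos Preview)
Your proof is correct and in fact more economical than the paper's. Both arguments hinge on the same key point---using \Cref{Coro 2} on $(Z,\rho_0)$ as a surrogate for the triangle inequality to control the second summand in the distortion split---but the constructions of $g_n$ differ. You take the straightforward near-inverse: for each $\eta\in Z$ pick any $g_n(\eta)\in Z_n$ with $\rho_0(f_n(g_n(\eta)),\eta)<\epsilon_n$. The paper instead first chooses a $2\epsilon_n$-separated $2\epsilon_n$-net $F_n\subset Z_n$, observes that $f_n|_{F_n}$ is injective onto $G_n=f_n(F_n)$, defines $g_n$ on $G_n$ as the exact inverse, and then extends to all of $Z$. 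This more elaborate route yields the extra structure $(g_n\circ f_n)|_{F_n}=\mathrm{id}_{F_n}$ and $(f_n\circ g_n)|_{G_n}=\mathrm{id}_{G_n}$, which the paper reuses later (notably in the proof of \Cref{epsilon net}). For the bare statement of the lemma your version suffices and is cleaner; the one cosmetic point is the final sentence about $\delta_n=\max\{2\epsilon_n,\epsilon_n+\epsilon(\epsilon_n)\}$---it is tidier to note that your $g_n$ do not depend on $\epsilon$, so the bound $\Dis(g_n)<\epsilon_n+\epsilon$ for all large $n$ and every $\epsilon>0$ directly gives $\Dis(g_n)\to 0$.
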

	\begin{proof}
		We will show that, for any fixed $\delta>0$ there exist maps $g_n\colon (Z,\rho_0)\to(Z_n,\rho_n)$ which are $\delta$-isometries for all $n$ sufficiently large. Therefore, for all $n$ sufficiently large we will have $d_{AI}((Z_n,\rho_n),(Z,\rho_0))<\delta$.
		
		We know by continuity property of $\rho_0$ (from \Cref{Coro 2}) we can choose $\delta_1,\delta_2>0$ with $\delta=\delta_0>\delta_1>\delta_2$ such that, if $\xi,\xi',\eta,\eta'\in Z$ with $\max\{\rho_0(\xi,\xi'),\rho_0(\eta,\eta')\}<\delta_i$ then
		\begin{equation}\label{delta i}
			|\rho_0(\xi,\eta)-\rho_0(\xi',\eta')|<\frac{\delta_{i-1}}{2},
		\end{equation} (for $i=1,2$, since we shall be using this twice)
		
		Now for every $n$ we consider $F_n$ a finite $(2\epsilon_n)$-net of $Z_n$ with respect to $\rho_n$ such that $\rho_n(\xi,\eta)\ge 2\epsilon_n$ for all distinct $\xi,\eta\in F_n$ (this is always possible because $Z_n$'s are compact). Observe that $f_n$ maps $F_n$ injectively onto $G_n\coloneqq f_n(F_n)\subset Z$ for all $n$, because		
		\begin{equation*}
			\rho_0(f_n(\xi),f_n(\eta))\ge\rho_n(\xi,\eta)-\epsilon_n>2\epsilon_n-\epsilon_n>0
		\end{equation*}
		for distinct $\xi ,\eta\in F_n$.
		
		Let $\xi\in Z$, for every $n$ there exists  $\xi_n\in Z_n$ such that $\rho_0(\xi,f_n(\xi_n))<\epsilon_n$ (as $f_n$'s are $\epsilon_n$-isometries), moreover there exists $\tilde{\eta_n}\in F_n$ such that $\rho_n(\xi_n,\tilde{\eta_n})<2\epsilon_n$. By hypothesis $\epsilon_n \to 0+$, so for all $n$ sufficiently large $\rho_0(\xi,f_n(\tilde{\xi_n}))<\epsilon_n<\delta_2$ implies
		\begin{equation*}
			\begin{split}
				|\rho_0(\xi,f_n(\tilde{\eta_n}))&-\rho_0(f_n(\xi_n),f_n(\tilde{\eta_n}))|<\frac{\delta_1}{2}\\
				\implies \rho_0(\xi,f_n(\tilde{\eta_n}))&<\rho_0(f_n(\xi_n),f_n(\tilde{\eta_n}))+\frac{\delta_1}{2}\\
				&\le \rho_n(\xi_n,\tilde{\eta_n}) +\epsilon_n+\frac{\delta_1}{2}\\
				&\le 3\epsilon_n+\frac{\delta_1}{2}<\delta_1.
			\end{split}
		\end{equation*}
		Therefore, for all $n$ sufficiently large $G_n$ is a $\delta_1$-net of $Z$ with respect to $\rho_0$, (hence a $\delta$-net as well). Since $f_n$ maps $F_n$ injectively to $G_n$, define $g_n\colon G_n\to F_n$ such that 
		\begin{equation}\label{injectivity relation}
			(g_n\circ f_n)|_{F_n}= \text{id}_{F_n} \quad \text{and} \quad (f_n\circ g_n)|_{G_n}=\text{id}_{G_n} 
		\end{equation}
		For each $\xi\in Z$ define the set 
		\begin{equation*}
			S_n(\xi)\coloneqq \{\eta\in F_n\ |\ \rho_0(\xi,f_n(\eta))<\delta_1\}\subset F_n.
		\end{equation*}
		Note for all $n$ sufficiently large $S_n(\xi)\not=\emptyset$, since $G_n=f_n(Z_n)$ is a $\delta_1$-net of $Z$. So, axiom of choice lets us extend functions $g_n\colon (Z,\rho_0)\to(Z_n,\rho_n)$ for all $n$ sufficiently large such that $g_n(\xi)\in S_n(\xi)$, for each $\xi\in Z\setminus G_n$. Clearly $g_n(Z)=F_n$, which is a $(2\epsilon_n)$-net with respect to $\rho_n$. By the above construction, for $\xi_1,\xi_2\in Z$ and for all $n$ sufficiently large, we have
		\begin{equation*}
			\max\{\rho_0(\xi_1,f_n(g_n(\xi_1))),\rho_0(\xi_2,f_n(g_n(\xi_2)))\}<\delta_1.
		\end{equation*}
		Thus, by the choice of $\delta_1$ above \eqref{delta i} and $f_n$ being $\epsilon_n$-isometry, we get
		\begin{equation*}
			\begin{split}
				\big|\rho_n(g_n(\xi_1),g_n(\xi_2))-\rho_0(\xi_1,\xi_2)\big|
				\le\ &\big|\rho_n(g_n(\xi_1),g_n(\xi_2))-\rho_0(f_n(g_n(\xi_1)),f_n(g_n(\xi_2)))\big|\\
				+&\big|\rho_0(f_n(g_n(\xi_1)),f_n(g_n(\xi_2)))-\rho_0(\xi_1,\xi_2)\big|\\
				\le& \  \epsilon_n+\frac{\delta}{2}\  <\  \delta.
			\end{split}
		\end{equation*}
		Therefore, for all $n$ large $g_n$ as defined above is a $\delta$-isometry.
	\end{proof}
	
	\medskip
	
	The technique of constructing rough $\delta$-isometries $g_n$, as described above, is standard. 
	This technique will be used multiple times.
	
	\begin{rmk}\label{AIconv}
		It is worth noting from \Cref{inverse} we have $(Z_n,\rho_n)\xrightarrow{AI\ conv.}(Z,\rho_0)$ if and only if there exist $\epsilon_n$-isometries $f_n\colon (Z_n,\rho_n)\to(Z,\rho_0)$ with $\epsilon_n \to 0+$. Moreover if $\rho_n$'s and $\rho_0$ are metrics then $(Z_n,\rho_n)\xrightarrow{AI\ conv.}(Z,\rho_0)$ is equivalent to $(Z_n,\rho_n)\xrightarrow{d_{GH}}(Z,\rho_0)$.
	\end{rmk}
	
	Similar to what we observed in \Cref{inverse}, the next \Cref{conditional inverse} demonstrates that for AI-convergence, it suffices to have $\epsilon_n$-isometries $g_n\colon (Z, \rho_0) \to (Z_n, \rho_n)$ with $\epsilon_n \to 0^+$, provided that $\{(Z_n, \rho_n)\}_{n \ge 1}$ is an equicontinuous family of semi-metrics.
	\begin{lemma}\label{conditional inverse}
		Let $\{(Z_n,\rho_n)\}_{n\ge 1}$ be a sequence of equicontinuous compact semi-metric spaces and let $(Z,\rho_0)$ be another compact semi-metric space. If there exist $\delta_n$-isometries $g_n\colon (Z,\rho_0)\to(Z_n,\rho_n)$ such that $\delta_n\to 0+$, then $(Z_n,\rho_n)\xrightarrow{AI\ conv.}(Z,\rho_0)$.
	\end{lemma}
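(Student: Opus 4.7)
The plan is to show that the hypothesis produces $\epsilon_n$-isometries in the reverse direction $f_n \colon (Z_n,\rho_n) \to (Z,\rho_0)$ with $\epsilon_n \to 0+$; by \Cref{AIconv} (which relies on \Cref{inverse}) this will yield AI-convergence. The construction mirrors the one used in the proof of \Cref{inverse}, with the crucial substitution that equicontinuity of the family $\{\rho_n\}$ plays the role that continuity of the fixed semi-metric $\rho_0$ played in that argument.

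Fix $\delta > 0$. I would first invoke the equicontinuity hypothesis twice to obtain $\delta > \delta_1 > \delta_2 > 0$ such that, uniformly in $n$, if $\xi,\xi',\eta,\eta' \in Z_n$ satisfy $\max\{\rho_n(\xi,\xi'),\rho_n(\eta,\eta')\} < \delta_i$ then $|\rho_n(\xi,\eta) - \rho_n(\xi',\eta')| < \delta_{i-1}/2$ for $i=1,2$ (with $\delta_0 \coloneqq \delta$). Next, for each $n$ I would pick a finite $(2\delta_n)$-separated $(2\delta_n)$-net $F_n \subset Z$ with respect to $\rho_0$. Since $g_n$ is a $\delta_n$-isometry, for distinct $\xi,\eta \in F_n$ we have $\rho_n(g_n(\xi),g_n(\eta)) \ge \rho_0(\xi,\eta) - \delta_n \ge \delta_n > 0$, so $g_n$ maps $F_n$ injectively onto $G_n \coloneqq g_n(F_n) \subset Z_n$.

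The first substantive step is to show that $G_n$ is a $\delta_1$-net of $Z_n$ with respect to $\rho_n$ for all $n$ sufficiently large. Given $z \in Z_n$, since $g_n$ is a $\delta_n$-isometry its image is a $\delta_n$-net, so there exists $\xi \in Z$ with $\rho_n(z, g_n(\xi)) < \delta_n$. Choose $\tilde\eta \in F_n$ with $\rho_0(\xi,\tilde\eta) < 2\delta_n$, giving $\rho_n(g_n(\xi),g_n(\tilde\eta)) \le 3\delta_n$. For $n$ large enough that $\delta_n < \delta_2$, equicontinuity applied to the pair of pairs $(z,g_n(\xi))$ and $(g_n(\tilde\eta),g_n(\tilde\eta))$ yields $|\rho_n(z,g_n(\tilde\eta)) - \rho_n(g_n(\xi),g_n(\tilde\eta))| < \delta_1/2$, hence $\rho_n(z,g_n(\tilde\eta)) < 3\delta_n + \delta_1/2 < \delta_1$. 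Now define $f_n \colon Z_n \to Z$ by sending $g_n(\xi) \mapsto \xi$ on $G_n$ (using the injectivity above) and, for $z \in Z_n \setminus G_n$, using the axiom of choice to send $z$ to some $\eta \in F_n$ with $\rho_n(z,g_n(\eta)) < \delta_1$.

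Finally I would verify that $f_n$ is a $\delta$-isometry for $n$ large. Its image $F_n$ is a $(2\delta_n)$-net of $Z$, hence a $\delta$-net. For the distortion bound, given $z_1,z_2 \in Z_n$ with $f_n(z_i) = \eta_i \in F_n$, we have $\rho_n(z_i, g_n(\eta_i)) < \delta_1$ by construction, so equicontinuity (with the choice of $\delta_1$) gives $|\rho_n(z_1,z_2) - \rho_n(g_n(\eta_1),g_n(\eta_2))| < \delta/2$, and the $\delta_n$-isometry property of $g_n$ gives $|\rho_n(g_n(\eta_1),g_n(\eta_2)) - \rho_0(\eta_1,\eta_2)| < \delta_n$; the triangle inequality then gives distortion less than $\delta_n + \delta/2 < \delta$. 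Thus $d_{AI}((Z_n,\rho_n),(Z,\rho_0)) < 2\delta$ eventually (or apply \Cref{inverse} to conclude directly). The main obstacle is the transfer of continuity-type estimates to the varying semi-metrics $\rho_n$, which is exactly what equicontinuity provides uniformly; without it, the argument of \Cref{inverse} would fail in this direction since each individual $\rho_n$ carries no a priori modulus of continuity we could control in the limit.
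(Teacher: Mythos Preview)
Your proof is correct and follows essentially the same route as the paper's: invoke equicontinuity of the family $\{\rho_n\}$ to obtain nested thresholds, push a net of $Z$ forward to a net of $Z_n$ via $g_n$, define $f_n$ as the approximate inverse, and bound the distortion using equicontinuity together with the $\delta_n$-isometry property of $g_n$. The only notable difference is cosmetic: the paper fixes a single $\epsilon_3$-net $G \subset Z$ (independent of $n$) and uses it for all sufficiently large $n$, whereas you take $n$-dependent $(2\delta_n)$-nets $F_n \subset Z$; both choices work and lead to the same estimates.
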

	\begin{proof}
		It is enough to show, given any $\epsilon>0$ there exist $\epsilon$-isometries $f_n\colon (Z_n,\rho_n)\to (Z,\rho_0)$, for all $n$ sufficiently large. Hence, we will get $d_{AI}((Z_n,\rho_n),(Z,\rho_0))<\epsilon$ for all $n$ sufficiently large.
		
		We know $\{(Z_n,\rho_n)\}_{n\ge 1}$ being equicontinuous can choose $\epsilon_1,\epsilon_2>0$ with $\delta=\epsilon_0>\epsilon_1>\epsilon_2$ such that, (for $i=1,2$) for all $n$, if $\xi,\xi',\eta,\eta'\in Z_n$ with $\max\{\rho_0(\xi,\xi'),\rho_0(\eta,\eta')\}<\epsilon_i$ then
		\begin{equation}\label{epsilon j}
			\big|\rho_n(\xi,\eta)-\rho_n(\xi',\eta')\big|<\frac{\epsilon_{i-1}}{2},
		\end{equation} (for $i=1,2$, since we shall be using this twice).
		
		Fix $\epsilon_3<\frac{\epsilon_2}{2}$. Now consider $G$ a finite $\epsilon_3$-net of $Z$ with respect to $\rho_0$ such that $\rho_0(\xi,\eta)\ge\epsilon_3$ for all distinct $\xi,\eta\in G$, this is possible by the compactness of $Z$. Observe that $g_n$ maps $G$ injectively onto $F_n\coloneqq g_n(G)\subset Z_n$ for all $n$ sufficiently large, because
		\begin{equation*}
			\rho_n(g_n(\xi),g_n(\eta))\ge\rho_0(\xi,\eta)-\delta_n>\epsilon_3-\delta_n>0
		\end{equation*}
		for distinct $\xi ,\eta\in G$, since $\delta_n\to 0+$.
		
		Let $\xi_n\in Z_n$, there exists  $\xi\in Z$ such that $\rho_n(\xi_n,g_n(\xi))<\delta_n$ (as $g_n$'s are $\delta_n$-isometries), moreover there exists $\tilde{\eta}\in G$ such that $\rho_0(\xi,\tilde{\eta})<\epsilon_3$. By hypothesis $\delta_n \to 0+$, so for all $n$ large $\rho_n(\xi_n,g_n(\xi))<\delta_n<\epsilon_2$ implies 
		\begin{equation*}
			\begin{split}
				\big|\rho_n(\xi_n,g_n(\tilde{\eta}))&-\rho_n(g_n(\xi),g_n(\tilde{\eta}))\big|<\frac{\epsilon_1}{2}\\
				\implies \rho_n(\xi_n,g_n(\tilde{\eta}))&<\rho_n(g_n(\xi),g_n(\tilde{\eta}))+\frac{\epsilon_1}{2}\\
				&\le \rho_0(\xi,\tilde{\eta}) +\delta_n+\frac{\epsilon_1}{2}\\
				&\le \epsilon_3+\delta_n+\frac{\epsilon_1}{2}<\epsilon_1.
			\end{split}
		\end{equation*}
		Therefore, for all $n$ sufficiently large $F_n$ is a $\epsilon_1$-net of $Z_n$ with respect to $\rho_n$ (hence a $\epsilon$-net as well). Define map $f_n\colon F_n\to G$ for all $n$ sufficiently large, such that
		\begin{equation*}
			f_n\circ g_n|_{G}= \text{id}_{G} \quad \text{and} \quad g_n\circ f_n|_{F_n}=\text{id}_{F_n}.
		\end{equation*}
		For each $\xi\in Z_n$ for $n$ sufficiently large define the set 
		\begin{equation*}
			S^n(\xi)\coloneqq \{\eta\in G\ |\ \rho_n(\xi,g_n(\eta))<\epsilon_1\}\subset G.
		\end{equation*}
		Here also note, $S^n(\xi)\not=\emptyset$ since for all $n$ sufficiently large as $G_n=f_n(Z_n)$ is a $\epsilon_1$-net of $Z_n$. Then axiom of choice lets us extend the functions $f_n\colon (Z_n,\rho_n)\to(Z,\rho_0)$ for all $n$ sufficiently large such that $f_n(\xi)\in S^n(\xi)$ for $\xi\in Z_n\setminus F_n$. Then we have $f_n(Z_n)=G$, which is an $\epsilon_3$-net with respect to $\rho_0$. 
		
		Next we use the fact that distortion of $f\colon F_n\to G$ is less or equal to $\delta_n$ and refer to the equicontinuity of $\rho_n$'s as given in \eqref{epsilon j} to conclude$\colon$  for all $n$ sufficiently large we have $f_n$, as defined above, is an $\epsilon$-isometry.
	\end{proof}
	
	Also, as a consequence of the above lemmas, we have the uniqueness of AI-limit up to isometry for AI-convergence of sequences of compact semi-metric spaces.
	
	\medskip
	
	\begin{prop}
		Let $\{(Z_n,\rho_n)\}_{n\ge 1}$ be a sequence compact semi-metric spaces and let $(Z,\rho_0)$ be another compact semi-metric space. If $(Z_n,\rho_n)\xrightarrow{AI\ conv.}(Z,\rho_0)$ then this limit is unique up to isometry. 
	\end{prop}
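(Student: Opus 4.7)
The plan is to reduce the uniqueness statement to \Cref{AI=0}, by showing that $d_{AI}((Z,\rho_0),(Z',\rho_0'))=0$ whenever $(Z_n,\rho_n)$ AI-converges to two candidate limits $(Z,\rho_0)$ and $(Z',\rho_0')$. The key input is that the composition of two rough isometries, whose errors tend to zero, is again a rough isometry with error tending to zero, so passing through the sequence $(Z_n,\rho_n)$ gives rough isometries directly between $Z$ and $Z'$.

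More concretely, suppose $(Z_n,\rho_n)\xrightarrow{AI\ conv.}(Z,\rho_0)$ and $(Z_n,\rho_n)\xrightarrow{AI\ conv.}(Z',\rho_0')$. By the definition of AI-convergence together with \Cref{inverse}, there exist $\epsilon_n$-isometries $f_n\colon(Z_n,\rho_n)\to(Z,\rho_0)$ and $g_n\colon(Z,\rho_0)\to(Z_n,\rho_n)$, and $\epsilon_n'$-isometries $f_n'\colon(Z_n,\rho_n)\to(Z',\rho_0')$ and $g_n'\colon(Z',\rho_0')\to(Z_n,\rho_n)$, with $\epsilon_n,\epsilon_n'\to 0$. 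Consider $h_n\coloneqq f_n'\circ g_n\colon(Z,\rho_0)\to(Z',\rho_0')$. A straightforward triangle-style estimate gives
\begin{equation*}
\big|\rho_0'(h_n(\xi),h_n(\eta))-\rho_0(\xi,\eta)\big|\le \big|\rho_0'(f_n'(g_n(\xi)),f_n'(g_n(\eta)))-\rho_n(g_n(\xi),g_n(\eta))\big|+\big|\rho_n(g_n(\xi),g_n(\eta))-\rho_0(\xi,\eta)\big|\le \epsilon_n'+\epsilon_n,
\end{equation*}
so $\Dis(h_n)\le\epsilon_n+\epsilon_n'$. For the net property, given $\zeta\in Z'$ choose $\xi_n\in Z_n$ with $\rho_0'(\zeta,f_n'(\xi_n))<\epsilon_n'$ using that $f_n'(Z_n)$ is an $\epsilon_n'$-net in $Z'$, then $\eta\in Z$ with $\rho_n(\xi_n,g_n(\eta))<\epsilon_n$ using that $g_n(Z)$ is an $\epsilon_n$-net in $Z_n$; applying the distortion bound for $f_n'$ yields $\rho_0'(\zeta,h_n(\eta))<2\epsilon_n'+\epsilon_n$. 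Thus $h_n$ is an $\eta_n$-isometry with $\eta_n\coloneqq 2\epsilon_n'+\epsilon_n\to 0$.

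By the symmetric argument, $f_n\circ g_n'\colon(Z',\rho_0')\to(Z,\rho_0)$ is an $\eta_n'$-isometry with $\eta_n'\to 0$. Consequently, for each sufficiently large $n$ there are rough isometries in both directions between $(Z,\rho_0)$ and $(Z',\rho_0')$ of error at most $\max(\eta_n,\eta_n')$, so by the definition of $d_{AI}$ we have $d_{AI}((Z,\rho_0),(Z',\rho_0'))\le\max(\eta_n,\eta_n')$. Letting $n\to\infty$ gives $d_{AI}((Z,\rho_0),(Z',\rho_0'))=0$, and \Cref{AI=0} then produces an isometry $(Z,\rho_0)\to(Z',\rho_0')$, proving uniqueness.

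The argument is essentially routine; there is no serious obstacle, only the need to verify that composing two rough isometries multiplies neither the distortion nor the net constant badly. The computation for the distortion is immediate, while the net property requires one extra triangle step (as above) to absorb the $\epsilon_n'$ error introduced when transferring $g_n(Z)\subset Z_n$ across $f_n'$ into $Z'$.
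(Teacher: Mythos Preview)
Your overall strategy---compose rough isometries through $(Z_n,\rho_n)$ and invoke \Cref{AI=0}---is sound, and the distortion estimate for $h_n=f_n'\circ g_n$ is correct. However, the net-property step has a genuine gap: you write ``one extra triangle step'' to conclude $\rho_0'(\zeta,h_n(\eta))<2\epsilon_n'+\epsilon_n$ from $\rho_0'(\zeta,f_n'(\xi_n))<\epsilon_n'$ and $\rho_0'(f_n'(\xi_n),h_n(\eta))<\epsilon_n+\epsilon_n'$, but $\rho_0'$ is only a semi-metric and need not satisfy the triangle inequality. This is precisely the subtlety that distinguishes the semi-metric setting from the metric one.

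The fix is to replace the triangle step by the continuity property of $\rho_0'$ (\Cref{Coro 2}): given $\delta>0$, choose $\delta'>0$ so that $\max\{\rho_0'(a,c),\rho_0'(b,c)\}<\delta'$ implies $\rho_0'(a,b)<\delta$; then for $n$ large enough that $\epsilon_n+\epsilon_n'<\delta'$, your two estimates give $\rho_0'(\zeta,h_n(\eta))<\delta$. With this correction, $h_n$ is a $\delta$-isometry for all large $n$, and the rest of your argument goes through. The paper's own proof avoids composing $f_n'\circ g_n$ and instead works with finite nets $F_n\subset Z_n$ mapped bijectively into $Z$ and $Z'$, then extends; your composition approach is more direct once the triangle step is replaced by continuity.
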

	\begin{proof}
		Let $(Z,\rho_0)$ and $(Z',\rho'_0)$ be two AI-limits of the sequence of compact semi-metric spaces $\{(Z_n,\rho_n)\}_{n\ge 1}$. We will show that for every $\delta>0$, we have $d_{AI}((Z',\rho'_0),(Z,\rho_0))<2\delta$. Then $d_{AI}((Z',\rho'_0),(Z,\rho_0))=0$ and hence there exists an isometry from $(Z',\rho'_0)$ to $(Z,\rho_0)$ by \Cref{AI=0}.  
		
		Fix $\delta>0$. By continuity of $\rho_0$ (from \Cref{Coro 1}) choose $\delta_1$ with $0<\delta_1<\delta$ such that if $\xi,\xi',\eta,\eta'\in Z$ with $\max\{\rho_0(\xi,\xi'),\rho_0(\eta,\eta')\}<\delta_1$ then 
		\begin{equation*}
			\big|\rho_0(\xi,\eta)-\rho_0(\xi',\eta')\big|<\delta.
		\end{equation*} 
		
		From the given hypothesis, without loss of generality there exists $\epsilon_n$-isometries $f_n\colon (Z_n,\rho_n)\to(Z,\rho_0)$ and $f'_n\colon (Z_n,\rho_n)\to(Z',\rho'_0)$ with $\epsilon_n \to 0+$. As in the proof of \Cref{inverse}, for every $n$ let $F_n$ be a finite $(2\epsilon_n)$-net of $Z_n$ with respect to $\rho_n$ such that $\rho_n(\xi,\eta)\ge 2\epsilon_n$ for all distinct $\xi,\eta\in F_n$. Observe that both $f_n$ and $f'_n$ maps $F_n$ injectively onto $G_n\coloneqq f_n(F_n)\subset Z$ and $G'_n\coloneqq f'_n(F_n)\subset Z'$ respectively for all $n$. Therefore, for all $n$, $\#G_n=\#G'_n$.
		
		Similarly, given $\delta_1>0$, for all $n$ sufficiently large $G_n$ and $G'_n$ are $\delta_1$-net of $Z$ and $Z'$ respectively. Consider $g_n\colon G_n\to F_n$ and $g'_n\colon G'_n\to F_n$ such that 
		\begin{equation*}
			(f_n\circ g_n)|_{G_n}=\text{id}_{G_n} \quad \hbox{and}\quad (f'_n\circ g'_n)|_{G'_n}=\text{id}_{G'_n}
		\end{equation*}
		Then $\varphi_n\coloneqq f'_n\circ g_n\colon G_n\to G'_n$ is a bijection from $G_n$ to $G'_n$ with inverse $\psi_n\coloneqq (\varphi_n)^{-1} =f_n\circ g'_n$. Observe that, the distortion of both $\varphi_n\colon G_n\to G'_n$ and $\psi_n\colon G'_n\to G_n$ is a less or equal to $2\epsilon_n$. 
		
		For any $\xi\in Z$, for all $n$ sufficiently large the set  
		\begin{equation*}
			P_n(\xi)\coloneqq \{\ \zeta\in G'_n\ |\  \rho_0(\xi,\psi_n(\zeta))<\delta_1\ \}\subset G'_n
		\end{equation*}
		is non-empty, since $G_n$ is a $\delta_1$-net. By axiom of choice we extend the map $\varphi_n$ to $\tilde{\varphi}_n\colon (Z,\rho_0)\to (Z',\rho'_0)$, such that for $\xi\in Z\setminus G_n$ we define $\tilde{\varphi}_n(\xi)\in P_n(\xi)$. Hence $\varphi_n(Z)=G'_n$ is a $\delta_1$-net of $Z'$. From the choice of $\delta_1$, we can argue that the extended map $\tilde{\varphi}_n$ has distortion $2\delta$. Therefore, $\tilde{\varphi}_n$ is $2\delta$-isometry for all $n$ sufficiently large.
		
		Similarly, for all $n$ sufficiently large, we can extend $\psi_n$ to a $2\delta$-isometry $\tilde{\psi}_n: (Z',\rho'_0) \to (Z,\rho_0)$.
		
		\medskip
		
		Thus, we get $d_{AI}((Z',\rho'_0),(Z,\rho_0))<2\delta$.
	\end{proof}
	
	\medskip
	
	\begin{rmk}
		From the above we have observed that for any $(Z_1,\rho_1)$ and $(Z_2,\rho_2)$ compact semi metric spaces 
		\begin{enumerate}[(a)]
			\item $d_{AI}((Z_1,\rho_1),(Z_2,\rho_2))<\infty$, i.e. $d_{AI}$ assigns finite value,
			\item $d_{AI}((Z_1,\rho_1),(Z_2,\rho_2))=d_{AI}((Z_2,\rho_2),(Z_1,\rho_1))$, i.e. $d_{AI}$ is symmetric,
			\item $d_{AI}((Z_1,\rho_1),(Z_2,\rho_2))=0$ if and only if $(Z_1,\rho_1)$ and $(Z_2,\rho_2)$ are isometric,
		\end{enumerate} Therefore, $d_{AI}$ defines a separating function on $\mathscr S$, the isometry classes of compact semi-metric spaces. Finite semi-metric spaces are dense in $(\mathscr S,d_{AI})$. One can also show that antipodal spaces form a closed subset of $(\mathscr S,d_{AI})$, in the sense that AI-limits of antipodal spaces are antipodal spaces. 
	\end{rmk}
	One might expect $d_{AI}$ to share similar properties with the Gromov-Hausdorff (GH) distance. While a comprehensive comparison is beyond the scope of this article, we restrict our discussion to the properties and aspects of $d_{AI}$ that are pertinent to our current objectives.

	\bigskip
	
	\section{Proof of \Cref{forward}}
	
	Let $\{(Z_n,\rho_n)\}_{n\ge 1}$ be a sequence of antipodal spaces and let $(Z,\rho_0)$ be another antipodal space such that $(Z_n,\rho_n)\xrightarrow{AI\ conv.}(Z,\rho_0)$, i.e. for each $n$ there exists $\epsilon_n$-isometry $f_n\colon (Z_n,\rho_n)\to (Z,\rho_0)$ with $\epsilon_n \to 0+$. From this hypothesis we will prove that $$(\MM(Z_n),\rho_n)\xrightarrow{GH conv.}(\MM(Z),\rho_0).$$

	In order to prove this, for any fixed $R>0$  for every $n$, we construct  $\FF_n\colon {B}_{\MM(Z)}(\rho_0,R)\to {B}_{\MM(Z_n)}(\rho_n,R)$ with $\FF_n(\rho_0)=\rho_n$, such that given any $\epsilon>0$ for all $n$ sufficiently large these $\FF_n$'s become $\epsilon$-isometry. 
	
	\medskip
	
	
	
	\noindent Now, we shall state and prove a few facts which will be crucial in the proof (of \Cref{forward}).
	
	\begin{lemma}\label{tauf_n}
		Let $\alpha,\beta\in B_{\MM(Z)}(\rho_0,R) \subset \MM(Z)$, then 
		\begin{equation}
			\big\|\tau_\alpha\circ f_n-\tau_\beta\circ f_n\big\|_\infty\to \big\|\tau_\alpha-\tau_\beta\big\|_\infty=d_{\MM(Z)}(\alpha,\beta)
		\end{equation}
		as $n\to \infty$, uniformly in $\alpha, \beta\in B_{\MM(Z)}(\rho_0,R)$.
	\end{lemma}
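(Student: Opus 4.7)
The plan is to sandwich $\|\tau_\alpha \circ f_n - \tau_\beta \circ f_n\|_\infty$ between two bounds that pinch to $\|\tau_\alpha - \tau_\beta\|_\infty$. The trivial direction is the upper bound: since $f_n(Z_n) \subset Z$, one has
$$\|\tau_\alpha \circ f_n - \tau_\beta \circ f_n\|_\infty = \sup_{\eta \in f_n(Z_n)} |\tau_\alpha(\eta) - \tau_\beta(\eta)| \le \|\tau_\alpha - \tau_\beta\|_\infty$$
for every $n$ and every pair $\alpha,\beta$. The final equality $\|\tau_\alpha - \tau_\beta\|_\infty = d_{\MM(Z)}(\alpha,\beta)$ is simply the isometric property of the map $i_{\rho_0}\colon \rho \mapsto \tau_\rho = \log\tfrac{d\rho}{d\rho_0}$ recalled in \Cref{antipodal}, so no work is needed for that part.

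The main point is the reverse direction: for any $\epsilon>0$ and all $n$ sufficiently large (depending only on $\epsilon$ and $R$), one should have $\|\tau_\alpha - \tau_\beta\|_\infty \le \|\tau_\alpha \circ f_n - \tau_\beta \circ f_n\|_\infty + \epsilon$ for every $\alpha,\beta \in B_{\MM(Z)}(\rho_0,R)$. The essential tool is \Cref{Coro 1} applied to the ball $B_{\MM(Z)}(\rho_0,R)$: it furnishes $\delta > 0$, depending only on $\epsilon$ and $R$, such that whenever $\rho_0(\xi,\eta) < \delta$ one has $|\tau_\rho(\xi) - \tau_\rho(\eta)| < \epsilon/2$ for all $\rho \in B_{\MM(Z)}(\rho_0,R)$. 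Choose $n_0$ large enough that $\epsilon_n < \delta$ for all $n \ge n_0$. Since $f_n$ is an $\epsilon_n$-isometry, $f_n(Z_n)$ is an $\epsilon_n$-net in $Z$, so for any $\xi \in Z$ there exists $\xi_n \in Z_n$ with $\rho_0(\xi,f_n(\xi_n)) < \epsilon_n < \delta$. Applying the equicontinuity bound to both $\tau_\alpha$ and $\tau_\beta$ at the pair $(\xi, f_n(\xi_n))$ and using the triangle inequality,
$$|\tau_\alpha(\xi) - \tau_\beta(\xi)| \le |\tau_\alpha(f_n(\xi_n)) - \tau_\beta(f_n(\xi_n))| + \epsilon \le \|\tau_\alpha \circ f_n - \tau_\beta \circ f_n\|_\infty + \epsilon.$$
Taking the supremum over $\xi \in Z$ and combining with the trivial upper bound gives
$$\bigl|\,\|\tau_\alpha \circ f_n - \tau_\beta \circ f_n\|_\infty - \|\tau_\alpha - \tau_\beta\|_\infty\,\bigr| \le \epsilon$$
for all $n \ge n_0$ and all $\alpha,\beta \in B_{\MM(Z)}(\rho_0,R)$.

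There is no real obstacle here: the proof is a clean uniform-equicontinuity argument, combining two ingredients already in hand, namely that $f_n(Z_n)$ is a $\delta$-net in $Z$ for large $n$, and that the functions $\{\tau_\rho : \rho \in B_{\MM(Z)}(\rho_0,R)\}$ form an equicontinuous family on $(Z,\rho_0)$. The only subtle point, which is precisely what makes the convergence \emph{uniform} in the pair $(\alpha,\beta)$ rather than merely pointwise, is that in \Cref{Coro 1} the modulus $\delta$ depends only on the radius $R$ of the ball in $\MM(Z)$ and on $\epsilon$, and not on the individual elements of that ball.
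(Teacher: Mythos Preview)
Your proof is correct and follows essentially the same route as the paper: the trivial upper bound, then \Cref{Coro 1} to get a uniform modulus $\delta$ on the ball of radius $R$, then the $\epsilon_n$-net property of $f_n(Z_n)$ to push any point of $Z$ to within $\delta$ of the image. The only cosmetic difference is that you invoke \Cref{Coro 1} with $\epsilon/2$ to land on a clean $\epsilon$, whereas the paper applies it with $\epsilon$ and ends with $2\epsilon$.
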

	\begin{proof}
		It is straightforward to see $\|\tau_\alpha\circ f_n-\tau_\beta\circ f_n\|_\infty\le\|\tau_\alpha-\tau_\beta\|_\infty$.
		
		Given $\epsilon>0$ we know by \Cref{Coro 1} there exists $\delta\coloneqq \delta(\epsilon,R)>0$ such that, if $\xi,\eta\in Z$ with $\rho_0(\xi,\eta)<\delta$ then 
		\begin{equation*}
			|\tau_\rho(\xi)-\tau_\rho(\eta)|<\epsilon
		\end{equation*} for all $\rho\in B_{\MM(Z)}(\rho_0,R)$. Since $\epsilon_n \to 0+$ we have $\epsilon_n<\delta$ for all $n$ sufficiently large. Suppose $\xi\in Z$ then there exists $\zeta_n\in Z_n$ such that $\rho_0(\xi,f_n(\zeta_n))<\epsilon_n<\delta$ for each $n$ sufficiently large. The maps $f_n$'s are $\epsilon_n$-isometry, which implies
		\begin{equation*}
			\big|(\tau_\alpha(\xi)-\tau_\beta(\xi))-(\tau_\alpha\circ f_n(\zeta_n)-\tau_\beta\circ f_n(\zeta_n))\big|\le\big|\tau_\alpha(\xi)-\tau_\alpha\circ f_n(\zeta_n)\big|+\big|\tau_\beta(\xi)-\tau_\beta\circ f_n(\zeta_n)\big|<2\epsilon.
		\end{equation*}
		Therefore, for any $\xi\in Z$
		\begin{equation*}
			\big|(\tau_\alpha(\xi)-\tau_\beta(\xi))\big|<\big|\tau_\alpha(f_n(\zeta_n))-\tau_\beta(f_n(\zeta_n))\big|+2\epsilon\\
			\le \big\|\tau_\alpha\circ f_n-\tau_\beta\circ f_n\big\|_\infty+2\epsilon
		\end{equation*}
		Hence for all $n$ sufficiently large we have $\|\tau_\alpha-\tau_\beta\|\le \|\tau_\alpha\circ f_n-\tau_\beta\circ f_n\|_\infty+2\epsilon$. 
	\end{proof}
	
	\medskip
	
	\begin{lemma}\label{tau composed f_n}
		Fix $R>0$ and $\epsilon>0$. Then there exists $\delta=\delta(\epsilon,R)>0$ such that for all $n$ sufficiently large (depending on $R$ and $\epsilon$), whenever $\xi,\eta\in Z_n$ with $\rho_n(\xi,\eta)<\delta$ we have
		\begin{equation*}
			\big|\tau_\rho\circ f_n(\xi)-\tau_\rho\circ f_n(\eta)\big|<\epsilon
		\end{equation*}
		for all $ \rho\in {B}_{\MM(Z)}(\rho_0,R) $.
	\end{lemma}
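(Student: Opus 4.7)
The plan is to pull back the equicontinuity of the family $\{\tau_\rho\}_{\rho \in B_{\MM(Z)}(\rho_0,R)}$ on $(Z,\rho_0)$ (which is \Cref{Coro 1}) through the rough isometries $f_n$. The key observation is that a uniform bound on $\rho_n(\xi,\eta)$ translates, up to the distortion error $\epsilon_n$, into a uniform bound on $\rho_0(f_n(\xi), f_n(\eta))$, and equicontinuity on the target side finishes the job.

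More concretely, given $\epsilon > 0$ and $R > 0$, I would first apply \Cref{Coro 1} to the ball $B_{\MM(Z)}(\rho_0, R) \subset \MM(Z)$: this produces some $\delta' = \delta'(\epsilon, R) > 0$ such that for any $\xi', \eta' \in Z$ with $\rho_0(\xi', \eta') < \delta'$, one has
\[
\bigl|\tau_\rho(\xi') - \tau_\rho(\eta')\bigr| < \epsilon
\]
for every $\rho \in B_{\MM(Z)}(\rho_0, R)$. Now set $\delta \coloneqq \delta'/2$, and choose $N$ large enough so that $\epsilon_n < \delta'/2$ for every $n \ge N$ (possible since $\epsilon_n \to 0^+$).

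For such $n$, suppose $\xi, \eta \in Z_n$ satisfy $\rho_n(\xi, \eta) < \delta$. Since $f_n\colon (Z_n, \rho_n) \to (Z, \rho_0)$ is an $\epsilon_n$-isometry, its distortion bound gives
\[
\rho_0(f_n(\xi), f_n(\eta)) \le \rho_n(\xi, \eta) + \epsilon_n < \tfrac{\delta'}{2} + \tfrac{\delta'}{2} = \delta'.
\]
Applying the equicontinuity estimate from the first step with $\xi' = f_n(\xi)$ and $\eta' = f_n(\eta)$ yields $|\tau_\rho \circ f_n(\xi) - \tau_\rho \circ f_n(\eta)| < \epsilon$ uniformly in $\rho \in B_{\MM(Z)}(\rho_0, R)$, which is exactly the desired conclusion.

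There is no real obstacle here: the content of the lemma is the transfer of equicontinuity of $\{\tau_\rho\}$ from $(Z,\rho_0)$ (already established in \Cref{Coro 1}) to the sequence of spaces $(Z_n,\rho_n)$ via the roughly isometric maps $f_n$, and the $\epsilon_n$-isometry inequality handles that transfer directly. The only mild point is to reserve a factor of two in the choice of $\delta$ versus $\delta'$ so that the distortion term $\epsilon_n$ can be absorbed for $n$ large.
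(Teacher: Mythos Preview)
Your proof is correct and follows essentially the same approach as the paper: apply \Cref{Coro 1} on $(Z,\rho_0)$ to get a threshold, halve it to absorb the distortion $\epsilon_n$ of the $\epsilon_n$-isometry $f_n$, and then transfer the bound on $\rho_n(\xi,\eta)$ to a bound on $\rho_0(f_n(\xi),f_n(\eta))$. The only cosmetic difference is that the paper phrases the halving as taking $2\delta$ in the application of \Cref{Coro 1} rather than setting $\delta=\delta'/2$.
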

	\begin{proof}
		Given $\epsilon>0$, by \Cref{Coro 1} there exists $\delta\coloneqq \delta(\epsilon, R)>0$ such that  if $\xi,\eta\in Z$ and $\rho_0(\xi,\eta)<2\delta$ then 
		\begin{equation*}
			\big|\tau_\rho(\xi)-\tau_\rho(\eta)\big|<\epsilon
		\end{equation*}
		for all $ \rho\in {B}_{\MM(Z)}(\rho_0,R)$.
		As $\epsilon_n \to 0+$, for all $n$ sufficiently large $\epsilon_n<\delta$. So for all $n$ sufficiently large if  $\xi,\eta\in Z_n$ with $\rho_n(\xi,\eta)<\delta$ then $f_n$ being $\epsilon_n$-isometry we have $\rho_0(f_n(\xi),f_n(\eta))<\rho_n(\xi,\eta)+\epsilon_n<2\delta$.
		Therefore, for all $n$ by \Cref{Coro 1} we have, if $\xi,\eta\in Z_n$ and $\rho_n(\xi,\eta)<\delta$ then 
		\begin{equation*}
			\big|\tau_\rho\circ f_n(\xi)-\tau_\rho\circ f_n(\eta)\big|<\epsilon
		\end{equation*}
		for all $ \rho\in {B}_{\MM(Z)}(\rho_0,R) $. 
	\end{proof}
	
	\medskip
	
	Now we have the following Lemma as a consequence of the above \Cref{tau composed f_n} and the \Cref{smoothing of tau lemma} proved in \Cref{PoU}. Given a fixed $\delta>0$, for each $n$, by compactness of $(Z_n,\rho_n)$ we can fix a finite $\delta$-net $F_n\subset Z_n$  with respect to $\rho_n$. Then consider $\{P_\zeta^n\}_{\zeta\in F_n}$, the partition of unity on $(Z_n,\rho_n)$ associated to the $\delta$-net $F_n$ and the corresponding smoothing operator $C_n\colon (B(Z_n),\|\cdot\|_\infty)\to (C(Z_n),\|\cdot\|_\infty)$ as defined in \Cref{PoU}.
	
	\begin{lemma}
		\label{smoothing of taufn}
		Fix $R>0$, given $\epsilon>0$ suitably choose $\delta=\delta(R,\epsilon)>0$ such that it satisfies the claim of \Cref{tau composed f_n} for all $n$ sufficiently large. For each $n$ and for all $\rho \in {B}_{\MM(Z)}(\rho_0,R)$ consider the bounded function $\tau_\rho\circ f_n\colon Z_n\to \mathbb{R}$. 
		Then for all $n$ sufficiently large, the continuous function $C_n(\tau_\rho\circ f_n)$ satisfies, 
		\begin{equation*}
			\big\|C_n(\tau_\rho\circ f_n)-\tau_\rho\circ f_n\big\|_\infty<\epsilon \quad \hbox{ with } \quad \big\|C_n(\tau_\rho\circ f_n)\big\|_\infty\le\big\|\tau_\rho\circ f_n\big\|_\infty,
		\end{equation*}  for $\rho\in B_{\MM(Z)}(\rho_0,R)$.
	\end{lemma}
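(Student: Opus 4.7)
The plan is to recognize this as a direct application of \Cref{smoothing of tau lemma} once we verify that the bounded function $\tau_\rho \circ f_n : Z_n \to \mathbb{R}$ has the required ``uniform continuity on the $\delta$-scale'' property with constants that do not depend on $\rho \in B_{\MM(Z)}(\rho_0, R)$. The uniformity in $\rho$ is what makes the statement non-trivial, and the work has essentially already been done in \Cref{tau composed f_n}.

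First, given $R > 0$ and $\epsilon > 0$, I would invoke \Cref{tau composed f_n} to extract a $\delta = \delta(R, \epsilon) > 0$ and an integer $N$ such that for every $n \ge N$, whenever $\xi, \eta \in Z_n$ satisfy $\rho_n(\xi, \eta) < \delta$ one has
\begin{equation*}
\big|\tau_\rho \circ f_n(\xi) - \tau_\rho \circ f_n(\eta)\big| < \epsilon
\qquad \text{for every } \rho \in B_{\MM(Z)}(\rho_0, R).
\end{equation*}
Using this $\delta$, I would then fix the finite $\delta$-net $F_n \subset Z_n$ (with respect to $\rho_n$) and the associated partition of unity $\{P_\zeta^n\}_{\zeta \in F_n}$ and smoothing operator $C_n$ as in the statement.

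For each $\rho \in B_{\MM(Z)}(\rho_0, R)$ and each $n \ge N$, the bounded function $\tau_\rho \circ f_n$ now satisfies, verbatim, the hypothesis of \Cref{smoothing of tau lemma} with the same $\delta$ and $\epsilon$. Applying that lemma directly yields
\begin{equation*}
\big\|C_n(\tau_\rho \circ f_n) - \tau_\rho \circ f_n\big\|_\infty < \epsilon
\quad\text{and}\quad
\big\|C_n(\tau_\rho \circ f_n)\big\|_\infty \le \big\|\tau_\rho \circ f_n\big\|_\infty,
\end{equation*}
which is precisely the claim. The crucial point — and the only place where care is needed — is that $\delta$ must be chosen uniformly in $\rho \in B_{\MM(Z)}(\rho_0, R)$; this uniformity is exactly what \Cref{tau composed f_n} (and behind it \Cref{Coro 1} together with the fact that $f_n$ is an $\epsilon_n$-isometry with $\epsilon_n \to 0+$) provides, so no further obstacle arises.
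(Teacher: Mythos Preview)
Your proposal is correct and follows exactly the same approach as the paper: the paper's proof is the single sentence ``A direct consequence of \Cref{tau composed f_n} and \Cref{smoothing of tau lemma},'' and you have simply spelled out how those two ingredients combine, including the key observation that $\delta$ is uniform over $\rho\in B_{\MM(Z)}(\rho_0,R)$.
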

	\begin{proof}
		A direct consequence of \Cref{tau composed f_n} and \Cref{smoothing of tau lemma}. 
	\end{proof}
	
	\medskip
	
	We already know that for $\rho\in B_{\MM(Z)}(\rho_0,R)\in\MM(Z)$, the function $\tau_\rho\circ f_n\colon  Z_n\to \R$ is bounded  for every $n$ and $\|\tau_\rho\circ f_n\|_\infty\le \|\tau_\rho\|_\infty=d_{\MM(Z)}(\rho_0,\rho)\le R$,
	moreover from inequality \eqref{discrepancy 2 lipschitz ineq}, we get
	\begin{equation}\label{discrepancy less than 2R}
		\big\|D_{\rho_n}(\tau_\rho\circ f_n)\big\|_\infty\le 2\big\|\tau_\rho\big\|_\infty\le 2R.
	\end{equation}
	\noindent In fact we show that, for fixed $R>0$ we have the discrepancy $D_{\rho_n}(\tau_\rho\circ f_n)$ uniformly close to $0$ for all $n$ sufficiently large, which will be crucial in concluding \Cref{distortion}.

	\begin{lemma}
		\label{discrepancy tau composed f_n}
		Fix $R>0$, given $\epsilon>0$, there exits $N_0\in \mathbb{N}$ such that for all $n>N_0$ we have,
		\begin{equation}
			\sup_{\rho\ \in\  B_{\MM(Z)}(\rho_0,R)} \big\|D_{\rho_n}(\tau_\rho\circ f_n)\big\|_\infty\ <\ \epsilon.
		\end{equation} 
	\end{lemma}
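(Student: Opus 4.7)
The underlying idea is that for $\rho \in \MM(Z)$ we have $D_{\rho_0}(\tau_\rho) \equiv 0$ (this is precisely the antipodality of $\rho$), and we want to push this vanishing to $D_{\rho_n}(\tau_\rho \circ f_n)$, uniformly in $\rho \in B_{\MM(Z)}(\rho_0,R)$, via the $\epsilon_n$-isometries $f_n \colon (Z_n,\rho_n) \to (Z,\rho_0)$. The single technical annoyance is the logarithmic singularity: the $\epsilon_n$-isometry property controls $|\rho_n - \rho_0 \circ f_n|$ only additively, whereas the discrepancy involves $\log \rho_n$. Both the upper and lower bound arguments must therefore keep the relevant distances bounded away from zero, or else invoke an ad hoc large-negative-log argument.

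For the upper bound, I would fix $\xi \in Z_n$ and split the supremum over $\eta \neq \xi$ into two regions. When $\rho_n(\xi,\eta) < e^{-R-\epsilon}$, the sum $\tau_\rho \circ f_n(\xi) + \tau_\rho \circ f_n(\eta) + \log \rho_n(\xi,\eta)^2$ is dominated by the negative log term, since $\|\tau_\rho\|_\infty \leq R$. When $\rho_n(\xi,\eta) \geq e^{-R-\epsilon}$, both $\rho_n(\xi,\eta)$ and (for $n$ large, using the $\epsilon_n$-isometry) $\rho_0(f_n(\xi),f_n(\eta))$ are bounded below by a positive constant, so continuity of $\log$ on a compact subinterval of $(0,\infty)$ combined with the $\epsilon_n$-isometry property gives $|\log \rho_n(\xi,\eta)^2 - \log \rho_0(f_n(\xi),f_n(\eta))^2| < \epsilon$ for $n$ large. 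The sum is then bounded by $D_{\rho_0}(\tau_\rho)(f_n(\xi)) + \epsilon = \epsilon$.

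For the lower bound, antipodality of $\rho$ gives, for each $\xi \in Z_n$, some $\bar\eta \in Z$ with $\tau_\rho(f_n(\xi)) + \tau_\rho(\bar\eta) + \log \rho_0(f_n(\xi),\bar\eta)^2 = 0$; the bound $\|\tau_\rho\|_\infty \leq R$ then forces $\rho_0(f_n(\xi),\bar\eta) \geq e^{-R}$, keeping us safely away from the singularity. Using the $\epsilon_n$-net property of $f_n(Z_n) \subset Z$, I would pick $\eta \in Z_n$ with $\rho_0(f_n(\eta),\bar\eta) < \epsilon_n$; then the $\epsilon_n$-isometry property together with continuity of $\rho_0$ gives $\rho_n(\xi,\eta)$ close to $\rho_0(f_n(\xi),\bar\eta) \geq e^{-R}$, while equicontinuity of the family $\{\tau_\rho\}_{\rho \in B_{\MM(Z)}(\rho_0,R)}$ (applicable via \Cref{Coro 1}) gives $\tau_\rho(f_n(\eta))$ close to $\tau_\rho(\bar\eta)$. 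Combining these, the competitor sum at $(\xi,\eta)$ lies within $\epsilon$ of $0$, yielding $D_{\rho_n}(\tau_\rho \circ f_n)(\xi) > -\epsilon$.

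Uniformity in $\rho$ is automatic: the isometry $i_{\rho_0}$ sends the compact ball $B_{\MM(Z)}(\rho_0,R)$ to a compact subset of $C(Z)$, so \Cref{Coro 1} supplies a single modulus of continuity controlling oscillation of every $\tau_\rho$ simultaneously, and all the thresholds chosen above depend only on $R$ and $\epsilon$. The main obstacle I anticipate is bookkeeping around the log singularity, but antipodality of $\rho$ on the $Z$-side (for the lower bound) and the trivial large-negative-log estimate on the $Z_n$-side (for the upper bound) handle it cleanly.
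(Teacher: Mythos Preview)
Your proposal is correct and follows essentially the same argument as the paper: the paper's proof simply invokes the appendix Lemma~\ref{discrepancy main} (applied with the singleton equicontinuous family $\{(Z,\rho_0)\}$ as the target), whose two-step proof---an upper bound via the large-negative-log argument away from the diagonal plus $\log(x+h)<\log x + h/x$, and a lower bound via pulling back a $\rho$-antipode through the $\epsilon_n$-net property and using the uniform modulus of continuity of $\{\tau_\rho\}_{\rho\in B_{\MM(Z)}(\rho_0,R)}$---is exactly what you outline. The only difference is packaging: the paper abstracts the argument into a general lemma about equicontinuous families, while you work it out directly for the case at hand.
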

	\begin{proof}
		Note that the singleton ${(Z,\rho_0)}$ is an equicontinuous family. Since $\epsilon_n\to0+$, given $\delta>0$, for all $n$ sufficiently large $\epsilon_n<\delta$, and hence for all $n$ sufficiently large $f_n$'s are $\delta$-isometries.  So, the claimed result follows from \Cref{discrepancy main}.
	\end{proof}
	
	\medskip
	
	Before proceeding further, we would like to define the following retraction map.
	
	\begin{definition}\label{ret}
		For any antipodal space $(Z,\rho_0)$ we define the \textbf{retraction} on $\MM(Z)$ onto a metric ball ${B}_{\MM(Z)}(\rho_0,R)$, $\pi^R\colon \MM(Z)\to {B}_{\MM(Z)}(\rho,R)$ by,
		\begin{equation*}
			\pi^R(\rho)\coloneqq 
			\begin{cases}
				\PP_\infty\left(\ \dfrac{R}{\|\tau_\rho\|_\infty}\cdot\tau_\rho\ ,\ \rho_0\ \right) \quad &\text{if } \, d_{\MM(Z)}(\rho,\rho_0)=\|\tau_\rho\|_\infty>R\\
				\rho \quad &\text{if } \, d_{\MM(Z)}(\rho,\rho_0)=\|\tau_\rho\|_\infty\le R\\
			\end{cases}
		\end{equation*}
		Here, $\tau_\rho=\log\frac{d\rho}{d\rho_0}$.
	\end{definition}
	
	The following lemma is immediate from the definition.
	
	\begin{lemma}
		\label{retraction lemma}
		For all $\rho\in \MM(Z)$, we have $d_{\MM(Z)}(\pi^R(\rho),\rho_0)\le R$.
		Moreover, if $d_{\MM(Z)}(\rho,\rho_0)>R$ then 
		\begin{equation*}
			R=d_{\MM(Z)}(\pi^R(\rho),\rho_0)=d_{\MM(Z)}(\rho,\rho_0)- d_{\MM(Z)}(\pi^R(\rho),\rho). 
		\end{equation*}
	\end{lemma}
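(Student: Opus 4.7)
The statement splits cleanly according to the definition of $\pi^R$. If $\|\tau_\rho\|_\infty = d_{\MM(Z)}(\rho,\rho_0) \le R$, then $\pi^R(\rho) = \rho$ by definition, giving $d_{\MM(Z)}(\pi^R(\rho),\rho_0) = \|\tau_\rho\|_\infty \le R$; the second assertion is vacuous. In the remaining case $\|\tau_\rho\|_\infty > R$, I would set $t \coloneqq R/\|\tau_\rho\|_\infty \in (0,1)$, so that $\pi^R(\rho) = \PP_\infty(t\tau_\rho,\rho_0)$ is precisely the point $\rho_t$ in the notation of \Cref{convexcombi}. Invoking that lemma directly yields
\begin{equation*}
d_{\MM(Z)}(\pi^R(\rho),\rho_0) \;=\; t\cdot d_{\MM(Z)}(\rho,\rho_0) \;=\; t\|\tau_\rho\|_\infty \;=\; R,
\end{equation*}
which gives both the first assertion and the first equality of the second.

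To finish I need $d_{\MM(Z)}(\pi^R(\rho),\rho) = \|\tau_\rho\|_\infty - R$. The plan is to show that the curve $s \mapsto \rho_s = \PP_\infty(s\tau_\rho,\rho_0)$, $s\in [0,1]$, is a constant-speed geodesic from $\rho_0$ to $\rho$. First, $\rho_1 = \rho$: because $\rho \in \MM(Z)$ is antipodal, the discrepancy $D_{\rho_0}(\tau_\rho)$ vanishes identically, so the antipodal flow starting at $\tau_\rho$ is stationary and $\PP_\infty(\tau_\rho,\rho_0) = E_{\rho_0}(\tau_\rho) = \rho$. Combined with the linear-speed formula $d_{\MM(Z)}(\rho_s,\rho_0) = s\|\tau_\rho\|_\infty$ from \Cref{convexcombi}, and using that $\MM(Z)$ is injective (so it admits a convex geodesic bi-combing, by \Cref{injectivity theorem} and the discussion in \Cref{geodesic bicombing subsection}), one identifies $s \mapsto \rho_s$ with the bi-combing geodesic between $\rho_0$ and $\rho$. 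Evaluating at $s = t$ then gives
\begin{equation*}
d_{\MM(Z)}(\rho_t,\rho) \;=\; (1-t)\|\tau_\rho\|_\infty \;=\; \|\tau_\rho\|_\infty - R,
\end{equation*}
so that $d_{\MM(Z)}(\rho,\rho_0) - d_{\MM(Z)}(\pi^R(\rho),\rho) = \|\tau_\rho\|_\infty - (\|\tau_\rho\|_\infty - R) = R$, as required.

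The main obstacle is the upper bound $d_{\MM(Z)}(\rho_t,\rho) \le (1-t)\|\tau_\rho\|_\infty$; the matching lower bound is automatic from the triangle inequality applied to $\rho_0,\rho_t,\rho$. The statement of \Cref{convexcombi} alone only pins down distances from the base point $\rho_0$, so something extra is needed. The cleanest route is to appeal to the geodesic property of the antipodalization parametrization as established in \cite{biswas2024quasi}, or equivalently to match $\rho_s$ with the unique (in the bi-combing sense) canonical geodesic from $\rho_0$ to $\rho$ in the injective space $\MM(Z)$. A self-contained alternative would be to re-apply \Cref{convexcombi} with base $\rho_s$ and potential $\tau_\rho - \tau_{\rho_s}$ (whose antipodal flow is again stationary because $\rho$ is antipodal), and then show by a short interpolation argument that $\|\tau_\rho - \tau_{\rho_s}\|_\infty$ is forced to equal $(1-s)\|\tau_\rho\|_\infty$.
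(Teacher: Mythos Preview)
Your approach is the same as the paper's: handle the trivial case, apply \Cref{convexcombi} with $t = R/\|\tau_\rho\|_\infty$ to get $d_{\MM(Z)}(\pi^R(\rho),\rho_0)=R$, and then use that $\pi^R(\rho)$ lies on a geodesic from $\rho_0$ to $\rho$ to obtain the additive relation. The paper simply asserts this last step as known (``We know that $d_{\MM(Z)}(\rho,\pi^R(\rho))+d_{\MM(Z)}(\pi^R(\rho),\rho_0)=d_{\MM(Z)}(\rho,\rho_0)$''), relying on the fact from \cite{biswas2024quasi}, Section~4, that $s\mapsto \PP_\infty(s\tau_\rho,\rho_0)$ is a geodesic from $\rho_0$ to $\rho$---precisely the external input you name in your final paragraph.

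One caution: your intermediate attempt to deduce the geodesic property from the bi-combing is not valid as written. Knowing only $d_{\MM(Z)}(\rho_s,\rho_0)=s\|\tau_\rho\|_\infty$ and $\rho_1=\rho$ does not identify $s\mapsto\rho_s$ with the bi-combing geodesic (or any geodesic); a curve can have linear distance growth from a single endpoint without being a geodesic. You recognize this yourself when you flag the missing upper bound $d_{\MM(Z)}(\rho_t,\rho)\le (1-t)\|\tau_\rho\|_\infty$ as the real obstacle. So drop the bi-combing detour and cite \cite{biswas2024quasi} directly, as the paper implicitly does and as you propose at the end.
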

	\begin{proof}
		If  $d_{\MM(Z)}(\rho,\rho_0)=\|\tau_\rho\|_\infty\le R$ then by definition $d_{\MM(Z)}(\pi^R(\rho),\rho)=0$.\newline
		Observe that for $d_{\MM(Z)}(\rho,\rho_0)=\|\tau_\rho\|_\infty >R$ we have by \Cref{convexcombi} 
		\begin{equation*}
			d_{\MM(Z)}(\pi^R(\rho),\rho_0)=d_{\MM(Z)}\left(\PP_\infty\left(\dfrac{R}{\|\tau_\rho\|_\infty}\cdot\tau_\rho\ ,\ \rho_0\right),\rho_0\right) =\dfrac{R}{\|\tau_\rho\|_\infty}d_{\MM(Z)}(\rho,\rho_0)=R.
		\end{equation*}
		We know that $d_{\MM(Z)}(\rho,\pi^R(\rho))+d_{\MM(Z)}(\pi^R(\rho),\rho_0)=d_{\MM(Z)}(\rho,\rho_0)$.
		Hence, we have the result.
	\end{proof}
	
	\medskip
	
	\begin{proof}[{\bf Proof of \Cref{forward}}]
		Fix $R>0$. We give the construction of the map
		\begin{equation*}
			\FF_n\colon {B}_{\MM(Z)}(\rho_0,R)\to {B}_{\MM(Z_n)}(\rho_n,R),
		\end{equation*} and show it has the desired properties (as discussed in the beginning of the section). 
		
		Let $\rho\in {B}_{\MM(Z)}(\rho_0,R)$ we want to define $\FF_n(\rho)\in {B}_{\MM(Z_n)}(\rho_n,R)$, which is induced by some continuous function $\tilde{\tau_\rho}\in C(Z_n)$ with discrepancy $D_{\rho_n}(\tilde{\tau_{\rho}})\equiv0$ (see  \Cref{discrepancy}). Our initial guess would be to take pull back, $f_n^*(\tau_\rho)=\tau_\rho\circ f_n$, but this need not always be continuous. So we make it continuous by composing with the smoothing operator $C_n$ (as encountered in \Cref{smoothing of taufn}) and get $C_n(\tau_\rho\circ f_n)\in C(Z_n)$. But, this might not have discrepancy $D_{\rho_n}(C_n(\tau_\rho\circ f_n))$ identically zero. Therefore, we compose it with the antipodalization map to get $\PP_\infty(C_n(\tau_\rho\circ f_n),\rho_n)$ (an antipodal function).   Finally compose it with the retraction $\pi^R_n\ \colon \ \MM(Z_n)\to {B}_{\MM(Z_n)}(\rho_n, R)$) as defined in \Cref{ret} (to put it inside $B_{\MM(Z_n)}(\rho_n, R)$) and define it to be $\FF_n(\rho)$. Here is a sketch:  
		\medskip
		
		\begin{equation}\label{FF}
			\begin{tikzcd}[column sep=1.25em,row sep=1em]
				{B_{\MM(Z)}(\rho_0,R)} & {C(Z)} & {C(Z_n)} && {\MM(Z_n)} & {B_{\MM(Z_n)}(\rho_n,R)} \\
				\rho & {\tau_{\rho}} & {C_n(\tau_{\rho}\circ f_n)} && {\mathcal{P}_\infty(C_n(\tau_\rho\circ f_n),\rho_n)} & {\pi^R_n(A_n(\rho)))}
				\arrow["{{{i_{\rho_0}}}}", from=1-1, to=1-2]
				\arrow["{{{\mathcal {F}_n}}}"{description, pos=0.5}, curve={height=-50pt}, from=1-1, to=1-6]
				\arrow["{{{C_n\circ f_n^*}}}", from=1-2, to=1-3]
				\arrow["{{{\mathcal{P}_\infty(\ \cdot\ ,\rho_n)}}}", from=1-3, to=1-5]
				\arrow["{{{\pi^R_n}}}", from=1-5, to=1-6]
				\arrow[maps to, from=2-1, to=2-2]
				\arrow["{{{A_n}}}"{description}, curve={height=40pt}, dashed, maps to, from=2-1, to=2-5]
				\arrow[maps to, from=2-2, to=2-3]
				\arrow[maps to, from=2-3, to=2-5]
				\arrow[maps to, from=2-5, to=2-6]
			\end{tikzcd}
		\end{equation}
		\hfill\\
		
		\noindent Observe by definition if $\rho=\rho_0$, then $\tau_\rho\equiv0$ and hence $\FF_n(\rho_0)=\rho_n$ for all $n$. Given any $\epsilon>0$, we want to show that, for all $n$ sufficiently large $\FF_n$ is an $\epsilon$-isometry 
		(see \eqref{epsiloniso}), that is $\colon$ 
		\medskip
		\begin{enumerate}[{\bf (A)}]
			\item For all $n$ sufficiently large the distortion of $\FF_n$, $\Dis(\FF_n)<\epsilon$\label{A} \\
			
			\item For all $n$ sufficiently large $\FF_n(B_{\MM(Z)}(\rho_0,R))$ is an $\epsilon$-net of $B_{\MM(Z_n)}(\rho_n,R)$.\label{B}
		\end{enumerate}
		\medskip
		\noindent The proof will be complete by the \Cref{distortion} and \ref{epsilon net}, where we prove Statement \eqref{A} and \eqref{B} respectively.
	\end{proof}
	\medskip
	
	\noindent We shall use positive universal constants $c_4,c_5,c_6\cdots>0$ in the proof of the following proposition.
	
	\begin{prop}\label{distortion}
		Fix $R>0$. Given $\epsilon>0$, for all $n$ sufficiently large the distortion of $\FF_n$ (see \eqref{FF}), $\Dis(\FF_n)<\epsilon$, i.e.
		\begin{equation*}
			\big|d_{\MM(Z_n)}(\FF_n(\alpha),\FF_n(\beta))-d_{\MM(Z)}(\alpha,\beta)\big|<\epsilon
		\end{equation*}
		for all $\alpha,\beta\in B_{\MM(Z)}(\rho_0,R)$ (i.e. Statement \eqref{A} is true).
	\end{prop}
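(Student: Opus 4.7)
The plan is to track the distance through each of the four stages of the map $\mathcal{F}_n$ in diagram \eqref{FF}, showing that each stage introduces only an error controlled by $\epsilon$. For $\rho \in B_{\MM(Z)}(\rho_0,R)$, write $A_n(\rho) = \PP_\infty(C_n(\tau_\rho \circ f_n),\rho_n) \in \MM(Z_n)$ so that $\mathcal{F}_n(\rho) = \pi^R_n(A_n(\rho))$, and let $\sigma_n^\rho \coloneqq i_{\rho_n}(A_n(\rho))$ denote the antipodal flow limit starting from $C_n(\tau_\rho \circ f_n)$ with base $\rho_n$. Since $i_{\rho_n}$ is an isometry onto $C(Z_n)$, we have $d_{\MM(Z_n)}(A_n(\alpha),A_n(\beta)) = \|\sigma_n^\alpha - \sigma_n^\beta\|_\infty$.

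First I would estimate $\|\sigma_n^\rho - C_n(\tau_\rho\circ f_n)\|_\infty$ using the exponential convergence estimate \eqref{expconv} evaluated at $t=0$, which gives $\|\sigma_n^\rho - C_n(\tau_\rho\circ f_n)\|_\infty \le 4\|D_{\rho_n}(C_n(\tau_\rho\circ f_n))\|_\infty$. The discrepancy on the right is bounded by combining \Cref{smoothing of taufn} with the $2$-Lipschitz property of $D_{\rho_n}$ from \Cref{discrepancy 2 lipschitz} to pass to $D_{\rho_n}(\tau_\rho\circ f_n)$, which in turn is controlled by \Cref{discrepancy tau composed f_n}. Uniformly in $\rho \in B_{\MM(Z)}(\rho_0,R)$, this produces $\|\sigma_n^\rho - C_n(\tau_\rho\circ f_n)\|_\infty = O(\epsilon)$ for all $n$ sufficiently large.

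Second, linearity of $C_n$ and the bound $\|C_n(\tau)-\tau\|_\infty < \epsilon$ from \Cref{smoothing of taufn} give $\bigl|\|C_n(\tau_\alpha\circ f_n) - C_n(\tau_\beta\circ f_n)\|_\infty - \|\tau_\alpha\circ f_n - \tau_\beta\circ f_n\|_\infty\bigr| \le 2\epsilon$. Combining this with the previous step via the triangle inequality in both directions, I would conclude
\begin{equation*}
\bigl|\, d_{\MM(Z_n)}(A_n(\alpha),A_n(\beta)) - \|\tau_\alpha\circ f_n - \tau_\beta\circ f_n\|_\infty \,\bigr| \;=\; O(\epsilon),
\end{equation*}
uniformly for $\alpha,\beta \in B_{\MM(Z)}(\rho_0,R)$. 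Now \Cref{tauf_n} replaces $\|\tau_\alpha\circ f_n - \tau_\beta\circ f_n\|_\infty$ by $d_{\MM(Z)}(\alpha,\beta)$ up to a further uniform $o(1)$ error, yielding the distortion bound for $A_n$ in place of $\mathcal{F}_n$.

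Finally I would absorb the retraction $\pi^R_n$. From $\|\tau_\rho\|_\infty \le R$ and $\|C_n(\tau_\rho\circ f_n)\|_\infty \le \|\tau_\rho\|_\infty$, together with the first step, $\|\sigma_n^\rho\|_\infty \le R + O(\epsilon)$; hence by \Cref{retraction lemma}, $d_{\MM(Z_n)}(\pi^R_n(A_n(\rho)),A_n(\rho)) \le O(\epsilon)$. Two applications of the triangle inequality transfer the distortion estimate from $A_n$ to $\mathcal{F}_n = \pi^R_n \circ A_n$, and choosing $\epsilon$ small at the outset gives the desired bound. The main technical obstacle I expect is the discrepancy control in the first step: one must pass from the non-continuous pullback $\tau_\rho \circ f_n$ (on which \Cref{discrepancy tau composed f_n} is available) to its smoothing $C_n(\tau_\rho \circ f_n)$ (which is what the antipodal flow acts on), and this passage relies precisely on the Lipschitz property of the discrepancy together with the quantitative estimate from \Cref{smoothing of taufn}, both holding uniformly over the ball $B_{\MM(Z)}(\rho_0,R)$.
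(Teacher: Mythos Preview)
Your proposal is correct and follows essentially the same approach as the paper: both arguments track the map $\mathcal{F}_n = \pi_n^R \circ A_n$ through its stages, using the exponential convergence estimate \eqref{expconv} together with \Cref{smoothing of taufn}, \Cref{discrepancy 2 lipschitz}, and \Cref{discrepancy tau composed f_n} to control $\|\tau_{A_n(\rho)} - C_n(\tau_\rho\circ f_n)\|_\infty$, then \Cref{retraction lemma} to absorb the retraction, and finally \Cref{tauf_n} to pass from $\|\tau_\alpha\circ f_n - \tau_\beta\circ f_n\|_\infty$ to $d_{\MM(Z)}(\alpha,\beta)$. The only cosmetic difference is the order in which you handle the retraction (you do it last, the paper interleaves it), and your use of linearity of $C_n$ in the second step is a slight repackaging of the paper's inequality \eqref{crucialineq3}.
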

	
	\begin{proof}
		Let $\alpha,\beta\in B_{\MM(Z)}(\rho_0,R)$. We know $d_{\MM(Z)}(\alpha,\beta)=\|\tau_\alpha-\tau_\beta\|_\infty$ and by the triangle inequality we have 
		\begin{equation*}
			\begin{split}
				&\Big|\ d_{\MM(Z_n)}(\FF_n(\alpha),\FF_n(\beta))-d_{\MM(Z)}(\alpha,\beta)\ \Big|\\
				&=\ \Big|\ d_{\MM(Z)}(\FF_n(\alpha),\FF_n(\beta))-\|\tau_\alpha-\tau_\beta\|_\infty \Big|\\
				&\le\ \Big|\ d_{\MM(Z)}(\FF_n(\alpha),\FF_n(\beta))-\|\tau_\alpha\circ f_n-\tau_\beta\circ f_n\|_\infty \Big|+\Big|\|\tau_\alpha\circ f_n-\tau_\beta\circ f_n\|_\infty-\|\tau_\alpha-\tau_\beta\|_\infty\Big|.
			\end{split}
		\end{equation*}
		By \Cref{tauf_n}, the second term goes to zero as $n\to \infty$. So, it is enough to show that the first term is small for all $n$ and sufficiently large. The proof relies on straightforward repeated use of the triangle inequality and referring to lemmas, which has been proved so far.
		
		For any $\rho\in B_{\MM(Z)}(\rho_0,R)$, let us denote (see \eqref{FF}) \begin{equation*}
			A_n(\rho)\coloneqq \PP_\infty(C_n(\tau_\rho\circ f_n),\rho_n)
		\end{equation*}
		for notational convenience, then $\FF_n(\rho)=\pi^R_n(A_n(\rho))$. From inequality \eqref{expconv} 
		we have 
		\begin{equation*}
			\|\tau_{A_n(\rho)}-C_n(\tau_\rho\circ f_n)\|_\infty\le 4\|D_{\rho_n}(C_n(\tau_\rho\circ f_n))\|_\infty.
		\end{equation*}
		By \Cref{smoothing of taufn} for $n$ sufficiently large we have 
		\begin{equation}\label{anotherineq}
			\|C_n(\tau_\rho\circ f_n)-\tau_\rho\circ f_n\|_\infty<\epsilon.
		\end{equation} 
		Hence by \eqref{discrepancy 2 lipschitz ineq} and from \cref{discrepancy 2 lipschitz} we get,
		\begin{equation*}
			\begin{split}
				\|D_{\rho_n}(C_n(\tau_\rho\circ f_n))&-D_{\rho_n}(\tau_\rho\circ f_n)\|_\infty\le 2\epsilon\\
				\implies\|D_{\rho_n}(C_n(\tau_\rho\circ f_n))\|_\infty&\le \|D_{\rho_n}(\tau_\rho\circ f_n)\|_\infty+ 2\epsilon
			\end{split}
		\end{equation*} 
		and therefore 
		\begin{equation}\label{crucialineq}
			\|\tau_{A_n(\rho)}-C_n(\tau_\rho\circ f_n)\|_\infty\le c_4(\|D_{\rho_n}(\tau_\rho\circ f_n)\|_\infty+\epsilon).
		\end{equation}
		Now by \eqref{crucialineq} and since $\|C_n(\tau_\rho\circ f_n)\|_\infty\le\|\tau_\rho\circ f_n\|_\infty\le \|\tau_\rho\|_\infty$ we have,
		\begin{equation}\label{effect of retraction 1}
			\begin{split}
				d_{\MM(Z_n)}(A_n(\rho),\rho_n)=\|\tau_{A_n(\rho)}\|_\infty&\le\|C_n(\tau_\rho\circ f_n)\|_\infty+c_4(\|D_{\rho_n}(\tau_\rho\circ f_n)\|_\infty+\epsilon)\\
				&\le\|\tau_\rho\|_\infty+c_4(\|D_{\rho_n}(\tau_\rho\circ f_n)\|_\infty+\epsilon)\\
				&\le R+c_4(\|D_{\rho_n}(\tau_\rho\circ f_n)\|_\infty+\epsilon).
			\end{split}
		\end{equation}
		Again by property of $\pi^R_n$ as in \Cref{retraction lemma} if $d_\MM(A_n(\rho),\rho_n)\le R$ then $d_{\MM(Z_n)}(\FF_n(\rho),A_n(\rho))=0$ and for $d_{\MM(Z_n)}(A_n(\rho),\rho_n)>R$ we have,
		\begin{equation}\label{effect of retraction}
			\begin{split}
				d_{\MM(Z_n)}(\FF_n(\rho),A_n(\rho))=d_{\MM(Z_n)}(\pi^R_n(A_n(\rho)),A_n(\rho))&=d_{\MM(Z_n)}(A_n(\rho),\rho_n)-R\\
				&\le c_4(\|D_{\rho_n}(\tau_\rho\circ f_n)\|_\infty+\epsilon).
			\end{split}
		\end{equation}
		for all $n$ sufficiently large, for all $\rho\in B_{\MM(Z)}(\rho_0,R)$. All the inequalities we shall encounter next are true for all $n$ sufficiently large. For $\alpha,\beta \in B_{\MM(Z)}(\rho_0,R)$ using the triangle inequality
		\begin{equation}\label{crucialineq1}
			\begin{split}
				\Big|\ d_{\MM(Z_n)}(\FF_n(\alpha),\FF_n(\beta))&-d_{\MM(Z_n)}(A_n(\alpha),A_n(\beta))\Big|\\
				&\le d_{\MM(Z_n)}(\FF_n(\alpha),A_n(\alpha))+d_{\MM(Z_n)}(A_n(\beta),\FF_n(\beta))\\
				&\le c_5(\|D_{\rho_n}(\tau_\alpha\circ f_n)\|_\infty+\|D_{\rho_n}(\tau_\beta\circ f_n)\|_\infty+\epsilon).
			\end{split}
		\end{equation}
		Since $d_{\MM(Z_n)}(A_n(\alpha),A_n(\beta))=\|\tau_{A_n(\alpha)}-\tau_{A_n(\beta)}\|_\infty$. By the use of triangle inequality, and from \eqref{crucialineq}, we can write,
		\begin{equation}\label{crucialineq2}
			\begin{split}
				\Big|\ d_{\MM(Z_n)}(A_n(\alpha),A_n(\beta))-&\|C_n(\tau_\alpha\circ 	f_n)-C_n(\tau_\beta\circ f_n)\|_\infty\Big|\\
				\le& \|\tau_{A_n(\alpha)}-C_n(\tau_\alpha\circ f_n)\|_\infty+\|C_n(\tau_\beta\circ f_n)-\tau_{A_n(\beta)}\|_\infty\\
				\le& c_6(\|D_{\rho_n}(\tau_\alpha\circ f_n)\|_\infty+\|D_{\rho_n}(\tau_\beta\circ f_n)\|_\infty+\epsilon).
			\end{split}
		\end{equation}
		Also from \eqref{anotherineq} 
		\begin{equation}\label{crucialineq3}
			\begin{split}
				\Big|\|C_n(\tau_\alpha\circ f_n)&-C_n(\tau_\beta\circ f_n)\|_\infty - \|\tau_\alpha\circ f_n - \tau_\beta\circ f_n\|_\infty\Big|\\
				&\le\|C_n(\tau_\alpha\circ f_n)-\tau_\alpha\circ f_n\|_\infty+\|\tau_\beta\circ f_n-C_n(\tau_\beta\circ f_n)\|_\infty\\
				&\le 2\epsilon.
			\end{split}
		\end{equation}
		Then finally by the above three inequalities \eqref{crucialineq1},\eqref{crucialineq2} and \eqref{crucialineq3} we get,
		\begin{equation*}
			\begin{split}
				\Big|\ d_{\MM(Z_n)}(\FF_n(\alpha),\FF_n(\beta))-&\|\tau_\alpha\circ f_n-\tau_\beta\circ f_n\|_\infty\Big|\\
				\le& c_7(\|D_{\rho_n}(\tau_\alpha\circ f_n)\|_\infty+\|D_{\rho_n}(\tau_\beta\circ f_n)\|_\infty+\epsilon).
			\end{split}
		\end{equation*}
		Now by lemma \ref{discrepancy tau composed f_n} we have for all $n$ sufficiently large
		\begin{equation*}
			\sup_{\rho\ \in\  B_{\MM(Z)}(\rho_0,R)} \big\|D_{\rho_n}(\tau_\rho\circ f_n)\big\|_\infty\ <\ \epsilon.
		\end{equation*}
		Hence for any $\alpha,\beta \in B_{\MM(Z)}(\rho_0,R)$ then for all $n$ sufficiently large we have 
		\begin{equation*}
			\Big|\ d_{\MM(Z_n)}(\FF_n(\alpha),\FF_n(\beta))-\|\tau_\alpha\circ f_n-\tau_\beta\circ f_n\|_\infty\Big|\ <\ c_8\ \epsilon
		\end{equation*}
		where $c_8$ is some positive universal constant (independent of $R$ and $\epsilon $). We could have started with $\epsilon/2c_8$ instead of $\epsilon$ to eventually get this last term smaller than $\epsilon/2$. This completes the proof.
	\end{proof}
	
	\medskip
	
	We need the following lemmas before proving Statement \eqref{B}. The lemma below demonstrates that any AI-convergent sequence of compact semi-metric spaces forms an equicontinuous family. 
	
	\begin{lemma}\label{rho_n equicontinuity}
		Let $\{(Z_n,\rho_n)\}_{n\ge 1}$ be a sequence of compact semi-metric 
		spaces such that $(Z_n,\rho_n)\xrightarrow{AI\ conv.}(Z,\rho_0)$. Then $\{(Z_n,\rho_n)\}_{n\ge 1}$ is an equicontinuous family of 
		compact semi-metric spaces, i.e., given $\epsilon>0$ there exists $\delta=\delta(\epsilon)>0$ such that for all $n$  if $\xi,\eta\in Z_n$ and $\rho_n(\xi,\eta)<\delta$ then 
		\begin{equation*}
			\sup_{\zeta\in Z}\ \big|\rho_n(\xi,\zeta)-\rho_n(\eta,\zeta)\big|<\epsilon.
		\end{equation*}
	\end{lemma}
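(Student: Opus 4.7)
The plan is to transfer the uniform continuity of $\rho_0$ (guaranteed by \Cref{Coro 2} applied to the compact semi-metric space $(Z,\rho_0)$) to each $(Z_n,\rho_n)$ via the rough isometries $f_n \colon (Z_n,\rho_n) \to (Z,\rho_0)$ supplied by the AI-convergence hypothesis. The argument works uniformly for large $n$, and the finitely many small-$n$ cases are handled separately by applying \Cref{Coro 2} directly to each $(Z_n,\rho_n)$. (I assume, as is clearly intended, that the supremum in the statement is taken over $\zeta \in Z_n$ rather than $\zeta \in Z$.)

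First I would fix $\epsilon > 0$ and apply \Cref{Coro 2} to $(Z,\rho_0)$ to obtain $\delta_0 > 0$ such that whenever $a,a',b,b' \in Z$ satisfy $\max\{\rho_0(a,a'),\rho_0(b,b')\} < \delta_0$, one has $|\rho_0(a,b) - \rho_0(a',b')| < \epsilon/2$. Since $\epsilon_n \to 0^+$, I can choose $N$ so that $\epsilon_n < \min(\delta_0/2,\epsilon/4)$ for all $n \ge N$, and I set $\delta_* \coloneqq \delta_0 / 2$.

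Next, for $n \ge N$ and $\xi,\eta \in Z_n$ with $\rho_n(\xi,\eta) < \delta_*$, the $\epsilon_n$-isometry property gives $\rho_0(f_n(\xi),f_n(\eta)) \le \rho_n(\xi,\eta) + \epsilon_n < \delta_0$. For any $\zeta \in Z_n$, applying the $\epsilon_n$-isometry property twice yields
\begin{equation*}
\bigl|\rho_n(\xi,\zeta) - \rho_n(\eta,\zeta)\bigr| \le \bigl|\rho_0(f_n(\xi),f_n(\zeta)) - \rho_0(f_n(\eta),f_n(\zeta))\bigr| + 2\epsilon_n.
\end{equation*}
Taking $a = f_n(\xi)$, $a' = f_n(\eta)$, $b = b' = f_n(\zeta)$ in the conclusion of \Cref{Coro 2}, the first term on the right is bounded by $\epsilon/2$, and then $2\epsilon_n < \epsilon/2$, giving the desired bound uniformly in $\zeta$ and in $n \ge N$.

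For the remaining finitely many indices $n = 1, \dots, N-1$, each $(Z_n,\rho_n)$ is itself a compact semi-metric space, so applying \Cref{Coro 2} to $(Z_n,\rho_n)$ (with the same two-step argument used to derive the $Z_n$-analog of \Cref{Coro 3}) yields a positive $\delta_n$ that works for that single space. Setting $\delta \coloneqq \min(\delta_*,\delta_1,\dots,\delta_{N-1}) > 0$ concludes the proof. The argument is essentially routine once the AI-convergence is translated into the four-point continuity statement of \Cref{Coro 2}; the only mild subtlety is the uniformity, which is secured by handling the tail $n \ge N$ and the head $n < N$ separately.
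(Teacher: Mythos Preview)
Your proof is correct and follows essentially the same approach as the paper's: transfer the uniform continuity of $\rho_0$ to each $\rho_n$ via the $\epsilon_n$-isometries $f_n$, obtain a uniform $\delta$ for all large $n$, and handle the finitely many remaining indices individually. The only cosmetic difference is that the paper invokes \Cref{Coro 3} while you invoke \Cref{Coro 2} with $b=b'$, which amounts to the same thing.
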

	\begin{proof}
		From the given hypothesis (by \Cref{AIconv}) we know, there exist $\epsilon_n$-isometries $f_n\colon (Z_n,\rho_n)\to (Z,\rho_0)$ with $\epsilon_n\to 0+$.
		Let $\xi,\eta,\zeta\in Z_n$ for some $n$, then 
		\begin{equation}\label{triangle ineq}
			\begin{split}
				\big|\rho_n(\xi,\zeta)-\rho_n(\eta,\zeta)\big|\le\big|\rho_n(\xi,\zeta)-\rho_0(f_n(\xi),f_n(\zeta))\big|&+\big|\rho_0(f_n(\xi),f_n(\zeta))-\rho_0(f_n(\eta),f_n(\zeta))\big|\\
				&+\big|\rho_0(f_n(\eta),f_n(\zeta))-\rho_n(\eta,\zeta)\big|\\
				<\ \epsilon_n+\big|\rho_0(f_n(\xi),f_n(\zeta))-\rho_0&(f_n(\eta),f_n(\zeta))\big|+\epsilon_n
			\end{split}
		\end{equation}
		since $f_n$'s are $\epsilon_n$-isometries.
		
		\noindent By \Cref{Coro 3} we know that there exists $\delta_1=\delta_1(\epsilon)>0$ such that if $\rho_0(f_n(\xi),f_n(\eta))<2\delta_1$ then 
		\begin{equation*}
			\big|\rho_0(f_n(\xi),f_n(\zeta))-\rho_0(f_n(\eta),f_n(\zeta))\big|<\frac{\epsilon}{2}.
		\end{equation*}
		
		\noindent If $\rho_n(\xi,\eta)<\delta_1$ then for all $n$ sufficiently large, (i.e. for all $n$ greater than some $N_0\in \mathbb N$) we shall have $\rho_0(f_n(\xi),f_n(\eta))<\rho_n(\xi,\eta)+\epsilon_n<2\delta_1$ ($f_n$ being $\epsilon_n$-isometry with $\epsilon_n \to 0+$). Consequently, from \eqref{triangle ineq} we have
		\begin{equation*}
			\big|\rho_n(\xi,\zeta)-\rho_n(\eta,\zeta)\big|<\epsilon
		\end{equation*}
		
		Therefore, we can find a $\delta = \delta(\epsilon) > 0$ such that for all $n$, the claimed result holds by applying \Cref{Coro 3} to the antipodal spaces $(Z_n, \rho_n)$ for $n \le N_0$.
	\end{proof}
	
	\medskip
	
	Observe the following \Cref{equicontinuity of tau 1} is an extension of \Cref{Coro 1} proved earlier. Here, instead of a single antipodal space, we are considering a sequence of antipodal spaces, which AI-converges to another antipodal space and consequently satisfies an equicontinuity condition by \Cref{rho_n equicontinuity}.
	\begin{lemma}
		\label{equicontinuity of tau 1}
		Let $\{(Z_n,\rho_n)\}_{n\ge 1}$ be a sequence of antipodal spaces such that $(Z_n,\rho_n)\xrightarrow{AI\ conv.}(Z,\rho_0)$. Fix $R>0$, given $\epsilon>0$ there exists  $\delta=\delta(R,\epsilon)>0$ such that for any $n$, if $\xi,\eta\in Z_n$ and $\rho_n(\xi,\eta)<\delta$ then 
		\begin{equation*}
			\big|\tau_\rho(\xi)-\tau_\rho(\eta)\big|<\epsilon
		\end{equation*}
		for all $\rho\in{B}_{\MM(Z_n)}(\rho_n,R)$.
	\end{lemma}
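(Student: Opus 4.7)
The plan is to exploit both the antipodal property of $\rho$ and the equicontinuity of the sequence $\{(Z_n,\rho_n)\}$ established in the preceding \Cref{rho_n equicontinuity}. Since $\rho \in B_{\MM(Z_n)}(\rho_n, R)$, the associated function $\tau_\rho$ is bounded uniformly by $R$, i.e. $\|\tau_\rho\|_\infty \le R$. Because $\rho$ is antipodal, for each $\xi \in Z_n$ there exists a point $\eta^* = \eta^*(\xi,\rho) \in Z_n$ with $\rho(\xi,\eta^*) = 1$; using the GMVT relation $\rho(\xi,\eta^*)^2 = e^{\tau_\rho(\xi)} e^{\tau_\rho(\eta^*)} \rho_n(\xi,\eta^*)^2$ together with the bound $\|\tau_\rho\|_\infty \le R$, this forces
\begin{equation*}
\rho_n(\xi, \eta^*(\xi,\rho)) \ge e^{-R},
\end{equation*}
uniformly in $n$, in $\xi$, and in $\rho \in B_{\MM(Z_n)}(\rho_n, R)$. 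This is the key geometric input that bounds the antipode away from $\xi$.

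Next I would compare $\tau_\rho$ at two nearby points. For $\xi, \xi' \in Z_n$ with $\rho_n(\xi,\xi') < \delta$, I use that $\rho(\xi',\eta^*(\xi))^2 \le 1$ (antipodal of diameter one) combined with the identity $\rho(\xi,\eta^*(\xi))^2 = 1$ to get
\begin{equation*}
\tau_\rho(\xi') - \tau_\rho(\xi) \le 2\log \frac{\rho_n(\xi,\eta^*(\xi))}{\rho_n(\xi',\eta^*(\xi))}.
\end{equation*}
Given $\epsilon > 0$, by the equicontinuity of $\{(Z_n,\rho_n)\}$ supplied by \Cref{rho_n equicontinuity}, one can choose $\delta = \delta(R,\epsilon)$ so that $\rho_n(\xi,\xi') < \delta$ forces $|\rho_n(\xi,\zeta) - \rho_n(\xi',\zeta)| < \eta$ for all $\zeta \in Z_n$, where $\eta = \eta(R,\epsilon)$ is chosen (say, $\eta < \tfrac{1}{2}e^{-R}$) so that both $\rho_n(\xi,\eta^*(\xi))$ and $\rho_n(\xi',\eta^*(\xi))$ remain $\ge \tfrac{1}{2}e^{-R}$. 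On that region $\log$ is Lipschitz with constant $2e^R$, giving $\tau_\rho(\xi') - \tau_\rho(\xi) \le 4\eta e^R$. Swapping the roles of $\xi$ and $\xi'$ (and using $\eta^*(\xi')$ as witness) yields the reverse inequality, and choosing $\eta$ small enough delivers $|\tau_\rho(\xi)-\tau_\rho(\xi')| < \epsilon$.

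The main obstacle I anticipate is not conceptual but bookkeeping: one must keep track that the chosen $\delta$ is genuinely independent of both $n$ and of $\rho \in B_{\MM(Z_n)}(\rho_n, R)$, since the witness $\eta^*(\xi,\rho)$ varies with both. The uniformity is rescued precisely because the only facts used about $\eta^*$ are the two uniform bounds $\rho(\xi,\eta^*) = 1$ and $\|\tau_\rho\|_\infty \le R$, which together produce the uniform lower bound $\rho_n(\xi,\eta^*) \ge e^{-R}$; the equicontinuity in \Cref{rho_n equicontinuity} then absorbs all remaining $n$-dependence. No additional appeal to the maps $f_n$ or to the AI-convergence is needed beyond that already packaged into the previous lemma.
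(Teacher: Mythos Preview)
Your proof is correct and follows essentially the same approach as the paper. The paper's proof simply invokes \Cref{rho_n equicontinuity} to get equicontinuity and then cites the appendix \Cref{equicontinuity of tau main}; the latter is proved by contradiction using exactly the same ingredients you use directly---the $\rho$-antipode $\eta^*$ of $\xi$, the GMVT inequality $\tau_\rho(\xi')-\tau_\rho(\xi)\le 2\log\frac{\rho_n(\xi,\eta^*)}{\rho_n(\xi',\eta^*)}$, the uniform lower bound $\rho_n(\xi,\eta^*)\ge e^{-R}$, and equicontinuity of the $\rho_n$ to control the ratio---so your argument is just an inlined, direct (and slightly more quantitative) version of the paper's.
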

	\begin{proof}
		By the  above \Cref{rho_n equicontinuity}, $\{(Z_n,\rho_n)\}_{n\ge 1}$ is an equicontinuous family of antipodal spaces. Then the claim follows from \Cref{equicontinuity of tau main}.
	\end{proof}
	
	\medskip
	
	The following Lemma is similar to \Cref{discrepancy tau composed f_n}.
	
	\begin{lemma}
		\label{discrepancy tau composed g_n}
		Let $\{(Z_n,\rho_n)\}_{n\ge 1}$ be a sequence of antipodal spaces such that $(Z_n,\rho_n)\xrightarrow{AI\ conv.}(Z,\rho_0)$. Fix $R>0$, given $\epsilon>0$, there exists $\delta=\delta(R,\epsilon)>0$ such that$\colon$  for any $n$, and for any $\delta$-isometry $g_n\colon (Z,\rho_0)\to(Z_n,\rho_n)$, we have 
		\begin{equation}
			\sup_{\ \rho\ \in {B}_{\MM(Z_n)}(\rho_n,R)}\big\|D_{\rho_0}(\tau_\beta\circ g_n)\big\|_\infty\ <\ \epsilon.
		\end{equation}
	\end{lemma}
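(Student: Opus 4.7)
The setup is dual to that of \Cref{discrepancy tau composed f_n}: there $\tau_\rho$ lived on the fixed space $Z$ (whose singleton family is trivially equicontinuous) and $f_n$ went from $Z_n$ to $Z$; here $\tau_\rho$ lives on the varying spaces $Z_n$, and the $\delta$-isometries $g_n$ go from $Z$ to $Z_n$. The extra ingredient needed is equicontinuity of the family carrying $\tau_\rho$, and this is exactly what \Cref{rho_n equicontinuity} supplies from the AI-convergence assumption. The plan is therefore to quote the same general equicontinuity-based discrepancy estimate (labelled \Cref{discrepancy main} in the proof of \Cref{discrepancy tau composed f_n}), applied with source the equicontinuous family $\{(Z_n, \rho_n)\}_{n\ge 1}$, target $(Z, \rho_0)$, and rough isometries $g_n$.

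If a direct argument is preferred, I would fix $\xi \in Z$ and $\rho \in B_{\MM(Z_n)}(\rho_n, R)$, and control $D_{\rho_0}(\tau_\rho \circ g_n)(\xi)$ from both sides. For the upper bound, for any $\eta \in Z$ with $g_n(\xi) \neq g_n(\eta)$, antipodality of $\rho$ on $Z_n$ gives
$$\tau_\rho(g_n(\xi)) + \tau_\rho(g_n(\eta)) + \log \rho_0(\xi,\eta)^2 \;\le\; 2\log\frac{\rho_0(\xi,\eta)}{\rho_n(g_n(\xi), g_n(\eta))}.$$
Split according to whether $\rho_n(g_n(\xi), g_n(\eta)) \ge \sqrt{\delta}$: in that case the bound is at most $2\log(1+\sqrt{\delta})$ via $|\rho_0 - \rho_n|<\delta$; otherwise $\rho_0(\xi,\eta) < 2\sqrt{\delta}$, and the whole expression is bounded above by $2R + 2\log(2\sqrt{\delta})$, which is very negative for small $\delta$. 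The case $g_n(\xi) = g_n(\eta)$ forces $\rho_0(\xi,\eta) < \delta$ and gives an equally negative bound. For the lower bound, use antipodality of $\rho$ to pick $\zeta^* \in Z_n$ with $\rho(g_n(\xi), \zeta^*) = 1$, which forces $\rho_n(g_n(\xi), \zeta^*) \ge e^{-R}$. Since $g_n(Z)$ is a $\delta$-net in $Z_n$, there is $\eta \in Z$ with $\rho_n(g_n(\eta), \zeta^*) < \delta$. \Cref{equicontinuity of tau 1} gives $|\tau_\rho(g_n(\eta)) - \tau_\rho(\zeta^*)|$ small, while equicontinuity of $\{(Z_n, \rho_n)\}$ plus the $\delta$-isometry property give $\rho_0(\xi,\eta)$ close to $\rho_n(g_n(\xi), \zeta^*)$; so this choice of $\eta$ produces a value close to $\tau_\rho(g_n(\xi)) + \tau_\rho(\zeta^*) + \log\rho_n(g_n(\xi), \zeta^*)^2 = 0$.

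The main obstacle is the log-singularity of $D_{\rho_0}$ at coincident points: pairs $(\xi,\eta)$ with $\rho_0(\xi,\eta)$ near zero could, in principle, push the supremum positive. This is neutralised by the observation that in such a regime, antipodality of $\rho$ combined with $\|\tau_\rho\|_\infty \le R$ forces the expression to be dominated by $2R + O(\log\delta)$, which is extremely negative and therefore harmless for the supremum. Dually, the lower bound $\rho_n(g_n(\xi), \zeta^*) \ge e^{-R}$ coming from $\|\tau_\rho\|_\infty \le R$ is precisely what keeps the witness pair for the lower bound in a region where the logarithm is quantitatively Lipschitz, and this is where the uniform radius $R$ enters in the quantifier $\delta=\delta(R,\epsilon)$.
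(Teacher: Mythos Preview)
Your proposal is correct and matches the paper's proof exactly: invoke \Cref{rho_n equicontinuity} to get equicontinuity of $\{(Z_n,\rho_n)\}_{n\ge 1}$, then apply \Cref{discrepancy main} with $(Z_\lambda,\rho_\lambda)=(Z_n,\rho_n)$, $(Z,\rho_\omega)=(Z,\rho_0)$, and $h_\lambda=g_n$. The direct argument you sketch is also sound and essentially reproduces the two-step proof of \Cref{discrepancy main} in the appendix.
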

	\begin{proof}
		Since $\{Z_n,\rho_n\}_{n\ge 1}$ forms an equicontinuous family of antipodal spaces by \Cref{rho_n equicontinuity}, we have the lemma as a straightforward consequence of \Cref{discrepancy main}, 
	\end{proof}
	
	\medskip
	
	Now we prove Statement \eqref{B} and hence the last step in the proof of \Cref{forward}. Again, we shall use positive universal constants $c_9,c_{10},\cdots>0$ in the following proof.
	
	\medskip
	
	\begin{prop}\label{epsilon net}
		Fix $R>0$. Given $\epsilon>0$, for all $n$ sufficiently large the image of the function $\FF_n$, as defined in \eqref{FF}, is an $\epsilon$-net in ${B}_{\MM(Z_n)}(\rho_n,R)\subset \MM(Z_n)$.
	\end{prop}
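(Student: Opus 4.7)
The plan is to construct for each $\beta \in B_{\MM(Z_n)}(\rho_n, R)$ a candidate preimage $\alpha \in B_{\MM(Z)}(\rho_0, R)$ mirroring the construction of $\FF_n$ but in the opposite direction, and to show $d_{\MM(Z_n)}(\FF_n(\alpha), \beta) < \epsilon$ for all $n$ sufficiently large. By \Cref{inverse}, AI-convergence provides $\epsilon_n$-isometries $g_n \colon (Z, \rho_0) \to (Z_n, \rho_n)$ with $\epsilon_n \to 0^+$; moreover, inspecting the construction in the proof of \Cref{inverse}, one may choose $g_n$ so that $\rho_0(y, f_n(g_n(y))) < \delta_1$ uniformly in $y \in Z$ for all $n$ large, where $\delta_1 \to 0^+$. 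Given $\beta$, set
\begin{equation*}
\alpha \coloneqq \pi^R\bigl(\PP_\infty(C(\tau_\beta \circ g_n), \rho_0)\bigr) \in B_{\MM(Z)}(\rho_0, R),
\end{equation*}
where $C$ is a smoothing operator on $(Z, \rho_0)$ associated to a sufficiently fine $\delta$-net. By \Cref{rho_n equicontinuity} and \Cref{equicontinuity of tau 1}, the family $\{\tau_\beta : \beta \in B_{\MM(Z_n)}(\rho_n, R),\ n \ge 1\}$ has a uniform modulus of continuity with respect to the $\rho_n$, so, using the small distortion of $g_n$, the functions $\tau_\beta \circ g_n$ satisfy the hypothesis of \Cref{smoothing of tau lemma} uniformly.

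The proof then estimates $\|\tau_{\FF_n(\alpha)} - \tau_\beta\|_\infty$ by a chain of triangle inequalities parallel to those used in the proof of \Cref{distortion}, inserting the intermediate terms $C_n(\tau_\alpha \circ f_n)$, $\tau_\alpha \circ f_n$, $C(\tau_\beta \circ g_n) \circ f_n$, and $\tau_\beta \circ g_n \circ f_n$. The antipodalization and retraction estimates (via \eqref{expconv} and \Cref{retraction lemma}), together with \Cref{smoothing of taufn}, bound $\|\tau_{\FF_n(\alpha)} - C_n(\tau_\alpha\circ f_n)\|_\infty$ by a constant multiple of $\|D_{\rho_n}(\tau_\alpha\circ f_n)\|_\infty$ plus smoothing error, both small by \Cref{discrepancy tau composed f_n}; the analogous estimates on the $(Z,\rho_0)$-side control $\|\tau_\alpha - C(\tau_\beta \circ g_n)\|_\infty$ in terms of $\|D_{\rho_0}(C(\tau_\beta\circ g_n))\|_\infty$, small by \Cref{discrepancy tau composed g_n}; and the smoothing errors for $C$ and $C_n$ are $<\epsilon$ by construction. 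Each of these error terms tends to zero uniformly as $n\to\infty$.

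The crux of the argument is the final term $\|\tau_\beta \circ g_n \circ f_n - \tau_\beta\|_\infty$. For $z \in Z_n$, the $\epsilon_n$-isometry property of $f_n$ gives
\begin{equation*}
\rho_n(z, g_n(f_n(z))) \le \rho_0(f_n(z), f_n(g_n(f_n(z)))) + \epsilon_n < \delta_1 + \epsilon_n,
\end{equation*}
which is uniformly small. By \Cref{equicontinuity of tau 1}, the uniform modulus of continuity of $\tau_\beta$, valid for all $\beta \in B_{\MM(Z_n)}(\rho_n, R)$ and all $n$, then yields $|\tau_\beta(z) - \tau_\beta(g_n(f_n(z)))| < \epsilon$ uniformly in $z$ and $\beta$. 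This last step is the main obstacle: it hinges on the fact, established in \Cref{rho_n equicontinuity}, that AI-convergence forces equicontinuity of the antipodal-space family, which in turn gives the uniform continuity of the potentials $\tau_\beta$ needed to transfer smallness of $\rho_n$-distances into smallness of $\tau_\beta$-oscillations uniformly over $\beta$. Summing all contributions gives $\|\tau_{\FF_n(\alpha)} - \tau_\beta\|_\infty \le c\epsilon$ for a universal $c > 0$, completing the argument after the usual initial rescaling.
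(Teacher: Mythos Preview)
Your proposal is correct and follows essentially the same strategy as the paper: construct a preimage $\alpha$ (the paper writes $\hat{\beta_n}$) by pulling $\tau_\beta$ back along a rough inverse $g_n$, smoothing, antipodalizing, and retracting, then estimate $d_{\MM(Z_n)}(\FF_n(\alpha),\beta)$ by the same chain of triangle inequalities, discrepancy bounds (\Cref{discrepancy tau composed f_n}, \Cref{discrepancy tau composed g_n}), and the exponential-convergence estimate \eqref{expconv}.

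The only organizational difference is in the final link of the chain. The paper chooses the smoothing net on $Z$ to be exactly $G_n=f_n(F_n)$ and uses the injectivity relation $g_n\circ f_n|_{F_n}=\mathrm{id}_{F_n}$ to rewrite $\hat{C_n}(\tau_\beta\circ g_n)(\xi)=\sum_{\zeta\in F_n}\tau_\beta(\zeta)Q^n_\zeta(\xi)$; this eliminates $g_n$ from the formula and allows a direct estimate of $\|\sigma_n\circ f_n-\tau_\beta\|_\infty$. You instead keep an arbitrary fine net for $C$ and close the chain by bounding $\|\tau_\beta\circ g_n\circ f_n-\tau_\beta\|_\infty$ via $\rho_n(z,g_n(f_n(z)))<\delta_1+\epsilon_n$ combined with \Cref{equicontinuity of tau 1}. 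Both routes are valid and rest on the same key input (equicontinuity of the family $\{(Z_n,\rho_n)\}$, \Cref{rho_n equicontinuity}); the paper's bookkeeping is slightly slicker, while yours is more transparently symmetric to the construction of $\FF_n$.
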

	\begin{proof}
		We will show that, for all $n$ sufficiently large, if $\beta\in {B}_{\MM(Z_n)}(\rho_n,R)$ then we can construct $\hat{\beta_n}\in {B}_{\MM(Z)}(\rho_0,R)$ such that $d_{\MM(Z_n)}(\beta,\FF_n(\hat{\beta_n}))<\epsilon$.
		
		As in proof of \Cref{inverse}, for every $n$ consider $F_n$ a finite $(2\epsilon_n)$-net of $Z_n$ with respect to $\rho_n$ such that $\rho_0(\xi,\eta)\ge 2\epsilon_n$ for all distinct $\xi,\eta\in F_n$. Moreover, $f_n$ maps $F_n$ injectively onto a $G_n\coloneqq f_n(F_n)\subset Z$ for all $n$. Given any $\delta>0$, for all $n$ sufficiently large $G_n$ becomes a $\delta$-net of $Z$ with respect to $\rho_0$ (cf. proof of \Cref{inverse}).
		
		Let us choose $\delta>0$ such that for all $n$ sufficiently large it satisfies the claims of \Cref{rho_n equicontinuity} and \Cref{discrepancy tau composed g_n}, that is for $n$ sufficiently large, if $\xi,\eta \in Z_n$ with $\rho_n(\xi,\eta)<\delta$ then
		\begin{equation}\label{recall 1}
			\big|\tau_\rho(\xi)-\tau_\rho(\eta)\big|<\epsilon
		\end{equation}
		for all $\rho\in B_{\MM(Z_n)}(\rho_n,R)$. 
		
		Also choose and fix $\delta_0<\delta/2$ such that for any $n$, if we have a $\delta_0$-isometry $g_n\colon (Z,\rho_0)\to (Z_n,\rho_n)$ then,
		\begin{equation}\label{recall 2}
			\sup_{\ \rho\ \in {B}_{\MM(Z_n)}(\rho_n,R)}\big\|D_{\rho_0}(\tau_\beta\circ g_n)\big\|_\infty\ <\ \epsilon.
		\end{equation}

		By \Cref{inverse}, for every $n$ sufficiently large, there exist $\delta_0$-isometries  $g_n\colon (Z,\rho_0)\to (Z_n,\rho_n)$, which satisfies the injectivity relation \eqref{injectivity relation}, 
		\begin{equation*}
			(g_n\circ f_n)|_{F_n}=\text{id}_{F_n}\quad \hbox{and} \quad g_n(G_n)=F_n.
		\end{equation*}
		
		Note $G_n=f_n(F_n)$'s are $\delta_0$ net of $Z$ with respect to $\rho_0$ for all $n$ sufficiently large. Let $\{Q^n_{f_n(\zeta)}\}_{\zeta\in F_n}$ be the partition of unity on $(Z,\rho_0)$ 
		associated with the $\delta_0$-net $f_n(F_n)=G_n$ (refer to \Cref{PoU}). For ease of notation we suppress `$f_n$' in the subscript of $Q^n_{f_n(\zeta)}$ and just write `$Q^n_\zeta$' instead. 
		
		Consider, $\hat{C_n}: (B(Z),\|\cdot\|_\infty)\to (C(Z),\|\cdot\|_{\infty})$, the smoothing operator corresponding to the partition of unity $\{Q^n_{\zeta}\}_{\zeta\in F_n}$ on $(Z,\rho_0)$ associated with the $\delta_0$-net $f_n(F_n)=G_n$ (see \Cref{PoU}), where 
		\begin{equation*}
			\hat{C_n}(\tau)(\cdot)=\sum_{\zeta\in F_n}\tau(f_n(\zeta))\cdot Q^n_\zeta(\cdot) \quad \hbox{for}\quad \tau\in (B(Z),\|\cdot\|_\infty).
		\end{equation*} Then for all $n$ sufficiently large and $\beta\in {B}_{\MM(Z_n)}(\rho_n,R)$ we can define the continuous function\hfill\\ $\sigma_n\coloneqq \hat{C_n}(\tau_\beta\circ g_n)\colon Z\to \mathbb{R}$ such that,
		\begin{equation*}
			\sigma_n(\xi)\coloneqq \hat{C_n}(\tau_\beta\circ g_n)(\xi)=\sum_{\zeta\in F_n}\tau_\beta\circ(f_n(\zeta))\cdot Q^n_\zeta(\xi)=\sum_{\zeta\in F_n}\tau_\beta(\zeta)\cdot Q^n_\zeta(\xi)
		\end{equation*}
		(the above equality is justified because $g_n$ satisfies the injectivity relation \eqref{injectivity relation}). Observe that $\|\sigma_n\|\le\|\tau_\beta\|\le R$.
		
		Next, we define, 
		\begin{equation*}
			\hat{\beta_n}=\pi^R_0(\mathcal{P}_\infty(\sigma_n,\rho_0)),
		\end{equation*} where $\pi^R_0\colon \MM(Z)\to {B}_{\MM(Z)}(\rho_0,R)$ is the same retraction as defined in \ref{ret}. So by definition $\hat{\beta_n}\in{B}_{\MM(Z)}(\rho_0,R)$. We will show, $d_{\MM(Z_n)}(\beta,\FF_n(\hat{\beta_n}))<\epsilon$.
		
		\medskip
		
		For any $\xi\in Z$, we know $Q^n_\zeta(\xi)>0$ if and only if $\zeta\in S_n(\xi)\coloneqq \{\zeta\in F_n\ |\ \rho_0(\xi,f_n(\zeta))<\delta_0\}\subset F_n$. Consequently, we have (cf. \Cref{smoothing of tau lemma}),
		\begin{equation*}
			\sum_{\zeta\in S_n(\xi)}Q^n_\zeta(\xi)=1 \quad \text{ and } \quad \sigma_n(\xi)=\sum_{\zeta\in S_n(\xi)}\tau_\beta(\zeta)Q^n_\zeta(\xi).
		\end{equation*}
		Since we have $\epsilon\to 0+$, for all $n$ sufficiently large, if $\eta\in Z_n$ (so, $f_n(\eta)\in Z$) and $\zeta\in S_n(f_n(\eta))$ then, 
		\begin{equation*}
			\rho_n(\eta,\zeta)<\rho_0(f_n(\eta),f_n(\zeta))+\epsilon_n<\delta_0+\epsilon_n<\delta.
		\end{equation*}
		Therefore, by the choice of $\delta$ above \eqref{recall 1} observe that for $\eta\in Z_n$,
		\begin{equation*}
			\Big|\sigma_n\circ f_n(\eta)-\tau_\beta(\eta)\Big|\le\sum_{\zeta\in S_n(f_n(\eta))}\Big|\tau_\beta(\zeta)-\tau_\beta(\eta)\Big|\cdot Q^n_\zeta(f_n(\eta))\ <\ \epsilon.
		\end{equation*}
		Hence we have 
		\begin{equation*}
			\big\|\sigma_n\circ f_n-\tau_\beta\big\|_\infty<\epsilon.
		\end{equation*}
		
		For all $n$ sufficiently large, for $\xi\in Z$, by definition of $S_n(\xi)$, we have $\rho_0(\xi,f_n(\zeta))<\delta_0$ for each $\zeta\in S_n(\xi)$. Now $g_n\colon (Z,\rho_0)\to (Z_n,\rho_n)$ being $\delta_0$-isometry, we have for $\zeta \in S_n(\xi)$
		\begin{equation*}
			\rho_n(g_n(\xi),\zeta)=\rho_n(g_n(\xi),g_n\circ f_n(\zeta))<\rho_0(\xi,f_n(\zeta))<\delta_0<\delta,
		\end{equation*} (since $g_n\circ f_n(\zeta)=\zeta$). 
		So we have for all $n$ sufficiently large, for $\beta \in{B}_{\MM(Z_n)}(\rho_n,R)$ and $\xi\in Z$ we get,
		\begin{equation*}
			\Big|\sigma_n(\xi)-\tau_\beta\circ g_n(\xi)\Big|\le\sum_{\zeta\in S_n(\xi)}\Big|\tau_\beta(\zeta)-\tau_\beta(g_n(\xi))\Big|\cdot Q^n_\zeta(\xi)<\ \epsilon
		\end{equation*}
		similarly as above. Hence (cf. \Cref{smoothing of taufn})
		\begin{equation*}
			\big\|\sigma_n-\tau_\beta\circ g_n\big\|_\infty<\epsilon.
		\end{equation*}
		Let us denote $A_0(\sigma_n)\coloneqq \mathcal{P}_\infty(\sigma_n,\rho_0)$, then by property of antipodalization map \eqref{expconv} we have, 
		\begin{equation*}
			\big\|\tau_{A_0(\sigma_n)}-\sigma_n\big\|_\infty\le 4\big\|D_{\rho_0}(\sigma_n)\big\|_\infty<c_{10}(\big\|D_{\rho_0}(\tau_\beta\circ g_n)\big\|_\infty+\epsilon).
		\end{equation*}
		Therefore, by similar application of \Cref{retraction lemma}, and by the same argument as in \eqref{crucialineq},\eqref{effect of retraction 1} and \eqref{effect of retraction} give us,	
		\begin{align*}
			\big\|\tau_{\hat{\beta_n}}-\tau_{A_0(\sigma_n)}\big\|_\infty=d_{\MM(Z)}(\hat{\beta_n},A_0(\sigma_n))&=d_{\MM(Z)}(\pi^R_0(A_0(\sigma_n)),A_0(\sigma_n))\\
			&=d_{\MM(Z)}(A_0(\sigma_n),\rho_0)-R\\
			&< c_{10}(\big\|D_{\rho_0}(\tau_\beta\circ g_n)\big\|_\infty+\epsilon).
		\end{align*}
		
		For $\beta\in {B}_{\MM(Z_n)}(\rho_n,R)$  we have,
		\begin{equation*}
			\begin{split}
				d_{\MM(Z_n)}(\FF_n(\hat{\beta_n}),\beta)&=\big\|\tau_{\FF_n(\hat{\beta_n})}-\tau_\beta\big\|_\infty\\
				&\le \big\|\tau_{\FF_n(\hat{\beta_n})}-\tau_{\hat{\beta_n}}\circ f_n\big\|_\infty+\big\|\tau_{\hat{\beta_n}}\circ f_n-\tau_\beta\big\|_\infty.\\
			\end{split}
		\end{equation*}
		From the proof of \Cref{distortion}, arguing as in \eqref{anotherineq},\eqref{crucialineq} and \eqref{effect of retraction},
		\begin{equation*}
			\big\|\tau_{\FF_n(\hat{\beta_n})}-\tau_{\hat{\beta_n}}\circ f_n\big\|_\infty< c_{11}(\big\|D_{\rho_n}(\tau_{\hat{\beta_n}}\circ f_n)\big\|_\infty+\epsilon).
		\end{equation*}
		For the other term 
		\begin{equation*}
			\big\|\tau_{\hat{\beta_n}}\circ f_n-\tau_\beta\big\|_\infty\le \big\|\tau_{\hat{\beta_n}}\circ f_n-\sigma_n\circ f_n\big\|_\infty+\big\|\sigma_n\circ 	f_n-\tau_\beta\big\|_\infty.
		\end{equation*}
		From above we have for all $n$ sufficiently large $\|\sigma_n\circ 	f_n-\tau_\beta\|_\infty<\epsilon$, and 
		\begin{equation*}
			\begin{split}
				\big\|\tau_{\hat{\beta_n}}\circ f_n-\sigma_n\circ f_n\big\|_\infty&\le \big\|\tau_{\hat{\beta_n}}-\sigma_n\big\|_\infty\\
				& \le\big\|\tau_{\hat{\beta_n}}-\tau_{A_0(\sigma_n)}\big\|_\infty+\big\|\tau_{A_0(\sigma_n)}-\sigma_n\textbf{}\big\|_\infty\\
				&< c_{12}(\big\|D_{\rho_0}(\tau_\beta\circ g_n)\big\|_\infty+\epsilon).
			\end{split}
		\end{equation*}
		Therefore, we finally have for all $n$ sufficiently large
		\begin{equation*}
			d_{\MM(Z_n)}(\FF_n(\hat{\beta_n}),\beta)< c_{13}(\|D_{\rho_n}(\tau_{\hat{\beta_n}}\circ f_n)\|_\infty+\|D_{\rho_0}(\tau_\beta\circ g_n)\|_\infty+\epsilon).
		\end{equation*}
		Now by \Cref{discrepancy tau composed f_n} and \Cref{discrepancy tau composed g_n} we have for all $n$ sufficiently large
		\begin{equation*}
			\sup_{\rho\ \in\  B_{\MM(Z)}(\rho_0,R)} \|D_{\rho_n}(\tau_\rho\circ f_n)\|_\infty\ <\ \epsilon
			\quad \text{and} \quad 
			\sup_{\ \rho\ \in\  B_{\MM(Z_n)}(\rho_n,R)}\|D_{\rho_0}(\tau_\beta\circ g_n)\|_\infty\ <\ \epsilon,
		\end{equation*}
		respectively, and therefore 
		\begin{equation*}
			d_{\MM(Z_n)}(\FF_n(\hat{\beta_n}),\beta)\le \ c_{14}\ \epsilon,
		\end{equation*}
		where $c_{14}$ is some positive universal constant. Again, starting with $\epsilon/2c_{14}$ instead of $\epsilon$, we would have eventually got this last term less or equal to $\epsilon/2$. This finishes the proof.
	\end{proof}
	
	\bigskip

	\section{Proof of \Cref{backward}}
	Let $\{(X_n,x_n)\}_{n\ge 1}$ be a sequence of Gromov product space, such that $(X_n,x_n)\xrightarrow{GH conv.}(X,x)$, where $X$ is another Gromov product space. 
	
	\noindent We will demonstrate that, for any given $\delta > 0$, one can construct $\delta$-isometries $g_n\colon (\partial_P X, \rho_x) \to (\partial_P X_n, \rho_{x_n})$ for all $n$ sufficiently large. By hypothesis $\{(\partial_P X_n, \rho_{x_n})\}_{n \ge 1}$, is an equicontinuous family of antipodal functions. Thus, by \Cref{conditional inverse}, this will be sufficient to conclude that $(\partial_P X_n, \rho_{x_n}) \xrightarrow{AI\ conv.} (\partial_P X, \rho_x)$ as claimed.
	
	Any Gromov product space $Y$ is isometrically embedded in $\MM(\partial_P Y)$ via the visual embedding $i_Y\colon y\mapsto \rho_y$. For any $y_1,y_2\in Y$ 
	\begin{equation*}
		\log \frac{d\rho_{y_2}}{d\rho_{y_1}}= B(y_1,y_2,\ \cdot\ )
	\end{equation*} where $B\colon Y\times Y\times \partial_P Y \to \R$, is the Busemann cocycle. Hence $$\argmx \frac{d\rho_{y_2}}{d\rho_{y_1}}= \argmx\  B(y_1,y_2,\ \cdot\ )$$ (see \cite[Section 5, and 6]{biswas2024quasi},\cite{biswas2015moebius}). 
	
	We shall treat $X_n$'s and $X$ as isometrically embedded subspaces of $\MM(\partial_P X_n)$'s and $\MM(\partial_P X)$, respectively. 
	
	Since $(X_n,\rho_{x_n})\xrightarrow{GH\ conv.}(X,\rho_x)$, for every $R>0$, there exists $\epsilon_n$-isometries 
	\begin{equation*}
		\FF_n\colon B_{\MM(\partial_P X)}(\rho_x,R)\cap X\cong B_X(x,R)\to B_{X_n}(x_n,R)\cong B_{\MM(\partial_P X_n)}(\rho_{x_n},R)\cap X_n
	\end{equation*} such that $\epsilon_n \to 0+$ and $\FF_n(x)=x_n$. We shall sometimes abuse notation and to think of $\FF_n$ as a function from $B_{\MM(\partial_P X)}(\rho_x,R)\cap X$ to $B_{\MM(\partial_P X_n)}(\rho_{x_n},R)\cap X_n$.
	
	Let $\delta>0$ be given. Now, $\{(\partial_P X_n,\rho_x)\}_{n\ge 1}$ being equicontinuous, we can choose $\epsilon=\epsilon(\delta)<\delta$ positive such that if $\xi,\xi',\eta,\eta'\in \partial_P X$ (respectively $\in \partial_P X_n$) with $\max\{\rho_x(\xi,\xi'),\rho_x(\eta,\eta')\}<\epsilon$ (respectively $\max\{\rho_{x_n}(\xi,\xi'),\rho_{x_n}(\eta,\eta')\}<\epsilon$) then
	\begin{equation}\label{equicontinuity used}
		|\rho_x(\xi,\eta)-\rho_x(\xi',\eta')|<\delta \quad \hbox{(respectively } |\rho_{x_n}(\xi,\eta)-\rho_{x_n}(\xi',\eta')|<\delta \hbox{)} 
	\end{equation} 
	from \Cref{Coro 1} and equicontinuity of $\rho_{x_n}$'s.
	
	\bigskip
	
	\subsection{Construction of the candidate for almost isometry $g_n$}\label{construction of gn}\hfill\\
	We can define a map 
	\begin{equation*}
		z\ \colon\ \partial_P X\to S_X(x,R)\coloneqq \{\ y\in X\ |\ d_X(x,y)=R\ \},
	\end{equation*}
	by choosing for each $\zeta\in \partial_P X$, a point $z(\zeta)$ on the sphere of radius $R$ centered at $x$, such that $z(\zeta)$ is on the geodesic ray joining $x$ to $\zeta$. We denote by $\alpha_\zeta$ the visual antipodal function $\rho_{z(\zeta)}\in \MM(\partial_P X)$. Note that
	\begin{equation*}
		\zeta \in \argmx \frac{d\alpha_\zeta}{d\rho_x}=\argmx\  B(x,z(\zeta),\ \cdot\ )\quad \hbox{ and }\quad  d_X(x,z(\zeta))=\log \frac{d\alpha_\zeta}{d\rho_x}(\zeta)
	\end{equation*} (see \cite[Section 6]{biswas2024quasi}).
	
	Fix $R>0$ large, so that 
	\begin{equation}\label{define epsilon_0}
		\epsilon_0\coloneqq \frac{2e^{-R}}{\epsilon}<\epsilon.
	\end{equation} 
	Let $F$ be a finite $\epsilon_0$-net of $\partial_P X$ with respect to $\rho_x$, such that $\rho_x(\zeta, \zeta') \ge \epsilon_0$, for all distinct $\zeta, \zeta' \in F$. By \Cref{gromov ip upper bound}, for distinct $\zeta,\zeta'\in F$ we get,
	\begin{equation}\label{gromov ip upper bound1}
		\exp((z(\zeta)|z(\zeta'))_x)=\exp((\alpha_{\zeta}|\alpha_{\zeta'})_{\rho_x})\le \frac{1}{\rho_x(\zeta,\zeta')}\le\frac{1}{\epsilon_0}.
	\end{equation} Hence by the choice of $\epsilon_0$ and $R$ (see \eqref{define epsilon_0}), we have, 
	\begin{equation*}
		d_X(z(\zeta),z(\zeta'))=d_{\MM(\partial_P X)}(\alpha_\zeta,\alpha_{\zeta'})> \log \left(\frac{1}{\epsilon}\right)
	\end{equation*}
	(we can choose $\epsilon$ small enough to get a large lower bound here). Thus, the points $z(\zeta)$'s and hence the corresponding $\alpha_\zeta$'s are distinct for $\zeta\in F$.
	
	By the axiom of choice, we define $p \colon \partial_P X \to F$, a retraction map, such that $\rho_x(\xi, p(\xi)) < \epsilon_0$ for all $\xi \in \partial_P X$ and $p|_{F} = \text{id}_{F}$. 
	
	We define a map $\tilde{g_n}\colon F \to \partial_P X_n$ as follows: for each $\zeta\in F$ we choose 
	\begin{equation*}
		\zeta_n\in \argmx\ B(x_n,\FF_n(z(\zeta)),\ \cdot\ )=\argmx\  \frac{d\FF_n(\alpha_{\zeta})}{d\rho_{x_n}}\subset \partial_P X_n.
	\end{equation*}  and define $\tilde{g_n}(\zeta)\coloneqq \zeta_n$ ( the point $\tilde{g_n}(\zeta)\in \partial_P X$ is the end point of a geodesic ray we get by extending the geodesic segment joining $x_n$ to $\FF_n(z(\zeta))$, i.e. $\FF_n(z(\zeta))$ is on a geodesic ray $[x_n,\zeta_n)$).
	
	We define the map 
	\begin{equation}\label{candidate}
		\begin{split}
			g_n\colon \partial_P X&\to \partial_P X_n\\
			\xi&\mapsto \tilde{g_n}\circ p(\xi)
		\end{split}
	\end{equation} i.e. composition of the retraction map $p$ with $\tilde{g_n}$. 
	
	We will show that this map $g_n$ is a $3\delta$-isometry for all $n$ sufficiently large.
	
	\subsection{The constructed candidate $g_n$ has small distortion for all $n$ sufficiently large}\hfill\\
	
	\noindent Note that the map $g_n$ depends on where it maps the finite set $F$.  
	Suppose $\xi,\eta\in \partial_P X$, let $p(\xi)=\zeta$, $p(\eta)=\zeta'\in F$, where $p$ is the retraction defined above. Then $\max\{\rho_x(\xi,\zeta),$ $\rho_x(\eta,\zeta')\}<\epsilon_0<\epsilon$, which implies $|\rho_x(\zeta,\zeta')-\rho_x(\xi,\eta)|<\delta$ (from \eqref{equicontinuity used}). Note, $g_n(\xi)=g_n(\zeta)$ and $g_n(\eta)=g_n(\zeta')$, thus,
	\begin{equation*}
		\begin{split}
			|\rho_{x_n}(g_n(\xi),g_n(\eta))-\rho_x(\xi,\eta)|&=|\rho_{x_n}(g_n(\zeta),g_n(\zeta'))-\rho_x(\zeta,\zeta')|+|\rho_x(\zeta,\zeta')-\rho_x(\xi,\eta)|\\
			&\le |\rho_{x_n}(g_n(\zeta),g_n(\zeta'))-\rho_x(\zeta,\zeta')|+\delta.
		\end{split}
	\end{equation*} Eventually, by the \Cref{distortion on finite set} below we have, distortion of $g_n$, 
	\begin{equation*}
		\Dis(g_n)= \sup_{\xi,\eta\in \partial_P X}|\rho_{x_n}(g_n(\xi),g_n(\eta))-\rho_x(\xi,\eta)|\le \delta + 4\epsilon_n+\delta<3\delta,
	\end{equation*}for all $n$ sufficiently large (since $\epsilon_n\to 0+$).
	
	\begin{sublemma}\label{distortion on finite set}
		For $\zeta,\zeta'\in F$ the map $g_n$ defined above in \eqref{candidate} (for all $n$ sufficiently large) satisfies,\\
		\begin{enumerate}[(i)]
			\item $\rho_{x_n}(g_n(\zeta),g_n(\zeta'))
			\le \rho_x(\zeta,\zeta')+\delta+4\epsilon_n$,\\
			\item $\rho_x(\zeta,\zeta')\le \rho_{x_n}(g_n(\zeta),g_n(\zeta'))+\delta+4\epsilon_n$.\\
		\end{enumerate}Thus for all $\zeta,\zeta,\in F$,
		\begin{equation*} 
			|\rho_{x_n}(g_n(\zeta),g_n(\zeta'))-\rho_x(\zeta,\zeta')|\le \delta+ 4\epsilon_n.
		\end{equation*} 
	\end{sublemma}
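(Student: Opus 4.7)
The strategy is to combine two applications of the Gromov product upper bound (\Cref{gromov ip upper bound}), one in each Moebius space, with the $\epsilon_n$-isometry property of $\FF_n$. Viewing $X, X_n$ as isometrically embedded in $\MM(\partial_P X), \MM(\partial_P X_n)$ respectively via the visual embeddings, we identify $\alpha_\zeta = \rho_{z(\zeta)}$ and recall that by construction $g_n(\zeta) = \zeta_n$ lies in the argmax of $d\rho_{\FF_n(z(\zeta))}/d\rho_{x_n}$. Since $\FF_n$ is an $\epsilon_n$-isometry with $\FF_n(x) = x_n$, each of the three distances entering the Gromov product is perturbed by at most $\epsilon_n$ under $\FF_n$, so for any $y,y' \in B_X(x,R)$ we have the basic comparison $|(\FF_n(y) | \FF_n(y'))_{x_n} - (y | y')_x| \le \tfrac{3}{2}\epsilon_n$.

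For (i), \Cref{gromov ip upper bound} applied in $\MM(\partial_P X_n)$ to $\rho_{\FF_n(z(\zeta))}, \rho_{\FF_n(z(\zeta'))}$ (with argmax points $\zeta_n, \zeta'_n$) gives $\rho_{x_n}(\zeta_n,\zeta'_n) \le \exp(-(\FF_n(z(\zeta)) | \FF_n(z(\zeta')))_{x_n})$, which together with the basic comparison yields $\rho_{x_n}(\zeta_n,\zeta'_n) \le e^{3\epsilon_n/2}\exp(-(z(\zeta) | z(\zeta'))_x)$. The crucial remaining step is to replace the interior Gromov product $(z(\zeta)|z(\zeta'))_x$ by the boundary Gromov product $(\zeta|\zeta')_x = -\log\rho_x(\zeta,\zeta')$. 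By continuity of the extended Gromov product on $\hat X\times\hat X$ we have $(z_R(\zeta)|z_R(\zeta'))_x \to (\zeta|\zeta')_x$ as $R\to\infty$, and with the quantitative choice $\epsilon_0 = 2e^{-R}/\epsilon$ (together with the compactness of pairs $(\zeta,\zeta')$ at $\rho_x$-distance at least $\epsilon_0$) this convergence can be made uniform enough to arrange $(z(\zeta)|z(\zeta'))_x \ge (\zeta|\zeta')_x - \delta/2$ for all $\zeta,\zeta' \in F$. Substituting gives $\rho_{x_n}(\zeta_n,\zeta'_n) \le e^{3\epsilon_n/2 + \delta/2}\rho_x(\zeta,\zeta')$; since $\rho_x \le 1$ and the exponent is small, this multiplicative bound converts to the desired additive bound $\rho_x(\zeta,\zeta') + \delta + 4\epsilon_n$.

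Part (ii) is proved by the mirror-image argument: apply \Cref{gromov ip upper bound} in $\MM(\partial_P X)$ to $\alpha_\zeta, \alpha_{\zeta'}$ (whose argmax points are $\zeta,\zeta'$) to obtain $\rho_x(\zeta,\zeta') \le \exp(-(z(\zeta)|z(\zeta'))_x)$, transfer to $X_n$ via the basic comparison, and finally replace $(\FF_n(z(\zeta))|\FF_n(z(\zeta')))_{x_n}$ by $(\zeta_n|\zeta'_n)_{x_n}$ by the analogous interior-to-boundary approximation, now in $X_n$. It is in this last step that the standing equicontinuity hypothesis on $\{(\partial_P X_n,\rho_{x_n})\}_{n\ge 1}$ enters, ensuring the approximation is uniform in $n$ as well as in the pair of points. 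The main obstacle, in both (i) and (ii), is exactly this interior-to-boundary approximation: in a Gromov hyperbolic space it follows immediately from the slim triangles inequality with constant depending only on the hyperbolicity, but in the general Gromov product setting assumed here one must extract a quantitative rate from the mere continuity of the extended Gromov product, combined with the antipodality of $\rho_{z(\zeta)}$ (equivalently, the identity $\log\rho_{z(\zeta)}(\zeta,\zeta') = (z(\zeta)|\zeta')_x - (\zeta|\zeta')_x$), and then ensure this rate survives uniformly across the sequence $\{(\partial_P X_n,\rho_{x_n})\}_{n\ge 1}$.
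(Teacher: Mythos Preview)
Your outline correctly isolates the two ingredients (\Cref{gromov ip upper bound} and the $\epsilon_n$-isometry comparison of Gromov products) and correctly identifies the crux as the ``interior-to-boundary'' comparison, i.e.\ passing from $(z(\zeta)|z(\zeta'))_x$ to $(\zeta|\zeta')_x$ in (i), and from $(\FF_n(z(\zeta))|\FF_n(z(\zeta')))_{x_n}$ to $(\zeta_n|\zeta'_n)_{x_n}$ in (ii). However, the mechanism you propose for this step does not go through.

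For (i) you appeal to compactness plus continuity of the extended Gromov product to upgrade pointwise convergence $(z_R(\zeta)|z_R(\zeta'))_x\to(\zeta|\zeta')_x$ to a uniform estimate. Two problems: the selection $\zeta\mapsto z(\zeta)$ need not be continuous (geodesic rays are not unique in a general Gromov product space), so the functions you want to apply a Dini-type argument to are not known to be continuous; and even granting continuity, you would need monotonicity in $R$ in the right direction, which you have not established for rays issuing from $x$ (the monotonicity that \emph{is} elementary goes the wrong way, giving $(z_R(\zeta)|z_R(\zeta'))_x\le(\zeta|\zeta')_x$, whereas you need a lower bound). For (ii) the gap is more serious: the equicontinuity hypothesis concerns only the boundary semi-metrics $\rho_{x_n}$, and says nothing about the \emph{rate} at which interior Gromov products in $X_n$ approach boundary ones; there is no compactness argument available that is uniform across the sequence $\{X_n\}$.

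The paper bypasses this entirely. Instead of comparing $(u|v)_x$ directly with $(\zeta|\zeta')_x$, it extends the segment $[u,v]$ (not the rays from $x$) to a bi-infinite geodesic with endpoints $\tilde\zeta,\tilde\zeta'\in\partial_P X$. Along \emph{this} geodesic the Gromov product based at $x$ is genuinely monotone, yielding $\exp(-(u|v)_x)\le\rho_x(\tilde\zeta,\tilde\zeta')$. An explicit GMVT computation (using $\zeta\in\argmx\,d\alpha/d\rho_x$ and the choice $\epsilon_0=2e^{-R}/\epsilon$) then shows $\rho_x(\zeta,\tilde\zeta),\ \rho_x(\zeta',\tilde\zeta')<\epsilon$, so the pre-arranged continuity of $\rho_x$ gives $|\rho_x(\zeta,\zeta')-\rho_x(\tilde\zeta,\tilde\zeta')|<\delta$, completing (i). Part (ii) runs the identical geodesic-extension argument inside $X_n$, producing $\tilde\zeta_n,\tilde\zeta'_n$ with $\rho_{x_n}(\zeta_n,\tilde\zeta_n),\rho_{x_n}(\zeta'_n,\tilde\zeta'_n)<\epsilon$; equicontinuity is invoked \emph{only at this boundary level}, to conclude $|\rho_{x_n}(\zeta_n,\zeta'_n)-\rho_{x_n}(\tilde\zeta_n,\tilde\zeta'_n)|<\delta$. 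This is exactly what equicontinuity gives, and it is the point you were missing: the uniform-in-$n$ control is achieved on the boundary, not on the interior Gromov products.
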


	\begin{proof}
		In both (i) and (ii), we use similar ideas. In the former, we use the continuity of $\rho_x$, while in the latter, we use the equicontinuity of $\rho_{x_n}$'s to conclude the claims.  
		
		Let $\zeta,\zeta'\in F$ and for ease of notation define $u=z(\zeta)$, $v=z(\zeta')$, $u_n=\FF_n(z(\zeta))$ and $v_n=\FF_n(z(\zeta'))$. Define the corresponding visual antipodal functions $\alpha=\alpha_{\zeta}$, and  $\beta=\alpha_{\zeta'}$. 
		Denote \begin{equation*}
			g_n(\zeta)=\zeta_n\in \argmx \frac{d\FF_n(\alpha)}{d\rho_{x_n}}\quad \hbox{ and }\quad g_n(\zeta')=\zeta'_n\in \argmx\frac{d \FF_n(\beta)}{d\rho_{x_n}}
		\end{equation*} as above. 
		
		\underline{\it Proof of (i).} By \Cref{gromov ip upper bound}
		\begin{equation*}
			\rho_{x_n}(\zeta_n,\zeta'_n)\le\exp(-(\FF_n(\alpha)|\FF_n(\beta))_{\rho_{x_n}})=\exp(-(u_n|v_n)_{x_n}).
		\end{equation*} 
		Now, $\FF_n$ being $\epsilon_n$-isometry with $\FF_n(x)=x_n$ we have 
		\begin{equation}\label{2 epsilon difference}
			\begin{split}
				\big|(\FF_n(\alpha)|\FF_n(\beta))_{\rho_{x_n}}-(\alpha|\beta)_{\rho_x}\big|&=\big|(u_n|v_n)_{x_n}-(u|v)_x\big|
				<2\epsilon_n\\
				\hbox{ which implies }\quad \big|\exp(-(u_n|v_n)_{x_n})- \exp(-(u|v)_x)\big|&\le \big|(u_n|v_n)_{x_n}-(u|v)_x\big| <2\epsilon_n
			\end{split}
		\end{equation}since $(t\mapsto e^{-t})$ is $1$-Lipschitz for $t\ge 0$. Consequently,
		\begin{equation*}
			\begin{split}
				\rho_{x_n}(\zeta_n,\zeta'_n)\le \exp(-(u_n|v_n)_{x_n})&\le \exp(-(u|v)_x)+2\epsilon_n.\\
			\end{split}
		\end{equation*}
		
		We know $X$ is a geodesically complete space. Any geodesic segment $[u,v]$ can be extended bi-infinitely on either side. On one side beyond $u$ it runs to $\tilde{\zeta}\in \partial_P X$ and on the other side beyond $v$ it runs to $\tilde{\zeta'}\in \partial_P X$.
		Then 
		\begin{equation*}
			\begin{split}
				&\tilde{\zeta}\in \argmx\ B(v,u,\ \cdot\ )= \argmx\ \frac{d\alpha}{d\beta}\implies B(v,u,\tilde{\zeta})=d_X(u,v)=\log\frac{d\alpha}{d\beta}(\tilde{\zeta}),\\
				\hbox{ and } &\tilde{\zeta'}\in \argmx\ B(u,v,\ \cdot\ )= \argmx\ \frac{d\beta}{d\alpha}\implies B(u,v,\tilde{\zeta'})=d_X(u,v)=\log\frac{d\beta}{d\alpha}(\tilde{\zeta'}).
			\end{split}
		\end{equation*} Moreover, $\alpha(\tilde{\zeta},\tilde{\zeta'})=\beta(\tilde{\zeta},\tilde{\zeta'})=1$. By GMVT (\Cref{GMVT}) we have,
		\begin{equation}\label{getting epsilon bound}
			\begin{split}
				1\ge \alpha(\zeta,\tilde{\zeta})^2=\rho_x(\zeta,\tilde{\zeta})^2\frac{d\alpha}{d\rho_x}(\zeta)\frac{d\alpha}{d\rho_x}(\tilde{\zeta})
				&=\rho_x(\zeta,\tilde{\zeta})^2\cdot \frac{d\alpha}{d\rho_x}(\zeta) \cdot\frac{d\alpha}{d\beta}(\tilde{\zeta})\cdot\frac{d\beta}{d\rho_x}(\tilde{\zeta})\\
				&\ge \rho_x(\zeta,\tilde{\zeta})^2\cdot \exp(d_X(u,x)+d_X(u,v)-d_X(v,x))\\
				&= \rho_x(\zeta,\tilde{\zeta})^2\cdot\exp(-2(u|v)_x+2d_X(u,x))\\
				&= \rho_x(\zeta,\tilde{\zeta})^2\cdot\exp(-2(u|v)_x) \cdot e^{2R}
			\end{split}
		\end{equation}
		Since $\rho_x(\zeta,\zeta')\ge \epsilon_0$, from \eqref{gromov ip upper bound1}, and by the choice of $\epsilon_0$ and $R$ (see \eqref{define epsilon_0}), we get,
		\begin{equation}\label{easy direction}
			\begin{split}
				\rho_x(\zeta,\tilde{\zeta})\le \exp((u|v)_x)\cdot e^{-R}
				=\exp((\alpha_\zeta|\alpha_{\zeta'})_{\rho_x})\cdot e^{-R}
				\le \frac{e^{-R}}{\epsilon_0}
				<\frac{\epsilon}{2}.
			\end{split}
		\end{equation}
		Similarly, we also have $\rho_x(\zeta',\tilde{\zeta'})<\epsilon/2$. 
		
		Hence by continuity of $\rho_x$ \eqref{equicontinuity used} we have $|\rho_x(\zeta,\zeta')-\rho_x(\tilde{\zeta},\tilde{\zeta'})|<\delta$. Moreover, we have
		\begin{equation}\label{gromov ip lower bound}
			\begin{split}
				\rho_x(\tilde{\zeta},\tilde{\zeta'})=\exp(-(\tilde{\zeta}|\tilde{\zeta'})_x)
				\ge\exp(-(u|v)_x).
			\end{split}
		\end{equation} since the Gromov product decreases as the two points $u$ and $v$ go in opposite directions towards $\tilde{\zeta}$ and $\tilde{\zeta'}$ respectively along the bi-infinite geodesic joining them. The inequality \eqref{gromov ip lower bound} is true for any Gromov product space in general by geodesic completeness.
		
		Therefore,
		\begin{equation*}
			\begin{split}
				\rho_{x_n}(\zeta_n,\zeta'_n)&\le\exp(-(u|v)_x)+2\epsilon_n\\
				&\le \rho_x(\tilde{\zeta},\tilde{\zeta'})+2\epsilon_n\\
				&\le \rho_x(\zeta,\zeta')+\delta+2\epsilon_n. 
			\end{split}
		\end{equation*}
		
		\medskip
		
		\underline{\it Proof of (ii).} From \eqref{gromov ip upper bound1} and \eqref{2 epsilon difference}  we have, for all $n$ sufficiently large,
		\begin{equation*}
			\begin{split}
				\rho_x(\zeta,\zeta')&\le\exp(-(u|v)_x)
				\le \exp(-(u_n|v_n)_{x_n})+2\epsilon_n.\\
			\end{split}
		\end{equation*}
		Let us now denote $\alpha_n=\FF_n(\alpha)$ and $\beta_n=\FF_n(\beta)$. Again $X_n$ being geodesically complete we can find $\tilde{\zeta_n},\tilde{\zeta'_n}\in \partial_P X_n$ such that
		\begin{equation*}
			\begin{split}
				&\tilde{\zeta_n}\in \argmx\ B(v_n,u_n,\ \cdot\ )= \argmx\ \frac{d\alpha_n}{d\beta_n}\implies B(v_n,u_n,\tilde{\zeta_n})=d_X(u_n,v_n)=\log\frac{d\alpha_n}{d\beta_n}(\tilde{\zeta_n}),\\
				\hbox{ and } &\tilde{\zeta'_n}\in \argmx\ B(u_n,v_n,\ \cdot\ )= \argmx\ \frac{d\beta_n}{d\alpha_n}\implies B(u_n,v_n,\tilde{\zeta'_n})=d_X(u_n,v_n)=\log\frac{d\beta_n}{d\alpha_n}(\tilde{\zeta'_n}),
			\end{split}
		\end{equation*} 
		as in the previous paragraph. Also $\tilde{\zeta'_n}\in \argmn \frac{d\alpha_n}{d\beta_n}$ and $\alpha_n(\tilde{\zeta_n},\tilde{\zeta'_n})=\beta_n(\tilde{\zeta_n},\tilde{\zeta'_n})=1$. Note that we are doing everything again at the $n$th level. Similarly, as in \eqref{getting epsilon bound}, we have 
		\begin{equation*}
			1\ge\alpha_n(\zeta_n,\tilde{\zeta_n})^2\ge\rho_{x_n}(\zeta_n,\tilde{\zeta'_n})^2\cdot \exp(-2(u_n|v_n)_{x_n}+2d_{X_n}(u_n,x_n)).
		\end{equation*} 
		Therefore, using \eqref{gromov ip upper bound1} and $\FF_n$ being $\epsilon_n$-isometry, for $n$ sufficiently large, (since $\epsilon_n \to 0+$) similarly as in \eqref{easy direction} we get, 
		\begin{equation*}
			\begin{split}
				\rho_{x_n}(\zeta_n,\tilde{\zeta_n})\le \exp((u_n|v_n)_{x_n}-d_{X_n}(u_n,x_n))
				&\le \exp((u|v)_x-d_{ X}(u,x))+3\epsilon_n\\
				&\le \exp((u|v)_x)\cdot e^{-R}+3\epsilon_n\\
				&\le \frac{\epsilon}{2}+3\epsilon_n\\
				&< \epsilon.
			\end{split}
		\end{equation*} Similarly, we have for all $n$ sufficiently large $\rho_{x_n}(\zeta'_n,\tilde{\zeta'_n})<\epsilon$. Again by equicontinuity property of $\rho_{x_n}$'s \eqref{equicontinuity used} for all $n$ we have $|\rho_{x_n}(\zeta_n,\zeta'_n)-\rho_{x_n}(\tilde{\zeta_n},\tilde{\zeta'_n})|<\delta$ and also as in \eqref{gromov ip lower bound} we have,
		$\rho_{x_n}(\tilde{\zeta_n},\tilde{\zeta'_n})\ge \exp(-(\alpha_n|\beta_n)_{\rho_{x_n}})$, therefore from what we got above,
		\begin{equation*}
			\begin{split}
				\rho_x(\zeta,\zeta')< \exp(-(u_n|v_n)_{x_n})+2\epsilon_n
				&=\exp(-(\alpha_n|\beta_n)_{\rho_{x_n}})+2\epsilon_n\\
				&\le\rho_{x_n}(\tilde{\zeta_n},\tilde{\zeta'_n})+2\epsilon_n\\
				&\le\rho_{x_n}(\zeta_n,\zeta'_n)+\delta+2\epsilon_n.
			\end{split}
		\end{equation*}
	\end{proof}
	
	\medskip
	
	\subsection{$(\partial_P X_n,\rho_{x_n})$ is contained in a small neighborhood of the image of $g_n$ for all $n$ sufficiently large}\hfill\\ 
	
	\noindent Let $\xi_n\in \partial_P X_n$ for some $n\ge 1$, we shall show for all $n$ sufficiently large there exists $\zeta\coloneqq \zeta(\xi_n)\in Z$ such that $\rho_n(\xi_n,g_n(\zeta))<3\delta$. 
	
	Choose $w_n\in S_{X_n}(x_n,R)\coloneqq\{ w\in X_n\ |\ d_X(x_n,w)=R\ \}\subset X_n$ 
	such that $\xi_n\in \argmx\ B(x_n,w_n,\ \cdot\ )$
	, i.e. point $w_n\in X_n$ is chosen at a distance $R$ from $x_n$, on a geodesic ray $[x_n,\xi_n)$ joining $x_n$ to $\xi_n$. There exists  $w\in  S_X(x,R)\subset B_X(x,R)$, such that $d_{ X_n}(\FF_n(w),w_n)<4\epsilon_n$.
	
	\noindent Further choose $\xi\in \argmx\ B(x,w,\ \cdot\ )$ and $\zeta=p(\xi)\in F$ as defined above, then $\rho_x(\xi,\zeta)<\epsilon_0$. Note that the choice of $\zeta$ is dependent on $\xi_n$. We will show, this $\zeta$ satisfies $\rho_n(\xi_n,g_n(\zeta))<3\delta$ for all $n$ sufficiently large.
	
	Let $s=d_{ X}(z(\zeta),w)$ (where $z(\zeta)\in S_X(o,R)$ as defined at the beginning of the section) with $d_{X}(z(\zeta),x)=R$ and $\zeta\in\argmx\  B(x,z(\zeta),\ \cdot\ )=\argmx \frac{d\alpha_{\zeta}}{d\rho_x}$. For ease of notation, denote $z(\zeta)$ by $u$.
	
	Cases$\colon$ 
	\begin{enumerate}[(i)]
		\item $s\le R$ $\colon$  Then $2(w|u)_x=2(w|z(\zeta))_x=R+R-s\ge R$, hence using \eqref{epsiloniso}, for all $n$ sufficiently large (since $\epsilon_n\to 0+$) we have
		\begin{equation*}
			\begin{split}
				\rho_{x_n}(\xi_n,g_n(\zeta))^2&\le\exp(-2(w_n|\FF_n(u))_{x_n})\\
				&\le\exp(-2(\FF_n(w)|\FF_n(u))_{x_n})+8\epsilon_n\quad \quad \hbox{(since $t\mapsto e^{-t}$ is $1$-Lipschitz for $t\ge0$)}\\
				&\le \exp(-2(w|u)_x)+12\epsilon_n \quad \quad \quad \quad \quad \quad\hbox{(from \eqref{2 epsilon difference})}\\
				&\le \exp(-R)+12\epsilon_n\\
				&\le \frac{\epsilon^2}{2}+10\epsilon_n\\
				&< \delta^2
			\end{split}
		\end{equation*}
		
		\item $s>R$ $\colon$ Similarly as in the proof of \Cref{distortion on finite set} we can find $\tilde{\zeta},\tilde{\xi}\in \partial_P X$ with $\tilde{\zeta}\in \argmx\frac{d\alpha_{\zeta}}{d\rho_w}$ and $\tilde{\xi}$ is a $\rho_w$-antipode, 
		of $\tilde{\zeta}$, i.e. $\rho_w(\tilde{\zeta},\tilde{\xi})=1$ with $\tilde{\xi}\in \argmx \frac{d\rho_w}{d\alpha_{\zeta}}$. Same calculation as in \eqref{getting epsilon bound} gives,
		\begin{equation*}
			\begin{split}
				1\ge \alpha_{\zeta}(\zeta,\tilde{\zeta})^2
				&\ge \rho_x(\zeta,\tilde{\zeta})^2\exp(d_X(u,x)+d_X(u,w)-d_X(w,x))\\
				&\ge\rho_x(\zeta,\tilde{\zeta})^2\ e^{R}\cdot e^s\cdot e^{-R}\\
				&=\rho_x(\zeta,\tilde{\zeta})^2\cdot e^s\\
				\implies \rho_x(\zeta,\tilde{\zeta})&\le e^{-s/2}<e^{-R/2}<\epsilon.
			\end{split}
		\end{equation*}
		Similarly, we have
		\begin{equation*}
			\begin{split}
				1\ge \rho_w(\xi,\tilde{\xi})^2
				&\ge \rho_x(\xi,\tilde{\xi})^2\exp(d_X(w,x)+d_X(u,w)-d_(u,x))\\
				&\ge \rho_x(\xi,\tilde{\xi})^2\cdot e^{R}\cdot e^s\cdot e^{-R}\\
				\implies \rho_x(\xi,\tilde{\xi})&\le e^{-s/2} <e^{-R/2}<\epsilon.
			\end{split}
		\end{equation*} Therefore, by continuity of $\rho_x$, (see \eqref{equicontinuity used}) $|\rho_x(\xi,\zeta)-\rho_x(\tilde{\xi},\tilde{\zeta})|<\delta$. So we have for all $n$ sufficiently large,
		\begin{equation*}
			\begin{split}
				\rho_{x_n}(\xi_n,g_n(\zeta))&\le \exp(-(w|u)_x+2\epsilon_n)\\
				& \le \rho_x(\tilde{\xi},\tilde{\zeta}) \cdot e^{\ 4\epsilon_n}\\
				&\le (\rho_x(\xi,\zeta)+\delta)\cdot e^{\ 4\epsilon_n}\\
				&\le (\epsilon_0+\delta) \cdot e^{\ 4\epsilon_n}\\
				&\le 3\delta.
			\end{split}
		\end{equation*}
	\end{enumerate}
	Therefore, for all $n$ large $g_n(\partial_P X)$ is a $3\delta$-net of $\partial_P X_n$ with respect to $\rho_{x_n}$.
	\medskip
	
	Thus, we have shown that $g_n\colon (\partial_P X,\rho_x)\to (\partial_P X_n,\rho_{x_n})$ constructed in \Cref{construction of gn} is a $3\delta$-isometry for all $n$ sufficiently large (could have started with $\delta/3$ to eventually get $\delta$ here). 
	This finishes the proof.
	
	\bigskip
	
	\section{GH convergence to maximal Gromov hyperbolic spaces with finite boundary}
	
	This section focuses on developing the technical tools required to prove \Cref{backward finite}. In the later part of the section, we provide the proof of the theorem.
	
	Before proceeding with the discussion, we define a few notions. Let $X$ be a good Gromov product space. Fix a base point $o\in X$. 
	For $R>0$, let us denote the closed ball and sphere of radius $R$ centered at $o$  by 
	\begin{equation*}
		B_X(o,R)\coloneqq\{\ x\in X\ |\ d_X(o,x)\le R\ \},\quad \text{ and }\quad S_X(o,R)\coloneqq \{x \in X \mid d_X(o, x) = R\} \subset X
	\end{equation*} respectively. For a closed subset $V\subset S_X(o,R)$ define the cone of $V$ based at $o$,
	\begin{equation}\label{definition of cone}
		\hbox{Cone}_o(V)\coloneqq\{\ x \in X\ | \hbox{ there exists a geodesic segment } [o,x] \hbox{ such that } [o,x]\cap V\neq \emptyset\ \}\ \subseteq X
	\end{equation}
	and the shadow of $V$ on the Gromov product boundary $\partial_P X$,
	\begin{equation}
		\hbox{Shad}_o(V)\coloneqq\{\ \xi \in \partial X\ | \hbox{ there exists a geodesic ray } [o,\xi) \hbox { such that } [o,\xi)\cap V\neq \emptyset\ \}\ \subseteq \partial_P X.
	\end{equation}
	It is immediate to observe that $V\subset \cone_o(V)$. Also for $\xi\in \shad_o(V)$ there exists a ray $[o,\xi)$ such that $[o,\xi)\cap V\neq \emptyset$, thus we can find a point $x_0$ on the ray such that for any point $x$ after $x_0$ is in $\cone_o(V)$, i.e. for all points $x$ on the ray $[o,\xi)$ with $d_X(o,x)\ge d_X(o,x_0)$ we have $x\in \cone_o(V)$. Also, observe that,
	\footnote{Since we know that $B_X(o, R)\coloneqq\{\ x\in X\ |\ d_X(o,x)\le R\ \}$ is the closed ball, the complement of the open ball is equal to the closure of the set $X\setminus B_X(o, R)$.}
	\begin{equation}\label{immediate property of cone}
		\begin{split}
			&\cone_o(V)\subseteq \overline {X\setminus B_X(o, R)}\ =\{\ x\in X\ |\ d_X(o,x)\ge R\ \},\\ 
			\text{ and }\quad &\cone_o(U)\cap B_X(o,R)=\cone_o(V)\cap S_X(o,R)=V.
		\end{split}
	\end{equation}

	In \Cref{disjoint cone and shadow} and \Cref{path component}, we observe that in a maximal Gromov product space $X$, the connected components of the sphere $S_X(o,R)$ 
	provide information about the structure of $X$ outside the ball $B_X(o, R)$ and the Gromov product boundary $\partial_P X$.

	\begin{lemma}\label{disjoint cone and shadow}
		Let $X$ be a maximal Gromov product space. For $o\in X$ and $R>0$,  let $S_X(o,R)$ be the sphere of radius $R>0$ centered at $o$. Let $U, V\subset S_X(o,R)$ be disjoint connected components in $S_X(o,R)$ (i.e. $U\cap V=\emptyset$). Then we have the following:\\
		\begin{enumerate}[(1)]
			\item For any $p\in \cone_o(U)$ and $q\in \cone_o(V)$ the Gromov product $(p|q)_o\le R$. Consequently, $\cone_o(U)\cap\cone_o(V)=\emptyset$.\\
			
			\item The shadow of $U$ and $V$ on $\partial_P X$ are disjoint, i.e. $\shad_o(U)\cap \shad_o(V)=\emptyset$. In fact $(\xi|\eta)_o\le R$ for all $\xi\in \shad_o(U)$ and $\eta\in \shad_o(V)$.\\
			\item Moreover, if $\CC$ is the  set of connected components of $S_X(o,R)$ then,
			\begin{equation}\label{disjoint union}
				\overline{X\setminus B_X(o,R)} = \bigsqcup_{U\in \CC}\cone_o(U)\footnote{Here `$\bigsqcup$' denotes disjoint union.} \quad \text{ and } \quad \partial_P X=\bigsqcup_{U\in \CC}\shad_o(U).
			\end{equation}
		\end{enumerate} 
	\end{lemma}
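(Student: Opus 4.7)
The plan is to exploit the convex (hence continuous) geodesic bi-combing $\Gamma$ which exists on $X$ since $X$ is injective by \Cref{injectivity theorem}, combined with a radial retraction onto $S_X(o,R)$. Define
\begin{equation*}
\phi \colon \{x \in X : d_X(o,x) \ge R\} \to S_X(o,R), \qquad \phi(x) \coloneqq \Gamma_{o,x}\bigl(R/d_X(o,x)\bigr),
\end{equation*}
which is continuous by the joint continuity of $\Gamma$ (from the conical property) and of $d_X(o,\cdot)$. The decisive technical step is a \emph{connectedness dictionary}: whenever $u \in S_X(o,R)$ lies on some geodesic $\sigma\colon [0,d_X(o,x)] \to X$ from $o$ to $x$ (so $u = \sigma(R)$), the curve $t \mapsto \Gamma_{o,\sigma(t)}(R/t)$ for $t \in [R, d_X(o,x)]$ is a continuous path in $S_X(o,R)$ from $u$ (at $t = R$, since $\Gamma_{o,u}(1) = u$) to $\phi(x)$ (at $t = d_X(o,x)$), so $u$ and $\phi(x)$ lie in the same connected component of $S_X(o,R)$. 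The upshot is the clean criterion: $x \in \cone_o(U)$ iff $\phi(x) \in U$.

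For part (1), given $p \in \cone_o(U)$ and $q \in \cone_o(V)$ with $U \ne V$, I would consider the bi-combing geodesic $\Gamma_{p,q}$. If $d_X(o, \Gamma_{p,q}(t)) > R$ for every $t \in [0,1]$, then $\phi \circ \Gamma_{p,q}$ is a continuous path in $S_X(o,R)$ from $\phi(p) \in U$ to $\phi(q) \in V$; its image is connected, contradicting that $U, V$ are distinct components. Hence some $t_0$ has $d_X(o, \Gamma_{p,q}(t_0)) \le R$, and since $\Gamma_{p,q}$ is a geodesic,
\begin{equation*}
d_X(p,q) = d_X(p, \Gamma_{p,q}(t_0)) + d_X(\Gamma_{p,q}(t_0), q) \ge (d_X(o,p) - R) + (d_X(o,q) - R),
\end{equation*}
which rearranges to $(p|q)_o \le R$. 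Disjointness $\cone_o(U) \cap \cone_o(V) = \emptyset$ then follows: a common point $p$ would satisfy $d_X(o,p) = (p|p)_o \le R$, combined with $d_X(o,p) \ge R$ from \eqref{immediate property of cone} this forces $p \in S_X(o,R)$, whence $p \in U \cap V = \emptyset$ via the second identity of \eqref{immediate property of cone}.

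Part (2) is a boundary limit of part (1): for $\xi \in \shad_o(U)$ and $\eta \in \shad_o(V)$ pick rays from $o$ to $\xi, \eta$ passing through points of $U, V$ and truncate them at $p_n, q_n$ with $d_X(o,p_n), d_X(o,q_n) \to \infty$; each $(p_n|q_n)_o \le R$ by part (1), and the continuous extension of the Gromov product to $\hat X \times \hat X$, which is built into the definition of a Gromov product space, yields $(\xi|\eta)_o \le R$. In particular $\xi \ne \eta$, so the two shadows are disjoint. Part (3) is then immediate: every $x$ with $d_X(o,x) \ge R$ lies on a geodesic $[o,x]$ meeting $S_X(o,R)$ in a point of a unique component $U$, and likewise every $\xi \in \partial_P X$ has a ray from $o$ meeting some component $U$, giving the claimed unions; disjointness has just been established.

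The step I expect to require the most care is the connectedness dictionary, since it needs the joint continuity of $\Gamma$ in all three of its arguments to ensure that the interpolation $t \mapsto \Gamma_{o,\sigma(t)}(R/t)$ is continuous as both $\sigma(t)$ and the parameter $R/t$ vary simultaneously. Everything else reduces to the triangle inequality together with the standing assumption that the Gromov product extends continuously to $\hat X$.
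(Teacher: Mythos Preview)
Your argument is correct and takes a genuinely different route from the paper's. The paper argues part~(1) by contradiction as well, but instead of a radial retraction it builds a two-parameter homotopy $H(s,t)=\Gamma(\alpha(s),\lambda(s),t)$ between a concatenated path $\alpha$ through $o$ (joining chosen points $p'\in U$, $q'\in V$) and the geodesic $\lambda=[p,q]$; it then uses \emph{convexity} of the bi-combing to show that for each $s$ the function $t\mapsto d_X(o,H(s,t))$ crosses the value $R$ at a unique time $\hat t(s)$, proves $\hat t$ is continuous via a subsequence argument, and thereby produces a path $s\mapsto H(s,\hat t(s))$ in $S_X(o,R)$ from $p'$ to $q'$. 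Your approach bypasses this machinery entirely: the radial projection $\phi(x)=\Gamma_{o,x}(R/d_X(o,x))$ together with your ``connectedness dictionary'' reduces the problem to projecting the single geodesic $\Gamma_{p,q}$, and you only need continuity of $\Gamma$ (guaranteed already by conicality), not convexity. Parts~(2) and~(3) are handled identically in both proofs. The paper's homotopy technique is reused verbatim in the proof of \Cref{path component}, but your retraction $\phi$ would serve equally well there; conversely, your dictionary yields as a byproduct the pleasant fact that all sphere-crossings of geodesics $[o,x]$ lie in a single component, which the paper obtains only implicitly.
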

	\begin{proof}
		\underline{{\it Proof of (1).}} 
		First, we prove $\cone_o(U)\cap\cone_o(V)=\emptyset$ assuming $(p|q)_o\le R$ for all $p\in \cone_o(U)$ and $q\in \cone_o(V)$. Let us suppose there exists $p\in \cone_o(U)\cap \cone_o(V)\neq\emptyset$. Then from \eqref{immediate property of cone} we know,
		\begin{equation*}
			R\le d_X(o,p)\le (p|p)_o\le R, \text{ which implies } p\in S_X(o,R)\cap \cone_o(U)\cap \cone_o(V).
		\end{equation*}
		Again, from \eqref{immediate property of cone}, we have $p\in U\cap V$, which is impossible, since $U\cap V=\emptyset$. Thus the cones $\cone_o(U)$ and $\cone_o(V)$ must be disjoint.
		
		\medskip
		
		Next we prove $(p|q)_o\le R$ for all $p\in \cone_o(U)$ and $q\in \cone_o(V)$. Let us suppose there exists some $p\in \cone_o(U)$ and $q\in cone_o(V)$ such that $(p|q)_o>R$, and we shall arrive at a contradiction. 
		Let $\lambda\colon [0,1]\to [p,q]$ be a constant speed parametrization of a geodesic segment $[p,q]$ such that $\lambda(0)=p$, $\lambda(1)=q$. 
		We have, 
		\begin{equation*}
			d_X(o,\lambda([0,1]))=d_X(o,[p,q])\ge (p|q)_o>R.
		\end{equation*}
		Thus $\lambda([0,1])\subset {X\setminus B_X(o,R)}$. Consequently, $p\in \cone_o(U)\setminus U$ and $q\in \cone_o(V)\setminus V$. Moreover, there exists 
		geodesic segments $[o,p]$ and $[o,q]$ with $\{p'\}=[o,p]\cap U$ and $\{q'\}= [o,q]\cap V$ respectively. Let $\alpha_1\colon [0,1]\to [o,p']$ and $\alpha_2\colon [0,1]\to [o,q']$ be constant speed parametrizations of the geodesic segments $[o,p']$ and $[o,q']$, respectively. Consider the concatenation path $\alpha \coloneq \overline{\alpha_1}* \alpha_2\colon [0,1]\to X$, such that
		\begin{equation*}
			\alpha(t)\coloneqq 
			\begin{cases}
				\alpha_1(1-2t) \quad &\text{if } \, t\in\big[1,\frac{1}{2}\big]\\
				\alpha_2(2t-1) \quad &\text{if } \, t\in\big[\frac{1}{2},1\big]\\
			\end{cases},
		\end{equation*} joining $p'$ to $q'$. 
		
		Now, $X$ is a maximal Gromov product space; thus, from \Cref{geodesic bicombing subsection} we know it admits a convex geodesic bi-combing $\Gamma\colon X\times X\times [0,1]\to X$ (note that $\Gamma$ is continuous). Let us define 
		\begin{equation*}
			\begin{split}
				H\colon [0,1]\times [0,1]&\to X\\
				(s,t)&\mapsto \Gamma(\alpha(s),\lambda(s),t).
			\end{split}
		\end{equation*}
		Let $s\in [0,1]$, then the function $f_s\colon t\in[0,1]\mapsto d_X(o,H(s,t))\in[0,\infty)$ is convex. For $s\in (0,1)$ we have,
		\begin{equation}
			\begin{split}
				& f_s(0)=d_X(o,H(s,0))=d_X(o,\alpha(s))<R\\
				\hbox{ and }& f_s(1)=d_X(o,H(s,1))=d_X(o,\lambda(s))>R.
			\end{split}
		\end{equation}
		Let $A_s\coloneqq\{\ t\in [0,1]\ |\ f_s(t)=R\ \}\neq \emptyset$. If $t_1,t_2\in A_s$, with $t_1\le t_2$ then $f_s$ being convex; and $f_s(0)<R=f_s(t_2)$ implies $t_1=t_2$ (otherwise $f_s(t_1)<R$, a contradiction). Thus $A_s$ is singleton, i.e. $A_s=\{\ \hat{t}(s)\ \}$ for some $\hat{t}(s)\in (0,1)$.
		
		\medskip
		
		\underline{Claim:} The function $\hat{t}\colon (0,1)\to (0,1) (s\mapsto \hat{t}(s))$ is continuous.

		Proof of the claim: Let us consider a sequence $s_n\to s$ in $(0,1)$. Passing to a sub-sequence $\hat{t}(s_n)\to t$ for some $t\in [0,1]$. By continuity of $\Gamma$ we have 
		\begin{equation*}
			R=f_{s_n}(\hat{t}(s_n))\to f_s(t)\quad \text{ as $n\to \infty$ along the sub-sequence}.
		\end{equation*}
		As a consequence $f_s(t)=R$ and $\hat{t}(s)=t\in(0,1)$. Therefore, $\hat{t}(s)=t$ is the unique limit point of the sequence $\{\hat{t}(s_n)\}_{n\ge 1}$. Hence $\hat{t}(s_n)\to \hat{t}(s)$ as $n\to \infty$, so $\hat{t}$ is continuous.
		
		Observe that $H(0,\cdot)=\Gamma_{p'p}$ is a constant speed parametrization of a geodesic from $p'$ to $p$, and 
		\begin{equation*}
			d_X(o,p)=d_X(o,p')+d_X(p',p).
		\end{equation*} Thus, the concatenation $\alpha_1*\Gamma_{p'p}$ is a constant speed parametrization of a geodesic segment joining $o$ to $p$. Consequently, $f_0(t)=d_X(o,H(0,t))=d_X(o,\Gamma(p',p,t))>R$ for all $t\in (0,1]$. Moreover $f_0(t)=R$ if and only if $t=0$. Hence we have $\hat{t}(s)\to 0$ as $s\to 0+$. Arguing similarly we also have $\hat{t}(s)\to 1$ as $s\to 1-$. Therefore, $\hat{t}$ extends to a continuous function $\hat{t}\colon [0,1]\to [0,1]$ with $\hat{t}(0)=0$ and $\hat{t}(1)=1$.
		
		Define the curve in $S_X(o,R)$
		\begin{equation*}\label{theta curve}
			\begin{split}
				\theta\colon [0,1]\to &S_X(o,R)\\
				s\mapsto &H(s,\hat{t}(s))=\Gamma(\alpha(s),\lambda(s),\hat{t}(s)).
			\end{split}
		\end{equation*}
		Then $\theta(0)=\alpha(0)=p'\in U$ and $\theta(1)=\alpha(1)=q'\in V$. However, $\theta([0,1])$ is connected, so $\theta(0)=p'\in U$ implies $\theta([0,1])\subset U$ (as $U$ connected component of $S_X(o,R)$) and $\theta(1)=q'\in U$. So we have $q'\in U\cap V$ which is a contradiction to the hypothesis $U\cap V=\emptyset$.
		Thus we must have $(p|q)_o\le R$ for all $p\in \cone_o(U)$ and $q\in\cone_o(V)$.
		
		\medskip
		
		\underline{{\it Proof of (2).}} This is a consequence of statement (1). Let $\xi\in \shad_o(U)$ and $\eta\in \shad_o(V)$. Consider two geodesic rays $\gamma_1\colon[0,\infty)\to X$ and $\gamma_2\colon[0,\infty)\to X$ joining $o$ to $\xi$ and $\eta$ on $\partial_P X$, respectively. We know from the properties of Gromov product spaces that
		\begin{equation*}
			\lim_{t\to\infty} (\gamma_1(t)|\gamma_2(t))_o=(\xi|\eta)_o.
		\end{equation*} Now, we have for all $t$ large $\gamma_1(t)\in \cone_o(U)$ and $ \gamma_2(t)\in \cone_o(V)$. Thus, from above we have for all $t$ large $(\gamma_1(t)|\gamma_2(t))_o\le R$ and $(\xi|\eta)_o\le R$. This also implies $\shad_o(U)\cap\shad_o(V)=\emptyset$.
		
		\medskip
		
		\underline{{\it Proof of (3).}} Let $\CC$ be the collection of all connected components of $S_X(o,R)$. From statement (1) and (2) we have
		\begin{equation*}
			\bigsqcup_{U\in \CC}\cone_o(U)\subseteq \overline{X\setminus B_X(o,R)} \quad \text{ and } \quad \bigsqcup_{U\in \CC}\shad_o(U)\subseteq \partial_P X.
		\end{equation*}
		For $x\in \overline{X\setminus B_X(o,R)}$ any geodesic segment $[o,x]$ must intersect $S_X(o,R)$, and hence it intersect some connected component $V$ of $S_X(o,R)$. Thus $x\in \cone_o(V)$. Similarly, for $\xi \in \partial_P X$ any geodesic ray $[o,\xi)$ must intersect some connected component $V$ of $S_X(o,R)$, and therefore $\xi\in \shad_o(V)$. Finally, we can conclude
		\begin{equation*}
			\overline{X\setminus B_X(o,R)} = \bigsqcup_{U\in \CC}\cone_o(U) \quad \text{ and } \quad \partial_P X=\bigsqcup_{U\in \CC}\shad_o(U).	
		\end{equation*}
	\end{proof}
	
	We make the following remark:
	\begin{rmk}
		Let $X$ be a maximal Gromov product space. Let $U$ be a connected component of the sphere $S_X(o, R)$, as in \Cref{disjoint cone and shadow}. Suppose a point $x \in \cone_o(U)$, i.e., there exists a geodesic segment $[o, x]$ which intersects $U$ (see \eqref{definition of cone}). From \Cref{disjoint cone and shadow}(1), we can argue that every geodesic segment $[o, x]$ joining $o$ to $x$ must intersect $U$. Similarly, from \Cref{disjoint cone and shadow}(2), for $\xi \in \shad_o(U)$, every geodesic ray $[o,\xi)$ intersects $U$. 
	\end{rmk}
	\begin{lemma}\label{path component}
		Let $X$ be a maximal Gromov product space. For $o\in X$ and $R>0$,  let $S_X(o,R)$ denote the sphere of radius $R>0$ centered at $o$. Let $U, V\subset S_X(o,R)$ be disjoint connected components of $S_X(o,R)$. Then for any
		$p\in\cone_o(U)\setminus U$ and $q\in\cone_o(V)\setminus V$,the points $p$ and $q$ lie in the different path components in $X\setminus B_X(o,R)$, that is, for $p\in \cone_o(U)\setminus U$ and $q\in \cone_o(V)\setminus V$ there does not exist any path $\lambda\colon [0,1]\to X\setminus B_X(o,R)$ with $\lambda(0)=p$ and $\lambda(1)=q$.
	\end{lemma}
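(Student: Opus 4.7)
The plan is to argue by contradiction, using the convex geodesic bi-combing on $X$ (available because maximal Gromov product spaces are injective, by \Cref{injectivity theorem}) to build a continuous path in the sphere $S_X(o,R)$ that connects a point of $U$ to a point of $V$, contradicting $U$ and $V$ being distinct connected components. This is essentially a parametrized version of the argument used in part (1) of the preceding lemma, but now applied to a hypothetical path $\lambda$ in $X \setminus B_X(o,R)$ rather than to a single geodesic segment $[p,q]$.

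More precisely, suppose for contradiction that $\lambda\colon [0,1]\to X\setminus B_X(o,R)$ is continuous with $\lambda(0)=p$ and $\lambda(1)=q$. Let $\Gamma$ be a convex (hence continuous) geodesic bi-combing on $X$, and define $H(s,t)\coloneqq \Gamma(o,\lambda(s),t)$. Because $\Gamma_{o,\lambda(s)}$ is a constant-speed geodesic, $d_X(o,H(s,t))=t\cdot d_X(o,\lambda(s))$; moreover $d_X(o,\lambda(s))>R$ for every $s$, so the function
\begin{equation*}
\hat t(s)\coloneqq \frac{R}{d_X(o,\lambda(s))}\in(0,1)
\end{equation*}
is well-defined and continuous. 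Setting $\theta(s)\coloneqq H(s,\hat t(s))$ yields a continuous curve $\theta\colon [0,1]\to S_X(o,R)$.

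The remaining point is to identify the endpoints of $\theta$. By construction, $\theta(0)$ is the unique point at distance $R$ from $o$ on the bi-combing geodesic $\Gamma_{o,p}$; since $p\in \cone_o(U)\setminus U$, the remark following \Cref{disjoint cone and shadow} says that \emph{every} geodesic segment $[o,p]$ meets $U$, so $\theta(0)\in U$. By the symmetric argument $\theta(1)\in V$. Since $\theta([0,1])$ is a connected subset of $S_X(o,R)$ meeting the connected component $U$, it must be contained in $U$; but then $\theta(1)\in U\cap V$, contradicting $U\cap V=\emptyset$. The main obstacle, as in the previous lemma, is finding a continuous selection of a point at distance exactly $R$ along the bi-combing geodesics from $o$; here this is easier than in the earlier argument, because the fibres $t\mapsto d_X(o,\Gamma(o,\lambda(s),t))$ are strictly monotone and linear, so the implicit function $\hat t$ is given explicitly and is automatically continuous.
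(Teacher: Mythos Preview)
Your proof is correct, and it takes a genuinely different route from the paper's. The paper reuses the construction from part~(1) of \Cref{disjoint cone and shadow} verbatim: it picks $p'\in[o,p]\cap U$ and $q'\in[o,q]\cap V$, forms the concatenation $\alpha=\overline{\alpha_1}*\alpha_2$ through $o$, sets $H(s,t)=\Gamma(\alpha(s),\lambda(s),t)$, and uses convexity of $t\mapsto d_X(o,H(s,t))$ together with the boundary values $f_s(0)<R<f_s(1)$ to produce the implicit function $\hat t(s)$ and the curve $\theta$ in $S_X(o,R)$ with $\theta(0)=p'$, $\theta(1)=q'$.

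Your variant replaces $\alpha$ by the constant path at $o$: with $H(s,t)=\Gamma(o,\lambda(s),t)$ the radial distance is linear in $t$, so $\hat t(s)=R/d_X(o,\lambda(s))$ is explicit and continuity is automatic --- no appeal to convexity of the bi-combing is needed, only its continuity. The price you pay is at the endpoints: $\theta(0)$ now lies on the particular bi-combing geodesic $\Gamma_{o,p}$ rather than on a geodesic chosen in advance to meet $U$, so you must invoke the remark after \Cref{disjoint cone and shadow} (that \emph{every} geodesic $[o,p]$ meets $U$ when $p\in\cone_o(U)$) to place $\theta(0)\in U$ and $\theta(1)\in V$. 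Since that remark is already available, this is a clean simplification of the paper's argument for this lemma.
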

	\begin{proof}
		Let us suppose there exist $p\in \cone_o(U)\setminus U$ and $q\in \cone_o(V)\setminus V$ such that there exists a path \begin{equation*}
			\lambda\colon [0,1]\to X\setminus B_X(o,R)
		\end{equation*} with $\lambda(0)=p$ and $\lambda(1)=q$. Thus $d_X(o,\lambda([0,1]))>R$. Let us consider geodesic segments $[o,p]$ and $[o,q]$ joining $o$ to $p$ and $q$ respectively. Consider, $p'\in [o,p]\cap U$ and $q'\in [o,q]\cap V$. Same as in the proof of \Cref{disjoint cone and shadow}(1), using convex geodesic bi-combing on $X$ we can construct a path (see \eqref{theta curve}) $\theta\colon [0,1]\to S_X(o,R)$ with $\theta(0)=p'$ and $\theta(1)=q'$. This is not possible since $p'\in U$ and $q'\in V$, which are disjoint connected components of $S_X(o,R)$.
	\end{proof}
	\begin{prop}\label{gromov product close}
		Let $X$ be a maximal Gromov product space. For $o\in X$ and $R>0$, let $S_X(o,R)$ denote the sphere of radius $R>0$ centered at $o$. Let $U, V\subset S_X(o,R)$ be disjoint connected components of $S_X(o,R)$. For $\xi\in \shad_o(U)$ and $\eta\in \shad_o(V)$ we have,
		\begin{equation}\label{gromov product close 1}
			\big|(\xi|\eta)_o-(x|y)_o\big|\le \hbox{diam}(U)+\hbox{diam}(V)
		\end{equation} for all $x\in U$ and $y\in V$. Moreover, for a closed subset $U\subseteq S_X(o,R)$, if $\xi,\eta\in \shad_o(U)$ we have 
		\begin{equation}\label{gromov product close 2}
			(\xi|\eta)_o\ge R-\frac{\hbox{diam}(U)}{2}.
		\end{equation}
	\end{prop}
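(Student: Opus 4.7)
The plan is to exploit the structural results \Cref{disjoint cone and shadow} and \Cref{path component} together with the continuous extension of the Gromov product $(\cdot|\cdot)_o$ to $\hat X\times\hat X$. I treat the two inequalities separately, starting with the easier \eqref{gromov product close 2}.

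For \eqref{gromov product close 2}, geodesic completeness of $X$ lets me pick rays $\gamma_\xi,\gamma_\eta\colon[0,\infty)\to X$ from $o$ to $\xi,\eta$ meeting $U$ at $x_\xi\coloneqq\gamma_\xi(R)$ and $y_\eta\coloneqq\gamma_\eta(R)$. For $t\ge R$, the triangle inequality applied along the concatenation $\gamma_\xi|_{[R,t]}\cup[x_\xi,y_\eta]\cup\gamma_\eta|_{[R,t]}$ gives $d(\gamma_\xi(t),\gamma_\eta(t))\le 2(t-R)+\diam(U)$, whence $(\gamma_\xi(t)|\gamma_\eta(t))_o\ge R-\tfrac12\diam(U)$. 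Letting $t\to\infty$ yields $(\xi|\eta)_o\ge R-\tfrac12\diam(U)$.

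For \eqref{gromov product close 1}, I pick analogous rays $\gamma_\xi$ through $x_\xi\in U$ and $\gamma_\eta$ through $y_\eta\in V$, and for $t>R$ set $p_t\coloneqq\gamma_\xi(t)$, $q_t\coloneqq\gamma_\eta(t)$. The crux is to show that any geodesic $[p_t,q_t]$ passes through some $u_t\in U$ and $v_t\in V$. Since $p_t\in\cone_o(U)\setminus U$ and $q_t\in\cone_o(V)\setminus V$ lie in different path components of $X\setminus B_X(o,R)$ by \Cref{path component}, $[p_t,q_t]$ must touch $S_X(o,R)$. Let $u_t$ be the first touch point: the portion of $[p_t,q_t]$ strictly preceding $u_t$ is a path-connected subset of $X\setminus B_X(o,R)$ containing $p_t$, and by \Cref{path component} combined with the decomposition $X\setminus B_X(o,R)=\bigsqcup_{W\in\CC}(\cone_o(W)\setminus W)$ from \Cref{disjoint cone and shadow}(3), it is entirely contained in $\cone_o(U)\setminus U$. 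Since $\cone_o(U)$ is closed (via a subsequential limit argument using properness of $X$ and compactness of $U\subset S_X(o,R)$) and $\cone_o(U)\cap S_X(o,R)=U$, the limit $u_t$ lies in $U$; symmetrically the last touch point $v_t$ lies in $V$. Because $[p_t,q_t]$ passes through $u_t$ and $v_t$ in that order, $d(p_t,q_t)=d(p_t,u_t)+d(u_t,v_t)+d(v_t,q_t)$, and a direct calculation using $d(o,u_t)=d(o,v_t)=R$ gives the identity
\[
(p_t|q_t)_o=(u_t|p_t)_o+(v_t|q_t)_o+(u_t|v_t)_o-2R\le(u_t|v_t)_o\le R-\tfrac12 d(U,V),
\]
using $(u_t|p_t)_o,(v_t|q_t)_o\le R$. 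Letting $t\to\infty$ yields $(\xi|\eta)_o\le R-\tfrac12 d(U,V)$.

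For the reverse direction, the argument of the preceding paragraph (applied across the two components) gives $(\xi|\eta)_o\ge (x_\xi|y_\eta)_o\ge R-\tfrac12(d(U,V)+\diam(U)+\diam(V))$, where the last inequality uses the triangle inequality through the pair realizing $d(U,V)$. For arbitrary $x\in U, y\in V$, the estimates $d(U,V)\le d(x,y)\le d(U,V)+\diam(U)+\diam(V)$ place $(x|y)_o=R-\tfrac12 d(x,y)$ in the same interval as $(\xi|\eta)_o$, whose length is $\tfrac12(\diam(U)+\diam(V))$ --- a bound in fact strictly stronger than \eqref{gromov product close 1}. The principal obstacle is the identification $u_t\in U$ and $v_t\in V$, which weaves together the closedness of $\cone_o(U)$, the disjoint decomposition of $\overline{X\setminus B_X(o,R)}$, and the path-component separation of \Cref{path component}, all of which ultimately rely on the convex geodesic bicombing available on the maximal Gromov product space $X$.
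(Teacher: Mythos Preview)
Your proof is correct and follows essentially the same route as the paper's: both identify the first and last intersections $u_t\in U$, $v_t\in V$ of a geodesic $[p_t,q_t]$ with $S_X(o,R)$ via \Cref{path component} and the cone decomposition, then compare Gromov products and pass to the limit. Your final arithmetic, packaging the computation as $(p_t|q_t)_o=(u_t|p_t)_o+(v_t|q_t)_o+(u_t|v_t)_o-2R$ together with the interval-squeeze on $(x|y)_o$, is slightly slicker and in fact yields the sharper constant $\tfrac12(\diam(U)+\diam(V))$, whereas the paper's direct triangle-inequality estimate loses this factor of two.
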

	\begin{proof}
		First, we prove \eqref{gromov product close 1}. Let $\xi\in \shad_o(U)$ and $\eta\in \shad_o(V)$. We consider two geodesic rays $\gamma_1\colon[0,\infty)\to X$ and $\gamma_2\colon[0,\infty)\to X$ running from $o$ to $\xi$ and $\eta$ on $\partial_P X$, respectively, with $\gamma_1(R)\in U$ and $\gamma_2(R)\in V$. Now for all $t$ large enough (say $t>2R$), we have, $\gamma_1(t)\in \cone_o(U)\setminus U$ and $\gamma_2(t)\in \cone_o(V)\setminus V$. $X$ being a geodesic metric space, there exists a geodesic segment, say $\gamma$ joining $\gamma_1(t)$ to $\gamma_2(t)$. By \Cref{path component} we know $\gamma$ must intersect $B_X(o,R)$.
		
		Define $t_0 \coloneq \inf \{t \ge 0 \mid d_X(o, \gamma(t)) \leq R\}$. Note that $d_X(o, \gamma(t)) > R$ for all $t < t_0$, and 
		$d_X(o, \gamma(t_0)) = R$, i.e. $\gamma(t_0)\in S_X(o,R)$.  
		
		For all $t < t_0$, we have $\gamma(t) \in \cone_o(U)$. Otherwise, (referring to \eqref{disjoint union}), for some $t<t_0$, we would have $\gamma(t) \in \cone_o(V) \setminus V$, where $V$ is some other connected component of the sphere $S_X(o,R)$ with $V \neq U$. Then the path $\gamma([0,t])$ joins $\gamma(0)\in \cone_o(U)\setminus U$ to $\gamma(t)\in \cone_o(V)\setminus V$. By \Cref{path component}, there would exist some $t_1$ with $0 < t_1 < t < t_0$ such that $d_X(o, \gamma(t_1)) \leq R$, contradicting the definition of $t_0$. Thus, $\gamma(t_0)$ is an accumulation point of $\cone_o(U)$ and $\gamma(t_0)\in S_X(o,R)$.
		
		Suppose $p$ is an accumulation point of $\cone_o(U)$ and $p\in S_X(o,R)$. Then there exists a sequence $p_n\in \cone_o(U)$ such that $p_n\to p$ as $n\to \infty$. Let $x_n\in [o,p_n]\cap U$ for some geodesic segment joining $o$ to $p_n$.
		\begin{equation*}
			\begin{split}
				d_X(x_n,p)&\le d_X(x_n,p_n)+d_X(p_n,p)\\
				&=d_X(o,p_n)-d_X(o,x_n)+ d_X(p_n,p)\\
				&=d_X(o,p_n)-R+d_X(p_n,p)\xrightarrow{n\to \infty} d_X(o,p)-R+0=0.
			\end{split}
		\end{equation*}
		Thus $x_n\to p$ as $n\to \infty$. Since $U$ is a connected component of $S$, it is closed, and we have $p\in U$. From this above discussion, we get $\gamma(t_0)\in U$. 
		
		Define $t_1\coloneqq\sup\{t\ge0\ |\ d_X(o,\gamma(t))\le R\}$, then similarly we can argue that $\gamma(t_1)\in V$. 
		
		Note that 
		\begin{equation*}
			\begin{split}
				d_X(\gamma_1(t),\gamma_2(t))&=d_X(\gamma_1(t),\gamma(t_0))+d_X(\gamma(t_0),\gamma(t_1))+d_X(\gamma(t_1),\gamma_2(t)),\\
				d_X(o,\gamma_1(t))&=d_X(o,\gamma_1(R))+d_X(\gamma_1(R),\gamma_1(t))=R+d_X(\gamma_1(R),\gamma_1(t)),\\	d_X(o,\gamma_2(t))&=d_X(o,\gamma_2(R))+d_X(\gamma_2(R),\gamma_2(t))=R+d_X(\gamma_2(R),\gamma_2(t)),
			\end{split}
		\end{equation*}which gives,
		\begin{equation}\label{gromov product equation}
			\begin{split}
				2(\gamma_1(t)|\gamma_2(t))_o=&2R-d_X(\gamma(t_0),\gamma(t_1))\\
				&+d_X(\gamma_1(t),\gamma(t_0))-d_X(\gamma_1(R),\gamma_1(t))\\
				&+d_X(\gamma(t_1),\gamma_2(t))-d_X(\gamma_2(R),\gamma_2(t)).
			\end{split}
		\end{equation}
		For $x\in U$ and $y\in V$ we have $2(x|y)_o=d_X(x,o)+d_X(y,o)-d_X(x,y)=2R-d_X(x,y).$ Thus from \eqref{gromov product equation} and application of the triangle inequality we get,
		\begin{equation}\label{gromov product approximation}
			\begin{split}
				2\big|(\gamma_1(t)|\gamma_2(t))_o-(x|y)_o\big|\le&\  \big|d_X(x,y)-d_X(\gamma(t_0),\gamma(t_1))\big|\\
				&+\big|d_X(\gamma_1(t),\gamma(t_0))-d_X(\gamma_1(R),\gamma_1(t))\big|\\
				&+\big|d_X(\gamma(t_1),\gamma_2(t))-d_X(\gamma_2(R),\gamma_2(t))\big|\\
				\le&\ d_X(x,\gamma(t_0))+d_X(\gamma(t_1),y)+d_X(\gamma(t_0),\gamma_1(R))+d_X(\gamma(t_1),\gamma_2(R))\\
				\le&\ 2 (\hbox{diam}(U)+\hbox{diam}(V))
			\end{split}
		\end{equation}
		We know from property of Gromov product space $$\lim_{t\to\infty} (\gamma_1(t)|\gamma_2(t))_o=(\xi|\eta)_o.$$ Therefore, from \eqref{gromov product approximation} we can conclude the desired claim \eqref{gromov product close 1} (by taking limit $t\to \infty$ on the left hand side).
		
		For proving \eqref{gromov product close 2}, consider geodesic rays $\gamma_1$ and $\gamma_2$ joining $o$ to $\xi$ and $\eta$, respectively, as above. Then $(\gamma_1(t)|\gamma_2(t))_o\uparrow (\xi|\eta)_o$ as $t\to \infty$. Note $\gamma_1(R),\gamma_2(R)\in U$. Thus,
		\begin{equation*}
			(\xi|\eta)_o\ge(\gamma_1(R)|\gamma_2(R))_o= \frac{1}{2}\bigg(d_X(o,\gamma_1(R))+d_x(o,\gamma_2(R))-d_X(\gamma_1(R),\gamma_2(R))\bigg)\ge R-\frac{\hbox{diam}(U)}{2}.
		\end{equation*}
	\end{proof}
	
	Now we are well equipped to give the proof of \Cref{backward finite}.
	
	\begin{proof}[{\bf Proof of \Cref{backward finite}}]
		Since $(X_n,x_n)$ is a maximal Gromov product space, it is isometric to $\MM(\partial_P X_n,\rho_{x_n})$, for each $n$, via the visual embedding (see \Cref{structure theorem}). Similarly, $(X,x)$ is isometric to $\MM(\partial_P X,\rho_{x})$. Define the sequence of antipodal spaces $(Z_n,\rho_n)=(\partial_P X_n,\rho_{x_n})$ for all $n$ and $(Z,\rho_0)=(\partial_P X,\rho_x)$, where $Z$ is a finite set of cardinality $m<\infty$.
		
		By the given hypothesis the Moebius spaces $(\MM(Z_n),\rho_n)\xrightarrow{GH\ conv.} (\MM(Z),\rho_0)$.
		We will demonstrate that for any given $\delta > 0$, it is possible to construct $\delta$-isometries $f_n\colon (Z_n, \rho_n) \to (Z, \rho_0)$ for all sufficiently large $n$. Thus, by \Cref{AIconv}, this will be sufficient to conclude $(Z_n,\rho_n)\xrightarrow{AI\ conv.} (Z,\rho_0)$, i.e. $(\partial_P X_n,\rho_{x_n})\xrightarrow{AI conv.}(\partial_P X,\rho_x)$.
		
		Since $(\MM(Z_n),\rho_n)\xrightarrow{GH\ conv.}(\MM(Z),\rho_0)$, for every $R>0$, there exists $\epsilon_n$-isometries $$\FF_n\colon B_{\MM(Z)}(\rho_0,R)\to B_{\MM(Z_n)}(\rho_n,R)$$ such that $\epsilon_n \to 0+$ and $\FF_n(\rho_0)=\rho_n$. 
		
		The antipodal space $(Z,\rho_0)$ is of finite cardinality, let us label $Z=\{1,2,\cdots,m\}$. By \cite[Theorem 1.1]{biswas2024polyhedral}, the maximal Gromov hyperbolic spaces $\MM(Z,\rho_0)$ is isometric to a polyhedral complex such that outside a large enough ball, it is the union of $m$ geodesic rays, each ray going to one among the $m$ points on the Gromov boundary. So, for all $R>0$ large enough, the sphere of radius $R$ centered at $\rho_0$ in $\MM(Z)$,
		\begin{equation*}
			S_0\coloneqq S_{\MM(Z)}(\rho_0,R)=\{\alpha_1,\alpha_2,\cdots,\alpha_m\},
		\end{equation*} (has cardinality $m$) where each point $\alpha_i$ is on the geodesic ray going to point $i\in \partial \MM(Z)=Z$. Choose and fix such an $R>0$ with  
		\begin{equation}\label{choice of R}
			R\ge\log\frac{2}{\delta}.
		\end{equation} The Gromov product of two distinct points on the Gromov boundary $i,j\in \partial \MM(Z)=Z$ is attained at $\alpha_i,\alpha_j$, that is $(i|j)_{\rho_0}=(\alpha_i|\alpha_j)_{\rho_0}$ (cf. \cite[(13)]{biswas2024polyhedral}). Thus $\rho_0(i,j)=\exp(-(\alpha_i|\alpha_j)_{\rho_0})$.
		
		Consider the spheres of radius $R$ centered at $\rho_n$, $S_n\coloneqq S_{\MM(Z_n)}(\rho_n,R)\subseteq \MM(Z_n)$. There is a universal constant $k_0>0$ such that using $\epsilon_n$-isometries $\FF_n$ we can construct $(k_0\cdot\epsilon_n)$-isometries $\hat{\FF_n}:S_0\to S_n$. Now $S_0$ being finite for all $n$ large $S_n$ is a finite disjoint union of $m$ closed balls $B_n^i$ in $S_n$\footnote{ These are balls in the metric space $S_n$ equipped the subspace metric }, centered at $\hat{\FF_n}(\alpha_i)$ of radius $(k_0\cdot\epsilon_n)$, 
		\begin{equation*}
			S_n=\bigsqcup_{i=1}^{m}B_n^i.
		\end{equation*} and diameter $\hbox{diam}(B_n^i)\le 2k_0\cdot\epsilon_n$. Thus for all $n$ large enough, from \Cref{disjoint cone and shadow} and \eqref{disjoint union} we have,
		\begin{equation}\label{shadow decomposition}
			\partial_P\MM(Z_n)=Z_n=\bigsqcup_{U\in \CC_n}\shad_{\rho_n}(U)=\bigsqcup_{i=1}^m\Biggr(\bigsqcup_{\substack{U\in \CC_n\\ U\subset B_n^i}} \shad_{\rho_n}(U)\Biggr)=\bigsqcup_{i=1}^m \shad_{\rho_n}(B_n^i).
		\end{equation} where $\CC_n$ is the set of connected components in $S_n$.
		
		For $n$ sufficiently large, we define $g_n:(Z_n,\rho_n)\to (Z,\rho_0)$ as follows,
		\begin{equation*}
			g_n(\xi)=i,\quad \hbox{for} \ \xi\in \shad_{\rho_n}(B_n^i).
		\end{equation*}Clearly $g_n$ is surjective. We will show $g_n$ is a $\delta$-isometry for all $n$ large, i.e. we just have to show distortion of $g_n$, $\Dis(g_n)<\delta$. 
		
		\medskip
		
		Suppose $\xi \in \shad_{\rho_n}(B_n^i)$ and $\eta \in \shad_{\rho_n}(B_n^j)$, for $i \neq j$. Then there exist connected components $U$ and $V$ of $S_n$ such that $U \subseteq B_n^i$ and $V \subseteq B_n^j$, with $\xi \in \shad_{\rho_n}(U)$ and $\eta \in \shad_{\rho_n}(V)$ (from \eqref{shadow decomposition}). Also $g_n(\xi)=i$ and $g_n(\eta)=j$.
		For $\beta\in U$ and $\beta'\in V$, by $\eqref{gromov product close 1}$ in \Cref{gromov product close} we have
		\begin{equation*}
			\big|(\xi|\eta)_{\rho_n}-(\beta|\beta')_{\rho_n}\big|\le\hbox{diam}(U)+\hbox{diam}(V)\le \hbox{diam}(B_n^i)+\hbox{diam}(B_n^j)\le 4k_0\cdot\epsilon_n.
		\end{equation*} Also we have,
		\begin{equation*}
			\big|(\beta|\beta')_{\rho_n}-(\hat{\FF_n}(\alpha_i)\big|\hat{\FF_n}(\alpha_j))_{\rho_n}|\le \hbox{diam}(B_n^i)+\hbox{diam}(B_n^j)\le 4k_0\cdot\epsilon_n.
		\end{equation*}
		Since $\hat{\FF_n}$'s are $(k_0\cdot\epsilon_n)$-isometries
		\begin{equation*}
			\big|(i|j)_{\rho_0}-(\hat{\FF_n}(\alpha_i)|\hat{\FF_n}(\alpha_j))_{\rho_n}\big|=\big|(\alpha_i|\alpha_j)_{\rho_0}-(\hat{\FF_n}(\alpha_i)|\hat{\FF_n}(\alpha_j))_{\rho_n}\big|\le k_0\cdot \epsilon_n.
		\end{equation*} From the above three inequalities, by straightforward application of the triangle inequality, we have,
		\begin{equation*}
			\big|(i|j)_{\rho_n}-(\xi|\eta)_{\rho_n}\big|\le k_1\cdot \epsilon_n
		\end{equation*} (for some universal constant $k_1>0$). Now, for $t\ge 0$ the function $(t\mapsto \exp{(-t)})$ is $1$-Lipschitz, hence we get,
		\begin{equation*}
			\begin{split}
				\big|\rho_0(g_n(\xi),g_n(\eta))-\rho_n(\xi,\eta)\big|&=\big|\rho_0(g_n(i),g_n(j))-\rho_n(\xi,\eta)\big|\\
				&=\big|\exp(-(i|j)_{\rho_0})-\exp(-(\xi|\eta)_{\rho_n})\big|\\
				&\le k_1\cdot \epsilon_n\\
				&<\delta
			\end{split}
		\end{equation*} for all $n$ large (as $\epsilon_n\to 0+$).
		
		\medskip
		
		Let $\xi,\eta\in \shad_{\rho_n}(B_n^i)$ then by \eqref{gromov product close 2} in \Cref{gromov product close} we know
		$2(\xi|\eta)_{\rho_n}\ge 2R-\hbox{diam}(B_n^i)$. Thus by the choice of $R$ in \eqref{choice of R},
		\begin{equation*}
			\begin{split}
				\big|\rho_0(g_n(\xi),g_n(\eta))-\rho_n(\xi,\eta)\big|&=\rho_n(\xi,\eta)=\exp{(-(\xi|\eta)_{\rho_n})}\\
				&\le \exp{(-R+k_0\cdot\epsilon_n)}\\
				&\le \frac{\delta}{2}\cdot (e^{k_0\cdot\epsilon_n})<\delta
			\end{split}
		\end{equation*}for all $n$ large (since $\epsilon_n\to 0+$). 
		
		\medskip
		
		Finally, we have for all $n$ sufficiently large $g_n:(Z_n,\rho_n)\to(Z,\rho_0)$, constructed above is a $\delta$-isometry. This finishes the proof.
	\end{proof}
	\bigskip
	
	\section{Some Applications}
	\subsection{Moebius equivalence and AI-convergence}\hfill\\
	
	\noindent We will show that under certain conditions, the Moebius equivalence of antipodal spaces is preserved under AI-convergence.
	\begin{subprop}\label{Mobius equivalence prop}
		Let $\{(Z_{n},\rho_{n})\}_{n\ge 1}$ and $\{(W_{n},\beta_{n})\}_{n\ge 1}$ be two sequences of antipodal spaces such that 
		\begin{equation*}
			(Z_{n},\rho_{n})\xrightarrow{AI\ conv.}(Z,\rho_0)\quad \text{and} \quad (W_{n},\beta_{n})\xrightarrow{AI\ conv.}(W,\beta_0),
		\end{equation*} 
		where $(Z,\rho_0)$ and $(W,\beta_0)$ are antipodal spaces as well. Suppose there exists  Moebius homeomorphism $\varphi_n\colon(W_{n},\beta_{n})\to(Z_{n},\rho_{n})$, with
		\begin{equation}\label{boundedness condition}
			d_{\MM(W_{n})}({{\varphi_n}^*}\rho_{n},\beta_{n})=\left\Vert \log \frac{d ({\varphi_n}^*\rho_{n})}{d\beta_{n}}\right\Vert_\infty<c<\infty,
		\end{equation} for every $n$ (where ${\varphi_n}^*$ is the pull-back, see \Cref{induced isometry}). Then $(Z,\rho_0)$ is Moebius equivalent to $(W,\beta_0)$.
	\end{subprop}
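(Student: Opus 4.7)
The plan is to lift the boundary data to the Moebius spaces, invoke \Cref{forward}, and then run a basepoint-shifting argument for pointed Gromov-Hausdorff limits to manufacture an isometry between $\MM(W)$ and $\MM(Z)$. Applying \Cref{Moebius homeomorphism} to that isometry will produce the required Moebius equivalence on boundaries.

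First, by \Cref{forward}, the hypotheses give pointed Gromov-Hausdorff convergences
\[
(\MM(Z_n),\rho_n) \to (\MM(Z),\rho_0), \qquad (\MM(W_n),\beta_n) \to (\MM(W),\beta_0).
\]
By \Cref{induced isometry}, each Moebius homeomorphism $\varphi_n$ induces a (pushforward) isometry $F_n \coloneqq (\varphi_n)_* \colon \MM(W_n) \to \MM(Z_n)$. The hypothesis \eqref{boundedness condition} translates directly, via the fact that $F_n$ is an isometry with inverse $\varphi_n^*$, into
\[
d_{\MM(Z_n)}(F_n(\beta_n),\rho_n) = d_{\MM(W_n)}(\varphi_n^*\rho_n,\beta_n) < c
\]
for every $n$. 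In particular, $F_n$ gives a pointed isometry $(\MM(W_n),\beta_n) \cong (\MM(Z_n), F_n(\beta_n))$ in which the shifted basepoint $F_n(\beta_n)$ stays in the closed ball $\overline{B}_{\MM(Z_n)}(\rho_n,c)$.

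Next, shift basepoints in the target. Properness of the Moebius spaces (\Cref{injectivity theorem}) and a standard argument about pointed Gromov-Hausdorff convergence under bounded basepoint shifts imply that, after extracting a sub-sequence, the points $F_n(\beta_n)$ correspond under the $\epsilon_n$-isometries furnished by \Cref{forward} to a convergent sequence in the compact ball $\overline{B}_{\MM(Z)}(\rho_0,c)$, with some limit $\rho^\ast \in \overline{B}_{\MM(Z)}(\rho_0,c)$. Along this sub-sequence,
\[
(\MM(Z_n), F_n(\beta_n)) \to (\MM(Z),\rho^\ast)
\]
in the pointed Gromov-Hausdorff sense. Composing the $\epsilon_n$-isometries for $(\MM(W_n),\beta_n) \to (\MM(W),\beta_0)$ with the isometries $F_n$ shows that the same sequence $(\MM(Z_n),F_n(\beta_n))$ also Gromov-Hausdorff-converges to $(\MM(W),\beta_0)$. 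By uniqueness of pointed Gromov-Hausdorff limits (\Cref{unique GH limit}), there is a pointed isometry
\[
F \colon (\MM(W),\beta_0) \to (\MM(Z),\rho^\ast).
\]

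Finally, extract the boundary Moebius homeomorphism. Since $\MM(W)$ and $\MM(Z)$ are maximal Gromov product spaces, \Cref{structure theorem} identifies $\partial_P \MM(W) = W$ (with visual antipodal function at $\beta_0$ equal to $\beta_0$) and $\partial_P \MM(Z) = Z$ (with visual antipodal function at $\rho_0$ equal to $\rho_0$). Applying \Cref{Moebius homeomorphism} to the pointed isometry $F$ yields a Moebius homeomorphism $f \colon (W,\beta_0) \to (Z,\rho_{\rho^\ast})$, where $\rho_{\rho^\ast}$ is the visual antipodal function on $Z = \partial_P \MM(Z)$ based at $\rho^\ast$. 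Both $\rho_0$ and $\rho_{\rho^\ast}$ are visual antipodal functions on $\partial_P \MM(Z)$, hence they are Moebius equivalent; since being a Moebius map depends only on cross-ratios, $f$ is equally a Moebius homeomorphism $(W,\beta_0) \to (Z,\rho_0)$, proving Moebius equivalence.

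The main obstacle is the basepoint-shift step: one must carefully verify that the approximating points associated to $F_n(\beta_n)$ under the $\epsilon_n$-isometries of \Cref{forward} are genuinely convergent (after extracting a sub-sequence) and that the pointed convergence survives replacing $\rho_n$ by $F_n(\beta_n)$ in the source and $\rho_0$ by $\rho^\ast$ in the limit. Properness of maximal Gromov product spaces and the uniform distortion control from \Cref{forward} are what make this basepoint shuffling go through.
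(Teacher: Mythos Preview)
Your proof is correct and follows a somewhat different route from the paper's. Both arguments begin by invoking \Cref{forward} to pass to pointed Gromov--Hausdorff convergence of the Moebius spaces, and both finish by applying \Cref{Moebius homeomorphism} to extract the boundary Moebius map. The difference lies in the middle: you use a basepoint-shift argument and the uniqueness of pointed GH limits (\Cref{unique GH limit}) to obtain a genuine pointed isometry $F\colon(\MM(W),\beta_0)\to(\MM(Z),\rho^\ast)$ directly, whereas the paper works by hand. It composes the $\epsilon_n$-isometries $\FF_n$, the pull-backs $\varphi_n^\ast$, and the $\epsilon_n$-isometries $\GG_n$ into maps $\HH_n\colon B_{\MM(Z)}(\rho_0,R-c)\to B_{\MM(W)}(\beta_0,R)$ of small distortion and $2c$-dense image, then runs two nested Cantor diagonal procedures (first over a countable dense set in each ball, then over an exhausting family of radii) to produce an isometric \emph{embedding} $\Phi\colon\MM(Z)\to\MM(W)$ with $2c$-dense image; surjectivity of the induced boundary map is then deduced from this density. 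Your approach is shorter and more conceptual, leaning on standard GH-limit facts; the paper's is more explicit and self-contained, avoiding the (admittedly routine) verification of the basepoint-shift step that you yourself flag as the main obstacle.
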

	\begin{proof}
		By \Cref{forward} we have,
		\[\begin{tikzcd}
			{(\MM(Z_{n}),\rho_{n})} && {(\MM(Z),\rho_0)} \\
			{} \\
			{(\MM(W_{n}),\beta_{n})} && {(\MM(W),\beta_0).}
			\arrow["GH\ conv.", from=1-1, to=1-3]
			\arrow["{{{\varphi_n}^*}}"', from=1-1, to=3-1]
			\arrow["\Phi", dashed, from=1-3, to=3-3]
			\arrow["GH\ conv.", from=3-1, to=3-3]
		\end{tikzcd}\]
		Now fix a countable dense subset  $\DD\subset\MM(Z)$, with $\rho_0\in \DD$. 
		
		\noindent Fix some $R>2c$. Define $\DD_R\coloneqq \DD\cap B_{\MM(Z)}(\rho_0,R-c)$, and it is dense in $B_{\MM(Z)}(\rho_0,R-c)$. Note from above, there exists $\epsilon_n$-isometries, 
		\begin{equation*}
			\begin{split}
				&\FF_n\colon B_{\MM(Z)}(\rho_0,R-c)\to B_{\MM(Z_n)}(\rho_n,R-c) \quad \text{and} \\
				&\GG_n\colon B_{\MM(W_n)}(\beta_n,R)\to B_{\MM(W)}(\beta_0,R)
			\end{split}
		\end{equation*}
		with $\FF_n(\rho_0)=\rho_n$, $\ \GG_n(\beta_n)=\beta_0$ and $\epsilon_n\to 0+$. Observe, for all $n$ we have (from \eqref{boundedness condition})
		\begin{equation*}
			{\varphi_n}^*(B_{\MM(Z_n)}(\rho_n,R-c))\subset B_{\MM(W_n)}(\beta_n,R)\subset {\varphi_n}^*(B_{\MM(Z_n)}(\rho_n,R+c)).
		\end{equation*} 
		Therefore, the map
		\begin{equation*}
			\HH_n\coloneqq \GG_n\circ{\varphi_n}^*\circ \FF_n \colon B_{\MM(Z)}(\rho_0,R-c)\to B_{\MM(W)}(\beta_0,R)
		\end{equation*} has distortion, $\text{Dis}(\HH_n)\le2\epsilon_n$ and the image is $2(c+2\epsilon_n)$-dense in $B_{\MM(W)}(\beta_0,R)$. Employing the same Cantor's diagonal procedure, with dense subset $\DD_R$, as in in the proof of \cite[Theorem 7.3.30]{burago2022course}, we get an isometric embedding $\HH_R\colon\DD_R\to B_{\MM(W)}(\beta_0,R)$, which we can extend to an isometric embedding 
		\begin{equation*}
			\HH_R\colon B_{\MM(Z)}(\rho_0,R-c)\to B_{\MM(W)}(\beta_0,R)
		\end{equation*}
		such that the image is $2c$-dense in  $B_{\MM(W)}(\beta_0,R)$. Moreover, $d_{\MM(W)}(\HH_R(\rho_0),\beta_0)\le c$. 
		
		Consider the family of such isometric embeddings $\HH_{R_m}$, where $R_m=(m+2)\cdot c$ and $m\ge 1$ is an integer. Again, employ Cantor's diagonal procedure, now with the dense subset $\DD$ in $\MM(Z)$, to get an embedding $\Phi\colon\DD\to \MM(W)$. Thus we can extend this to an isometric embedding $\Phi\colon\MM(Z)\to\MM(W)$, with image being $2c$-dense in $\MM(W)$.
		
		The isometric embedding $\Phi$ extends to a Moebius embedding $\phi\colon Z\to W$ (see \Cref{Moebius homeomorphism}), Since the Image$(\Phi)$ is $2c$-dense in $\MM(W)$, we have, $\phi$ is surjective, hence a Moebius homeomorphism. This finishes the proof. 
	\end{proof}
	
	\medskip
	
	\subsection{$\MM(Z)$ as a Gromov-Hausdorff limit of polyhedral complexes}\label{PolyComplex subsection}\hfill\\
	
	\noindent Given an infinite antipodal space say $(Z,\rho_0)$ we have finite antipodal spaces $\{(Z_n,\rho_n)\}_{n\ge 1}$ such that $(Z_n,\rho_n)\xrightarrow{AI\ conv.}(Z,\rho_0)$. For every $n$ we can choose finite $(1/n)$-net $Z_n$ of $(Z,\rho_0)$ such that every point in $Z_n$ has a $\rho_0$-antipode in it. 
	Then, the inclusion map 
	\begin{equation*}
		(Z_n,\rho_n)\xhookrightarrow{\iota_n}(Z,\rho_0) \quad \text{with}\quad \rho_n\coloneqq \rho_0|_{Z_n\times Z_n}
	\end{equation*}is a $(1/n)$-isometry for every $n$. Therefore, $(Z_n,\rho_n)\xrightarrow{AI\ conv.}(Z,\rho_0)$. We know that if $(Z_n,\rho_n)$ is finite (with say $m$ many points) then $\MM(Z_n,\rho_n)$ is a polyhedral complex (from \cite[Theorem 1.1]{biswas2024polyhedral}) embedded in $(\R^{m},\|\cdot\|_\infty)$. Moreover for $R>0$ sufficiently large, $\MM(Z_n)\setminus B_{\MM(Z_n)}(\rho_n,R)$ is union of $m$ many rays  (isometric to interval $(0,\infty)$). Then as a consequence of \Cref{forward} we have that $(Z_n,\rho_n)\xrightarrow{AI\ conv.}(Z,\rho_0)$ implies 
	\begin{equation*}
		(\MM(Z_n),\rho_n)\xrightarrow{GH\ conv.}(\MM(Z),\rho_0).
	\end{equation*}
	Therefore, we have the following proposition:
	\begin{proof}[{\bf Proof of \Cref{PolyComplex}}]
		Given a maximal Gromov product space $X$, it is isometric to a Moebius space $\MM(Z)$, where we can take $Z=\partial X$ equipped with a visual antipodal function $\rho_0=\rho_x$, for some $x\in X$ (see \Cref{structure theorem}). Then we can construct $(Z_n,\rho_n)$ as above and consider $(X_n,x_n)\coloneqq(\MM(Z_n),\rho_n)$ and have 
		\begin{equation*}
			(X_n,x_n)\cong (\MM(Z_n),\rho_n)\xrightarrow{GH\ conv.} (\MM(Z),\rho_0)\cong(X,x).
		\end{equation*}
		From \cite[Theorem 1.1]{biswas2024polyhedral} each $X_n$ is isometric to a polyhedral complex embedded in $(\R^{m(n)},\|\cdot\|_{\infty})$, where $m(n)=\#Z_n$ 
		(as discussed above).
	\end{proof}
	\medskip
	
	\subsection{Gromov-Hausdorff convergence of CAT$(-1)$ spaces}\hfill\\
	
	\noindent It is a well-known fact due to Bourdon \cite{bourdon1995structure} that- if $X$ is a CAT$(-1)$ space and $x \in X$, then the visual antipodal space $(\partial X, \rho_x)$ is a metric space. In this subsection, we shall establish that if a sequence of good CAT$(-1)$ spaces (i.e. proper geodesically complete) $\{(X_n,x_n)\}_{n\ge 1}$ pointed GH- converges to another CAT$(-1)$ space $(X,x)$ then the visual metric spaces $(\partial X_n,\rho_{x_n})\xrightarrow{d_{GH}}(\partial X,\rho_x)$ in the usual GH-distance (i.e. we will prove \Cref{CAT(-1) main}).
	
	\begin{proof}[{\bf Proof of \Cref{CAT(-1) main}}]
		Observe that good CAT$(-1)$ spaces $X_n$ are good Gromov hyperbolic spaces. Hence, they are Gromov product spaces with Gromov product boundaries being their Gromov boundaries $\partial X_n$. The visual antipodal spaces $(\partial X_n,\rho_n)$ are metric spaces. Thus, $(\partial X_n,\rho_n)$ are equicontinuous. Therefore, the theorem follows as a direct consequence of \Cref{backward}, since AI-convergence is the same as the convergence in GH-distance for compact metric spaces (see \Cref{AIconv}).
	\end{proof}
	
	\medskip
	
	\subsection{Application to CAT$(-1)$ fillable spaces and CAT$(-1)$ spaces}\label{application to CAT(-1) fillable}\hfill\\
	
	\noindent Recall, we say that, an antipodal space is $(Z,\rho)$ is CAT$(-1)$ {\it  fillable} if there exists a proper geodesically complete CAT$(-1)$ space $X$ with  $\varphi\colon (\partial X, \rho_x)\to (Z,\rho)$ a Moebius homeomorphism where $\partial X$ is the Gromov boundary and $\rho_x$ is the visual metric on $\partial X$ for some $x\in X$. For detailed exposition in CAT$(-1)$ spaces, Gromov hyperbolic spaces, Gromov boundary and visual (quasi-)metric, we refer to \cite[Chapter II.1, II.8]{bridson1999metric}, \cite[Chapter 1-3]{buyalo2007elements}, \cite{vaisala2005gromov}.

	In this subsection, we demonstrate that a sequence of CAT$(-1)$ fillable antipodal spaces is AI-convergent, if the corresponding sequence of Moebius spaces is pointed GH-convergent (see \Cref{CAT(-1) fillable}). Subsequently, we will establish that the class of CAT$(-1)$ fillable antipodal spaces are closed in the category of antipodal spaces in the following sense. If $(Z,\rho_0)$, an antipodal space, is an AI-limit of a sequence of CAT$(-1)$ fillable antipodal spaces, then $(Z,\rho_0)$ is also CAT$(-1)$ fillable (see \Cref{CAT(-1) fillable closed}). 
	
	\begin{sublemma}\label{equicontinuity of tau 2}
		Suppose $\{(Z_n,\rho_n)\}_{n\ge 1}$ be a sequence of antipodal metric spaces. Fix $R>0$, given $\epsilon>0$ there exists $\delta=\delta(\epsilon,R)>0$ such that if $\rho_n(\xi,\eta)<\delta$ then
		\begin{equation*}
			\sup_{\rho\ \in\  B_{\MM(Z_n)}(\rho_n,R)} \big|\tau_\rho(\xi)-\tau_\rho(\eta)\big|<\epsilon.
		\end{equation*}
	\end{sublemma}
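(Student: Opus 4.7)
The plan is to exploit two ingredients: (i) the fact that each $\rho_n$ is a genuine metric, so the triangle inequality is available, and (ii) the antipodal property of $\rho\in\MM(Z_n)$, which pins down $\tau_\rho$ at an antipode via the GMVT identity $\rho(\xi,\eta)^2=e^{\tau_\rho(\xi)+\tau_\rho(\eta)}\rho_n(\xi,\eta)^2$. The strategy is to bound $\tau_\rho(\xi)-\tau_\rho(\eta)$ by comparing both sides of this identity evaluated at a suitably chosen third point — namely an antipode of $\xi$ with respect to $\rho$.

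More precisely, fix $\rho\in B_{\MM(Z_n)}(\rho_n,R)$, so $\|\tau_\rho\|_\infty\le R$. For $\xi\in Z_n$, pick $\xi^\ast\in Z_n$ with $\rho(\xi,\xi^\ast)=1$; by GMVT this gives $\tau_\rho(\xi)+\tau_\rho(\xi^\ast)=-2\log\rho_n(\xi,\xi^\ast)$, and the bound $|\tau_\rho(\xi^\ast)|\le R$ forces $\rho_n(\xi,\xi^\ast)\ge e^{-R}$. For any other $\eta\in Z_n$, applying GMVT together with the upper bound $\rho(\eta,\xi^\ast)\le 1$ yields $\tau_\rho(\eta)+\tau_\rho(\xi^\ast)\le -2\log\rho_n(\eta,\xi^\ast)$. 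Subtracting the two inequalities gives
\begin{equation*}
\tau_\rho(\xi)-\tau_\rho(\eta)\ge 2\log\frac{\rho_n(\eta,\xi^\ast)}{\rho_n(\xi,\xi^\ast)}.
\end{equation*}

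Now the metric triangle inequality for $\rho_n$ supplies $\rho_n(\eta,\xi^\ast)\ge\rho_n(\xi,\xi^\ast)-\rho_n(\xi,\eta)\ge e^{-R}-\delta$, so whenever $\rho_n(\xi,\eta)<\delta$ we obtain
\begin{equation*}
\frac{\rho_n(\eta,\xi^\ast)}{\rho_n(\xi,\xi^\ast)}\ge 1-\frac{\delta}{\rho_n(\xi,\xi^\ast)}\ge 1-\delta e^{R},
\end{equation*}
and hence $\tau_\rho(\xi)-\tau_\rho(\eta)\ge 2\log(1-\delta e^R)$. Repeating the argument with $\xi$ and $\eta$ interchanged (using an antipode of $\eta$) yields the matching upper bound, giving $|\tau_\rho(\xi)-\tau_\rho(\eta)|\le -2\log(1-\delta e^R)$.

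Since the right-hand side depends only on $\delta$ and $R$ (and not on $n$, $\rho$, or the choice of antipode), given $\epsilon>0$ one simply chooses $\delta=\delta(\epsilon,R)>0$ small enough that $-2\log(1-\delta e^R)<\epsilon$ (which forces $\delta<e^{-R}$, ensuring the lower bound on $\rho_n(\eta,\xi^\ast)$ was valid). The only subtle point — and the reason this argument fails for general antipodal semi-metric spaces — is the use of the metric triangle inequality on $\rho_n$ to control $\rho_n(\eta,\xi^\ast)$ in terms of $\rho_n(\xi,\xi^\ast)$ and $\rho_n(\xi,\eta)$; in the semi-metric setting one would instead have to invoke the equicontinuity hypothesis directly, which is precisely the content of the more general result alluded to as Lemma~\ref{equicontinuity of tau main}.
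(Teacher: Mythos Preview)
Your proof is correct. The antipode-plus-GMVT argument you give is essentially the same mechanism that drives the paper's Lemma~\ref{equicontinuity of tau main}: pick a $\rho$-antipode $\xi^\ast$ of $\xi$, use GMVT at $(\xi,\xi^\ast)$ and the diameter bound at $(\eta,\xi^\ast)$ to bound $\tau_\rho(\xi)-\tau_\rho(\eta)$ by a log-ratio of $\rho_n$-distances, then control that ratio.

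The difference is one of packaging. The paper's proof of this sublemma is a one-liner: since the $\rho_n$ are genuine metrics, the family $\{(Z_n,\rho_n)\}$ is automatically equicontinuous (the triangle inequality gives it), and then Lemma~\ref{equicontinuity of tau main} applies verbatim. That lemma itself is proved by contradiction. You instead unwind the argument directly in the metric case, replacing the equicontinuity step by the triangle inequality $\rho_n(\eta,\xi^\ast)\ge\rho_n(\xi,\xi^\ast)-\rho_n(\xi,\eta)$, and you obtain an explicit quantitative bound $|\tau_\rho(\xi)-\tau_\rho(\eta)|\le -2\log(1-\delta e^R)$ with an explicit $\delta(\epsilon,R)$. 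Your closing remark correctly identifies that the triangle-inequality step is exactly where the metric hypothesis is spent, and that in the general semi-metric setting one must substitute the equicontinuity hypothesis --- which is precisely what Lemma~\ref{equicontinuity of tau main} does. So your argument is a self-contained, slightly more constructive special case of the paper's general lemma; the paper trades that explicitness for modularity.
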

	\begin{proof}
		$(Z_n,\rho_n)$ being antipodal metric spaces, it constitutes an equicontinuous family. Hence, the lemma is a direct consequence of \Cref{equicontinuity of tau main}.  
	\end{proof}
	
	It is well known for a CAT$(-1)$ space $X$ Next we will show, if $\{X_n\}_{n\ge1}$ is a family of CAT$(-1)$ spaces, then the antipodal spaces $\{(\partial X_n,\rho_n)\}_{n\ge 1}$ form an equicontinuous family. This holds even if the $\rho_n$'s are not necessarily visual metrics.
	
	\begin{sublemma}\label{equicontinuity lemma}
		Let $\{{X_n}\}_{n\ge 1}$ be a sequence of CAT$(-1)$ spaces and let $\rho_n\in \MM(\partial X_n)$ for all $n$. Then $\{(\partial X_n,\rho_n)\}_{n\ge 1}$ are equicontinuous.  
	\end{sublemma}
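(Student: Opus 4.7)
The plan is to exploit the fact that every CAT$(-1)$ space is Gromov $\delta_0$-hyperbolic with a universal constant $\delta_0 > 0$. By \cite[Theorem 1.4]{biswas2024quasi}, the image of $X_n$ under the visual embedding is $7\delta_0$-dense in $\MM(\partial X_n)$, so for each $n$ I can choose $x_n \in X_n$ with $d_{\MM(\partial X_n)}(\rho_n, \rho_{x_n}) \le C_0 := 7\delta_0$. By Bourdon's theorem the visual antipodal function $\rho_{x_n}$ is an honest metric on $\partial X_n$, and setting $\tau_n := \log(d\rho_n/d\rho_{x_n})$ gives $\|\tau_n\|_\infty \le C_0$; by \Cref{GMVT} this implies that $\rho_n$ and $\rho_{x_n}$ are bi-Lipschitz equivalent with a universal constant $e^{C_0}$.

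The crux is to show that the family $\{\tau_n\}_{n\ge 1}$ has a uniform modulus of continuity with respect to the metrics $\rho_{x_n}$. Granted this, expanding
\begin{equation*}
\rho_n(\xi,\zeta)-\rho_n(\eta,\zeta)=e^{\tau_n(\zeta)/2}\bigl[(e^{\tau_n(\xi)/2}-e^{\tau_n(\eta)/2})\rho_{x_n}(\xi,\zeta)+e^{\tau_n(\eta)/2}(\rho_{x_n}(\xi,\zeta)-\rho_{x_n}(\eta,\zeta))\bigr],
\end{equation*}
and using the triangle inequality for $\rho_{x_n}$, the bi-Lipschitz bound between $\rho_n$ and $\rho_{x_n}$, and the uniform bound on $\tau_n$, will yield the desired uniform equicontinuity of $(\partial X_n,\rho_n)$.

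The hard part will be establishing this modulus. The strategy is to use the antipodality of $\rho_n$, which translates via the GMVT to $D_{\rho_{x_n}}(\tau_n)\equiv 0$. For each $\xi\in\partial X_n$ the supremum in \eqref{discrepancy definition} is attained at some $\eta_\xi$ with $\rho_n(\xi,\eta_\xi)=1$; from the equality $\tau_n(\xi)+\tau_n(\eta_\xi)+2\log\rho_{x_n}(\xi,\eta_\xi)=0$ and $\|\tau_n\|_\infty\le C_0$ one obtains the uniform lower bound $\rho_{x_n}(\xi,\eta_\xi)\ge e^{-C_0}$. For any other $\xi'$ the inequality $\tau_n(\xi')+\tau_n(\eta_\xi)+2\log\rho_{x_n}(\xi',\eta_\xi)\le 0$ gives
\begin{equation*}
\tau_n(\xi')-\tau_n(\xi)\le 2\log\frac{\rho_{x_n}(\xi,\eta_\xi)}{\rho_{x_n}(\xi',\eta_\xi)},
\end{equation*}
and the triangle inequality for $\rho_{x_n}$ together with the lower bound on $\rho_{x_n}(\xi,\eta_\xi)$ bounds the right-hand side by $2\log(1+2e^{C_0}\rho_{x_n}(\xi,\xi'))$ whenever $\rho_{x_n}(\xi,\xi')<e^{-C_0}/2$. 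Swapping the roles of $\xi$ and $\xi'$ then yields a modulus of continuity for $\tau_n$ depending only on $C_0$, which, combined with the display above, completes the argument.
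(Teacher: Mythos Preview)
Your proposal is correct and follows essentially the same route as the paper: pick a nearby visual metric $\rho_{x_n}$ (the paper uses the sharper CAT$(-1)$-specific density constant $(\log 2)/2$ from \cite{biswas2015moebius} rather than $7\delta_0$), expand $\rho_n(\xi,\zeta)-\rho_n(\eta,\zeta)$ via GMVT, and control $|\tau_n(\xi)-\tau_n(\xi')|$ using a $\rho_n$-antipode together with the triangle inequality for $\rho_{x_n}$. The only cosmetic difference is that the paper packages your modulus-of-continuity step into the separate \Cref{equicontinuity of tau 2} (itself a special case of \Cref{equicontinuity of tau main}, proved by contradiction), whereas you carry out the same antipode estimate directly inline.
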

	\begin{proof}
		In \cite[Theorem 6.1]{biswas2015moebius}, we have seen that for any proper geodesically complete CAT$(-1)$ space $X$, the image of the visual embedding $i_X\colon X\to \MM(\partial X), x\mapsto \rho_x$ (see \Cref{Gromov ip}), is $(\log 2)/2$-dense in $\MM(\partial X)$ (cf. \cite[Theorem 8.11]{biswas2024quasi}). Therefore, for every given $\rho_n$ there exists visual metrics $\rho_{x_n}$ for some $x_n\in X_n$ such that 
		\begin{equation*}
			d_{\MM(\partial X_n)}(\rho_n,\rho_{x_n})=\left\Vert\log \frac{d\rho_n}{d\rho_{x_n}}\right\Vert_\infty\le \frac{1}{2}\log 2.
		\end{equation*}
		Let us denote $\tau_n\coloneqq \log \frac{d\rho_n}{d\rho_{x_n}}$, then $\|e^{\tau_n}\|_\infty\le 2$. Thus, $\rho_n$ and $\rho_{x_n}$ satisfies,
		\begin{equation*} 
			\frac{1}{2} \cdot \rho_n\le\rho_{x_n}\le 2\cdot\rho_n \quad \quad \hbox{(by \Cref{GMVT})}.
		\end{equation*} 
		For $\xi,\eta,\zeta \in \partial X_n$ by GMVT (\Cref{GMVT})
		\begin{equation*}
			\begin{split}
				\big|\rho_n(\xi,\zeta)-\rho_n(\eta,\zeta)\big|&\le \big|e^{\tau_n(\xi)/2}e^{\tau_n(\zeta)/2}\rho_{x_n}(\xi,\zeta)-e^{\tau_n(\eta)/2}e^{\tau_n(\zeta)/2}\rho_{x_n}(\eta,\zeta)\big|\\
				&\le \big|e^{\tau_n(\zeta)/2}\big|\cdot\big|e^{\tau_n(\xi)/2} \rho_{x_n}(\xi,\zeta)-e^{\tau_n(\eta)/2}\rho_{x_n}(\eta,\zeta)\big|\\
				&\le \big\| e^{\tau_n/2} \big\|_{\infty}\ \big|e^{\tau_n(\xi)/2}
				\rho_{x_n}(\xi,\zeta)-e^{\tau_n(\eta)/2}\rho_{x_n}(\eta,\zeta)\big|.
			\end{split}
		\end{equation*} 
		And for this last term
		\begin{equation*}
			\begin{split}
				&\ \big|e^{\tau_n(\xi)/2}\rho_{x_n}(\xi,\zeta)-e^{\tau_n(\eta)/2}\rho_{x_n}(\eta,\zeta)\big|\\
				\le&\ \big|e^{\tau_n(\xi)/2} \rho_{x_n}(\xi,\zeta)-e^{\tau_n(\eta)/2}\rho_{x_n}(\xi,\zeta)\big|+\big|e^{\tau_n(\eta)/2}\rho_{x_n}(\xi,\zeta)-e^{\tau_n(\eta)/2}\rho_{x_n}(\eta,\zeta)\big|\\
				\le&\ \big|e^{\tau_n(\xi)/2}-e^{\tau_n(\eta)/2}\big|+ \big|e^{\tau_n(\eta)/2}\big|\cdot\big|\rho_{x_n}(\xi,\zeta)-\rho_{x_n}(\eta,\zeta)\big|\\
				\le&\ \big|e^{\tau_n(\xi)/2}-e^{\tau_n(\eta)/2}\big|+ \big\| e^{\tau_n/2}\big\|_{\infty}\ \big|\rho_{x_n}(\xi,\zeta)-\rho_{x_n}(\eta,\zeta)\big|.
			\end{split}
		\end{equation*}
		For the first term, we have 
		\begin{equation*}
			\big|e^{\tau_n(\xi)/2}-e^{\tau_n(\eta)/2}\big|\le \big\| e^{\tau_n/2}\big\|_{\infty}\ \big|\tau_n(\xi)-\tau_n(\eta)\big|	
		\end{equation*} 
		and for the second, since $\rho_{x_n}$ is a metric, we have by triangle inequality, 
		\begin{equation*}
			\big|\rho_{x_n}(\xi,\zeta)-\rho_{x_n}(\eta,\zeta)\big|\le \rho_{x_n}(\xi,\eta)= e^{-\tau_n(\xi)/2}e^{-\tau_n(\eta)/2}\rho_n(\xi,\eta)
		\end{equation*} (last equality by GMVT). Since $\|\tau_n\|_\infty\le \log 2$, there exists some universal constant $c_{15}>0$ such that
		\begin{equation}\label{almost equicontinuous}
			\big|\rho_n(\xi,\zeta)-\rho_n(\eta,\zeta)\big|\le c_{15}\ (\ \big|\tau_n(\xi)-\tau_n(\eta)\big|+ \rho_n(\xi,\eta)\ ).
		\end{equation} 
		We have $(\partial X_n,\rho_{x_n})$ are antipodal metric spaces, since $X_n$ are CAT$(-1)$ spaces (see \cite{bourdon1995structure}). Now we use the previous \Cref{equicontinuity of tau 2} to get $\tau_n$'s are equicontinuous. Therefore, there exists $\delta=\delta(\epsilon)>0$ small enough (depending only on $\epsilon$) such that, if $\xi,\eta\in \partial X_n$ with $\rho_{x_n}(\xi,\eta)\le 2\rho_n(\xi,\eta)<\delta$ then (from \eqref{almost equicontinuous}) we have, 
		\begin{equation*}
			\big|\rho_n(\xi,\zeta)-\rho_n(\eta,\zeta)\big|<\epsilon
		\end{equation*} for all $\zeta\in \partial X_n$.
	\end{proof}
	
	\begin{subrmk}\label{equicontinuity remark}
		Suppose $(Z_n,\rho_n)$'s are $\text{CAT}(-1)$ fillable. Then from the above \Cref{equicontinuity lemma} we have $\{(Z_n,\rho_n)\}_{n\ge 1}$ is an  equicontinuous family of antipodal spaces.
	\end{subrmk}
	
	\medskip
	
	\begin{subprop}\label{CAT(-1) fillable}
		Let $\{(Z_n,\rho_n)\}_{n\ge 1}$ be a sequence of CAT$(-1)$ fillable antipodal spaces, and let $(Z,\rho_0)$ be another antipodal space such that $(\MM(Z_n),\rho_n)\xrightarrow{GH\ conv.} (\MM(Z),\rho_0)$ then $(Z_n,\rho_n)\xrightarrow{AI\ conv.} (Z,\rho_0)$.
	\end{subprop}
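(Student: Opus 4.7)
The plan is to derive this proposition as a direct consequence of \Cref{backward} applied to the sequence of Moebius spaces. The crucial point is that Moebius spaces are themselves maximal Gromov product spaces whose Gromov product boundaries recover the original antipodal spaces, so convergence of the ``interior'' is transported to convergence of the boundary provided the equicontinuity hypothesis can be verified.

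First, I would recall from \Cref{Gromov ip} (specifically the discussion after \Cref{structure theorem} and the cited \cite[Theorem 6.3]{biswas2024quasi}) that for every antipodal space $(W,\sigma)$, the Moebius space $\MM(W,\sigma)$ is a maximal Gromov product space whose Gromov product boundary, equipped with the visual antipodal function based at $\sigma$, is canonically identified with $(W,\sigma)$. Applied to the present setting, we obtain that each $\MM(Z_n)$ (respectively $\MM(Z)$) is a Gromov product space, and that under the identification $\partial_P\MM(Z_n)\cong Z_n$ (respectively $\partial_P\MM(Z)\cong Z$) the visual antipodal function based at $\rho_n$ (respectively $\rho_0$) coincides with $\rho_n$ (respectively $\rho_0$).

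Next, I would verify the equicontinuity hypothesis of \Cref{backward} for the boundaries $\{(\partial_P\MM(Z_n),\rho_n)\}_{n\ge 1}=\{(Z_n,\rho_n)\}_{n\ge 1}$. By hypothesis each $(Z_n,\rho_n)$ is CAT$(-1)$ fillable, meaning there exist proper geodesically complete CAT$(-1)$ spaces $X_n$ and Moebius homeomorphisms $(\partial X_n,\rho_{x_n})\to(Z_n,\rho_n)$ for some $x_n\in X_n$. Under such a Moebius homeomorphism, $\rho_n$ corresponds to an element of $\MM(\partial X_n)$. Therefore \Cref{equicontinuity lemma} (and the observation recorded in \Cref{equicontinuity remark}) applies and yields that $\{(Z_n,\rho_n)\}_{n\ge 1}$ is an equicontinuous family of antipodal spaces.

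Finally, combining the GH-convergence $(\MM(Z_n),\rho_n)\xrightarrow{GH\ conv.}(\MM(Z),\rho_0)$ given in the hypothesis with the equicontinuity established above, \Cref{backward} yields
\[
(\partial_P\MM(Z_n),\rho_n)\xrightarrow{AI\ conv.}(\partial_P\MM(Z),\rho_0),
\]
which under the canonical identifications reads $(Z_n,\rho_n)\xrightarrow{AI\ conv.}(Z,\rho_0)$, as desired. There is no real obstacle here: the whole content has already been absorbed into \Cref{backward} and \Cref{equicontinuity lemma}, and the proposition is essentially just the specialization of those results to the Moebius space setting, together with the observation that CAT$(-1)$ fillability is inherited along Moebius homeomorphisms and forces equicontinuity.
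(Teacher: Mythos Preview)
Your proposal is correct and follows essentially the same approach as the paper's own proof: invoke \Cref{equicontinuity remark} (via \Cref{equicontinuity lemma}) to obtain equicontinuity from CAT$(-1)$ fillability, identify the Moebius spaces as Gromov product spaces with boundaries $(Z_n,\rho_n)$ and $(Z,\rho_0)$, and then apply \Cref{backward}. The paper's proof is just a terser version of what you wrote.
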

	\begin{proof}
		By \Cref{equicontinuity remark} the sequence  $\{(Z_n,\rho_n)\}_{n\ge 1}$ is an equicontinuous family of antipodal functions. The Moebius spaces are maximal Gromov product spaces, and the antipodal spaces correspond to Gromov product boundaries. Hence, the proposition follows directly from \Cref{backward}.
	\end{proof}
	
	Next we show that, the CAT$(-1)$ fillable antipodal spaces are closed (in the sense of AI-convergence) in the category of antipodal spaces:
	
	\begin{subprop}\label{CAT(-1) fillable closed}
		Let $\{(Z_n,\rho_n)\}_{n\ge 1}$ be a sequence of CAT$(-1)$ fillable spaces. Suppose $(Z_n,\rho_n)\xrightarrow{AI\ conv.}(Z,\rho_0)$, another antipodal space. Then $(Z,\rho_0)$ is also CAT$(-1)$ fillable. 
	\end{subprop}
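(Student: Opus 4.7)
The plan is to realize the limit $(Z,\rho_0)$ as the boundary of a proper geodesically complete CAT$(-1)$ space obtained as a pointed Gromov-Hausdorff limit of CAT$(-1)$ fillings of the $(Z_n,\rho_n)$.

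First, for each $n$ fix a proper geodesically complete CAT$(-1)$ space $Y_n$ together with a Moebius homeomorphism $\varphi_n\colon \partial Y_n \to (Z_n,\rho_n)$. Using the push-forward isometry of \Cref{induced isometry}, identify $\MM(\partial Y_n)$ with $\MM(Z_n)$, and via the visual embedding view $Y_n$ as an isometrically embedded subspace of $\MM(Z_n)$. By the $(\log 2)/2$-density of the visual embedding (cited in the proof of \Cref{equicontinuity lemma}), $Y_n$ is $(\log 2)/2$-dense in $\MM(Z_n)$. Choose $y_n\in Y_n$ with $d_{\MM(Z_n)}(y_n,\rho_n)\le (\log 2)/2$.

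Next, apply \Cref{forward}: AI-convergence $(Z_n,\rho_n)\xrightarrow{AI\ conv.}(Z,\rho_0)$ gives $(\MM(Z_n),\rho_n)\xrightarrow{GH\ conv.}(\MM(Z),\rho_0)$. In particular, for each $R>0$ and $\epsilon>0$ the closed balls $B_{\MM(Z_n)}(\rho_n,R)$ admit $\epsilon$-nets of uniformly bounded cardinality. Since $Y_n$ is $(\log 2)/2$-dense in $\MM(Z_n)$, the balls $B_{Y_n}(y_n,R)$ sit inside $B_{\MM(Z_n)}(\rho_n,R+\tfrac{\log 2}{2})$, so they satisfy the hypothesis of Gromov's precompactness theorem \Cref{GCT}. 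A subsequence of $(Y_n,y_n)$ therefore converges in the pointed Gromov-Hausdorff sense to a proper geodesic space $(Y,y)$. By the stability of the CAT$(-1)$ condition and (local) geodesic completeness under pointed GH-limits noted in \Cref{preliminaries} (after Lytchak-Nagano), $(Y,y)$ is again a proper geodesically complete CAT$(-1)$ space.

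The remaining task is to identify $(\partial Y,\rho_y)$ with $(Z,\rho_0)$ as antipodal spaces. The idea is to run both GH-convergences in parallel. Using the $\epsilon_n$-isometries supplied by \Cref{forward} together with the $\epsilon_n'$-isometries realizing $(Y_n,y_n)\to (Y,y)$, the $(\log 2)/2$-dense inclusions $Y_n\hookrightarrow \MM(Z_n)$ have uniformly bounded distortion at every scale, so a diagonal subsequence argument produces a pointed isometric embedding $(Y,y)\hookrightarrow (\MM(Z),\rho_0)$ whose image is $(\log 2)/2$-dense. Since $Y$ is a Gromov product space, \Cref{Moebius homeomorphism} promotes this isometric embedding to a Moebius embedding $\partial Y\to \partial\MM(Z)=(Z,\rho_0)$; coboundedness of $Y$ in $\MM(Z)$ forces surjectivity of the boundary extension, hence a Moebius homeomorphism. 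This exhibits $Y$ as a CAT$(-1)$ filling of $(Z,\rho_0)$.

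The main obstacle is the compatibility step in the last paragraph: one must execute the two parallel GH-convergences in a coherent way, so that the inclusions $Y_n\hookrightarrow \MM(Z_n)$ really pass to a genuine limiting isometric inclusion $Y\hookrightarrow \MM(Z)$ rather than two unrelated embeddings. This should be handled by selecting the basepoints $y_n$ within $(\log 2)/2$ of $\rho_n$, using the uniform density to align $\epsilon$-nets of $Y_n$ with $\epsilon$-nets of $\MM(Z_n)$, and taking a single diagonal subsequence that simultaneously realizes both pointed GH-limits.
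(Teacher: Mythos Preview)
Your proposal is correct and follows the same overall strategy as the paper: apply \Cref{forward} to get GH-convergence of the Moebius spaces, embed the CAT$(-1)$ fillings $(\log 2)/2$-densely into them, extract a subsequential CAT$(-1)$ limit via Gromov precompactness, and then identify its boundary with $(Z,\rho_0)$.

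The difference lies only in the final identification step. The paper first invokes \Cref{CAT(-1) main} to obtain $(\partial Y_n,\rho_{y_n})\xrightarrow{d_{GH}}(\partial Y,\rho_y)$, and then applies \Cref{Mobius equivalence prop} (with the uniform bound $d_{\MM(Z_n)}({\varphi_n}_*\rho_{y_n},\rho_n)\le(\log 2)/2$ playing the role of the constant $c$ there) to produce the Moebius homeomorphism $(\partial Y,\rho_y)\to(Z,\rho_0)$. You instead propose to pass the isometric inclusions $Y_n\hookrightarrow\MM(Z_n)$ directly to a limiting cobounded isometric embedding $Y\hookrightarrow\MM(Z)$ via a single Cantor diagonal argument, and then extend to the boundary using \Cref{Moebius homeomorphism}. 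This is legitimate, and the ``main obstacle'' you flag --- aligning the two GH-convergences coherently --- is precisely the content of the proof of \Cref{Mobius equivalence prop} (there carried out for maps between Moebius spaces rather than from a filling into a Moebius space, but the mechanism is identical). So the two routes differ only in packaging: the paper's is more modular, reusing \Cref{CAT(-1) main} and \Cref{Mobius equivalence prop} as black boxes; yours is more self-contained and in particular avoids the appeal to \Cref{backward} that underlies \Cref{CAT(-1) main}.
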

	
	\begin{proof}
		By \Cref{forward} 
		\begin{equation*}
			(\MM(Z_n),\rho_n)\xrightarrow{GH\ conv.}(\MM(Z),\rho_0).
		\end{equation*}
		Then for every $R>0$ we have, $B_{\MM(Z_n)}(\rho_n,R)\xrightarrow{d_{GH}}B_{\MM(Z)}(\rho_0,R)$ as compact metric spaces. Therefore, for every $R>0$ and $\epsilon>0$ there exists $N(R,\epsilon)\in \mathbb{N}$ such that, for each $n$, the closed ball $B_{\MM(Z_n)}(\rho_n,R)$ admits an $\epsilon$-net with at most $N(R,\epsilon)$ points (see \cite[Chapter I.5]{bridson1999metric}). Now $(Z_n,\rho_n)$'s being CAT$(-1)$ fillable there exist proper geodesic geodesically complete CAT$(-1)$ spaces $X_n$'s equipped with a Moebius homeomorphism $\partial X_n\xrightarrow{\varphi_n} (Z_n,\rho_n)$, i.e. $\varphi_n$ preserves the canonical cross-ratios given by the visual metrics. According to \Cref{induced isometry} we have the isometric embedding of $X_n$ into $\MM(Z_n)$, say $\psi_n\coloneqq {\varphi_n}_*\circ\ i_{X_n}$,  
		
		\[\begin{tikzcd}[column sep=2em, row sep=tiny]
			{X_n} && {\MM(\partial X_n)} && {\MM(Z_n)} \\
			x && {\rho_x} && {{\varphi_n}_*\rho_x}
			\arrow["{{i_{X_n}}}", from=1-1, to=1-3]
			\arrow["{{\psi_n}}", curve={height=-40pt}, from=1-1, to=1-5]
			\arrow["{{{\varphi_n}_*}}", from=1-3, to=1-5]
			\arrow[maps to, from=2-1, to=2-3]
			\arrow[maps to, from=2-3, to=2-5]
		\end{tikzcd}\]
		where $i_{X_n}$ is the visual embedding and ${\varphi_n}_*$ is the push-forward (see \Cref{induced isometry}). Again from \cite{biswas2015moebius} we know that the image is $(\log 2)/2$-dense in $\MM(Z_n)$. Thus, for all $n$, there exists $x_n\in X_n$ such that 
		\begin{equation}\label{bounded distance}
			d_{\MM(Z_n)}(\psi_n(\rho_{x_n}),\rho_n)<\frac{1}{2}\log 2.
		\end{equation}
		Fix $R>0$. Consider close balls $B_{X_n}(x_n,R)\subset X_n$, then $\psi_n(B_{X_n}(x_n,R))\subset B_{\MM(Z_n)}(\rho_n,R+1)$. Given $\epsilon>0$ small,  there exists $N=N(R+1,\epsilon)$, such that for every $n$, we can find an $\epsilon$-net say $K_n$ of $B_{\MM(Z_n)}(\rho_n,R+1)$ with cardinality at most $N$ (as pointed out above). 
		
		From $K_n$'s we will construct $2\epsilon$-nets of $B_{X_n}(x_n,R)$'s with cardinality uniformly bounded by $N$ for each $n$. Define, 
		\begin{equation*}
			L_n\coloneqq\{ \rho\in K_n\ |\ \psi_n(B_{X_n}(x_n,R))\cap B_{\MM(Z_n)}(\rho,\epsilon)\neq\emptyset\ \}.
		\end{equation*} For each point $\rho\in L_n$ we can pick one point from the intersection $\psi_n(B_{X_n}(x_n,R))\cap B_{\MM(Z_n)}(\rho,\epsilon)$, look at its preimage under $\psi_n$ to consequently get $M_n$, which will be a $2\epsilon$-net of $B_{X_n}(x_n,R)$, and having cardinality at most $N$. Therefore, for every $R>0$ and $\epsilon>0$, we can find such $2\epsilon$-nets $M_n$ of $B_{X_n}(x_n,R)$ in $X_n$, such that the cardinality is uniformly bounded for all $n$. The bound depends only on $R$ and $\epsilon$.
		Therefore, by Gromov's pre-compactness \Cref{GCT} we can pass to a sub-sequence  of $\{(X_n,x_n)\}$, and find a pointed metric space $(X,x)$ such that 
		\begin{equation*}
			(X_n,x_n)\xrightarrow{GH\ conv.}(X,x).
		\end{equation*} Furthermore, $X$ is a proper, geodesically complete CAT$(-1)$ space, since it is a pointed GH-limit of proper, geodesically complete CAT$(-1)$ spaces. Consequently, by \Cref{CAT(-1) main}, we have $(\partial X_n,\rho_{x_n})\xrightarrow{d_{GH}} (\partial X,\rho_x)$. Recall that we have Moebius homeomorphism $\varphi_n\colon (\partial X_n,\rho_{x_n})\to(Z_n,\rho_n)$, for each $n$. From the boundedness condition \eqref{boundedness condition} we have, 
		\begin{equation*}
			d_{\MM(Z_n)}({\varphi_n}_*(\rho_{x_n}),\rho_n)<\frac{1}{2}\log 2.
		\end{equation*}
		Then by the application of \Cref{Mobius equivalence prop}, there exists a Moebius homeomorphism $\varphi\colon(\partial X,\rho_x)\to (Z,\rho_0)$. Hence $(Z,\rho_0)$ is CAT$(-1)$ fillable.
	\end{proof}
	
	\medskip
	
	We have the following corollary as a consequence of the above proposition and \Cref{CAT(-1) fillable}.
	\begin{subcoro}
		Let $\{(Z_n,\rho_n)\}_{n\ge 1}$ be CAT$(-1)$ fillable antipodal spaces and let $(Z,\rho_0)$ be another antipodal space such that $(\MM(Z_n),\rho_n)\xrightarrow{GH\ conv.} (\MM(Z),\rho_0)$. Then $(Z,\rho_0)$ is CAT$(-1)$ fillable.
	\end{subcoro}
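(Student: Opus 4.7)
The plan is to obtain this corollary simply by composing the two preceding results in the subsection, so the proof will be almost immediate. First I would apply \Cref{CAT(-1) fillable}: the hypothesis that each $(Z_n,\rho_n)$ is CAT$(-1)$ fillable supplies, via \Cref{equicontinuity lemma} and \Cref{equicontinuity remark}, the equicontinuity of the family $\{(Z_n,\rho_n)\}_{n\ge 1}$, so together with $(\MM(Z_n),\rho_n)\xrightarrow{GH\ conv.}(\MM(Z),\rho_0)$ (viewing the Moebius spaces as maximal Gromov product spaces with Gromov product boundaries identified with the antipodal spaces themselves) \Cref{backward} yields
\begin{equation*}
(Z_n,\rho_n)\xrightarrow{AI\ conv.}(Z,\rho_0).
\end{equation*}

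Second, I would feed this AI-convergence into \Cref{CAT(-1) fillable closed}: since each $(Z_n,\rho_n)$ is CAT$(-1)$ fillable and they AI-converge to the antipodal space $(Z,\rho_0)$, that proposition concludes that $(Z,\rho_0)$ is itself CAT$(-1)$ fillable, which is exactly the statement we need.

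There is no genuine obstacle here; the corollary is simply the composition of the forward-direction \Cref{CAT(-1) fillable} (GH-convergence of Moebius spaces $\Rightarrow$ AI-convergence of boundaries, in the CAT$(-1)$ fillable regime) with the closure property \Cref{CAT(-1) fillable closed} (AI-limits of CAT$(-1)$ fillable antipodal spaces are CAT$(-1)$ fillable). The substantive content has already been carried out in the proofs of those two propositions, in particular the Gromov pre-compactness argument producing a CAT$(-1)$ subsequential limit $(X,x)$ and the use of \Cref{Mobius equivalence prop} to transfer the Moebius structure from $(\partial X,\rho_x)$ to $(Z,\rho_0)$.
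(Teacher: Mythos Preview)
Your proposal is correct and matches the paper's approach exactly: the paper states this corollary as an immediate consequence of \Cref{CAT(-1) fillable} and \Cref{CAT(-1) fillable closed}, without giving a separate proof. Your explanation of how the two propositions compose is precisely the intended argument.
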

	
	\bigskip
	
	\appendix
	\section{Equicontinuity}
	\begin{lemma}\label{equicontinuity of tau main}
		Let $\{(Z_n,\rho_n)\}_{n\ge 1}$ be a equicontinuous family of antipodal spaces. Fix $R>0$, given $\epsilon>0$ there exists  $\delta=\delta(R,\epsilon)>0$ such that for all $n$, if $\xi,\eta\in Z_n$ and $\rho_n(\xi,\eta)<\delta$ then 
		\begin{equation*}
			\big|\tau_\rho(\xi)-\tau_\rho(\eta)\big|<\epsilon
		\end{equation*}
		for all $\rho\in{B}_{\MM(Z_n)}(\rho_n,R)$.
	\end{lemma}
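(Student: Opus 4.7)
The plan is to exploit the characterization that $\tau_\rho$ has zero discrepancy (since $\rho$ is antipodal), which lets us express $\tau_\rho(\xi)$ as an infimum that involves only $\rho_n$ and $\tau_\rho$ at \emph{other} points; combined with a lower bound on the distance from $\xi$ to its $\rho$-antipodes, the equicontinuity of the family $\{\rho_n\}$ will transfer directly to equicontinuity of the $\tau_\rho$'s, uniformly in $n$ and in $\rho\in B_{\MM(Z_n)}(\rho_n,R)$.

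First I would rewrite the condition $D_{\rho_n}(\tau_\rho)\equiv 0$ (equivalent to $\rho$ being antipodal) as
\begin{equation*}
\tau_\rho(\xi)=\inf_{\eta\neq\xi}\bigl[-\tau_\rho(\eta)-2\log\rho_n(\xi,\eta)\bigr],
\end{equation*}
and observe that because $-2\log\rho_n(\xi,\eta)\to+\infty$ as $\eta\to\xi$ while $\tau_\rho$ is bounded, the infimum is attained at some $\xi^*\in Z_n\setminus\{\xi\}$. Plugging back into the defining equation and applying the GMVT (\Cref{GMVT}) gives $\rho(\xi,\xi^*)=1$, i.e.\ $\xi^*$ is a genuine $\rho$-antipode of $\xi$. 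From the same GMVT identity $1=e^{\tau_\rho(\xi)+\tau_\rho(\xi^*)}\rho_n(\xi,\xi^*)^2$ together with $\|\tau_\rho\|_\infty\le R$, we obtain the key uniform lower bound $\rho_n(\xi,\xi^*)\ge e^{-R}$.

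Next, for $\xi'\in Z_n$ close to $\xi$, since $\xi^*$ is a candidate in the infimum defining $\tau_\rho(\xi')$ (we must verify $\xi^*\neq\xi'$, which holds as soon as $\rho_n(\xi,\xi')<e^{-R}$, because otherwise GMVT would force $\rho(\xi,\xi')^2\le e^{2R}\rho_n(\xi,\xi')^2<1$ contradicting $\rho(\xi,\xi')=\rho(\xi,\xi^*)=1$), we get
\begin{equation*}
\tau_\rho(\xi')-\tau_\rho(\xi)\le -2\log\rho_n(\xi',\xi^*)+2\log\rho_n(\xi,\xi^*)=2\log\frac{\rho_n(\xi,\xi^*)}{\rho_n(\xi',\xi^*)}.
\end{equation*}
Given $\epsilon>0$, choose $\eta_0>0$ so small that $2\log(1+2\eta_0 e^R)<\epsilon$ and $\eta_0<e^{-R}/2$; by equicontinuity of $\{(Z_n,\rho_n)\}$ there exists $\delta_0=\delta_0(\eta_0)>0$ such that $\rho_n(\xi,\xi')<\delta_0$ implies $|\rho_n(\xi,\zeta)-\rho_n(\xi',\zeta)|<\eta_0$ for every $\zeta\in Z_n$ and every $n$. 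Taking $\delta\coloneqq\min(\delta_0,e^{-R}/2)$ and applying this with $\zeta=\xi^*$, we get $\rho_n(\xi',\xi^*)\ge e^{-R}/2$, so the log-ratio is bounded by $\log(1+2\eta_0 e^R)$, giving $\tau_\rho(\xi')-\tau_\rho(\xi)<\epsilon$.

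The argument is symmetric in $\xi,\xi'$ (run it with a $\rho$-antipode of $\xi'$ instead), which yields the matching lower bound and hence $|\tau_\rho(\xi)-\tau_\rho(\xi')|<\epsilon$, with $\delta=\delta(R,\epsilon)$ depending only on $R$ and $\epsilon$, as required. The main conceptual obstacle is the apparent circularity: the natural GMVT identities relating $\rho$ and $\rho_n$ always couple $\tau_\rho(\xi)-\tau_\rho(\xi')$ to $\tau_\rho$ evaluated at still other points. The infimum formulation dissolves this, replacing the unknown differences by a ratio of $\rho_n$-distances to which the equicontinuity hypothesis applies cleanly; verifying that $\xi^*\neq\xi'$ and that $\rho_n(\xi',\xi^*)$ stays uniformly bounded away from $0$ are the only points requiring the assumption $\|\tau_\rho\|_\infty\le R$.
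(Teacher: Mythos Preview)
Your proof is correct and shares the same core idea as the paper's: pick a $\rho$-antipode $\xi^*$ of $\xi$, use the zero-discrepancy condition to get $\tau_\rho(\xi')-\tau_\rho(\xi)\le 2\log\frac{\rho_n(\xi,\xi^*)}{\rho_n(\xi',\xi^*)}$, bound $\rho_n(\xi,\xi^*)\ge e^{-R}$ via GMVT, and then invoke equicontinuity of $\{\rho_n\}$ to control the ratio. The paper packages this as a proof by contradiction with a subsequence/limiting argument, whereas you give a direct quantitative version producing an explicit $\delta(R,\epsilon)$; your infimum reformulation of $D_{\rho_n}(\tau_\rho)\equiv 0$ is just a restatement of the paper's GMVT inequality $\tau_\rho(\xi')+\tau_\rho(\xi^*)+\log\rho_n(\xi',\xi^*)^2\le 0$ with equality at $\xi$, so the two arguments are essentially the same, yours being slightly more self-contained.
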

	\begin{proof}
		Let us suppose this is false. Then there exists $\epsilon>0$ such that for infinitely many $n$ we can find $\xi_n,\eta_n\in Z_n$ with $\rho_n(\xi_n,\eta_n)\to 0$ and $\tau_{\beta_n}(\eta_n)-\tau_{\beta_n}(\xi_n)>\epsilon$ for some $\beta_n\in {B}_{\MM(Z_n)}(\rho_n,R)$
		
		Let $\tilde{\xi_n}\in Z_n$ be $\beta_n$-antipode of $\xi_n$,  i.e. $\beta_n(\xi_n,\tilde{\xi_n})=1$. Then by GMVT (Lemma \Cref{GMVT})
		\begin{equation}\label{GMVTused}
			\begin{split}
				\tau_{\beta_n}(\eta_n)+\tau_{\beta_n}(\tilde{\xi_n})+\log \rho_n(\eta_n,\tilde{\xi_n})^2&\le 0=\tau_{\beta_n}(\xi_n)+\tau_{\beta_n}(\tilde{\xi_n})+\log \rho_n(\xi_n,\tilde{\xi_n})^2\\
				\implies \tau_{\beta_n}(\eta_n)-\tau_{\beta_n}(\xi_n)&\le \log \frac{\rho_n(\xi_n,\tilde{\xi_n})^2}{\rho_n(\eta_n,\tilde{\xi_n})^2}
			\end{split}
		\end{equation} 
		also
		\begin{equation*}
			1\ge \rho_n(\xi_n,\tilde{\xi_n})=\left(\frac{d\beta_n}{d\rho_0}(\xi_n)\right)^{1/2}\left(\frac{d\beta_n}{d\rho_0}(\tilde{\xi_n})\right)^{1/2}\ge e^{-\|\tau_{\beta_n}\|_\infty}\ge e^{-R}.
		\end{equation*}
		So passing to a sub-sequence there exist some $\ell\in(e^{-R},1]\subset\R$ such that
		\begin{equation}\label{converging2ell}
			\rho_n(\xi_n,\tilde{\xi_n}) \rightarrow \ell\ge e^{-R}.
		\end{equation} 
		\noindent Now by equicontinuity property of $(Z_n,\rho_n)$'s we have $\rho_n(\eta_n,\xi_n)\to 0+$ as $n\to \infty$ implies
		\begin{equation*}
			\big|\rho_n(\eta_n,\tilde{\xi_n})-\rho_n(\xi_n,\tilde{\xi_n})\big|\rightarrow 0 \quad {\text as} \quad n\to \infty
		\end{equation*} 
		
		\noindent From the above convergence, we get that 
		\begin{equation}\label{converging2ell'}
			\rho_n(\eta_n,\tilde{\xi_n})\rightarrow \ell\ge e^{-R}
		\end{equation}
		along the same sub-sequence as in \eqref{converging2ell}. 
		Using \eqref{converging2ell} and \eqref{converging2ell'} we get,
		\begin{equation*}
			2\log \frac{\rho_n(\xi_n,\tilde{\xi_n})}{\rho_n(\eta_n,\tilde{\xi_n})}\rightarrow 0,
		\end{equation*}
		along some sub-sequence. But from \eqref{GMVTused} we have 
		\begin{equation*}
			\epsilon<\tau_{\beta_n}(\eta_n)-\tau_{\beta_n}(\xi_n)\le 2\log \frac{\rho_n(\xi_n,\tilde{\xi_n})}{\rho_n(\eta_n,\tilde{\xi_n})},
		\end{equation*}
		for all $n$ along the sub-sequence,  which contradicts the initial hypothesis.
	\end{proof}
	
	\bigskip
	
	\section{Discrepancy}
	
	\begin{lemma}\label{discrepancy main}
		Let $\{(Z_\lambda,\rho_\lambda)\}_{\lambda\in\Lambda}$ be an equicontinuous family of antipodal spaces. Fix $R>0$, given $\epsilon>0$ there exists $\delta=\delta(R,\epsilon)>0$ such that for any other antipodal space $(Z,\rho_\omega)$, if we have a $\delta$-isometry $h_{\lambda}\colon (Z,\rho_\omega)\to (Z_\lambda,\rho_\lambda)$ for any $\lambda\in \Lambda$ 
		then,
		\begin{equation*}
			\sup_{\rho\in B_{\MM(Z_\lambda)}(\rho_\lambda,R)}\ \big\|D_{\rho_\omega}(\tau_\rho\circ h_\lambda)\big\|_\infty<\epsilon.
		\end{equation*}
	\end{lemma}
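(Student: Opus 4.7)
The plan is to exploit the fact that the antipodal condition on $\rho\in B_{\MM(Z_\lambda)}(\rho_\lambda,R)$ translates, via the GMVT, to the identity $D_{\rho_\lambda}(\tau_\rho)\equiv 0$ on $Z_\lambda$; this is the source of both the upper and lower bounds on $D_{\rho_\omega}(\tau_\rho\circ h_\lambda)$. The errors are controlled by the $\delta$-isometry hypothesis together with \Cref{equicontinuity of tau main} and the equicontinuity of $\{(Z_\lambda,\rho_\lambda)\}_{\lambda\in\Lambda}$. I will repeatedly use the elementary observation that if $\|\tau_\rho\|_\infty\le R$, then every $\xi^*\in Z_\lambda$ admits a $\rho$-antipode $\eta^*\in Z_\lambda$ (a point realising $D_{\rho_\lambda}(\tau_\rho)(\xi^*)=0$), and this antipode satisfies $\rho_\lambda(\xi^*,\eta^*)\ge e^{-R}$, since $1=e^{\tau_\rho(\xi^*)+\tau_\rho(\eta^*)}\rho_\lambda(\xi^*,\eta^*)^2$ and $\tau_\rho(\xi^*)+\tau_\rho(\eta^*)\le 2R$.

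For the upper bound, fix $\xi\in Z$ and estimate $T(\eta):=\tau_\rho(h_\lambda(\xi))+\tau_\rho(h_\lambda(\eta))+2\log\rho_\omega(\xi,\eta)$ for $\eta\in Z\setminus\{\xi\}$ by a threshold argument. Pick $\delta'=\delta'(R,\epsilon)>0$ with $2R+2\log\delta'<\epsilon/2$. If $\rho_\omega(\xi,\eta)<\delta'$, then $T(\eta)\le 2\|\tau_\rho\|_\infty+2\log\delta'<\epsilon/2$. If instead $\rho_\omega(\xi,\eta)\ge\delta'$ and $\delta<\delta'/2$, the $\delta$-isometry property forces $\rho_\lambda(h_\lambda(\xi),h_\lambda(\eta))\ge\delta'/2>0$; from $D_{\rho_\lambda}(\tau_\rho)\equiv 0$ we get $\tau_\rho(h_\lambda(\xi))+\tau_\rho(h_\lambda(\eta))+2\log\rho_\lambda(h_\lambda(\xi),h_\lambda(\eta))\le 0$, and the error $2|\log\rho_\omega(\xi,\eta)-\log\rho_\lambda(h_\lambda(\xi),h_\lambda(\eta))|\le 4\delta/\delta'$ (since $\log$ is $(2/\delta')$-Lipschitz on $[\delta'/2,\infty)$); taking $\delta$ small enough relative to $\delta'$ and $\epsilon$ gives $T(\eta)<\epsilon$.

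For the lower bound, fix $\xi\in Z$, let $\eta^*\in Z_\lambda$ be a $\rho$-antipode of $h_\lambda(\xi)$, and use the $\delta$-density of $h_\lambda(Z)$ in $Z_\lambda$ to pick $\eta\in Z$ with $\rho_\lambda(h_\lambda(\eta),\eta^*)<\delta$; the lower bound $\rho_\lambda(h_\lambda(\xi),\eta^*)\ge e^{-R}$ ensures $\eta\ne\xi$ for $\delta$ small. By construction $\tau_\rho(h_\lambda(\xi))+\tau_\rho(\eta^*)+2\log\rho_\lambda(h_\lambda(\xi),\eta^*)=0$, so it remains to estimate $|\tau_\rho(h_\lambda(\eta))-\tau_\rho(\eta^*)|$ and $|\log\rho_\omega(\xi,\eta)-\log\rho_\lambda(h_\lambda(\xi),\eta^*)|$. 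The first is controlled by \Cref{equicontinuity of tau main} applied to the equicontinuous family $\{(Z_\lambda,\rho_\lambda)\}$ at radius $R$. For the second, combine the $\delta$-isometry bound $|\rho_\omega(\xi,\eta)-\rho_\lambda(h_\lambda(\xi),h_\lambda(\eta))|<\delta$ with the equicontinuity estimate $|\rho_\lambda(h_\lambda(\xi),h_\lambda(\eta))-\rho_\lambda(h_\lambda(\xi),\eta^*)|<\epsilon''$ (valid for $\delta$ small, by equicontinuity of the $\rho_\lambda$-family), and use $\rho_\lambda(h_\lambda(\xi),\eta^*)\ge e^{-R}$ to Lipschitz-compare the logarithms. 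Choosing $\delta$ small enough makes each error $<\epsilon/4$, whence $T(\eta)>-\epsilon$.

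The main obstacle is making all constants uniform in $\lambda\in\Lambda$ and $\rho\in B_{\MM(Z_\lambda)}(\rho_\lambda,R)$; this is precisely what the equicontinuity hypotheses (together with the uniform bound $\rho_\lambda(\xi^*,\eta^*)\ge e^{-R}$) give us. The one mildly delicate point is the upper-bound case where $\rho_\omega(\xi,\eta)$ is small and $\rho_\lambda(h_\lambda(\xi),h_\lambda(\eta))$ may vanish, but the threshold $\delta'=\delta'(R,\epsilon)$ handles it by brute force using $\|\tau_\rho\|_\infty\le R$.
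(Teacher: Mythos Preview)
Your proof is correct and follows essentially the same strategy as the paper's: both split into an upper and a lower bound for $D_{\rho_\omega}(\tau_\rho\circ h_\lambda)(\xi)$, both exploit $D_{\rho_\lambda}(\tau_\rho)\equiv 0$, and the lower bound in each case proceeds by pulling a $\rho$-antipode $\eta^*\in Z_\lambda$ of $h_\lambda(\xi)$ back to $Z$ via $\delta$-density, controlling $|\tau_\rho(h_\lambda(\eta))-\tau_\rho(\eta^*)|$ through \Cref{equicontinuity of tau main} and the $\rho_\lambda$-error through the equicontinuity hypothesis, and finally converting to a $\log$-error using $\rho_\lambda(h_\lambda(\xi),\eta^*)\ge e^{-R}$.

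The only noticeable difference is in the upper bound. The paper enlarges the supremum to all of $Z_\lambda\setminus\{h_\lambda(\xi)\}$ (after replacing $\rho_\omega$ by $\rho_\lambda+\delta$), locates an actual maximizer $\xi_\lambda'$, shows $\rho_\lambda(h_\lambda(\xi),\xi_\lambda')\ge e^{-2R}-\delta$, and then applies $\log(x+h)<\log x+h/x$. Your threshold argument---splitting at $\rho_\omega(\xi,\eta)=\delta'$ and handling the small-$\rho_\omega$ case by the crude bound $T(\eta)\le 2R+2\log\delta'$---is a bit more direct and avoids passing to $Z_\lambda$. Both routes hinge on the same two facts: $\|\tau_\rho\|_\infty\le R$ disposes of the near-diagonal, and $D_{\rho_\lambda}(\tau_\rho)\equiv 0$ plus a Lipschitz estimate for $\log$ away from zero handles the rest. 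Neither the paper nor you needs equicontinuity for the upper bound; it enters only in the lower bound, exactly where you use it.
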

	\begin{proof}
		The proof is in two steps. Enough to prove for $\epsilon$ small, so that $2e^R\cdot \epsilon<1$ (note this choice will be useful in Step 2 of the proof).\\
		
		\noindent \underline{Step 1 $\colon$} 
		Let $\lambda\in \Lambda$. Let $\rho\in {B}_{\MM(Z_\lambda)}(\rho_\lambda,R)$ and let $h_\lambda\colon (Z,\rho_\omega)\to(Z_\lambda,\rho_\lambda)$ be a $\delta$-isometry (for some $\delta >0$ ) then for $\xi\in Z$ we have 
		\begin{align*}
			D_{\rho_\omega}(\tau_\rho\circ h_\lambda)(\xi)&\coloneqq \sup_{\eta\in Z\setminus\{\xi\}}\tau_\rho\circ h_\lambda(\xi)+\tau_\rho\circ h_\lambda(\eta)+\log\rho_\omega(\xi,\eta)^2\\
			&\le\sup_{\eta\in Z\setminus\{\xi\}}\tau_\rho\circ h_\lambda(\xi)+\tau_\rho\circ h_\lambda(\eta)+\log(\rho_\lambda(h_\lambda(\xi),h_\lambda(\eta))+\delta)^2
		\end{align*}
		We  want to compare this last term with 
		\begin{equation}\label{expression after sup}
			\sup_{\eta\in Z_\lambda\setminus\{h_\lambda(\xi)\}}\tau_\rho( h_\lambda(\xi))+\tau_\rho(\eta)+\log(\rho_\lambda(h_\lambda(\xi),\eta)+\delta)^2
		\end{equation}
		It is possible to choose $\delta$ small enough depending on $R$ such that, if $h_\lambda(\xi)=\eta$ then 
		\begin{align*}
			\tau_\rho\circ h_\lambda(\xi)+\tau_\rho(\eta)+\log(\rho_\lambda(h_\lambda(\xi),\eta)+\delta)^2
			\le 2R+\log(\delta)^2< -4R,
		\end{align*} and we know from the inequality \eqref{discrepancy less than 2R}  \begin{equation*}
			-2R\ \le\ -2\|\tau_\rho\|_\infty\ \le\ D_{\rho_\omega}(\tau_\rho\circ h_\lambda)(\xi).
		\end{equation*} So the expression 
		we considered in \eqref{expression after sup} attains the supremum on $Z_\lambda\setminus \{h_\lambda(\xi)\}$. Therefore, we can write $\colon$  for $\delta$ small enough,
		\begin{equation*}
			\begin{split}
				D_{\rho_\omega}(\tau_\rho\circ h_\lambda)(\xi)&\le\sup_{\eta\in Z\setminus\{\xi\}}\tau_\rho\circ h_\lambda(\xi)+\tau_\rho\circ h_\lambda(\eta)+\log(\rho_\lambda(h_\lambda(\xi),h_\lambda(\eta))+\delta)^2\\
				&\le \sup_{\eta\in Z_\lambda\setminus\{h_\lambda(\xi)\}}\tau_\rho(h_\lambda(\xi))+\tau_\rho(\eta)+\log(\rho_\lambda(h_\lambda(\xi),\eta)+\delta)^2\\
				&=\  \tau_\rho(h_\lambda(\xi))+\tau_\rho(\xi_\lambda')+2\log(\rho_\lambda(h_\lambda(\xi),\xi_\lambda')+\delta)
			\end{split}
		\end{equation*}
		for some $\xi_\lambda'\in Z_\lambda\setminus\{h_\lambda(\xi)\}$. 
		
		From the inequality \eqref{discrepancy less than 2R} and above observations we have, 
		\begin{equation}\label{bound for rho_lambda}
			\begin{split}
				2\log(\rho_\lambda(h_\lambda(\xi),\xi_\lambda')+\delta)&\ge D_{\rho_\omega}(\tau_\rho\circ h_\lambda)(\xi)-\tau_\rho(h_\lambda(\xi))-\tau_\rho(\xi_\lambda')\\
				& \ge-2\|\tau_\rho\|_\infty-2\|\tau_\rho\|_\infty\ge -4R\\
				\hbox{which implies}\quad \rho_\lambda(h_\lambda(\xi),\xi_\lambda')&\ge e^{-2R}-\delta.
			\end{split}
		\end{equation}
		
		We know that 
		\begin{equation*}
			\log(x + h) < \log x + \frac{h}{x}
		\end{equation*} for all $x, h > 0$, and thus we have,
		\begin{equation}\label{discrepancy tau g_n upper bound}
			\begin{split}
				D_{\rho_\omega}(\tau_\rho\circ h_\lambda)(\xi)&<\tau_\rho(h_\lambda(\xi))+\tau_\rho(\xi_\lambda')+\log(\rho_\lambda(h_\lambda(\xi),\xi_\lambda'))^2+\frac{2\delta}{\rho_\lambda(h_\lambda(\xi),\xi_\lambda')}\\ 
				&\le D_{\rho_\lambda}(\tau_\rho)(h_\lambda(\xi))+\frac{2\delta}{\rho_\lambda(h_\lambda(\xi),\xi_\lambda')}.
			\end{split}
		\end{equation}
		Note that $D_{\rho_\lambda}(\tau_\rho)\equiv 0$. 
		Combining this with inequality \eqref{bound for rho_lambda},  it is possible to choose $\delta=\delta(\epsilon,R)>0$ small enough so that for $\xi\in Z$,
		\begin{equation*}
			D_{\rho_\omega}(\tau_\rho\circ h_\lambda)(\xi)\le \frac{2\delta}{\rho_\lambda(h_\lambda(\xi),\xi_\lambda')}\le \frac{2\delta}{ e^{-2R}-\delta}\le \epsilon.
		\end{equation*}
		Hence, there exists $\delta>0$ small enough, such that if we have a $\delta$-isometry $h_{\lambda}\colon (Z,\rho_\omega)\to (Z_\lambda,\rho_\lambda)$ for any $\lambda\in \Lambda$ then for all $\rho\in B_{\MM(Z_\lambda)}(\rho_\lambda,R)$
		\begin{equation*}
			\sup_{\xi\in Z}D_{\rho_\omega}(\tau_\rho\circ h_\lambda)(\xi)<\epsilon.
		\end{equation*}
		\bigskip

		\noindent \underline{Step 2 $\colon$}  Since $(Z_\lambda,\rho_\lambda)$'s are equicontinuous and by \Cref{equicontinuity of tau main}, there exists $\delta_0=\delta_0(\epsilon,R)$ positive such that for all $\lambda\in \Lambda$ if $\zeta,\eta\in Z_\lambda$ and $\rho_\lambda(\zeta,\eta)<\delta_0$ then 
		\begin{equation*}
			\big|\tau_\rho(\eta)-\tau_\rho(\zeta)\big|<\frac{\epsilon}{2}
		\end{equation*}
		for all $ \rho\in {B}_{\MM(Z_\lambda)}(\rho_\lambda,R)$, and also
		\begin{equation*}
			\big|\rho_\lambda(\xi,\eta)-\rho_\lambda(\xi,\zeta)\big|<\frac{\epsilon}{2}
		\end{equation*}
		for all $\xi\in Z_\lambda$. \\
		
		Let $\xi\in Z$, $ \rho\in {B}_{\MM(Z_\lambda)}(\rho_\lambda,R) $, and $h_\lambda\colon (Z,\rho_\omega)\to(Z_\lambda,\rho_\lambda)$ be $\delta$-isometry, then there exists $\tilde{\xi_\lambda}\in Z_\lambda$ a $\rho$-antipode of $h_\lambda(\xi)$, i.e. $\rho(h_\lambda(\xi),\tilde{\xi_\lambda})=1$ and also $\tilde{\xi}\in Z$ such that $\rho_\lambda(h_\lambda(\tilde{\xi}),\tilde{\xi_\lambda})<\delta$.
		Now by GMVT (\Cref{GMVT}),
		\begin{equation*}
			\rho_\lambda(h_\lambda(\xi),\tilde{\xi_\lambda})=\frac{d\rho}{d\rho_\lambda}(h_\lambda(\xi))^{\frac{1}{2}}\frac{d\rho}{d\rho_\lambda}(\tilde{\xi_\lambda})^{\frac{1}{2}}\ge e^{-\|\tau_\rho\|_\infty}\ge e^{-R}.
		\end{equation*}
		If we choose $\delta<\delta_0$ then we have $\rho_\lambda(h_\lambda(\tilde{\xi}),\tilde{\xi_\lambda})<\delta<\delta_0$ and therefore
		\begin{align*}
			|\rho_\lambda(h_\lambda(\xi),h_\lambda(\tilde{\xi}))&-\rho_\lambda(h_\lambda(\xi),\tilde{\xi_\lambda})|<\frac{\epsilon}{2}.
		\end{align*}
		Further for $\delta<\epsilon$, $h_\lambda$ being $\delta$-isometry we have from above (also remember the choice of $\epsilon<(e^{-R})/2$),
		\begin{equation}\label{ineq 33}
			\begin{split}
				\rho_\omega(\xi,\tilde{\xi})>\rho_\lambda(h_\lambda(\xi),h_\lambda(\tilde{\xi}))-\delta &>\rho_\lambda(h_\lambda(\xi),\tilde{\xi_\lambda})-\frac{\epsilon}{2}-\delta\\
				&>\rho_\lambda(h_\lambda(\xi),\tilde{\xi_\lambda})-\epsilon\\
				&>e^{-R}-\epsilon.\\
			\end{split}
		\end{equation}
		Then from inequality \eqref{ineq 33} we have,
		\begin{equation}
			\begin{split}
				D_{\rho_\omega}(\tau_\rho\circ h_\lambda)(\xi)&\ge\tau_\rho(h_\lambda(\xi))+\tau_\rho(h_\lambda(\tilde{\xi}))+\log\rho_\omega(\xi,\tilde{\xi})^2\\
				&>\tau_\rho(h_\lambda(\xi))+\tau_\rho(h_\lambda(\tilde{\xi}))+\log(\rho_\lambda(h_\lambda(\xi),\tilde{\xi_\lambda})-\epsilon)^2.
			\end{split}
		\end{equation}
		Again $\rho_\lambda(h_\lambda(\tilde{\xi}),\tilde{\xi_\lambda})<\delta<\delta_0$ implies $\tau_\rho(h_\lambda(\tilde{\xi}))>\tau_\rho(\tilde{\xi_\lambda})-\epsilon/2$, hence,
		\begin{align*} 
			D_{\rho_\omega}(\tau_\rho\circ 	h_\lambda)(\xi)&>\tau_\rho(h_\lambda(\xi))+\tau_\rho(h_\lambda(\tilde{\xi}))+\log(\rho_\lambda(h_\lambda(\xi),\tilde{\xi_\lambda})-\epsilon)^2\\
			&>\tau_\rho(h_\lambda(\xi))+(\tau_\rho(\tilde{\xi_\lambda})-\epsilon)+2\log(\rho_\lambda(h_\lambda(\xi),\tilde{\xi_\lambda})-\epsilon)
		\end{align*}
		We know that 
		\begin{equation*}
			\log (x-h)>\log x-\frac{h}{x-h}
		\end{equation*} for all $x>h>0$. Then from the last expression
		\begin{align*}
			D_{\rho_\omega}(\tau_\rho\circ 	h_\lambda)(\xi)&> \tau_\rho(h_\lambda(\xi))+(\tau_\rho(\tilde{\xi_\lambda})-\epsilon)+\log\rho_\lambda(h_\lambda(\xi),\tilde{\xi_\lambda})^2-\frac{2\epsilon}{\rho_\lambda(h_\lambda(\xi),\tilde{\xi_\lambda})-\epsilon}\\
			&=\log\rho(h_\lambda(\xi),\tilde{\xi_\lambda})^2-\left(\epsilon+\frac{2\epsilon}{\rho_\lambda(h_\lambda(\xi),\tilde{\xi_\lambda})-\epsilon}\right).\\
		\end{align*}
		Recall that $\log\rho(h_\lambda(\xi),\tilde{\xi_\lambda})=0$ since $\tilde{\xi_\lambda}$ is $\rho$-antipode of $h_\lambda(\xi)$. Now by the choice of $\epsilon<(e^{-R})/2$ we have from \eqref{ineq 33},
		\begin{align*}
			\rho_\lambda(h_\lambda(\xi),\tilde{\xi_\lambda})-\epsilon>\frac{e^{-R}}{2}\implies \frac{2\epsilon}{\rho_\lambda(h_\lambda(\xi),\tilde{\xi_\lambda})-\epsilon}<4\epsilon e^R.
		\end{align*}
		From this we get,
		\begin{align*}
			D_{\rho_\omega}(\tau_\rho\circ h_\lambda)(\xi)&>-\left(\epsilon+\frac{2\epsilon}{\rho_\lambda(h_\lambda(\xi),\tilde{\xi_\lambda})-\epsilon}\right)> c_1\cdot \epsilon.
		\end{align*}
		where $c_1=c_1(R)>1$ is a constant only dependent on $R$. Thus, it follows from the above that, staring with $\epsilon/c_1$ instead of $\epsilon$, we would eventually get for any $\lambda\in \Lambda$, if $h_{\lambda}\colon (Z,\rho_\omega)\to (Z_\lambda,\rho_\lambda)$ is a $\delta$-isometry, for $\delta$ suitably small, then for all $\rho\in B_{\MM(Z_\lambda)}(\rho_\lambda,R)$
		\begin{equation*}
			\inf_{\xi\in Z}D_{\rho_\omega}(\tau_\rho\circ h_\lambda)(\xi)>-\epsilon.
		\end{equation*} 
		
		\medskip
		
		\noindent From the conclusion of the two steps, we have the claimed result.	
	\end{proof}
	\bibliography{References}
	\bibliographystyle{alpha}
	
\end{document}